\theoremstyle{definition}
\newtheorem{theorem}{Theorem}[section]
\newtheorem{definition}[theorem]{Definition}
\newtheorem{lemma}[theorem]{Lemma}
\newtheorem{proposition}[theorem]{Proposition}
\newtheorem{corollary}[theorem]{Corollary}
\newtheorem{question}[theorem]{Question}
\newtheorem*{theorem*}{Theorem}
\theoremstyle{remark}
\newtheorem{remark}[theorem]{Remark}
\newtheorem{example}[theorem]{Example}
\renewcommand{\AA}{\mathbb A}
\newcommand{\QQ}{\mathbb Q}
\newcommand{\RR}{\mathbb R}
\newcommand{\CC}{\mathbb C}
\newcommand{\PP}{\mathbb P}
\newcommand{\ZZ}{\mathbb Z}
\newcommand{\GG}{\mathbb G}
\newcommand{\be}{\mathbf e}
\newcommand{\kk}{\mathbbm k}
\newcommand{\M}{\mathrm M}
\newcommand{\U}{\mathrm U}
\renewcommand{\H}{\mathrm{H}}
\newcommand{\cS}{\mathcal S}
\newcommand{\cQ}{\mathcal Q}
\newcommand{\cG}{\mathcal G}
\newcommand{\V}{V}
\newcommand{\ind}{\mathbb I}
\newcommand*\one{\mathbf{1}} 
\newcommand{\msum}{\uplus}
\newcommand{\rk}{\operatorname{rk}}
\begin{document}

\title{Stellahedral geometry of matroids}

\author{Christopher Eur, June Huh, Matt Larson}

\address{Harvard University}
\email{ceur@math.harvard.edu}

\address{Princeton University and Korea Institute for Advanced Study}
\email{huh@princeton.edu}

\address{Stanford University}
\email{mwlarson@stanford.edu}

\begin{abstract}
We use the geometry of the stellahedral toric variety to study matroids.  We identify the valuative group of matroids with the cohomology ring of the stellahedral toric variety, and show that valuative, homological, and numerical equivalence relations for matroids coincide.  We establish a new log-concavity result for the Tutte polynomial of a matroid, answering a question of Wagner and Shapiro--Smirnov--Vaintrob on Postnikov--Shapiro algebras, and calculate the Chern--Schwartz--MacPherson classes of matroid Schubert cells.  The central construction is the ``augmented tautological classes of matroids,'' modeled after certain toric vector bundles on the stellahedral toric variety.
\end{abstract}

\maketitle

\renewcommand{\baselinestretch}{.95}\normalsize
\setcounter{tocdepth}{1}
\tableofcontents
\renewcommand{\baselinestretch}{1.2}\normalsize

\section{Introduction}

Let $E = \{1, \dotsc, n\}$.  For $S\subseteq E$, we write $\be_S$ for the sum of the standard basis vectors $\sum_{i \in S} \be_i$ in the vector space $\mathbb{R}^E$.  A \emph{matroid} $\M$ on $E$ is a collection $\mathscr{B}$ of subsets of $E$, called the \emph{bases} of $\M$, such that every edge of the convex hull
\[
P(\M) \coloneq \operatorname{conv}\{\be_B \, | \, B\in \mathscr{B}\} \subseteq \RR^E
\]
is parallel to $\be_i - \be_j$ for some $i$ and $j$ in $E$.
 By definition, the coordinate sum of any point in the \emph{base polytope} $P(\M)$ is a constant integer $\rk(\M)$, called the \emph{rank} of $\M$, which is equal to $|B|$ for any $B\in \mathscr{B}$.
The condition on the edges of the base polytope is equivalent to the \emph{basis exchange property} {appearing in the work of Whitney \cite{Whitney35} that introduced matroids}:
\begin{quote}
For any $B_1,B_2 \in \mathscr{B}$ and any $i \in B_1 \setminus B_2$, there is $j \in B_2 \setminus B_1$ such that $(B_1 \setminus i) \cup j \in \mathscr{B}$.
\end{quote}
The above definition of matroids via base polytopes arose from  the study of moment map images of torus orbit closures in Grassmannians by Gelfand, Goresky, MacPherson, and Serganova in \cite{GGMS}.
See \cite[Chapter 1]{Kun86} for an excellent historical overview of early contributions, and \cite{Ard22} and \cite{Eur} for snapshots of recent advances in the theory of matroids.
For a general introduction to matroids, and for any undefined matroid terms, we refer to \cite{Oxl11}.

For a nonnegative integer $r \le n$, we consider the free abelian group generated by the set of matroids of rank $r$ on $E$:
\[
\mathrm{Mat}_r(E) \coloneq \Big\{ \sum_i c_i \M_i \ \Big| \ \text{$c_i$ is an integer and $\M_i$ is a rank $r$ matroid on $E$}\Big\}.
\]
We study three equivalence relations on $\mathrm{Mat}_r(E)$---valuative, homological, and numerical.

\begin{definition}
Let $\mathbf 1_{P(\M)}$ be the indicator function of the base polytope of $\M$, which is the function $\RR^E\to \ZZ$ defined by $\mathbf 1_{P(\M)}(x) = 1$ if $x\in P(\M)$ and $\mathbf 1_{P(\M)}(x) = 0$ otherwise.
An element $\sum_i c_i \M_i$ is said to be \emph{valuatively equivalent to zero} if the function $\sum_i c_i \mathbf 1_{P(\M_i)}$ is zero.
\end{definition}

Figure \ref{fig:zero} illustrates an  element of $\mathrm{Mat}_2\, ([4])$ that is valuatively equivalent to zero.
The valuative group of rank $r$ matroids on $E$, denoted $\mathrm{Val}_r(E)$, is the group $\mathrm{Mat}_r(E)$ modulo the subgroup of elements valuatively equivalent to zero.
A homomorphism of abelian groups $ \mathrm{Mat}_r(E) \to G$ is said to be \emph{valuative} if it factors through the valuative group. 
Many matroid invariants, including the Tutte polynomial, 
the Kazhdan--Lusztig polynomial, 
the motivic zeta function, 
the Chern--Schwartz--MacPherson cycle, 
and the volume polynomial of the Chow ring, 
turn out to be valuative. 
See \cite{AFR10,AS22,Ard22} for extensive lists and history of the study of valuative matroid invariants.

\begin{figure}[h!]
\centering
\begin{tikzpicture}[
	x = {(-.7cm,-0.6cm)},
	y = {(.9cm,-.2cm)},
	z = {(0cm,1cm)},
	edge/.style={line width=1pt, line cap=round, black},
	edge2/.style={line width=0.4pt, line cap=round, black},
	edge3/.style={line width=0.8pt, line cap=round, black},
	face/.style={draw=none, opacity=0.2, fill=blue},
	scale = 1.2
	]

\coordinate (s) at (0,1,0.2);
\coordinate (t) at (0,0,-0.7);
\node[label = {\huge$-$}]  at ($1.7*(s)+(t)$) {};
\node[label = {\huge$-$}]  at ($4.7*(s)+(t)$) {};
\node[label = {\huge$+$}]  at ($7.7*(s)+(t)$) {};

\coordinate (v0) at (0,0,-.07);
\coordinate (v1) at (0,1,-.02);
\coordinate (v2) at (1,0,0);
\coordinate (v3) at (.95,.98,0);
\coordinate (v4) at ($0.1*(s)+(0,0,.35)$);
\coordinate (v5) at ($0.1*(s)+(0,0,-1.32)$);

\draw[edge2] (v0)--(v1);
\draw[edge] (v1)--(v3);
\draw[edge2] (v2)--(v0);
\draw[edge] (v3)--(v2);
\draw[edge2] (v0)--(v4);
\draw[edge] (v1)--(v4);
\draw[edge] (v2)--(v4);
\draw[edge] (v3)--(v4);
\draw[edge2] (v0)--(v5);
\draw[edge] (v1)--(v5);
\draw[edge] (v2)--(v5);
\draw[edge] (v3)--(v5);

\coordinate (s1) at ($3*(s)$);
\coordinate (p0) at ($(v0)+(s1)$);
\coordinate (p1) at ($(v1)+(s1)$);
\coordinate (p2) at ($(v2)+(s1)$);
\coordinate (p3) at ($(v3)+(s1)$);
\coordinate (p4) at ($(v4)+(s1)$);

\draw[edge2] (p0)--(p1);
\draw[edge] (p1)--(p3);
\draw[edge2] (p2)--(p0);
\draw[edge] (p3)--(p2);
\draw[edge2] (p0)--(p4);
\draw[edge] (p1)--(p4);
\draw[edge] (p2)--(p4);
\draw[edge] (p3)--(p4);

\coordinate (s2) at ($6*(s)$);
\coordinate (q0) at ($(v0)+(s2)$);
\coordinate (q1) at ($(v1)+(s2)$);
\coordinate (q2) at ($(v2)+(s2)$);
\coordinate (q3) at ($(v3)+(s2)$);
\coordinate (q4) at ($(v5)+(s2)$);

\draw[edge] (q0)--(q1);
\draw[edge] (q1)--(q3);
\draw[edge] (q2)--(q0);
\draw[edge] (q3)--(q2);
\draw[edge2] (q0)--(q4);
\draw[edge] (q1)--(q4);
\draw[edge] (q2)--(q4);
\draw[edge] (q3)--(q4);

\coordinate (s3) at ($9*(s)$);
\coordinate (u0) at ($(v0)+(s3)$);
\coordinate (u1) at ($(v1)+(s3)$);
\coordinate (u2) at ($(v2)+(s3)$);
\coordinate (u3) at ($(v3)+(s3)$);

\draw[edge] (u0)--(u1);
\draw[edge] (u1)--(u3);
\draw[edge] (u2)--(u0);
\draw[edge] (u3)--(u2);

\end{tikzpicture}
    \caption{An element of $\mathrm{Mat}_2\, ([4])$ that is valuatively equivalent to zero}
    \label{fig:zero}
\end{figure}
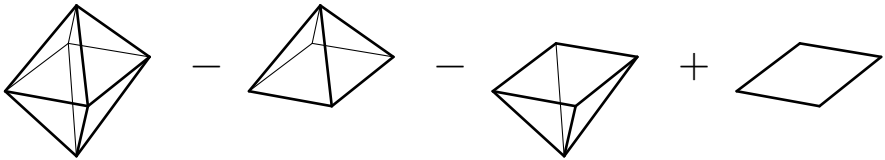

For the homological equivalence relation, we use the \emph{augmented Bergman fan} $\Sigma_\M$ of $\M$, which is an $r$-dimensional  simplicial fan in $\RR^E$ obtained by gluing together the order complex of the lattice of flats and the independence complex of $\M$. For an explicit description, see Definition~\ref{defn:augmentedBergman}.  
The augmented Bergman fan, introduced in \cite{BHMPW}, is a central object in the proof of the Dowling--Wilson top-heavy conjecture and the nonnegativity of the matroid Kazhdan--Lusztig polynomial \cite{BHMPW2}.  
The constant weight $1$ is balanced on the augmented Bergman fan, 
defining a Minkowski weight  $[\Sigma_\M]$ in the sense of \cite{FultonSturmfels}.
We review the definition of Minkowski weights and their identification with homology classes on toric varieties in Section~\ref{subsec:MW}.

\begin{definition}
An element $\sum_i c_i \M_i$ is said to be \emph{homologically equivalent to zero} if the Minkowski weight $\sum_i c_i [\Sigma_{\M_i}]$ is zero.
\end{definition}

For the numerical equivalence, we use the bilinear \emph{intersection pairing}
\[
\mathrm{Mat}_r(E) \times \mathrm{Mat}_{n-r}(E) \longrightarrow \ZZ, \quad (\M,\M') \longmapsto \deg(\M \wedge \M'),
\]
where the integer  $\deg(\M \wedge \M')$, for a rank $r$ matroid $\M$ and a rank $n-r$ matroid $\M'$ on $E$, is 
\[
\deg(\M \wedge \M') = \begin{cases}
1 & \text{if there are bases $B$ of $\M$ and $B'$ of $\M'$ such that $B \cap B' = \emptyset$},\\
0 & \text{if otherwise}.
\end{cases}
\]
We will identify this intersection pairing with an instance of the intersection product on the homology of a certain $n$-dimensional smooth projective variety; see Theorem~\ref{thm:intersectionaugmented} and Section \ref{SectionValuativeGroup}.

\begin{definition}
An element $\sum_i c_i \M_i$ is said to be \emph{numerically equivalent to zero} if it is in the kernel of the intersection pairing.
\end{definition}

Our first main result states that these three equivalence relations coincide.

\begin{theorem}\label{mainthm:equivalences}
The following conditions are equivalent for any $\eta \in \mathrm{Mat}_r(E)$.
\begin{enumerate}[label = (\arabic*)]\itemsep 5pt
\item\label{val} $\eta$ is valuatively equivalent to zero. 
\item\label{hom}  $\eta$ is homologically equivalent to zero. 
\item\label{num}  $\eta$ is numerically equivalent to zero.  
\end{enumerate}
\end{theorem}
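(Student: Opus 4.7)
The plan is to establish the cycle of implications $(1) \Rightarrow (2) \Rightarrow (3) \Rightarrow (1)$, using the stellahedral toric variety $X_\Sigma$ promised in the abstract as the bridge between combinatorics and geometry. Since $X_\Sigma$ is smooth and projective of dimension $n$, its cohomology ring satisfies Poincar\'e duality, and Minkowski weights on the stellahedral fan can be identified with cohomology classes on $X_\Sigma$. Morally, the proof amounts to checking that $\M \mapsto [\Sigma_\M]$ descends to and is detected inside $H^{n-r}(X_\Sigma)$, and that the combinatorial pairing $\wedge$ matches the cohomological intersection product.

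For $(1) \Rightarrow (2)$, I would show that $\M \mapsto [\Sigma_\M]$ is valuative. The strategy is to exhibit $[\Sigma_\M]$ as a universal polynomial expression in combinatorially defined cohomology classes on $X_\Sigma$---in particular, the augmented tautological classes of matroids highlighted in the abstract---whose construction is manifestly additive in the indicator function $\mathbf{1}_{P(\M)}$. Once the Minkowski weight of the augmented Bergman fan is expressed this way, any element $\sum c_i \M_i$ with $\sum c_i \mathbf{1}_{P(\M_i)} = 0$ necessarily maps to zero.

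For $(2) \Rightarrow (3)$, the central computation is the identity
\[
\M \wedge \M' \;=\; \deg_{X_\Sigma}\bigl([\Sigma_\M] \cdot [\Sigma_{\M'}]\bigr)
\]
for any rank $r$ matroid $\M$ and rank $n-r$ matroid $\M'$ on $E$. This should be a transversality statement on the stellahedral fan: the augmented Bergman fans $\Sigma_\M$ and $\Sigma_{\M'}$ have complementary dimensions, their common refinement has finitely many top-dimensional cones, and these cones correspond bijectively to pairs of disjoint bases $(B,B')$, each contributing multiplicity one by the simplicial structure of the fans. Given this identification, homological vanishing forces numerical vanishing.

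The hard implication is $(3) \Rightarrow (1)$, which says that if $\eta$ pairs trivially with every rank $n-r$ matroid, then $\eta$ is already valuatively zero. I would deduce this from two facts: (a) the map $\M \mapsto [\Sigma_\M]$ induces an \emph{injection} $\mathrm{Val}_r(E) \hookrightarrow H^{n-r}(X_\Sigma)$, and (b) the classes $\{[\Sigma_{\M'}]\}$ for $\M'$ of rank $n-r$ span a subspace large enough to separate the image of (a) via Poincar\'e duality. Step (a) will be the main obstacle: one must produce enough valuative functionals---realized as integrals of characteristic classes of the augmented tautological bundles against $[\Sigma_\M]$---to distinguish every valuatively nonzero element. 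I expect this to reduce to the identification (announced in the abstract) of $\mathrm{Val}_r(E)$ with the degree $n-r$ part of $H^*(X_\Sigma)$, after which $(3) \Rightarrow (1)$ follows directly from Poincar\'e duality together with the pairing matched in $(2) \Rightarrow (3)$.
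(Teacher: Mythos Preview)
Your architecture is the same as the paper's: $(1)\Leftrightarrow(2)$ comes from Theorem~\ref{thm:valuativeiso} (the map $\M\mapsto[\Sigma_\M]$ is a graded isomorphism $\mathrm{Val}_r(E)\to H_{2r}(X_E,\ZZ)$), and $(2)\Leftrightarrow(3)$ comes from Theorem~\ref{thm:intersectionaugmented} together with Poincar\'e duality for $X_E$. The valuativity of $\M\mapsto[\Sigma_\M]$ is indeed obtained exactly as you suggest, via $[\Sigma_\M]=c_{n-r}(\cQ_\M)$ (Corollary~\ref{cor:ctop}) and the valuativity of the tautological classes (Proposition~\ref{prop:tautoval}).

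There is a concrete error in your sketch of $(2)\Rightarrow(3)$. The top-dimensional cones of a common refinement of $\Sigma_\M$ and $\Sigma_{\M'}$ do \emph{not} biject with pairs of disjoint bases, and the intersection number is not a count of such pairs: it is $0$ or $1$. For instance, with $E=\{1,2\}$ and $\M=\M'=\U_{1,2}$ there are two ordered pairs of disjoint bases, yet $\deg([\Sigma_\M]\cdot[\Sigma_{\M'}])=1$. The paper does not argue by fan transversality at all; it proves the full multiplicative identity $[\Sigma_\M]\cdot[\Sigma_{\M'}]=[\Sigma_{\M\wedge\M'}]$ (or $0$) by passing through the exceptional isomorphism $\phi:K(X_E)\to A^\bullet(X_E)$, under which $[\Sigma_\M]$ is the top graded piece of $\phi([\mathcal O_{X_E}(D_{I(\M^\perp)})])$, and then using a polytope-algebra computation: $[I(\M^\perp)+I(\M'^\perp)]$ is decomposed by intersecting with integer translates of the Boolean cube (Lemma~\ref{lem:polymatspan}), the piece in $[0,1]^E$ being exactly $I((\M\wedge\M')^\perp)$.

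For $(3)\Rightarrow(1)$ you correctly locate the difficulty in the injectivity of $\mathrm{Val}_r(E)\to H_{2r}(X_E,\ZZ)$, but your plan to produce ``enough valuative functionals via integrals of characteristic classes'' is not how the paper proceeds, and you would still need the surjectivity in rank $n-r$ to conclude from Poincar\'e duality. The paper instead proves that $\M\mapsto[\Sigma_\M]$ is an \emph{isomorphism} in each degree, via the chain
\[
\bigoplus_r \mathrm{Val}_r(E)\ \xrightarrow{\ \M\,\mapsto\,[I(\M^\perp)]\ }\ \overline{\mathbb I}(\Sigma_E)\ \xrightarrow{\ \sim\ }\ K(X_E)\ \xrightarrow{\ \phi\ }\ A^\bullet(X_E),
\]
where the first arrow is shown to be an isomorphism in Proposition~\ref{prop:val&ind} (again using Lemma~\ref{lem:polymatspan}), the second is the polytope-algebra/$K$-ring isomorphism (Theorem~\ref{thm:folklore}), and the third is the exceptional isomorphism of Theorem~\ref{thm:exceptIsom}. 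The ingredients you did not name---the map $\phi$ and the polymatroid/Boolean-cube Lemma~\ref{lem:polymatspan}---are precisely what make both the isomorphism and the intersection formula go through.
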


We establish this equivalence via the combinatorics and algebraic geometry of the \emph{stellahedron} $\Pi_E$ of $E$, which is an $n$-dimensional simple polytope in $\RR^E$ with the following equivalent descriptions.
\begin{enumerate}[label=$\bullet$]\itemsep 5pt
\item The \emph{permutohedron} of $E$ is the convex hull of the permutations
\[
{\underline \Pi}_E\coloneq \operatorname{conv}\{ w \cdot (1,2,\ldots, n) \mid w \text{ is a permutation of $E$}\} \subseteq \RR^E.
\]
Writing $\mathbb{R}^E_{\ge 0}$ for the nonnegative orthant, the stellahedron of $E$ is
\[
\Pi_E = \big\{ u \in \RR_{\geq 0}^E \, \big| \, \text{ there exists $v \in {\underline \Pi}_E$ such that $v-u \in \RR_{\geq 0}^E$}\big\}. 
\]
This description shows that the permutohedron $\underline{\Pi}_E$ is the facet of $\Pi_E$ on which the standard inner product with $\be_E$ is maximized.
\item The \emph{independence polytope} of a matroid $\M$ is the convex hull
\[
I(\M) = \operatorname{conv}\{\be_I \, | \, I\subseteq B \text{ for some basis $B$ of $\M$}\} \subseteq \RR^E.
\]
Writing $\U_{r,E}$ for the uniform matroid of rank $r$ on $E$, whose bases are all size $r$ subsets of $E$, the stellahedron of $E$ is the Minkowski sum
\[
\Pi_E=\sum_{r=0}^n I(\U_{r,E}).
\]
This description shows that the standard $n$-dimensional simplex $I(\U_{1,E})$ and the standard $n$-dimensional cube $I(\U_{n,E})$ are Minkowski summands of the $n$-dimensional stellahedron $\Pi_E$.
Figure \ref{fig:stella} illustrates the case $E=[3]$.
\end{enumerate}
We remark that the stellahedron $\Pi_E$ is a realization of the graph associahedron of the star graph with the set of endpoints $E$; see for example \cite[\S10.4]{PRW08}.
 We refer to \cite{CD} and \cite{Devadoss} for discussions of graph associahedra and their realizations.\footnote{In \cite{FS05, PRW08, Postnikov}, an $n$-dimensional graph associahedron is realized as a generalized permutohedron in $\mathbb{R}^{n+1}$. For the star graph with the set of endpoints $E$, the stellahedron $\Pi_E$ and the projection of that graph associahedron to $\mathbb{R}^E$ have the same normal fan.}

\begin{figure}[h!]
\centering
\begin{tikzpicture}[
	x = {(-.7cm,-0.6cm)},
	y = {(.9cm,-.3cm)},
	z = {(0cm,1cm)},
	edge/.style={line width=1pt, line cap=round, black},
	edge2/.style={line width=0.4pt, line cap=round, black},
	edge3/.style={line width=0.8pt, line cap=round, black},
	face/.style={draw=none, opacity=0.2, fill=blue},
	scale = .75
	]

\coordinate (s) at (0,1,0.3);
\coordinate (t) at (0,0,-0.5);
\node[label = {\Huge$=$}]  at ($4.2*(s)+(t)$) {};
\node[label = {\huge$+$}]  at ($7.75*(s)+(t)$) {};
\node[label = {\huge$+$}]  at ($11.35*(s)+(t)$) {};

\coordinate (s0) at ($0*(0,-1,-0.3)$);
\coordinate (v'0) at ($.9*(0,0,0)$);
\coordinate (v'1) at ($.9*(0,0,2.85)$);
\coordinate (v'2) at ($.9*(0,2,3)$);
\coordinate (v'3) at ($.9*(1.1,2,3)$);
\coordinate (v'4) at ($.9*(2,1.2,3)$);
\coordinate (v'5) at ($.9*(2,0,3)$);
\coordinate (v'6) at ($.9*(0,3,2)$);
\coordinate (v'7) at ($.9*(0,2.95,0)$);
\coordinate (v'8) at ($.9*(1,3,2)$);
\coordinate (v'9) at ($.9*(1.92,3,0)$);
\coordinate (v'10) at ($.9*(1.92,3,1.05)$);
\coordinate (v'11) at ($.9*(2.9,0,0)$);
\coordinate (v'12) at ($.9*(3,0,2)$);
\coordinate (v'13) at ($.9*(3,1.2,2)$);
\coordinate (v'14) at ($.9*(3,2,0)$);
\coordinate (v'15) at ($.9*(3,2,1.05)$);

\coordinate (v0) at ($(v'0)+(s0)$);
\coordinate (v1) at ($(v'1)+(s0)$);
\coordinate (v2) at ($(v'2)+(s0)$);
\coordinate (v3) at ($(v'3)+(s0)$);
\coordinate (v4) at ($(v'4)+(s0)$);
\coordinate (v5) at ($(v'5)+(s0)$);
\coordinate (v6) at ($(v'6)+(s0)$);
\coordinate (v7) at ($(v'7)+(s0)$);
\coordinate (v8) at ($(v'8)+(s0)$);
\coordinate (v9) at ($(v'9)+(s0)$);
\coordinate (v10) at ($(v'10)+(s0)$);
\coordinate (v11) at ($(v'11)+(s0)$);
\coordinate (v12) at ($(v'12)+(s0)$);
\coordinate (v13) at ($(v'13)+(s0)$);
\coordinate (v14) at ($(v'14)+(s0)$);
\coordinate (v15) at ($(v'15)+(s0)$);

\draw[edge] (v1)--(v2);
\draw[edge] (v2)--(v3);
\draw[edge] (v3)--(v4);
\draw[edge] (v4)--(v5);
\draw[edge] (v5)--(v1);
\draw[edge] (v2)--(v6);
\draw[edge] (v6)--(v7);
\draw[edge] (v3)--(v8);
\draw[edge] (v6)--(v8);
\draw[edge] (v7)--(v9);
\draw[edge] (v8)--(v10);
\draw[edge] (v9)--(v10);
\draw[edge] (v5)--(v12);
\draw[edge] (v12)--(v11);
\draw[edge] (v4)--(v13);
\draw[edge] (v12)--(v13);
\draw[edge] (v13)--(v15);
\draw[edge] (v10)--(v15);
\draw[edge] (v11)--(v14);
\draw[edge] (v14)--(v9);
\draw[edge] (v14)--(v15);
\draw[edge2] (v1)--(v0);
\draw[edge2] (v11)--(v0);
\draw[edge2] (v7)--(v0);

\coordinate (s1) at ($6*(0,1,0.3)$);
\coordinate (a0) at (0,0,0);
\coordinate (a1) at (0,0,.98);
\coordinate (a2) at (0,1,0);
\coordinate (a3) at (1,0,0);

\coordinate (d0) at ($(a0) + (s1)$);
\coordinate (d1) at ($(a1) + (s1)$);
\coordinate (d2) at ($(a2) + (s1)$);
\coordinate (d3) at ($(a3) + (s1)$);

\draw[edge] (d1)--(d2);
\draw[edge] (d2)--(d3);
\draw[edge] (d1)--(d3);
\draw[edge2] (d1)--(d0);
\draw[edge2] (d2)--(d0);
\draw[edge2] (d3)--(d0);

	x = {(-.7cm,-0.6cm)},
	y = {(.9cm,-.3cm)},
	z = {(0cm,1cm)},
	edge/.style={line width=1.1pt, line cap=round, black},
	edge2/.style={line width=0.6pt, line cap=round, black},
	face/.style={draw=none, opacity=0.2, fill=blue},
	]
\coordinate (s2) at ($13*(0,1,0.3)$);
\coordinate (b'0) at (0,0,0);
\coordinate (b'1) at (0,0,0.98);
\coordinate (b'2) at (0,1,0);
\coordinate (b'3) at (1,0,0);
\coordinate (b'4) at (0,1,1);
\coordinate (b'5) at (.95,.98,0);
\coordinate (b'6) at (1,0,1);
\coordinate (b'7) at (1,1,1);

\coordinate (b0) at ($(b'0)+(s2)$);
\coordinate (b1) at ($(b'1)+(s2)$);
\coordinate (b2) at ($(b'2)+(s2)$);
\coordinate (b3) at ($(b'3)+(s2)$);
\coordinate (b4) at ($(b'4)+(s2)$);
\coordinate (b5) at ($(b'5)+(s2)$);
\coordinate (b6) at ($(b'6)+(s2)$);
\coordinate (b7) at ($(b'7)+(s2)$);

\draw[edge2] (b1)--(b0);
\draw[edge2] (b2)--(b0);
\draw[edge2] (b3)--(b0);
\draw[edge] (b1)--(b4);
\draw[edge] (b6)--(b7);
\draw[edge] (b4)--(b7);
\draw[edge] (b1)--(b6);
\draw[edge] (b3)--(b5);
\draw[edge] (b3)--(b6);
\draw[edge] (b5)--(b7);
\draw[edge] (b2)--(b4);
\draw[edge] (b2)--(b5);

\coordinate (s2) at ($9.5*(0,1,0.3)$);
\coordinate (c'0) at (0,0,0);
\coordinate (c'1) at (0,0,0.98);
\coordinate (c'2) at (0,1,0);
\coordinate (c'3) at (1,0,0);
\coordinate (c'4) at (0,1,1);
\coordinate (c'5) at (.95,.98,0);
\coordinate (c'6) at (1,0,1);

\coordinate (c0) at ($(c'0)+(s2)$);
\coordinate (c1) at ($(c'1)+(s2)$);
\coordinate (c2) at ($(c'2)+(s2)$);
\coordinate (c3) at ($(c'3)+(s2)$);
\coordinate (c4) at ($(c'4)+(s2)$);
\coordinate (c5) at ($(c'5)+(s2)$);
\coordinate (c6) at ($(c'6)+(s2)$);

\draw[edge2] (c1)--(c0);
\draw[edge2] (c2)--(c0);
\draw[edge2] (c3)--(c0);
\draw[edge] (c1)--(c4);
\draw[edge] (c1)--(c6);
\draw[edge] (c3)--(c5);
\draw[edge] (c3)--(c6);
\draw[edge] (c2)--(c4);
\draw[edge] (c2)--(c5);
\draw[edge3] (c4)--(c5);
\draw[edge3] (c6)--(c5);
\draw[edge3] (c6)--(c4);
\end{tikzpicture}
    \caption{The stellahedron of $[3]$ as the sum of three independence polytopes}
    \label{fig:stella}
\end{figure}
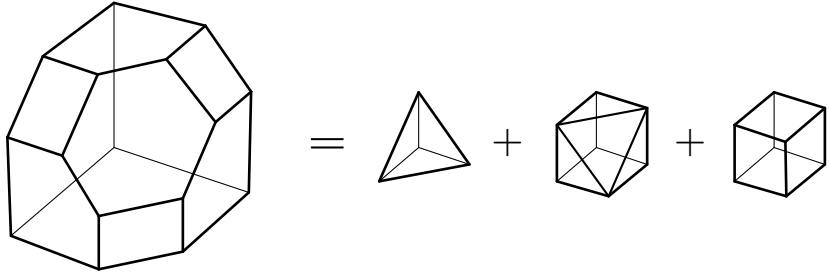

The \emph{stellahedral fan} $\Sigma_E$ is the normal fan of the stellahedron $\Pi_E$.  It is a simplicial fan that is unimodular with respect to the lattice $\ZZ^E \subseteq \RR^E$.  
The \emph{stellahedral variety} of $E$ is the associated smooth projective toric variety $X_E$. 
In this introduction, all varieties will be over the complex numbers. We follow the conventions of \cite{Ful93} and \cite{CLS11} for toric varieties.  The compact complex manifold $X_E$ is the central geometric object behind Theorem \ref{mainthm:equivalences}.

Let $T$ be the open torus $(\mathbb{C}^*)^E$ of the stellahedral variety $X_E$.
The two descriptions of the stellahedron have the following geometric consequences:
\begin{enumerate}[label=$\bullet$]\itemsep 5pt
\item 
The \emph{permutohedral variety} ${\underline X}_E$, the toric variety of the permutohedron ${\underline \Pi}_E$, admits a $T$-equivariant embedding
\[
\iota_E\colon\underline{X}_E \longrightarrow X_E,
\]
corresponding to the permutohedral facet $\underline{\Pi}_E$ of $\Pi_E$.
\item There is a birational toric morphism to the $n$-dimensional projective space 
\[
\pi_E\colon X_E \longrightarrow \PP^E,
\] 
corresponding to the Minkowski summand $I(\U_{1,E})$ of $\Pi_E$.
\item There is a birational toric morphism to the $n$-dimensional product of projective lines 
\[
\pi_{1^E}\colon X_E \longrightarrow (\PP^1)^E,
\]
 corresponding to the Minkowski summand $I(\U_{n,E})$ of $\Pi_E$.
\end{enumerate}
Summarizing, we have $T$-equivariant maps
\[
\begin{tikzcd}
{\underline X}_E \arrow[r, hook, "\iota_E"]& X_E \arrow[dr, "\pi_{1^E}"'] \arrow[dl, "\pi_E"] & \\
\mathbb{P}^E
& & (\mathbb{P}^1)^E.
\end{tikzcd}
\]
The image of ${\underline X}_E $ in $\mathbb{P}^E$ is the hyperplane at infinity $\mathbb{P}(\mathbb{C}^E)$, and the image of 
${\underline X}_E $ in $(\mathbb{P}^1)^E$ is the point $\infty^E$. 
Note that $\PP^E$ and $(\PP^1)^E$ are equivariant compactifications of the additive group $\CC^E$.   
In Section \ref{SectionStellahedral}, we observe
that the stellahedral variety $X_E$ is also a $\CC^E$-equivariant compactification of $\CC^E$, 
and that both maps to $\PP^E$ and $(\PP^1)^E$ are equivariant with respect to $\CC^E$.  

\begin{theorem}\label{thm:valuativeiso}
For every integer $r$, the assignment $\M \mapsto [\Sigma_\M]$ defines an isomorphism 
\[
\mathrm{Val}_r(E) \overset\sim\to H_{2r}(X_E,\ZZ)
\]
 from the valuative group of matroids on $E$ to the homology of the stellahedral variety of $E$.
\end{theorem}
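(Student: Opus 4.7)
The plan is to interpret $H_{2r}(X_E,\ZZ)$ as a group of Minkowski weights on the stellahedral fan $\Sigma_E$ via the Fulton--Sturmfels theorem, show that the assignment $\M \mapsto [\Sigma_\M]$ descends to a homomorphism out of $\mathrm{Val}_r(E)$, and then verify bijectivity against an explicit basis of the valuative group.

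First, since $X_E$ is smooth and projective, Fulton--Sturmfels gives a canonical identification between the appropriate graded piece of $H_*(X_E,\ZZ)$ and the group of Minkowski weights on $\Sigma_E$ of matching dimension. Under this identification, a rank $r$ matroid $\M$ produces the weight $[\Sigma_\M]$ assigning $1$ to the maximal cones of the augmented Bergman fan $\Sigma_\M$ and $0$ to the remaining cones of $\Sigma_E$; the necessary compatibility here is that $\Sigma_\M$ is a subfan of $\Sigma_E$, which presumably is established earlier in the paper.

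The crucial step is to show that this map factors through the valuative group. Since Minkowski weights form a free abelian group indexed by maximal cones of $\Sigma_E$, it suffices to prove that, for each such cone $\sigma$, the function $\M \mapsto [\Sigma_\M](\sigma) \in \{0,1\}$ is a valuative invariant. I would pursue a local, polytope-based formula: each maximal cone $\sigma$ of $\Sigma_E$ is normal to a vertex $v_\sigma$ of the stellahedron $\Pi_E$, and I would express $[\Sigma_\M](\sigma)$ as a local invariant of the independence polytope $I(\M)$ near $v_\sigma$, for instance by evaluating $\one_{I(\M)}$ at auxiliary points determined by $\sigma$; such local-vertex invariants are classical examples of valuative invariants. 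An alternative, more geometric route would be to realize $[\Sigma_\M]$ as (a pushforward of) a Chern class of an augmented tautological vector bundle on $X_E$ depending valuatively on $\M$, which would make valuativity transparent; the abstract hints that this construction is central to the paper.

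Finally, bijectivity can be checked against the Derksen--Fink basis of $\mathrm{Val}_r(E)$ given by Schubert matroids of rank $r$. Ranks of the two sides ought to match (both counted by the $h$-vector of $\Pi_E$), so it suffices to verify linear independence of the images $[\Sigma_{\M}]$; this can be done by pairing against a dual family of Minkowski weights supported on complementary cones, or by an induction that peels off torus orbits of $X_E$. The main obstacle throughout is the valuativity in the middle step: the augmented Bergman fan is built from chains of flats and independent sets, a combinatorial description not manifestly determined by $\one_{P(\M)}$, so the real content of the theorem lies in extracting the correct polytope-level reformulation of $[\Sigma_\M]$.
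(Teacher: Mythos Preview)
Your proposal differs substantially from the paper's argument, and the bijectivity step has genuine gaps.

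For valuativity, your second suggestion---realizing $[\Sigma_\M]$ as the top Chern class of an augmented tautological class depending valuatively on $\M$---is exactly what the paper does (Corollary~\ref{cor:ctop} via Proposition~\ref{prop:tautoval}). Your first suggestion, via local evaluation of $\one_{I(\M)}$ near vertices of $\Pi_E$, is plausible but not pursued in the paper; carrying it out would require verifying that $[\Sigma_\M](\sigma)$ admits such a polytope-local expression, which is not obvious from the flat/independent-set description of the augmented Bergman fan.

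For bijectivity, your plan takes the Derksen--Fink basis of $\mathrm{Val}_r(E)$ as input, together with a rank count and a linear-independence check on the images. The paper runs this logic in reverse: it proves Theorem~\ref{thm:valuativeiso} without Derksen--Fink and then deduces the Derksen--Fink basis as Corollary~\ref{cor:DerksenFink}. Concretely, the paper builds a chain of isomorphisms
\[
\bigoplus_r \mathrm{Val}_r(E) \;\xrightarrow{\ \M\mapsto [I(\M^\perp)]\ }\; \overline{\mathbb I}(\Sigma_E) \;\simeq\; K(X_E) \;\xrightarrow{\ \phi\ }\; A^\bullet(X_E),
\]
where the first map is Proposition~\ref{prop:val&ind} (using that every integral polymatroid intersected with a translated unit cube is a translated independence polytope), the second is the general polytope-algebra/$K$-ring identification (Theorem~\ref{thm:folklore}), and $\phi$ is the exceptional isomorphism of Section~\ref{sec:exceptIsom}. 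The composite sends $\M$ to $c(\cQ_\M)$, whose top nonvanishing graded piece is $[\Sigma_\M]$; since the composite is an isomorphism compatible with the filtrations by rank and by degree, the associated graded map $\M \mapsto [\Sigma_\M]$ is an isomorphism.

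Your approach could in principle succeed, but both of its hard steps are left open. You assert that ``ranks of the two sides ought to match,'' yet equating the number of rank-$r$ Schubert matroids on $E$ with $\operatorname{rk} A^{n-r}(X_E)$ is itself a nontrivial computation---the paper obtains it only as a byproduct of Theorem~\ref{thm:schubert}, whose proof uses the Feichtner--Yuzvinsky monomial basis together with the matroid-intersection product formula (Theorem~\ref{thm:intersectionaugmented}). Likewise, ``pairing against a dual family of Minkowski weights'' or ``peeling off torus orbits'' to get linear independence of the $[\Sigma_\M]$ over Schubert matroids is not straightforward: augmented Bergman fans of distinct Schubert matroids share many cones, and no triangular structure is visible until one invokes the monomial basis. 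In short, the gaps in your bijectivity argument sit exactly where the paper deploys its heaviest machinery---the polytope algebra and the exceptional isomorphism $\phi$.
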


Theorem~\ref{thm:valuativeiso} explains the coincidence of the valuative and the homological equivalence relations in 
Theorem~\ref{mainthm:equivalences}.
In Corollary~\ref{cor:DerksenFink}, we use Theorem~\ref{thm:valuativeiso} to give a geometric interpretation of a result of Derksen and Fink on a combinatorial basis of the valuative group  \cite{DerksenFink}.
The restriction of $[\Sigma_\M]$ to the permutohedral variety $\underline{X}_E$ is given by the Minkowski weight $[\underline{\Sigma}_\M]$, which is the constant balanced weight $1$ on the Bergman fan $\underline{\Sigma}_\M$ if the matroid is loopless and the constant balanced weight $0$ if otherwise.
Thus, Theorem~\ref{thm:valuativeiso} 
also recovers a result of Hampe that identifies the homology of ${\underline X}_E$ with the valuative group of loopless matroids \cite{Ham17}.

Poincar\'e duality for $X_E$ endows the homology of $X_E $ with the intersection product that is dual to the cup product on the cohomology of $X_E$.  We identify this intersection product with matroid intersection.  
Recall that the \emph{matroid intersection} of matroids $\M$ and $\M'$ on  $E$ is a matroid $\M \wedge \M'$ on $E$ whose bases are the minimal members of the family 
\[
\{B \cap B' \, | \, \text{$B$ is a basis of $\M$ and $B'$ is a basis of $\M'$}\}.
\]
  In particular, $\M  \wedge \M'$ has rank zero if and only if $\M$ and $\M'$ have bases $B$ and $B'$ that are disjoint.  Let us denote by $\operatorname{crk}(\M) = n-r$ the \emph{corank} of a rank $r$ matroid $\M$ on $E$.

\begin{theorem}\label{thm:intersectionaugmented}
The intersection product on $X_E$ satisfies
\[
[\Sigma_{\M}] \cdot [\Sigma_{\M'}] = \begin{cases}
[\Sigma_{\M \wedge \M'}] & \text{if $\operatorname{crk}(\M) + \operatorname{crk}(\M') = \operatorname{crk}(\M\wedge \M')$},\\
0 & \text{if otherwise}.
\end{cases}
\]
\end{theorem}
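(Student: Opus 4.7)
The plan is to apply the Fulton--Sturmfels fan displacement rule to express the cup product $[\Sigma_\M] \cdot [\Sigma_{\M'}]$ as a Minkowski weight on $\Sigma_E$ and to match the resulting count against $[\Sigma_{\M \wedge \M'}]$ combinatorially.

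First, fix a sufficiently generic $v \in \RR^E$. Since $\Sigma_E$ is a unimodular complete fan, the Fulton--Sturmfels formula gives, for every cone $\tau$ of $\Sigma_E$ of dimension $r + r' - n$ with $r = \rk(\M)$ and $r' = \rk(\M')$,
\[
\bigl([\Sigma_\M] \cdot [\Sigma_{\M'}]\bigr)(\tau) \;=\; \#\bigl\{(\sigma, \sigma') \in \Sigma_\M^{\mathrm{top}} \times \Sigma_{\M'}^{\mathrm{top}} \,:\, \sigma \cap (v + \sigma') \cap \mathrm{relint}(\tau) \neq \emptyset\bigr\},
\]
reducing the theorem to verifying that this count equals $[\Sigma_{\M \wedge \M'}](\tau)$.

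Second, I would unpack the maximal cones of the augmented Bergman fan $\Sigma_\M$ using the description from \cite{BHMPW} as compatible pairs $(I, F_\bullet)$ of an independent set $I$ and a flag of flats $F_\bullet$ of $\M$. The displacement condition on $(\sigma, \sigma')$ then translates into a matroid-theoretic matching between data for $\M$ and data for $\M'$: the two flags of flats must interleave into a flag of flats of $\M \wedge \M'$ indexing $\tau$, while the two independent sets must be disjoint so as to furnish a pair of disjoint bases. Using the description of bases of $\M \wedge \M'$ as maximal common independent sets, I would show that the count is exactly $1$ when $\tau$ is a maximal cone of $\Sigma_{\M \wedge \M'}$ and $0$ otherwise. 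In the degenerate case $\operatorname{crk}(\M) + \operatorname{crk}(\M') > \operatorname{crk}(\M \wedge \M')$, the matroid union theorem forces every pair of bases of $\M$ and $\M'$ to overlap more than expected, so no $(\sigma,\sigma')$ can satisfy the disjointness condition encoded by the displacement, and the count vanishes for all $\tau$ of dimension $r + r' - n$.

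The main obstacle will be the combinatorial matching in the second step: showing that the Fulton--Sturmfels displacement condition exactly encodes the data of a basis of $\M \wedge \M'$. The augmented Bergman fan glues the order complex of flats with the independence complex in a subtle way, and determining when a generic displacement places two maximal cones into proper intersection requires careful case analysis, in particular distinguishing rays indexed by flats from rays indexed by singletons. A cleaner alternative is to first establish the formula for realizable matroids, where $[\Sigma_\M]$ should be the fundamental class of the closure in $X_E$ of a linear subspace realizing $\M$ and the intersection product is transparent after a generic translation by an element of $\CC^E$; the general statement would then follow via Theorem~\ref{thm:valuativeiso}, using the fact that Schubert matroids, which are realizable, span $\mathrm{Val}_r(E)$ by Derksen--Fink.
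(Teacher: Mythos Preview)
Your proposal takes a route quite different from the paper's. The paper does not use fan displacement at all; instead it passes through the exceptional isomorphism $\phi\colon K(X_E)\to A^\bullet(X_E)$ and the polytope algebra. Since $[\Sigma_\M]=c_{\operatorname{crk}\M}(\cQ_\M)$ (Corollary~\ref{cor:ctop}) and $\phi([\mathcal O_{X_E}(D_{I(\M^\perp)})])=c(\cQ_\M)$ (Corollary~\ref{cor:phizetaontauto}), the product $[\Sigma_\M]\cdot[\Sigma_{\M'}]$ is the degree-$(\operatorname{crk}\M+\operatorname{crk}\M')$ piece of $\phi([I(\M^\perp)+I(\M'^\perp)])$. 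One then tiles $I(\M^\perp)+I(\M'^\perp)$ by integer translates of the unit cube (Lemma~\ref{lem:polymatspan}); the piece inside $[0,1]^E$ is $I((\M\wedge\M')^\perp)$ by Edmonds' matroid-union theorem, and every other piece contributes only in strictly lower degree. This reduces the whole theorem to a single polytope identity, with no case analysis on cones.

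Your fan-displacement plan is plausible in outline and would give a more self-contained proof, but---as you note---the heart of it is the unperformed combinatorial matching. Your alternative route, however, has two genuine gaps. First, the ``transparent'' realizable case is not transparent: for fixed realizations $L,L'$, the matroid realized by $L\cap(L'+b)$ depends on $L$ and $L'$, not only on $\M$ and $\M'$ (equivalently, the matroid of $L^\perp+(L')^\perp$ is not determined by $\M^\perp$ and $(\M')^\perp$), and you would also need to control the intersection $W_L\cap W_{L'+b}$ along the boundary of $X_E$. Second, and more seriously, the valuative reduction fails as stated: to pass from Schubert matroids to all matroids you need \emph{both} sides to be valuative in each argument. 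The left side is, via Theorem~\ref{thm:valuativeiso}. But matroid intersection $\M\mapsto \M\wedge\M'$ is not linear on the valuative group, and you have given no independent reason why the piecewise assignment $\M\mapsto[\Sigma_{\M\wedge\M'}]$ (or $0$) should respect valuative relations. Without that, knowing the identity on a spanning set does not propagate. The paper's approach sidesteps exactly this issue: the Minkowski sum $[I(\M^\perp)]\cdot[I((\M')^\perp)]$ is manifestly bilinear in the polytope algebra, so no separate valuativity argument for the right-hand side is needed.
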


Theorem~\ref{thm:intersectionaugmented}, together with Poincar\'e duality for $X_E$, explains the coincidence of the homological and the numerical equivalence relations
 in Theorem~\ref{mainthm:equivalences}. 
By restricting to the permutohedral variety ${\underline X}_E$, we recover the following description of the intersection product on the homology of ${\underline X}_E$, previously established by Speyer in \cite[Proposition 4.4]{Spe08}. 

\begin{corollary}\label{cor:intersectbergman}
The intersection product on $\underline{X}_E$ satisfies
\[
 [\underline\Sigma_{\M}] \cdot  [\underline\Sigma_{\M'}] = \begin{cases}
 [\underline \Sigma_{\M \wedge \M'}] & \text{if $\M \wedge \M'$ is loopless}, \\ 0 &\text{if otherwise}.
\end{cases}
\]
\end{corollary}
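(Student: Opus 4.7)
The plan is to deduce the corollary from Theorem~\ref{thm:intersectionaugmented} by pulling back along the $T$-equivariant closed embedding $\iota_E : \underline{X}_E \hookrightarrow X_E$, which induces a graded ring homomorphism $\iota_E^* : H^*(X_E,\ZZ) \to H^*(\underline{X}_E,\ZZ)$ on cohomology. The crucial input is the restriction formula recorded in the paragraph after Theorem~\ref{thm:valuativeiso}:
\[
\iota_E^*[\Sigma_\M] \;=\; \begin{cases} [\underline{\Sigma}_\M] & \text{if } \M \text{ is loopless}, \\ 0 & \text{otherwise.}\end{cases}
\]

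First I apply $\iota_E^*$ to both sides of the identity in Theorem~\ref{thm:intersectionaugmented}. Since $\iota_E^*$ respects cup products, the result reads
\[
[\underline{\Sigma}_\M]\cdot[\underline{\Sigma}_{\M'}] \;=\; \iota_E^*\bigl([\Sigma_\M]\cdot[\Sigma_{\M'}]\bigr) \;=\; \begin{cases} \iota_E^*[\Sigma_{\M\wedge\M'}] & \text{if } \operatorname{crk}(\M)+\operatorname{crk}(\M')=\operatorname{crk}(\M\wedge\M'), \\ 0 & \text{otherwise,} \end{cases}
\]
whenever $\M$ and $\M'$ are loopless. If either $\M$ or $\M'$ has a loop, then both sides of the corollary vanish, since a loop of $\M$ or $\M'$ is automatically a loop of $\M\wedge\M'$ (no basis of $\M\wedge\M'$ can contain it). A second application of the restriction formula identifies $\iota_E^*[\Sigma_{\M\wedge\M'}]$ with $[\underline{\Sigma}_{\M\wedge\M'}]$ when $\M\wedge\M'$ is loopless, and with $0$ otherwise.

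Comparing with the statement of the corollary, the only remaining case to handle is when $\M, \M'$ are both loopless, the corank identity \emph{fails}, and yet $\M\wedge\M'$ is loopless. I must rule this out, which reduces the corollary to the combinatorial lemma: \emph{if $\M$, $\M'$, and $\M\wedge\M'$ are all loopless matroids on $E$, then there exist a basis $B$ of $\M$ and a basis $B'$ of $\M'$ with $B\cup B'=E$.} I would prove this via Edmonds' matroid partition theorem, which equates the existence of such $B,B'$ with the inequalities $r_\M(S)+r_{\M'}(S)\geq |S|$ for every $S\subseteq E$; a violation of such an inequality for some $S$ forces $|B\cap S|+|B'\cap S|<|S|$ for all pairs of bases, from which one extracts an element of $S$ that lies in no minimum intersection $B\cap B'$, contradicting looplessness of $\M\wedge\M'$. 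This combinatorial lemma, linking looplessness of the intersection matroid to the rank-additivity condition, is the main obstacle; once it is in hand, the rest of the argument is a formal consequence of the ring homomorphism $\iota_E^*$ together with Theorem~\ref{thm:intersectionaugmented} and the restriction formula.
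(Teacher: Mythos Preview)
Your approach is correct and matches the paper's: both pull back along $\iota_E^*$ and invoke Theorem~\ref{thm:intersectionaugmented} together with the restriction formula $\iota_E^*[\Sigma_\M]=[\underline\Sigma_\M]$. You are, however, more careful than the paper about one point. You correctly isolate the residual case---$\M,\M',\M\wedge\M'$ all loopless but $\operatorname{crk}(\M)+\operatorname{crk}(\M')\neq\operatorname{crk}(\M\wedge\M')$---and propose to rule it out by a direct combinatorial argument (matroid union / Edmonds partition). The paper handles this differently: after stating that the corollary follows by restriction, it writes ``We also deduce that if $\M,\M',\M\wedge\M'$ are loopless, then $\operatorname{crk}(\M)+\operatorname{crk}(\M')=\operatorname{crk}(\M\wedge\M')$,'' treating the corank identity as an \emph{output} rather than an input. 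This is consistent because the corollary is independently known (it is cited as Speyer's \cite[Proposition~4.4]{Spe08}), so comparing Speyer's formula with the restriction of Theorem~\ref{thm:intersectionaugmented} forces the identity. Your route gives a self-contained derivation that does not lean on Speyer; the paper's route extracts the combinatorial fact as a geometric corollary. Both are valid.

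One small correction to your sketch: the Edmonds condition you want is for the dual pair. The existence of bases $B$ of $\M$ and $B'$ of $\M'$ with $B\cup B'=E$ is equivalent to $\rk(\M^\perp\vee\M'^\perp)=\rk(\M^\perp)+\rk(\M'^\perp)$, which by the matroid union rank formula reads $|E\setminus S|+r_{\M^\perp}(S)+r_{\M'^\perp}(S)\ge r_{\M^\perp}(E)+r_{\M'^\perp}(E)$ for all $S$, not $r_\M(S)+r_{\M'}(S)\ge|S|$ as you wrote. Tracing a violating $S$ to a coloop of $\M^\perp\vee\M'^\perp=(\M\wedge\M')^\perp$, i.e.\ a loop of $\M\wedge\M'$, then finishes the lemma.
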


Recall that a \emph{realization} of $\M$ over $\mathbb{C}$ is an $r$-dimensional linear subspace $L\subseteq \CC^E$ such that 
\[
\mathscr{B}=\big\{B\subseteq E \, \big| \, \text{the projection $\CC^E\twoheadrightarrow \CC^B$ restricts to an isomorphism $L\overset\sim\to \CC^B$}\big\}.
\]
The \emph{augmented wonderful variety} $W_L$ is the closure of $L$ in $X_E$.  
We show in Corollary \ref{cor:ctop} that the homology class of the augmented wonderful variety in the stellahedral variety is given by
\[
[W_L]=[\Sigma_\M] \in H_{2r}(X_E,\mathbb{Z}).
\]
The intersection of $W_L$ and $\underline{X}_E$ is the \emph{wonderful variety} $\underline{W}_L$ of de Concini and Procesi \cite{dCP95}, which is the closure of the projective hyperplane arrangement complement $\mathbb{P}(L) \cap (\mathbb{C}^*)^E/\mathbb{C}^*$ in $\underline{X}_E$. 
The main geometric objects behind the displayed identity and the proofs of Theorems~\ref{thm:valuativeiso} and \ref{thm:intersectionaugmented} are certain $T$-equivariant vector bundles on $X_E$ which we call ``augmented tautological bundles.'' 
For a linear subspace $L\subseteq \CC^E$, these are $T$-equivariant vector bundles $\cQ_L$ and $\cS_L$ on $X_E$ 
that have the following properties:
\begin{enumerate}[label=$\bullet$]\itemsep 5pt
\item The augmented wonderful variety $W_L$ is the vanishing locus of a distinguished global section of $\cQ_L$ (Theorem~\ref{thm:vanishingsection}). Consequently, the normal bundle $\mathcal N_{W_L/X_E}$ is isomorphic to the restriction 
of $\cQ_L$ to $W_L$ (Corollary~\ref{cor:koszul}).
\item The logarithmic tangent bundle $\mathcal T_{W_L}(-\log \partial W_L)$ of $W_L$, viewed as a compactification of $L=W_L\setminus \partial W_L$, is isomorphic to the restriction 
of $\cS_L$ to $W_L$ (Theorem~\ref{thm:logtangent}).
\end{enumerate}
See Definition~\ref{defn:augmentedTauto} for the construction of the augmented tautological bundles.  By restricting these bundles $\cQ_L$ and $\cS_L$ to the permutohedral variety $\underline X_E$, one recovers the ``tautological bundles'' $\underline \cQ_L$ and $\underline \cS_L$ (Definition~\ref{defn:tauto}) introduced in \cite{BEST21}.

In general, for an arbitrary matroid $\M$ with possibly no realization over $\mathbb{C}$, instead of vector bundles on $X_E$ we have $T$-equivariant $K$-classes $[\cQ_\M]$ and $[\cS_{\M}]$ on $X_E$.
These classes, which we call ``augmented tautological classes,'' 
satisfy the following properties:
\begin{enumerate}[label=$\bullet$]\itemsep 5pt
\item If $L \subseteq \mathbb{C}^E$ is a realization of $\M$, then $[\cQ_\M] = [\cQ_L]$ and $[\cS_\M] = [\cS_L]$ as $T$-equivariant $K$-classes (Proposition~\ref{prop:Kclasses}).
\item The assignments $\M\mapsto [\cQ_\M]$ and $\M \mapsto [\cS_\M]$ are both valuative maps from $\mathrm{Mat}_r(E)$ to the Grothendieck ring of $T$-equivariant vector bundles on $X_E$ (Proposition~\ref{prop:tautoval}).
\item By restricting $[\cQ_\M]$ and $[\cS_{\M}]$  to the permutohedral variety $\underline X_E$, one recovers the ``tautological classes of matroids'' $[\underline\cQ_\M]$ and $[\underline\cS_\M]$ introduced in \cite{BEST21}.
\end{enumerate}
The Chern classes of augmented tautological classes relate well to independence polytopes and augmented Bergman classes of matroids:
\begin{enumerate}[label=$\bullet$]\itemsep 5pt
\item Under the correspondence between base-point-free divisor classes on toric varieties and polytopes \cite[Section 6.2]{CLS11}, 
the first Chern class $c_1(\cQ_\M)$ of $[\cQ_\M]$ corresponds to the independence polytope $I(\M^\perp)$ of the dual $\M^\perp$ of $\M$.
\item  The top Chern class $c_{n-r}(\cQ_\M) \cap [X_E]$ of $[\cQ_\M]$ is the augmented Bergman class $[\Sigma_\M]$.
\end{enumerate}
The augmented tautological classes behave particularly well with respect to the following \emph{exceptional isomorphisms} between the Grothendieck ring of vector bundles $K(X_E)$ and the cohomology ring $H^{\bullet}(X_E,\mathbb{Z})$.
For any $K$-class $[\mathcal E]$, we write $c(\mathcal E)$ 
for its total Chern class and $[\det \mathcal E]$ for the $K$-class of its determinant line bundle.

\begin{theorem}\label{thm:exceptIsom} \
\begin{enumerate}[label = (\arabic*)]\itemsep 5pt
\item There is a unique ring isomorphism 
\[
\phi \colon K(X_E) \overset\sim\to H^{\bullet}(X_E,\mathbb{Z})
\]
that satisfies
$
\phi ( [\det \cQ_L] ) = c(\cQ_L)
$
for any linear subspace  $L\subseteq \CC^E$.
\item There is a unique ring isomorphism 
\[
\zeta \colon K(X_E) \overset\sim\to H^{\bullet}(X_E,\mathbb{Z})
\]
that satisfies
$
\zeta([\mathcal O_{W_L}]) = [W_L]$
for any linear subspace  $L\subseteq \CC^E$.
\end{enumerate}
\end{theorem}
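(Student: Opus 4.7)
The plan is to construct $\phi$ and $\zeta$ by combining the toric-geometric structure of $X_E$ with the explicit description of the augmented tautological bundles developed earlier in the paper. Since $X_E$ is a smooth projective toric variety, $K(X_E)$ is generated as a ring by line bundle classes $[\mathcal O(D_\rho)]$ attached to rays $\rho$ of the stellahedral fan, and $H^\bullet(X_E, \ZZ)$ admits a Stanley--Reisner presentation in the generators $[D_\rho]$; both rings are quotients of a common polynomial ring indexed by rays, with different relations coming from the character lattice.

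For the existence of $\phi$, I would first exhibit a spanning family of subspaces $L \subseteq \CC^E$ for which $\cQ_L$ splits $T$-equivariantly into a direct sum of toric line bundles; the natural choice is coordinate subspaces $L = \CC^S$. For such $L$ one computes both $[\det \cQ_L] \in K(X_E)$ and $c(\cQ_L) \in H^\bullet(X_E, \ZZ)$ explicitly, and defines $\phi$ on the line bundle generators so that the characterizing identity holds on coordinate subspaces. The assignment extends to a ring homomorphism once one verifies that it respects the multiplicative toric relations; the fact that it extends to \emph{all} subspaces $L$ then follows from the valuativity of the tautological classes (Proposition~\ref{prop:tautoval}) together with a spanning/approximation argument reducing the general case to the coordinate case. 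Uniqueness reduces to showing that the $[\det \cQ_L]$'s for $L$ in this spanning family generate $K(X_E)$ as a ring.

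The construction of $\zeta$ uses the Koszul resolution: by Theorem~\ref{thm:vanishingsection}, $W_L$ is the vanishing locus of a regular section of $\cQ_L$, yielding
\[
[\mathcal O_{W_L}] = \sum_{i=0}^{n-r}(-1)^i[\Lambda^i \cQ_L^\vee] \in K(X_E),
\]
while transversality gives $[W_L] = c_{n-r}(\cQ_L) \in H^\bullet(X_E, \ZZ)$. Both sides are symmetric polynomials in the Chern roots of $\cQ_L$, so on coordinate subspaces where $\cQ_L$ splits, the formula $\zeta([\mathcal O_{W_L}]) = [W_L]$ is determined by and compatible with the splitting. One then defines $\zeta$ on generators, checks compatibility with the toric relations, and extends as before, again using the valuativity of $[\mathcal O_{W_L}]$ in the matroid of $L$ to handle arbitrary subspaces. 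Uniqueness follows as in the case of $\phi$.

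The main obstacle in both constructions is the well-definedness check: one must verify that the multiplicative relations among the tautological $K$-classes (arising from the Stanley--Reisner and character relations of the stellahedral fan) map to the correct relations among their target cohomology classes. The key technical tool is the explicit $T$-equivariant splitting of $\cQ_L$ for coordinate subspaces, which reduces the verification to identities among products of first Chern classes of toric line bundles, where the multiplicative character relations in $K$ and the additive character relations in $H^\bullet$ can be matched by direct computation.
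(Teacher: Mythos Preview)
Your approach diverges substantially from the paper's and has a genuine gap in the existence step.

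The paper does not construct $\phi$ and $\zeta$ from the characterizing properties. Instead, it builds $\zeta$ directly via equivariant localization: by Theorem~\ref{thm:localization}, both $K_T(X_E)$ and $A^\bullet_T(X_E)$ embed in products of (Laurent) polynomial rings indexed by maximal cones, with explicit congruence conditions along codimension-one walls. The key structural observation is that every edge of the stellahedron is parallel to some $\be_i$ or $\be_i-\be_j$, so the wall conditions are divisibility by $1-T_i$ or $T_i-T_j$ on the $K$-side and by $t_i$ or $t_i-t_j$ on the Chow side. The substitution $T_i\mapsto 1+t_i$ carries one set of conditions to the other, giving a ring isomorphism $\zeta_T$ that descends to $\zeta$; then $\phi=D_A\circ\zeta\circ D_K$. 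Only \emph{after} these maps exist does the paper verify the identities $\phi([\det\cQ_L])=c(\cQ_L)$ and $\zeta([\mathcal O_{W_L}])=[W_L]$ (Corollary~\ref{cor:phizetaontauto}), using that $[\cQ_\M]^\vee$ has simple Chern roots.

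Your construction, by contrast, tries to specify $\phi$ on generators so that the characterizing identity holds for coordinate subspaces $L=\CC^S$. For such $L$ one has $\cQ_L\simeq\bigoplus_{i\notin S}\pi_i^*\mathcal O_{\PP^1}(1)$, so the identity forces only $\phi([\pi_i^*\mathcal O_{\PP^1}(1)])=1+y_i$. But the $n$ classes $[\pi_i^*\mathcal O_{\PP^1}(1)]$ do not generate $K(X_E)$ as a ring: the Picard group of $X_E$ has rank $2^n$ (one generator $[D_S]$ for each proper $S\subsetneq E$, in the notation of Proposition~\ref{prop:nefpolymatroid}), and nothing in your scheme tells you where to send the remaining line bundles. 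To close this gap you would need to know that the classes $[\det\cQ_L]=[\mathcal O_{X_E}(D_{I(\M^\perp)})]$ over \emph{all} $L$ generate $K(X_E)$; but in the paper this fact is a consequence of the chain $\bigoplus_r\mathrm{Val}_r(E)\simeq\overline{\mathbb I}(\Sigma_E)\simeq K(X_E)\simeq A^\bullet(X_E)$ in Section~\ref{subsec:polytopealgebra}, whose last isomorphism is $\phi$ itself. Your existence and uniqueness arguments are therefore circular as stated. The missing idea is precisely the localization construction above, which produces $\phi$ and $\zeta$ without any prior knowledge of what generates $K(X_E)$.
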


Recall that the classical Hirzebruch--Riemann--Roch formula requires the use of rational coefficients.
We show that the isomorphisms $\phi$ and $\zeta$ satisfy the following Hirzebruch--Riemann--Roch-type formula with integer coefficients. 
We write the sheaf Euler characteristic map and the degree map by
\[
\chi \colon K(X_E)\to \ZZ \quad \text{and} \quad \int_{X_E} \colon H^{\bullet}(X_E,\mathbb{Z}) \to \ZZ.
\]
For each $i$ in $E$, let  
 $\pi_i\colon X_E \to \PP^1$ be the $i$-th factor of the map $\pi_{1^E} \colon X_E\to (\PP^1)^E$. 

\begin{theorem}\label{thm:fakeHRR}
For any $\xi \in K(X_E)$, the exceptional isomorphisms $\phi$ and $\zeta$ satisfy 
\[
\chi \big(\xi  \big) = \int_{X_E} \phi\big( \xi \big)\cdot c \big(\bigoplus_{i\in E}\pi_i^*\mathcal O_{\PP^1}(1)\big) = \int \zeta\big(\xi \big)\cdot c\big(\pi_E^*\mathcal O_{\PP^E}(-1)\big)^{-1}.
\]
\end{theorem}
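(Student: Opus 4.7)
The proof strategy is to verify each identity on a set of $K$-classes that span $K(X_E)$ as an abelian group. Both sides of each identity are $\ZZ$-linear in $\xi$, so linearity reduces the problem to generators. Natural generators are $[\det\cQ_L]$ for the first identity and $[\mathcal O_{W_L}]$ for the second, as $L$ ranges over linear subspaces of $\CC^E$, since these are where Theorem~\ref{thm:exceptIsom} specifies $\phi$ and $\zeta$ explicitly. These classes span $K(X_E)$ additively: by Poincar\'e duality on $X_E$ together with Theorem~\ref{thm:valuativeiso} and the Derksen--Fink basis of the valuative group by realizable Schubert matroids, the classes $[W_L]$ span $H^\bullet(X_E,\ZZ)$, and since $\zeta$ is a ring isomorphism, the classes $[\mathcal O_{W_L}]=\zeta^{-1}([W_L])$ span $K(X_E)$; a parallel argument handles $[\det\cQ_L]$ via $\phi$.

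For the $\zeta$-identity applied to $\xi=[\mathcal O_{W_L}]$, the projection formula yields
\[
\int_{X_E}[W_L]\cdot c(\pi_E^*\mathcal O_{\PP^E}(-1))^{-1}=\int_{W_L}(\pi_E|_{W_L})^*c(\mathcal O_{\PP^E}(-1))^{-1}.
\]
The restriction $\pi_E|_{W_L}\colon W_L\to\overline L$ is birational onto the projective closure $\overline L\cong\PP^r$ of $L$, since $\pi_E$ is the identity on the dense subset $L\subseteq W_L$. Hence the integral reduces to $\int_{\PP^r}c(\mathcal O_{\PP^r}(-1))^{-1}=\int_{\PP^r}\sum_k c_1(\mathcal O(1))^k=1$. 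This matches $\chi(\mathcal O_{W_L})=1$, which holds because $W_L$ is a rational smooth projective variety with vanishing higher cohomology of its structure sheaf (a property to be established alongside the geometric analysis of $W_L$).

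For the $\phi$-identity applied to $\xi=[\det\cQ_L]$: Since $c_1(\cQ_L)$ corresponds to the independence polytope $I(\M_L^\perp)$, the line bundle $\det\cQ_L$ has Newton polytope $I(\M_L^\perp)$, so $\chi(\det\cQ_L)=|I(\M_L^\perp)\cap\ZZ^E|$ equals the number of spanning subsets of $\M_L$. Expanding the right-hand side gives $\sum_{S\subseteq E}\int c_{n-|S|}(\cQ_L)\prod_{i\in S}c_1(\pi_i^*\mathcal O(1))$, and I would show each summand equals $\mathbf{1}[S\text{ is spanning in }\M_L]$, whose sum is exactly the count on the left. For $|S|=r$, this follows from $c_{n-r}(\cQ_L)=[W_L]$ and the fact that $\pi_S|_{W_L}\colon W_L\to(\PP^1)^S$ is birational iff $S$ is a basis. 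The hardest step, and the main obstacle I expect, is the case $|S|>r$: one needs a pushforward formula expressing $\pi_{S*}(c_{n-|S|}(\cQ_L))$ on $(\PP^1)^S$ in terms of indicator classes of spanning sets, which should follow from the systematic analysis of the augmented tautological bundle $\cQ_L$ developed earlier in the paper.
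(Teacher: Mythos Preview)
Your argument for the $\zeta$-identity is essentially the paper's: verify on the classes $[\mathcal O_{W_L}]$, which span $K(X_E)$, by applying the projection formula along $\pi_E$ and using $\chi(\mathcal O_{W_L})=1$ (the paper justifies the latter by noting $W_L$ is an iterated blow-up of projective space along smooth centers).

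For the $\phi$-identity you take a genuinely different route. The paper does \emph{not} verify it directly on the classes $[\det\cQ_L]$; instead it derives the $\phi$-formula from the already-established $\zeta$-formula via Serre duality. Concretely, the paper computes the anticanonical bundle $\omega_{X_E}^\vee=\mathcal O_{X_E}(D_{I(\U_{1,E})}+D_{I(\U_{n,E})})$, observes $\phi([\omega_{X_E}^\vee])=(1+\alpha)\cdot c(\bigoplus_i\pi_i^*\mathcal O_{\PP^1}(1))$, and then uses $\chi(\xi)=(-1)^n\chi(\xi^\vee\cdot[\omega_{X_E}])$ together with the defining relation $\zeta=D_A\circ\phi\circ D_K$ to convert the $\zeta$-identity into the $\phi$-identity in a few lines. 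This is structural: it explains why the two ``Todd-like'' factors $c(\bigoplus_i\pi_i^*\mathcal O(1))$ and $(1+\alpha+\alpha^2+\cdots)$ are exchanged under the duality involutions.

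Your direct approach is nonetheless correct, and your stated obstacle for $|S|>r$ is not one. The restriction formula $c(\cQ_\M)|_{Z_{S\le\emptyset}}=c(\cQ_{\M/S})$ (Proposition~\ref{prop:restrict}) handles all $S$ uniformly: one gets $\int_{X_E}c_{n-|S|}(\cQ_\M)\cdot\prod_{i\in S}y_i=\int_{X_{E\setminus S}}c_{n-|S|}(\cQ_{\M/S})$, and since $\cQ_{\M/S}$ has rank $(n-|S|)-(r-\rk_\M(S))$, this top Chern number is $1$ when $S$ is spanning (so $\M/S=\U_{0,E\setminus S}$) and $0$ otherwise. No pushforward to $(\PP^1)^S$ is needed. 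Your approach thus yields the pleasant combinatorial interpretation that both sides count spanning sets of $\M_L$; the paper's Serre-duality argument is shorter and more conceptual but does not surface this.
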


Despite apparent similarities, these identities are not consequences of the classical Hirzebruch--Riemann--Roch theorem, since $\phi$ and $\zeta$ differ from the Chern character map. 
The integral classes $c \big(\bigoplus_{i\in E}\pi_i^*\mathcal O_{\PP^1}(1)\big)$ and $c\big(\pi_E^*\mathcal O_{\PP^E}(-1)\big)^{-1}$ play the role of the Todd class for $\phi$ and $\zeta$.
The isomorphisms $\phi$ and $\zeta$ 
 are closely related to the isomorphism $K({\underline X}_E) \overset\sim\to H^{\bullet}({\underline X}_E,\mathbb{Z})$ in \cite[Theorem D]{BEST21} in two different ways; see Remark~\ref{rem:compareHRR}.
 
We prove the existence of the isomorphisms in Theorem~\ref{thm:exceptIsom} in Section \ref{sec:exceptIsom}, and use it to prove Theorems~\ref{thm:valuativeiso} and \ref{thm:intersectionaugmented} in Section~\ref{subsec:polytopealgebra}.
The uniqueness of the isomorphisms in Theorem~\ref{thm:exceptIsom} is then derived from Theorem~\ref{thm:valuativeiso} in Section~\ref{subsec:polytopealgebra}.
We prove Theorem~\ref{thm:fakeHRR} in Section \ref{subsec:fakeHRR}.

Theorem~\ref{thm:fakeHRR} reveals remarkable numerical properties of the augmented tautological classes.
Recall that the \emph{Tutte polynomial} of a matroid $\M$ on $E$, introduced by Tutte \cite{Tut67} for graphs and by Crapo \cite{Cra69} for matroids, is the bivariate polynomial
\[
T_\M(x,y) = \sum_{S\subseteq E} (x-1)^{\rk_\M(E) - \rk_\M(S)}(y-1)^{|S| - \rk_\M(S)},
\]
where $\rk_\M \colon 2^E \to \ZZ$ here denotes the \emph{rank function} of $\M$. 
We give the following geometric interpretations of the Tutte polynomial as intersection numbers of the Chern and Segre classes of augmented tautological classes.
For a $K$-class $[\mathcal E]$ and a formal variable $u$, we set
\[
 c(\mathcal E, u) = \sum_{i} c_i(\mathcal E)u^i \quad \text{and} \quad s(\mathcal E, u) = \sum_{i} s_i(\mathcal E)u^i,
 \]
 where  $c_i(\mathcal E)$ is the $i$-th Chern class of $[\mathcal E]$ and $s_i(\mathcal E)$ is the $i$-th Segre class of $[\mathcal E]$.

\begin{theorem}\label{thm:tutterank}
For any rank $r$ matroid $\M$ on $E$, we have
\[
 T_\M(u+1, v+1)=
\int_{X_E} c(\mathcal S_\M,u) \cdot v^{n-r} \cdot c(\mathcal Q_\M,v^{-1}) \cdot c\big(\bigoplus_{i\in E} \pi_i^*\mathcal O_{\PP^1}(1)\big).
\]
\end{theorem}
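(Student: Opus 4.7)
The plan is to reduce to realizable matroids by valuativity, then combine the fake Hirzebruch--Riemann--Roch formula (Theorem \ref{thm:fakeHRR}) with the geometry of augmented wonderful varieties $W_L$ to match the cohomological integral with the Tutte polynomial.

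First I would observe that both sides are valuative functions of $\M \in \mathrm{Mat}_r(E)$: the Tutte polynomial is classically a valuative invariant, and the right-hand side is valuative because $\M \mapsto [\cQ_\M]$ and $\M \mapsto [\cS_\M]$ are valuative maps into $K(X_E)$ (Proposition \ref{prop:tautoval}) and Chern classes factor through $K$-theory. Since Schubert matroids are realizable and span the valuative group (Corollary \ref{cor:DerksenFink}), it suffices to prove the identity when $\M$ admits a realization $L \subseteq \CC^E$, in which case $[\cQ_\M] = [\cQ_L]$ and $[\cS_\M] = [\cS_L]$ come from honest vector bundles on $X_E$.

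Next I would apply Theorem \ref{thm:fakeHRR} to rewrite the right-hand side as $\chi(X_E, \xi)$ for $\xi = \phi^{-1}\bigl(c(\cS_L, u) \cdot v^{n-r} \cdot c(\cQ_L, v^{-1})\bigr) \in K(X_E)[u,v]$. The defining relation $\phi([\det \cQ_L]) = c(\cQ_L)$ together with the ring structure of $\phi$ lets one identify $\xi$ explicitly as a polynomial in exterior powers of $\cS_L$ and $\cQ_L$. Once $\xi$ is in hand, the Koszul resolution of $\mathcal O_{W_L}$ associated to the distinguished section of $\cQ_L$ (Theorem \ref{thm:vanishingsection}), combined with the identifications $\cQ_L|_{W_L} \cong \mathcal N_{W_L/X_E}$ and $\cS_L|_{W_L} \cong \mathcal T_{W_L}(-\log \partial W_L)$ (Theorem \ref{thm:logtangent}), reduces $\chi(X_E, \xi)$ to a $K$-theoretic Euler characteristic on the augmented wonderful variety $W_L$ involving only its log tangent and normal bundles.

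The final step is to match this Euler characteristic with $T_\M(u+1, v+1) = \sum_{S \subseteq E} u^{r - \rk_\M(S)} v^{|S| - \rk_\M(S)}$ by stratifying $W_L$ using its boundary divisors, which are indexed by proper flats of $\M$. Each stratum contributes to $\chi$ through its log normal and log tangent bundles, yielding a monomial of the expected form $u^{r - \rk_\M(S)} v^{|S| - \rk_\M(S)}$, and the grouping by flats reproduces the sum over subsets $S \subseteq E$. The main obstacle I anticipate is the explicit identification of $\xi = \phi^{-1}(\cdots)$ in the middle step, since $\phi$ is defined abstractly by its behavior on determinants of $\cQ_L$'s rather than as a Chern character. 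A cleaner alternative would be to first establish a $K$-theoretic Tutte formula on $X_E$ directly, generalizing the permutohedral formula of \cite{BEST21}, and then apply Theorem \ref{thm:fakeHRR} to deduce the cohomological identity stated in the theorem.
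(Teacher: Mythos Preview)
Your approach takes a long detour through Theorem~\ref{thm:fakeHRR}, realizability, and the geometry of $W_L$, and the detour has genuine gaps. The paper's proof uses none of these ingredients: it is a two-line direct computation valid for \emph{all} matroids, with no reduction to the realizable case and no appeal to $\phi$ or to Euler characteristics.

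The paper's argument is this. Since $c\bigl(\bigoplus_{i\in E}\pi_i^*\mathcal O_{\PP^1}(1)\bigr)=\prod_{i\in E}(1+y_i)=\sum_{I\subseteq E}\prod_{i\in I}y_i$, the integral on the right-hand side breaks up as a sum over subsets $I\subseteq E$. Capping with $\prod_{i\in I}y_i$ restricts to the torus-invariant subvariety $Z_I\cong X_{E\setminus I}$, and by Proposition~\ref{prop:restrict} the restrictions of $c(\cS_\M)$ and $c(\cQ_\M)$ to $Z_I$ are $c(\cS_{\M/I})$ and $c(\cQ_{\M/I})$. Finally, on any $X_{E'}$ one has $\int_{X_{E'}}c(\cS_{\M'},u)\cdot c(\cQ_{\M'},v)=u^{\rk\M'}v^{|E'|-\rk\M'}$, because $[\cS_{\M'}]+[\cQ_{\M'}]=[\bigoplus\pi_i^*\mathcal O(1)]$ forces the only contributing term to be $c_{\rk\M'}(\cS_{\M'})\cdot c_{\operatorname{crk}\M'}(\cQ_{\M'})$, which is the point class. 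Summing over $I$ gives $\sum_I u^{r-\rk_\M(I)}v^{|I|-\rk_\M(I)}=T_\M(u+1,v+1)$ on the nose.

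By contrast, your main line of argument stalls exactly where you say it does: inverting $\phi$ on $c(\cS_\M,u)\cdot v^{n-r}c(\cQ_\M,v^{-1})$ is not straightforward, and even granting a $K$-class $\xi$, the proposed stratification of $W_L$ by boundary divisors is indexed by chains of \emph{flats}, not by arbitrary subsets $S\subseteq E$, so matching the contributions with the rank-generating sum $\sum_{S\subseteq E}u^{r-\rk_\M(S)}v^{|S|-\rk_\M(S)}$ would require a further combinatorial reorganization you have not supplied. The key idea you are missing is that the factor $c\bigl(\bigoplus\pi_i^*\mathcal O(1)\bigr)$ should be expanded combinatorially, not absorbed into an HRR-type formula; once expanded, Proposition~\ref{prop:restrict} does all the work.
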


Eliminating $\cS$ using $\cQ^\vee$, we get the following identity for the homogeneous polynomial
\[
t_\mathrm{M}(x,y,z,w)\coloneq (y + z)^{r} (x + w)^{n-r} T_\M\left( \frac{x + y}{y + z}, \frac{x + y + z + w}{x + w}\right).
\]

\begin{theorem}\label{thm:tutteintersection}
For any rank $r$ matroid $\M$ on $E$, we have
\[
t_\M(x,y,z,w)=\int_{X_E} s\big(\pi_E^*\mathcal O_{\PP^E}(-1), x\big) \cdot  c \big(\bigoplus_{i\in E} \pi_i^* \mathcal{O}_{\PP^1}(1), y \big) \cdot s(\mathcal{Q}_\M^{\vee}, z) \cdot c(\mathcal{Q}_\M, w). 
\]
\end{theorem}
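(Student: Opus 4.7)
The strategy is to derive Theorem~\ref{thm:tutteintersection} from Theorem~\ref{thm:tutterank} by algebraic manipulation, following the paper's cue ``Eliminating $\cS$ using $\cQ^\vee$''. The first ingredient I would establish is the short exact sequence of augmented tautological bundles
\[
0 \to \cS_L \to \bigoplus_{i\in E}\pi_i^{*}\mathcal{O}_{\PP^1}(1) \to \cQ_L \to 0
\]
in the realizable case (from the construction in Definition~\ref{defn:augmentedTauto}); this restricts on $\underline X_E$ to the tautological sequence $0\to \underline{\cS}_L\to \mathcal{O}_{\underline X_E}^{E}\to \underline{\cQ}_L\to 0$ of \cite{BEST21}, since $\pi_i^{*}\mathcal{O}_{\PP^1}(1)|_{\underline X_E}$ is trivial (the map $\underline X_E\to \PP^1$ factors through the point $\infty$). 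Valuativity of $[\cS_\M]$ and $[\cQ_\M]$ (Proposition~\ref{prop:tautoval}) combined with Theorem~\ref{thm:valuativeiso} then extends this to the Chern polynomial identity
\[
c(\cS_\M, t)\cdot c(\cQ_\M, t) = c\bigl(\textstyle\bigoplus_{i\in E}\pi_i^{*}\mathcal{O}_{\PP^1}(1), t\bigr)
\]
in $H^{\bullet}(X_E)[t]$ for every matroid $\M$.

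The key cohomological fact is that $h_i := c_1(\pi_i^{*}\mathcal{O}_{\PP^1}(1))$ satisfies $h_i^{2}=0$ since it is pulled back from the one-dimensional $\PP^1$. Hence the Chern polynomial $c(\bigoplus_i \pi_i^{*}\mathcal{O}(1), u) = \prod_i(1+h_iu)$ obeys the additive law $c(\bigoplus_i \pi_i^{*}\mathcal{O}(1), u)\cdot c(\bigoplus_i \pi_i^{*}\mathcal{O}(1), u') = c(\bigoplus_i \pi_i^{*}\mathcal{O}(1), u+u')$. Combining $c(\cS_\M, u) = c(\bigoplus_i \pi_i^{*}\mathcal{O}(1), u)/c(\cQ_\M, u)$ with the factor $c(\bigoplus_i \pi_i^{*}\mathcal{O}(1))$ already present in Theorem~\ref{thm:tutterank}, and using the elementary identity $s(\cQ_\M, u) = s(\cQ_\M^{\vee}, -u)$, Theorem~\ref{thm:tutterank} rewrites as
\[
T_\M(u+1, v+1) = \int_{X_E} c\bigl(\textstyle\bigoplus_{i}\pi_i^{*}\mathcal{O}(1), u+1\bigr)\cdot s(\cQ_\M^{\vee}, -u)\cdot v^{n-r} c(\cQ_\M, v^{-1}).
\]

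I would then perform the substitution $u+1 = (x+y)/(y+z)$ and $v+1 = (x+y+z+w)/(x+w)$ (matching the arguments of $T_\M$ inside $t_\M$) and multiply both sides by $(y+z)^{r}(x+w)^{n-r}$. The LHS becomes $t_\M(x,y,z,w)$, and the Segre/Chern factors homogenize to an integral of $\prod_i\bigl((y+z)+h_i(x+y)\bigr)\cdot\prod_j\frac{(y+z)+\beta_j(x+w)}{(y+z)+\beta_j(x-z)}$, where $\beta_j$ are Chern roots of $\cQ_\M$.

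The hard part is matching this with the RHS of Theorem~\ref{thm:tutteintersection}, which features the factor $s(\pi_E^{*}\mathcal{O}_{\PP^E}(-1), x) = 1/(1+\gamma x)$ where $\gamma = c_1(\pi_E^{*}\mathcal{O}_{\PP^E}(-1))$. This identification requires a cohomological relation on $X_E$ between $\gamma$ and the classes $h_i$ that reflects the compatibility of the toric morphisms $\pi_E$ and $\pi_{1^E}$ with the common open orbit $\CC^E\subseteq X_E$. I expect this to be handled either by a direct Stanley--Reisner calculation describing $\gamma$ in terms of toric divisors of $X_E$ together with the $h_i$'s, or, more conceptually, by reinterpreting the computation through Theorem~\ref{thm:fakeHRR}, which translates between the two ``Todd-like'' factors $c(\bigoplus_i \pi_i^{*}\mathcal{O}(1))$ and $c(\pi_E^{*}\mathcal{O}(-1))^{-1}$ appearing in the two forms of the fake Hirzebruch--Riemann--Roch formula --- effectively trading the contribution of $\cS_\M$ (via the $\phi$-side Todd factor) for that of $\pi_E^{*}\mathcal{O}(-1)$ (via the $\zeta$-side Todd factor).
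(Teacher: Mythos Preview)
Your initial reduction is exactly what the paper does in its Step~1: using $[\cS_\M]+[\cQ_\M]=[\bigoplus_i\pi_i^*\mathcal O_{\PP^1}(1)]$ together with $h_i^2=0$ to rewrite Theorem~\ref{thm:tutterank} as an intersection number involving only $c(\bigoplus_i\pi_i^*\mathcal O(1),\cdot)$, $s(\cQ_\M^\vee,\cdot)$ and $c(\cQ_\M,\cdot)$. Your identity $s(\cQ_\M^\vee,-u)=s(\cQ_\M,u)$ and the additive law for $\prod_i(1+h_iu)$ are both correct, and your rewritten formula matches the paper's equation~\eqref{eq:step1} after expanding $\prod_i(1+h_iz)$ and summing over~$I$.

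The gap is in what you call ``the hard part''. Your first proposed route---a cohomological relation on $X_E$ expressing $\alpha=c_1(\pi_E^*\mathcal O_{\PP^E}(1))$ in terms of the $h_i$---does not exist: $\alpha$ and the $h_i$ are linearly independent nef divisor classes (both sit inside the Picard group of $X_E$, and the polymatroid description in Proposition~\ref{prop:nefpolymatroid} shows $I(\U_{1,E})$ and the coordinate segments are independent summands). No Stanley--Reisner computation will produce such an identity at the level of Chow classes.

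Your second proposed route is the correct one, and it is what the paper does, but you have only gestured at it. Concretely, one must identify a $K$-class $\xi$ whose images under $\phi$ and $\zeta$ are computable and land on the two sides one wants to compare. The paper takes
\[
\xi=\Bigl(\sum_{i\ge 0}\textstyle\bigwedge^i[\cQ_\M]^\vee w^i\Bigr)\Bigl(\sum_{j\ge 0}\operatorname{Sym}^j[\cQ_\M]^\vee z^j\Bigr),
\]
and the reason this works is Proposition~\ref{prop:simpleChern}, which evaluates $\phi$ and $\zeta$ on exterior and symmetric powers of classes with simple Chern roots. Equating the $\phi$-HRR and $\zeta$-HRR expressions for $\chi(\xi)$ and making a change of variables yields
\[
\int_{X_E}(1+\alpha+\alpha^2+\cdots)\,c(\cQ_\M,w)\,s(\cQ_\M^\vee,z)=\int_{X_E}c\bigl(\textstyle\bigoplus_i\pi_i^*\mathcal O(1)\bigr)\,c(\cQ_\M,w+1)\,s(\cQ_\M,1-z),
\]
which is the promised trade of Todd factors, but only at $x=1$. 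Homogeneity then gives the general~$x$, but with $y$ still absent. A further round of the restriction-to-$Z_I$ trick (the paper's Step~3), together with a separate combinatorial identity (Lemma~\ref{lem:tuttecontraction}) summing contractions of the Tutte polynomial, is needed to reinsert the $y$-variable. Your substitution scheme, which tries to handle all four variables in one shot from Theorem~\ref{thm:tutterank}, does not align with this: the HRR step only converts one specific pair of Todd factors (the ones with no formal variable attached), so the $x$ and $y$ directions have to be introduced in separate stages.

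In summary: your outline matches the paper's architecture, and you correctly name Theorem~\ref{thm:fakeHRR} as the key input, but the proposal stops precisely where the real work begins. To complete it you need Proposition~\ref{prop:simpleChern} to make the HRR trade explicit, and then Lemma~\ref{lem:tuttecontraction} to assemble the pieces.
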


The second formula implies the following 
analytic property of the Tutte polynomial.

\begin{theorem} \label{thm:tuttelorentzian}
For any rank $r$ matroid $\M$ on $E$, the polynomial $t_\M(x,y,z,w)$
is a denormalized Lorentzian polynomial in the sense of \cite{BH20, BLP}.
\end{theorem}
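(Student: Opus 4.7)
The strategy is to apply Theorem \ref{thm:tutteintersection}, which expresses $t_\M$ as an intersection number on $X_E$, and then invoke the theorem of Br\"and\'en--Huh \cite{BH20} (together with the extensions in \cite{BLP}) that volume polynomials of nef divisor classes, and more generally Chern--Segre polynomials of nef vector bundles, on a smooth projective variety are denormalized Lorentzian.

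First I would treat matroids realizable over $\mathbb C$. For $\M$ realized by $L \subseteq \CC^E$, we have $[\cQ_\M] = [\cQ_L]$ (Proposition \ref{prop:Kclasses}), and from the construction in Definition \ref{defn:augmentedTauto} (which presents $\cQ_L$ as a quotient of a trivial bundle) the bundle $\cQ_L$ is globally generated, hence nef. The line bundles $\pi_E^*\mathcal O_{\PP^E}(1)$ and $\pi_i^*\mathcal O_{\PP^1}(1)$ for $i \in E$ are pullbacks of very ample line bundles and are also nef. Passing to the complete flag bundle $p\colon Y \to X_E$ of $\cQ_L$, the pullback $p^*\cQ_L$ admits a complete filtration with nef line bundle quotients $L_1, \dots, L_{n-r}$. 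Writing $H_0 = \pi_E^*c_1(\mathcal O(1))$, $H_i = \pi_i^*c_1(\mathcal O(1))$ for $i \in E$, and $\alpha_j = c_1(L_j)$, and expanding the two Segre factors in Theorem \ref{thm:tutteintersection} as geometric series, one obtains
\[
t_\M(x,y,z,w) \;=\; \int_Y \Bigl(\sum_{k \ge 0} x^k H_0^k\Bigr) \prod_{i \in E}(1 + yH_i) \prod_{j=1}^{n-r} \frac{1 + w\alpha_j}{1 - z\alpha_j},
\]
where only the top-dimensional part of the integrand on $Y$ contributes.

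Next I would identify $t_\M$ with a specialization of a multivariable volume polynomial of nef divisor classes on $Y$. Each monomial coefficient of $t_\M$ in $(x,y,z,w)$ equals, up to nonnegative integer multiplicity, the integral of a product of positive symmetric functions $e_b(H_1,\dots,H_n)$, $h_c(\alpha_1,\dots,\alpha_{n-r})$, and $e_d(\alpha_1,\dots,\alpha_{n-r})$ against suitable powers of the nef class $H_0$ — in particular, each coefficient is a nonnegative combination of intersection numbers of nef divisor classes on $Y$. Introducing one auxiliary variable for each such nef class, the volume polynomial $\Phi = \int_Y (\sum u_i D_i)^{\dim Y}$ is denormalized Lorentzian by \cite{BH20}, and $t_\M$ is obtained from $\Phi$ by a sequence of nonnegative diagonal substitutions together with the reorganization of the bivariate $(z,w)$-dependence encoded in the Segre/Chern factorization. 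Since all the operations used preserve denormalized Lorentzianness (\cite[Section~2]{BH20} and \cite{BLP}), $t_\M$ is denormalized Lorentzian in the realizable case.

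For an arbitrary matroid $\M$, the polynomial $t_\M$ depends on $\M$ only through the valuative $K$-class $[\cQ_\M]$ (Proposition \ref{prop:tautoval}). However, denormalized Lorentzianness is not preserved under signed linear combinations, so valuativity alone does not transport the result from the realizable case via a Derksen--Fink-type decomposition. The main obstacle is therefore to argue directly that for every matroid $[\cQ_\M]$ is a sufficiently ``nef'' $K$-class on $X_E$ for the splitting argument above to apply. I expect this follows from the combinatorial construction of $[\cQ_\M]$ in Definition \ref{defn:augmentedTauto} — using in particular the identification of $c_1(\cQ_\M)$ with the class of the independence polytope $I(\M^\perp)$, a generalized permutohedron — by exhibiting $[\cQ_\M]$ on $X_E$ as a positive combination of nef line bundle classes coming from matroid-polytope combinatorics. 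Establishing this combinatorial positivity of $[\cQ_\M]$ for non-realizable $\M$ is, in my view, the principal technical hurdle of the proof.
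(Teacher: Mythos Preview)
Your treatment of the realizable case is on the right track; indeed the paper remarks in the introduction that when $\M$ has a realization $L\subseteq\CC^E$, Theorem~\ref{thm:tuttelorentzian} follows from Theorem~\ref{thm:tutteintersection} together with global generation of $\cQ_L$, via essentially the projectivization argument you sketch.

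The genuine gap is in the non-realizable case, and your proposed fix does not work. You suggest showing that $[\cQ_\M]$ is ``a positive combination of nef line bundle classes coming from matroid-polytope combinatorics.'' But $[\cQ_\M]$ is only a $K$-class, not a bundle, and there is no reason to expect it to split as a nonnegative sum of nef line bundle classes; the identification $c_1(\cQ_\M)\leftrightarrow I(\M^\perp)$ constrains only the first Chern class, not the higher ones, and certainly does not yield such a splitting. More fundamentally, the Lorentzian property is a Hodge-theoretic inequality, and for non-realizable $\M$ there is no variety whose classical Hodge theory can be invoked.

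The paper's actual argument is entirely different. It builds a \emph{tropical} model: one embeds $\cQ_\M^\vee$ (and a companion class $\mathcal K_\M$ with $s(\mathcal K_\M)=c(\cQ_\M)$) into trivial bundles of rank $2n$, and associates to $\M$ an auxiliary loopless matroid $F_\M$ on $[2n]$. One then constructs a smooth projective toric variety $Y_\Sigma$ with a birational map $\pi\colon Y_\Sigma\to X_E\times\PP^{2n-1}\times\PP^{2n-1}$ and a subfan $\Sigma_{X_E,\M}\subset\Sigma$ supported on $\RR^E\times\underline\Sigma_{F_\M}$, whose Minkowski-weight class pushes forward to $[\PP(\mathcal K_\M)\times_{X_E}\PP(\cQ_\M^\vee)]$. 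The crucial point is that $\Sigma_{X_E,\M}$ is a \emph{Lefschetz fan} in the sense of \cite{ADH22}: its support is a product of a complete fan with the Bergman fan of $F_\M$, and Bergman fans of loopless matroids are Lefschetz by \cite{AHK18}. The tropical Hodge theory of \cite{ADH22} then gives directly that the polynomial of intersection numbers of $[\Sigma_{X_E,\M}]$ against the nef divisors $\pi^*\alpha$, $\pi^*y_i$, $\pi^*\delta$, $\pi^*\epsilon$ is denormalized Lorentzian. This is what replaces classical Hodge theory in the absence of a realization; your proposal does not supply any such substitute.
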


See Section \ref{subsec:positive} for a short review of Lorentzian polynomials, and see Remark~\ref{rmk:strengthen} for a strengthening of Theorem~\ref{thm:tuttelorentzian}.  If $\M$ has a realization $L\subseteq \CC^E$, Theorem \ref{thm:tuttelorentzian} follows from Theorem \ref{thm:tutteintersection} and the fact that the vector bundle $\cQ_L$ is globally generated.  For an arbitrary, not necessarily realizable, matroid $\M$, we establish Theorem~\ref{thm:tuttelorentzian} by constructing tropical models of augmented tautological classes, and then by applying tools from tropical Hodge theory as developed in \cite[Section 5]{ADH22}.

\begin{remark}
Consider the homogeneous polynomial 
 \[
 \underline{t}_\M(x,y,z,w) \coloneq (x + y)^{-1}(y + z)^{r} (x + w)^{n-r} T_\M\left( \frac{x + y}{y + z}, \frac{x + y}{x + w}\right).
 \]
In \cite[Theorems A and B]{BEST21}, the authors show the identity
\[
 \underline{t}_\M(x,y,z,w) =\int_{\underline{X}_E} s\big(\pi_E^*\mathcal O_{\mathbb{P}(\mathbb{C}^E)}(-1), x\big) \cdot c(\underline\cQ_{\U_{1,E}}, y) \cdot s(\underline{\mathcal{Q}}_\M^{\vee}, z) \cdot c(\underline{\mathcal{Q}}_\M, w) 
 \]
and show that this polynomial is a denormalized Lorentzian polynomial.  The authors do not know whether this result can be deduced directly from Theorem \ref{thm:tutteintersection} and \ref{thm:tuttelorentzian}, or vice versa.
\end{remark}

Specializing Theorem~\ref{thm:tuttelorentzian} by setting $x=1$, $y =0$, $z=q$, $w = 0$, we obtain the following corollary, which appeared in \cite[Problem 6.10]{Wagner98} and \cite[Conjecture 2]{SSV22} in the context of Postnikov--Shapiro algebras of graphs \cite{PostnikovShapiro}.

\begin{corollary}\label{cor:PostnikovShapiro}
For any rank $r$ matroid $\M$, the coefficients of the polynomial $q^{r} \, T_{\M}(q^{-1}, 1+q)$ form a log-concave sequence with no internal zeroes. 
\end{corollary}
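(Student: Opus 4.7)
The plan is to derive the log-concavity claim from Theorem \ref{thm:tuttelorentzian} by a direct specialization of the four-variable polynomial $t_\M(x,y,z,w)$. The first step is an algebraic identification: setting $(x,y,z,w) = (1,0,q,0)$ in the definition of $t_\M$, one has $(y+z)^r = q^r$, $(x+w)^{n-r} = 1$, $(x+y)/(y+z) = q^{-1}$, and $(x+y+z+w)/(x+w) = 1+q$, so that
\[
 t_\M(1,0,q,0) \;=\; q^{r}\, T_\M(q^{-1}, 1+q).
\]
Hence it suffices to show that the coefficients of $t_\M(1,0,q,0)$, viewed as a polynomial in $q$, form a log-concave sequence with no internal zeros.

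Next I would invoke the closure properties of denormalized Lorentzian polynomials from \cite{BH20}. By Theorem \ref{thm:tuttelorentzian}, $t_\M(x,y,z,w)$ is denormalized Lorentzian, and the class of denormalized Lorentzian polynomials is closed under setting variables to zero. Consequently, the bivariate homogeneous polynomial
\[
 t_\M(x,0,z,0) \;=\; \sum_{k=0}^{n} c_k\, x^{n-k} z^{k}
\]
is denormalized Lorentzian of degree $n$ in two variables. It is a standard fact that the coefficient sequence $(c_k)$ of such a bivariate homogeneous denormalized Lorentzian polynomial is log-concave with no internal zeros. Specializing $x=1$ gives $t_\M(1,0,q,0) = \sum_k c_k\, q^k$, and by the identity above this equals $q^r T_\M(q^{-1}, 1+q)$, yielding the corollary.

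Thus the proof is immediate once Theorem \ref{thm:tuttelorentzian} is in hand: the only care required is the verification of the substitution $t_\M(1,0,q,0) = q^r T_\M(q^{-1}, 1+q)$ and the observation that the coordinate projection $(y,w)\mapsto(0,0)$ preserves the denormalized Lorentzian property. All the genuine work---the intersection-theoretic interpretation of $t_\M$ via augmented tautological classes and the tropical Hodge-theoretic argument verifying the Lorentzian signature---is absorbed into Theorems \ref{thm:tutteintersection} and \ref{thm:tuttelorentzian}, leaving Corollary \ref{cor:PostnikovShapiro} as a routine specialization with no serious remaining obstacle.
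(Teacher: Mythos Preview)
Your proof is correct and follows exactly the approach indicated in the paper: the corollary is obtained from Theorem~\ref{thm:tuttelorentzian} by the specialization $x=1$, $y=0$, $z=q$, $w=0$, together with the standard fact that a bivariate homogeneous denormalized Lorentzian polynomial has log-concave coefficients with no internal zeros. Your verification of the substitution and the observation that setting variables to zero preserves the denormalized Lorentzian property are precisely what is needed.
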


We conclude with the study of the geometry of {matroid Schubert varieties} via augmented tautological bundles.  For a realization $L\subseteq \CC^E$ of a matroid $\M$, its \emph{matroid Schubert variety} $Y_L$ is the closure of $L$ in $(\PP^1)^E$.  
Matroid Schubert varieties play a central role in the proof of the Dowling--Wilson top-heavy conjecture in the realizable case \cite{HW},
and their intersection cohomologies are the main objects of study in the proof of the general case  \cite{BHMPW2}.
Matroid Schubert varieties satisfy several features analogous to those of classical Schubert varieties in flag varieties; see \cite{BHMPW2}.  Two such features are as follows:
\begin{enumerate}[label=$\bullet$]\itemsep 5pt
\item The map $\pi_{1^E}\colon X_E \to (\PP^1)^E$ restricts to a resolution of singularities $W_L \to Y_L$ for any $L \subseteq \mathbb{C}^E$.  The boundary $\partial W_L = W_L \setminus L$ is a simple normal crossings divisor on $W_L$. 
\item The standard affine paving of $(\PP^1)^E$ restricts to an affine paving of a matroid Schubert variety $Y_L$, whose $k$-dimensional cells are 
\[
U^F=\{p \in Y \, | \, \text{$p_i=\infty$ if and only if $i \notin F$}\},
\]
one for each rank $k$ flat $F$ of $\M$.
Writing $y_F$ for the homology class of the closure of $U^F$, which is another matroid Schubert variety,
we have
\[
H_\bullet(Y_L,\ZZ) \simeq \bigoplus_{F \in \mathscr{L}(\M)} \ZZ \, y_F,
\]
where $\mathscr{L}(\M)$ is the lattice of flats of $\M$.
\end{enumerate}
As mentioned before, 
the restriction of $\cS_L$ to the augmented wonderful variety $W_L$ is isomorphic to the log-tangent bundle $\mathcal T_{W_L}(-\log \partial W_L)$. 
This allows us to deduce the following remarkably simple formula for 
 the Chern--Schwartz--MacPherson (CSM) classes of matroid Schubert cells in their varieties.
See Section~\ref{ssec:CSM} for a brief review of CSM classes.

\begin{theorem}\label{thm:csm}
The Chern--Schwartz--MacPherson class of $\mathbf{1}_L$ in $Y_L$ is the sum over all flats
\[
c_{SM}(\mathbf 1_L) = \sum_{F \in \mathscr{L}(\M)} y_F \in H_\bullet(Y_L, \ZZ).
\]
\end{theorem}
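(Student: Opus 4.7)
The plan is to combine the log-tangent identification of Theorem~\ref{thm:logtangent} with Aluffi's formula for the Chern--Schwartz--MacPherson class of the open stratum of a simple normal crossings compactification, push the resulting class forward along the resolution $\pi_{1^E}\colon W_L\to Y_L$, and then identify the answer with $\sum_F y_F$ by a computation on the stellahedral variety.

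Concretely, first I would invoke Aluffi's theorem: since $W_L$ is smooth and $\partial W_L$ is a simple normal crossings divisor, the CSM class of $\mathbf 1_L$ on $W_L$ is
\[
c_{SM}(\mathbf 1_L)_{W_L}\;=\;c\bigl(\mathcal T_{W_L}(-\log\partial W_L)\bigr)\cap [W_L]\;=\;c(\cS_L|_{W_L})\cap [W_L],
\]
where the second equality is Theorem~\ref{thm:logtangent}. Second, MacPherson's functoriality of CSM under proper pushforward, applied to $\pi_{1^E}\colon W_L\to Y_L$---which restricts to an isomorphism on $L$, and therefore sends $\mathbf 1_L$ to $\mathbf 1_L$ at the level of constructible functions---gives $(\pi_{1^E})_* c_{SM}(\mathbf 1_L)_{W_L}=c_{SM}(\mathbf 1_L)_{Y_L}$. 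Extending along the closed embedding $W_L\hookrightarrow X_E$, using the identity $[W_L]=c_{n-r}(\cQ_L)\cap [X_E]$ from Corollary~\ref{cor:ctop}, and applying the projection formula reduces the theorem to the identity
\[
(\pi_{1^E})_*\bigl(c(\cS_L)\cdot c_{n-r}(\cQ_L)\cap [X_E]\bigr)\;=\;\sum_{F\in\mathscr L(\M)} y_F
\]
in $H_*(Y_L,\ZZ)$.

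This pushforward identity on the stellahedral variety is the heart of the proof. Since each closure $\overline{U^F}$ inside $Y_L$ is itself a matroid Schubert variety of the restriction $\M|F$, and the stellahedral fan has its cones indexed by pairs of a flat and a compatible independent set, my plan is to match, flat by flat, the contributions of the toric strata of $X_E$ lying over $\overline{U^F}$ with the generator $y_F$. I would carry this out either via a toric localization computation on $X_E$---summing residues at the torus fixed points indexed by flags through $F$---or by a direct Chern-class manipulation using the exceptional isomorphism $\zeta$ of Theorem~\ref{thm:exceptIsom}, under which $c_{n-r}(\cQ_L)\cap[X_E]$ corresponds to $[\mathcal O_{W_L}]$ and the pushforward of $c(\cS_L)\cdot [\mathcal O_{W_L}]$ can be evaluated inductively.

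The main obstacle is this combinatorial matching: although the reduction exposes a clean formula in terms of augmented tautological bundles, extracting coefficient exactly $1$ on every flat---with all other tautological contributions cancelling---requires a precise identity between Chern classes of $\cS_L$ and $\cQ_L$ and the lattice of flats. I expect the cleanest execution to proceed by induction on the rank of $\M$, using the compatibility of augmented tautological classes with matroid deletion and contraction together with the recursive structure of the stratification of $Y_L$ by the matroid Schubert cells of $\M|F$, with the base case being small uniform matroids where $Y_L$ is a standard toric subvariety of $(\PP^1)^E$.
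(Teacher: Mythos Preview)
Your setup is exactly the paper's: Aluffi's formula, the identification $\mathcal T_{W_L}(-\log\partial W_L)\simeq \cS_L|_{W_L}$, the equality $[W_L]=c_{n-r}(\cQ_L)\cap[X_E]$, and MacPherson functoriality reduce everything to computing the pushforward of $c(\cS_L)\cdot c_{n-r}(\cQ_L)\cap[X_E]$.

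The divergence is in how to finish. You propose to push only to $Y_L$ and then match coefficients flat by flat, via localization, the isomorphism $\zeta$, or induction on rank; you yourself flag this matching as the main obstacle. The paper avoids it entirely by pushing one step further, to $(\PP^1)^E$. There the homology has the basis $\{y_I:I\subseteq E\}$ Poincar\'e-dual to $\{\prod_{i\in I}y_i\}$, and the intersection numbers are read off directly from Theorem~\ref{thm:SQintersect}: taking $z=1$ and extracting the $w^0$ coefficient gives
\[
\int_{X_E} c(\cS_\M)\cdot c_{n-r}(\cQ_\M)\cdot \prod_{i\in I} y_i \;=\;
\begin{cases}1 & I\text{ independent in }\M,\\ 0 & \text{otherwise.}\end{cases}
\]
Hence the image in $H_*((\PP^1)^E)$ is $\sum_{I\text{ independent}} y_I$. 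The paper then invokes a lemma (from Ardila--Boocher) that the pushforward $H_*(Y_L)\to H_*((\PP^1)^E)$ sends $y_F$ to $\sum_{I\text{ basis of }F} y_I$; since every independent set is a basis of a unique flat (its closure), this is exactly the image of $\sum_F y_F$. Injectivity of the pushforward (from the compatible affine pavings) finishes the argument.

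So the missing idea is not any of the tools you list, but rather: do not attempt the identification in $H_*(Y_L)$ directly; pass to $(\PP^1)^E$, where Theorem~\ref{thm:SQintersect} hands you the answer, and then pull back via the injective pushforward and the explicit description of $y_F\mapsto\sum_{I\text{ basis of }F} y_I$.
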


In particular, the CSM class of $L$ in $Y_L$ is effective.
The analogous effectivity of CSM classes of classical Schubert cells in their varieties was established in \cite{aluffi2017shadows}.

We include an appendix that discusses notions of valuativity and polytope algebras. We mostly collect statements from the literature, but we also give an isomorphism between a certain polytope algebra and the $K$-ring of a smooth projective toric variety.

\subsection*{Notation}
Let $\kk$ be an algebraically closed field of arbitrary characteristic.  A variety is an irreducible and reduced scheme of finite type separated over $\kk$.  When $\kk = \CC$, the singular homology groups in even degrees and the Chow homology groups coincide for smooth projective toric varieties and augmented wonderful varieties, so we will use the two groups interchangeably in such cases, and similarly for the singular cohomology ring and the Chow cohomology ring.
We denote by $\langle \cdot, \cdot \rangle$ the standard pairing on $\kk^E$ or $\ZZ^E$.

\subsection*{Acknowledgements} We thank Alex Fink and Ravi Vakil for helpful conversations, Mario Sanchez for a helpful discussion on the proof of Lemma~\ref{lem:surjInd}, and the referees for their careful reading and suggestions.  The first author is partially supported by the US National Science Foundation (DMS-2001854).  The second author is partially supported by Simons Investigator Grant and NSF Grant DMS- 2053308.  The third author is supported by an NDSEG graduate fellowship.

\section{Torus-equivariant geometry preliminaries}
We collect some facts about the torus-equivariant $K$-ring and torus-equivariant Chow ring of a smooth projective toric variety.  The reader may skip this section and refer back as needed.

Let $X_{\Sigma}$ be the smooth projective toric variety with fan $\overline\Sigma$, and let $T = \mathbb{G}_m^E$ be the torus with character lattice $\operatorname{Char}(T) = \ZZ^E$.  Suppose that $T$ acts on $X_\Sigma$ via a surjective map of tori with connected kernel to the dense open torus of $X_\Sigma$, so that the corresponding map of cocharacter lattices is $\ZZ^E \to \ZZ^E/(\operatorname{lin}\cap \ZZ^E)$ for some linear subspace $\operatorname{lin}\subset \ZZ^E \otimes \RR$.
This data is encoded by the $n$-dimensional complete fan $\Sigma$ in $\RR^E$ with lineality space $\operatorname{lin}$ such that $\Sigma/\operatorname{lin} = \overline\Sigma$.

\subsection{Localization theorems}
Let $K_T(X_{\Sigma})$ be the $T$-equivariant $K$-ring of $X_{\Sigma}$, the Grothendieck ring of $T$-equivariant vector bundles on $X_{\Sigma}$. Let $K(X_{\Sigma})$ denote the $K$-ring of $X_{\Sigma}$. By forgetting the equivariant structure, one has a surjective map $K_T(X_{\Sigma}) \to K(X_{\Sigma})$.
By taking the $T$-equivariant sheaf Euler characteristic, one has a $K_T(\operatorname{pt})$-module homomorphism $\chi^T \colon K_T(X_{\Sigma}) \to K_T(\operatorname{pt})$.  We identify $K_T(\operatorname{pt}) = \ZZ[\operatorname{Char}(T)]$ with the Laurent polynomial ring $\mathbb{Z}[T_1^{\pm 1}, \ldots, T_n^{\pm 1}]$ where $T_i$ is the standard character of $i\in E$ under the identification $\operatorname{Char}(T) = \ZZ^E$.

Let $A_T^{\bullet}(X_{\Sigma})$ denote the equivariant Chow ring of $X_{\Sigma}$, as defined in \cite{EG1998}, and let $A^{\bullet}(X_{\Sigma})$ denote the Chow ring of $X_{\Sigma}$. Similar to the $K$-rings, one has a surjective map $A_T^{\bullet}(X_{\Sigma}) \to A^{\bullet}(X_{\Sigma})$ and a $A_T^{\bullet}(\operatorname{pt})$-module homomorphism $\int^T \colon A_T^{\bullet}(X_{\Sigma}) \to A_T^{\bullet}(\operatorname{pt})$.  We identify $A^\bullet_T(\operatorname{pt})$ with the polynomial ring $\ZZ[t_1, \ldots, t_n]$.
Let $\int \colon A^\bullet(X_\Sigma) \to \ZZ$ be the (non-equivariant) degree map.

Let $\Sigma(k)$ denote the set of cones of dimension $k$ of $\Sigma$. For each maximal cone $\sigma$ of $\Sigma$, we have a map $K_T(X_{\Sigma}) \to K_T(\operatorname{pt}_\sigma) = \mathbb{Z}[T_1^{\pm 1}, \ldots, T_n^{\pm 1}]$ given by pulling back to or \emph{localizing at} the corresponding fixed point $\operatorname{pt}_\sigma$. Similarly, we have a map $A_T^{\bullet}(X_{\Sigma}) \to A_T^\bullet(\operatorname{pt}_\sigma) = \mathbb{Z}[t_1, \ldots, t_n]$.
These maps can be combined into maps $K_T(X_{\Sigma}) \to K_T(X_{\Sigma}^T) = \prod_{\sigma \in \Sigma(n)} K_T(\operatorname{pt})$ and $A^{\bullet}_T(X_{\Sigma}) \to A_{T}^{\bullet}(X_{\Sigma}^T) = \prod_{\sigma \in \Sigma(n)} A_T^{\bullet}(\operatorname{pt})$, where $X_\Sigma^T$ denotes the set of $T$-fixed points of $X_\Sigma$. For a character $v = (v_1, \ldots, v_n) \in \ZZ^E$, we denote $T^v = T_1^{v_1} \dotsb T_n^{v_n}$ and $t_v = v_1 t_1+ \cdots+ v_nt_n$. Then we have the following localization theorem.

\begin{theorem}\label{thm:localization}
Let $X_{\Sigma}$ as above. Then
\begin{enumerate}[label = (\arabic*)]\itemsep 5pt
\item \label{localization:K} \cite[Corollary 5.11]{VezzosiVistoli} The restriction map $K_T(X_{\Sigma}) \to K_T(X_{\Sigma}^T)$ is injective, and its image is the subring of $\prod_{\sigma \in \Sigma(n)} K_T(\operatorname{pt})$ given by
$$\left\{f \in \prod_{\sigma \in \Sigma(n)} K_T(\operatorname{pt}) \ \middle| \ \begin{matrix} f_{\sigma} - f_{\sigma'}\equiv 0 \ \operatorname{mod} \ 1 - T^{v} \\ \text{ whenever }\dim \sigma \cap \sigma' = d - 1 \text{ with } \RR(\sigma \cap \sigma' ) = \ker v \end{matrix} \right\}.$$
Moreover, the map $K_T(X_\Sigma) \to K(X_\Sigma)$ forgetting the equivariant structure is surjective, with kernel $I_K$ equal to the ideal generated by $f - f(1, \ldots, 1)$ where $f$ is a global Laurent polynomial, i.e.,\ $f_\sigma$ for all $\sigma\in \Sigma(n)$ equals a common Laurent polynomial.
\item \label{localization:A} \cite{PayneCohomology} The restriction map $A_T^{\bullet}(X_{\Sigma}) \to A_T^{\bullet}(X_{\Sigma}^T)$ is injective, and its image is the subring of $\prod_{\sigma \in \Sigma(n)} A_T^{\bullet}(\operatorname{pt})$ given by
$$\left\{f \in \prod_{\sigma \in \Sigma(n)} A_T^{\bullet}(\operatorname{pt}) \ \middle| \ \begin{matrix} f_{\sigma} - f_{\sigma'}\equiv0 \ \operatorname{mod} \ {t_v} \\ \text{ whenever }\dim \sigma \cap \sigma' = d - 1 \text{ with } \RR(\sigma \cap \sigma') = \ker v \end{matrix} \right\}.$$
Moreover, the map $A_T^\bullet(X_\Sigma) \to A^\bullet(X_\Sigma)$ forgetting the equivariant structure is surjective, with kernel $I_A$ equal to the ideal generated by $f - f(0, \ldots, 0)$ where $f$ is a global polynomial, i.e.,\ $f_\sigma$ for all $\sigma\in \Sigma(n)$ equals a common polynomial.
\end{enumerate}
\end{theorem}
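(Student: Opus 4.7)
The plan is to deduce both parts from the cited equivariant localization results of Vezzosi--Vistoli and Payne, which apply in greater generality; what remains in each case is essentially to verify the hypotheses and to translate between the general GKM-style formulation and the explicit cone-combinatorial form stated here. Since the two parts are formally parallel (with $(1 - T^v)$ replaced by $t_v$), I would handle $K$-theory and Chow in tandem.

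For the injectivity and image descriptions, my first step is to identify the $T$-fixed locus: for a smooth projective toric variety, $X_\Sigma^T$ is in canonical bijection with $\Sigma(n)$, and each codimension-one cone $\tau = \sigma \cap \sigma'$ determines a $T$-invariant $\PP^1$ joining $\operatorname{pt}_\sigma$ and $\operatorname{pt}_{\sigma'}$ whose two tangent weights are $\pm v$ with $\ker v = \RR\tau$. This is enough data to identify the general GKM conditions (which ask for divisibility by $1 - T^v$, respectively $t_v$, along each invariant $\PP^1$) with the explicit conditions stated in the theorem. Injectivity then follows from the Vezzosi--Vistoli/Payne localization theorems once one checks that the action of $T$ on $X_\Sigma$ is \emph{good} in the sense of those references, which is automatic here because the action factors through the dense torus of $X_\Sigma$ with connected kernel, so fixed points remain isolated and the weights at each fixed point are primitive in the appropriate lattice.

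For the forgetful kernel descriptions, my approach is to use that the augmentation $K_T(\operatorname{pt}) \to \ZZ$ sends $T_i \mapsto 1$ (and $A_T^{\bullet}(\operatorname{pt}) \to \ZZ$ sends $t_i \mapsto 0$). Standard results identify $K(X_\Sigma)$ with $K_T(X_\Sigma) \otimes_{R(T)} \ZZ$ and $A^{\bullet}(X_\Sigma)$ with $A_T^{\bullet}(X_\Sigma) \otimes_{\ZZ[t_1, \ldots, t_n]} \ZZ$, and the surjectivity of the forgetful maps is classical (one can lift the Chern classes of toric line bundles $\mathcal O(D_\rho)$ indexed by rays $\rho \in \Sigma(1)$). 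Under the explicit GKM models from the first part, a class $f = (f_\sigma)$ is in the kernel of the forgetful map if and only if it lies in the $R(T)$- (resp.\ $\ZZ[t_1,\ldots,t_n]$-)submodule generated by augmentation-ideal multiples of the constant tuples; rewriting this produces the stated ideal $I_K$ (resp.\ $I_A$) generated by $f - f(1,\ldots,1)$ (resp.\ $f - f(0,\ldots,0)$) for globally constant $f$.

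I expect the main obstacle to be the translation step between the abstract tensor-product description of the non-equivariant ring and the concrete ``$f - f(1,\ldots,1)$'' presentation of the forgetful kernel: one must argue that every element killed by the augmentation can be represented by a difference of a global tuple and its specialization, rather than as an arbitrary element of the augmentation ideal multiplied by an equivariant class. This is where the completeness and smoothness of $X_\Sigma$ enter essentially, since they ensure enough global polynomial (resp.\ Laurent polynomial) classes exist to carry out the reduction. All other steps reduce to invocations of the cited localization theorems and combinatorial bookkeeping with the fan $\Sigma$.
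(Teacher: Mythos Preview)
The paper does not give a proof of this theorem at all: it is stated as a collection of known facts, with the two parts attributed directly to \cite[Corollary 5.11]{VezzosiVistoli} and \cite{PayneCohomology} respectively, and no further argument is supplied. Your proposal---to verify the hypotheses of those references and translate their general GKM-style statements into the explicit cone-combinatorial form---is a reasonable way to fill in what the paper leaves implicit, but there is nothing in the paper to compare it against.
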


\subsection{Duality, rank, symmetric powers, exterior powers, Chern classes, and Segre classes}\label{subsec:equivConsts}
We now recall the description of several operations on the equivariant $K$-ring of a toric variety in terms of localization at fixed points. Let $[\mathcal{E}] \in K_T(X_{\Sigma})$ be an equivariant $K$-class, localizing to $[\mathcal{E}]_{\sigma} = \sum_{i=1}^{k_{\sigma}} a_{\sigma, i}T^{m_{\sigma, i}}$ at a torus-fixed point corresponding to a maximal cone $\sigma \in \Sigma(n)$. 

There is a ring involution $D_K$ on $K_T(X_{\Sigma})$ defined by sending the class of an equivariant vector bundle to the class of the dual vector bundle. The dual class $D_K([\mathcal{E}]) := [\mathcal{E}]^{\vee}$ has 
$$D_K([\mathcal{E}])_{\sigma} = \sum_{i=1}^{k_{\sigma}} a_{\sigma, i}T^{-m_{\sigma, i}}.$$
There is a corresponding ring involution, denoted $D_A$, on $A_T^{\bullet}(X_{\Sigma})$, defined by $D_A(t_i) \mapsto -t_i$ at each torus-fixed point. This multiplies by $(-1)^k$ on $A^k_T(X_{\Sigma})$. These involutions descend to $K(X_{\Sigma})$ and $A^{\bullet}(X_{\Sigma})$. 

As toric varieties are integral, every coherent sheaf on a toric variety has a rank. As the rank is additive in short exact sequences, this defines a ring homomorphism $\operatorname{rk} \colon K_T(X_{\Sigma}) \to \mathbb{Z}$, which descends to $K(X_{\Sigma}) \to \mathbb{Z}$. The rank of $[\mathcal{E}]$ is $\sum_{i=1}^{k_{\sigma}} a_{\sigma, i}$, which is independent of the choice of $\sigma$. 

The operation that assigns to each equivariant vector bundle its $j$-th symmetric or exterior power extends naturally to $K(X_{\Sigma})$ and $K_T(X_{\Sigma})$. Explicitly, with $u$ a formal variable, we have that
$$\sum_{j=0}^{\infty}  \textstyle\bigwedge^j \displaystyle[\mathcal{E}]_{\sigma} u^j = \prod_{i=1}^{k_{\sigma}}(1 + T^{m_{\sigma, i}}u)^{a_{\sigma, i}}, \text{ and } \sum_{j=0}^{\infty} \operatorname{Sym}^j[\mathcal{E}]_{\sigma}u^j = \prod_{i=1}^{k_{\sigma}} \left(\frac{1}{1 - T^{m_{\sigma, i}}u}\right )^{a_{\sigma, i}}.$$

The function that sends a vector bundle to its equivariant total Chern class extends to a function $c^T \colon K_T(X_{\Sigma}) \to A_T^{\bullet}(X_{\Sigma})$, which is multiplicative in the sense that $c^T(\mathcal E + \mathcal F) = c^T(\mathcal E) \cdot c^T(\mathcal F)$.
The equivariant Chern polynomial $c^T(\mathcal{E}, u)$ is the polynomial $c_0^T(\mathcal{E}) + c_1^T(\mathcal{E})u + c_2^T(\mathcal{E})u^2 + \dotsb$, where $u$ is a formal variable. Define similarly the Chern polynomial $c(\mathcal{E}, u) \in A^{\bullet}(X_{\Sigma})[u]$. 
The equivariant total Chern class localizes to
$$c^T(\mathcal{E}, u)_{\sigma} = \sum_{j=0}^{\infty} c_j^T(\mathcal{E})_{\sigma} u^j = \prod_{i=1}^{k_{\sigma}}(1 + ut_{m_{\sigma, i}})^{a_{\sigma, i}},$$
where $u$ is a formal variable. 

If $\mathcal{E}$ is a vector bundle on $X_{\Sigma}$, then $\mathcal{E}$ has a Segre class in $A^{\bullet}(X_{\Sigma})$, characterized by the property that $c(\mathcal{E}) s(\mathcal{E}) = 1$.  We define the \emph{equivariant Segre class} to be the inverse of $c^T(\mathcal{E})$ in $A^{\bullet}_T(X_{\Sigma})[c^T(\mathcal{E})^{-1}]$. Because $c(\mathcal{E})$ is a unit in $A^{\bullet}(X_{\Sigma})$,  there is a natural map $A^{\bullet}_T(X_{\Sigma})[c^T(\mathcal{E})^{-1}] \to A^{\bullet}(X_{\Sigma})$, and the image of $s^T(\mathcal{E})$ is $s(\mathcal{E})$. Define the (equivariant) Segre polynomial in the same way as the (equivariant) Chern polynomial.

\section{Stellahedral varieties}\label{SectionStellahedral}

We describe the stellahedral fan $\Sigma_E$ and its variety $X_E$ in several different ways, and we record several useful properties of $X_E$ we will need.  The closely related permutohedral fan ${\underline\Sigma}_E$ and its variety ${\underline X}_E$ will often appear and aid the discussion.

\subsection{The stellahedral fan via compatible pairs}\label{subsec:stellafan}

We describe the stellahedral fan in terms of its cones.
We start by describing the closely related permutohedral fan, which both serves as a motivation for and appears as a  substructure in the stellahedral fan.

\begin{definition}
The \emph{permutohedral fan} ${\underline\Sigma}_E$ is a fan in $\RR^E/\RR\be_E$ that consists of cones $\underline \sigma_{\mathscr F}$ for each chain $\mathscr F: F_1 \subsetneq \cdots \subsetneq F_k$ of nonempty proper subsets of $E$ where
\[
\underline\sigma_{\mathscr F} = \operatorname{cone}\{\overline\be_{F_1}, \ldots, \overline\be_{F_k}\}.
\]
Here we denoted $\overline u$ for the image of $u\in \RR^E$ in $\RR^E/\RR\be_E$.
\end{definition}

That this definition of ${\underline\Sigma}_E$ is equivalent to its description as the normal fan of the permutohedron ${\underline \Pi}_E =\operatorname{conv}\{ w \cdot (1,2,\ldots, n) \mid w \text{ is a permutation of $E$}\} \subseteq \RR^E$ is a standard fact about Coxeter reflection groups; see for instance \cite{BB05}.
We now give a similar description of the stellahedral fan $\Sigma_E$ in terms of ``compatible pairs'' as given in \cite[\S2]{BHMPW}.

\begin{definition}
A pair $(I,\mathscr  F)$ consisting of a subset $I\subseteq E$ and a chain $\mathscr F: F_1\subsetneq F_2 \subsetneq \cdots\subsetneq F_k$ of proper subsets of $E$ is said to be \emph{compatible} if $I$ is a subset of every element of $\mathscr F$.  We write $I\leq \mathscr  F$ in this case.
\end{definition}

Both the subset $I$ and the chain $\mathscr F$ are allowed to be empty.  In contrast to the permutohedral case, the empty set is allowed to be an element in the chain $\mathscr F$. Make the following a definition.

\begin{proposition}\label{prop:stellafan}\cite[Proposition 2.6]{BHMPW}
The stellahedral fan $\Sigma_E$ is a simplicial fan that consists of cones $\sigma_{I\leq\mathscr F}$ for each compatible pair $I\leq \mathscr F$ where
\[
\sigma_{I\leq \mathscr F} = \operatorname{cone}\{\be_i \mid i\in I\} + \operatorname{cone}\{-\be_{E\setminus F} \mid F\in \mathscr F\}.
\]
\end{proposition}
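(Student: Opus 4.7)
The plan is to verify directly that $\{\sigma_{I \le \mathscr F}\}$ is a complete simplicial fan coinciding with the normal fan of $\Pi_E$. I would proceed in three stages: simpliciality of each cone, the partition property of $\RR^E$, and identification via the support function of $\Pi_E$.

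First, for simpliciality, fix $\mathscr F = \{F_1 \subsetneq \cdots \subsetneq F_k\}$. Because $I \subseteq F_1 \subseteq F_t$ for all $t$, the rays $\{\be_i\}_{i \in I}$ have support in $I$ while $\{-\be_{E \setminus F_t}\}_t$ have support in $E \setminus I$; within the latter family, the supports $E \setminus F_1 \supsetneq \cdots \supsetneq E \setminus F_k$ form a strict chain, so the corresponding indicator vectors are linearly independent. Hence all rays together are linearly independent and $\sigma_{I \le \mathscr F}$ is simplicial of dimension $|I| + |\mathscr F|$. To show the cones partition $\RR^E$, given $\varphi \in \RR^E$ I would set $I_\varphi = \{i : \varphi_i > 0\}$ and, letting $v_1 > v_2 > \cdots > v_k$ be the distinct negative values taken by $\varphi$, define $\mathscr F_\varphi = \{F_t\}_{t=1}^k$ with $F_t = \{i : \varphi_i > v_t\}$. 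The identity
\[
\varphi = \sum_{i \in I_\varphi} \varphi_i\, \be_i - \sum_{t=1}^{k} (v_{t-1} - v_t)\, \be_{E \setminus F_t} \qquad (v_0 := 0)
\]
exhibits $\varphi$ as a strictly positive combination of the rays of $\sigma_{I_\varphi \le \mathscr F_\varphi}$, and uniqueness is immediate from reading $I_\varphi$ off the sign pattern and $\mathscr F_\varphi$ off the level sets of $\varphi$. Closure under taking faces is automatic since dropping a ray from $\sigma_{I \le \mathscr F}$ yields $\sigma_{I' \le \mathscr F'}$ with $I' \subseteq I$ and $\mathscr F' \subseteq \mathscr F$, still compatible. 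Combined with the partition property, this gives a complete simplicial fan.

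Finally, to identify this fan with the normal fan of $\Pi_E$, I would show that the support function $h(\varphi) := \min_{u \in \Pi_E}\langle \varphi, u\rangle$ is linear on each $\sigma_{I \le \mathscr F}$. For $\varphi$ in the relative interior with decomposition as above, minimization forces $u_i = 0$ for $i \in I$ and reduces to maximizing $\sum_{j \in E \setminus F_1} w_j u_j$, where $w_j := \sum_{s < \tau(j)} b_{F_s}$ with $\tau(j) := \min\{t : j \in F_t\}$ (setting $F_{k+1} := E$). Using the description $\Pi_E = \{u \ge 0 : u \le v \text{ for some } v \in \underline{\Pi}_E\}$, one can take $u$ equal to $v$ with the $I$-coordinates zeroed, so the problem reduces to $\max_{v \in \underline{\Pi}_E}\sum_j w_j v_j$, which by the rearrangement inequality is a linear function of $(b_{F_1}, \ldots, b_{F_k})$ determined by the sorted order of the $w_j$'s---an order that is constant on the relative interior. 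The main obstacle will be confirming that the cones are precisely the maximal domains of linearity of $h$, rather than a strict refinement: this follows because any wall crossing of $\sigma_{I \le \mathscr F}$ either merges two distinct weights $w_j$ or toggles a coordinate between the forced-zero and free regimes, and each such transition changes the optimizing vertex of $\underline{\Pi}_E$ and hence the linear form of $h$.
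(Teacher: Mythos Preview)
The paper does not actually prove this proposition: it cites \cite[Proposition~2.6]{BHMPW} and remarks that the description follows from the general theory of normal fans of graph associahedra in \cite[Theorem~7.4]{Postnikov}, even suggesting the statement be taken as a definition. Your direct verification is therefore supplying what the paper deliberately omits.

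Your first two stages are clean and correct: the ray vectors are visibly linearly independent (supports in $I$ versus the strictly nested complements $E\setminus F_t$), and the explicit decomposition of an arbitrary $\varphi$ according to its positive coordinates and its distinct negative levels shows that the relative interiors of the $\sigma_{I \le \mathscr F}$ partition $\RR^E$, so the collection is a complete simplicial fan.

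The third stage is the right idea but underspecified as written, and the wall-crossing remark you flag as the ``main obstacle'' can be bypassed. What your computation actually shows is that for $\varphi$ in the interior of a \emph{maximal} $\sigma_{I \le \mathscr F}$ (so $F_1 = I$ and each $F_{t+1}\setminus F_t$ is a singleton), the minimum of $\langle\varphi,u\rangle$ over $\Pi_E$ is attained at the point $u^*$ with $u^*_i = 0$ for $i\in I$ and $u^*_j = |I|+t$ for the unique $j\in F_{t+1}\setminus F_t$. This vertex determines the pair $(I,\mathscr F)$ back: $I = \{j : u^*_j = 0\}$, and $\mathscr F$ is read off from the total order on the nonzero coordinates. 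Hence distinct maximal compatible pairs yield distinct vertices of $\Pi_E$, and every vertex arises this way since a generic $\varphi$ lies in the interior of some maximal cone. Thus you have a bijection between maximal $\sigma_{I\le\mathscr F}$'s and maximal normal cones, with each $\sigma_{I\le\mathscr F}$ contained in the corresponding normal cone; completeness of both fans forces equality. This is cleaner than checking linearity breaks across every wall.
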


We denote the rays of the fan $\Sigma_E$ by
\[
\rho_i = \sigma_{\{i\}\leq \emptyset} = \operatorname{cone}(\be_i) \text{ for each $i\in E$} \quad\text{and}\quad \rho_S = \sigma_{\emptyset \leq \{S\}} = \operatorname{cone}(-\be_{E\setminus S}) \text{ for each $S\subsetneq E$}.
\]

The proposition gives the following corollary concerning the stars of the stellahedral fan.  Recall that for a fan $\Sigma$ in $\RR^E$, the \emph{star} of a cone $\sigma\in\Sigma$ is a fan, denoted $\operatorname{star}_\sigma\Sigma$, in $\RR^E/\RR\sigma$ whose cones are the images of the cones in $\Sigma$ containing $\sigma$.

\begin{corollary} \label{cor:stars}\cite[Proposition 2.7]{BHMPW}
Let $I = \{i_1, \ldots, i_j\} \leq \mathscr F: F_1 \subsetneq \cdots \subsetneq F_k$ be a compatible pair, and by convention set $F_{k+1} = E$ (so $F_1 = E$ if $\mathscr F$ is an empty chain).  Then, the isomorphism
\[
\RR^E / \RR\sigma_{I\leq \mathscr F} = \RR^E/\RR\{\be_{i_1},\ldots, \be_{i_j}, -\be_{E\setminus F_1}, \ldots, -\be_{E\setminus F_k}\} \simeq \RR^{F_1\setminus I} \times \prod_{i=1}^k \RR^{F_{i+1}\setminus F_i}/\RR\be_{F_{i+1}\setminus F_i}\]
induces an isomorphism of fans
\[
\operatorname{star}_{\sigma_{I\leq \mathscr F}}\Sigma_E \simeq \Sigma_{{F_1\setminus I}} \times \prod_{i = 1}^k \underline\Sigma_{{F_{i+1}\setminus F_i}}.
\]
\end{corollary}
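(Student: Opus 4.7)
The plan is to exhibit a natural bijection of cones between $\operatorname{star}_{\sigma_{I\leq\mathscr F}}\Sigma_E$ and the proposed product fan, and to verify it is induced by the stated vector-space isomorphism. First, I would identify the cones of the star: by Proposition~\ref{prop:stellafan}, a cone $\sigma_{I' \leq \mathscr F'}$ of $\Sigma_E$ contains $\sigma_{I \leq \mathscr F}$ if and only if its generating set contains the generators of the latter, which happens exactly when $I \subseteq I'$ and $\mathscr F$ is a sub-chain of $\mathscr F'$. So I would parametrize the star's cones by the new data $(I' \setminus I, \mathscr F' \setminus \mathscr F)$, and then split the new flats of $\mathscr F' \setminus \mathscr F$ according to which gap $(F_m, F_{m+1})$ (with the convention $F_0 = \emptyset$ and $F_{k+1} = E$) they fall into.

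Next I would justify the vector-space decomposition. The relations defining the quotient $\RR^E/\RR\sigma_{I\leq\mathscr F}$ are $\be_i$ for $i \in I$ together with $\be_{E\setminus F_1}, \ldots, \be_{E\setminus F_k}$. Using the telescoping identities $\be_{F_{m+1}\setminus F_m} = \be_{E\setminus F_m} - \be_{E\setminus F_{m+1}}$ for $m=1,\ldots,k-1$ and $\be_{F_{k+1}\setminus F_k} = \be_{E\setminus F_k}$, the latter span is rewritten as the span of $\be_{F_{m+1}\setminus F_m}$ for $m=1,\ldots,k$. With respect to the partition $E = I \sqcup (F_1\setminus I) \sqcup (F_2\setminus F_1) \sqcup \cdots \sqcup (F_{k+1}\setminus F_k)$, the relation $\be_i$ kills the $I$-block while each $\be_{F_{m+1}\setminus F_m}$ modifies only the corresponding block, giving the claimed product decomposition.

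The key step is to check that each ray of a containing cone $\sigma_{I' \leq \mathscr F'}$ maps to the expected ray of the product fan. A ray $\be_i$ with $i \in I'\setminus I$ lands in $\RR^{F_1\setminus I}$ if $i \in F_1\setminus I$ (covered by compatibility $I'\subseteq F_1$), so it gives the corresponding ray of $\Sigma_{F_1\setminus I}$. For a new flat $G \in \mathscr F'\setminus \mathscr F$, the unique $m \in \{0,\ldots,k\}$ with $F_m \subsetneq G \subsetneq F_{m+1}$ distinguishes two cases. If $m=0$, writing $E\setminus G = (E\setminus F_1) \cup (F_1\setminus G)$ and using that $\be_{E\setminus F_1} \equiv 0$, one sees $-\be_{E\setminus G} \equiv -\be_{(F_1\setminus I)\setminus(G\setminus I)}$, which is the ray $\rho_{G\setminus I}$ of the stellahedral fan $\Sigma_{F_1\setminus I}$ (allowing $G = I$, giving the ray indexed by $\emptyset$). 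If $m\ge 1$, then $\be_{E\setminus F_{m+1}} \equiv 0$ (trivially when $m=k$, by the quotient otherwise), so $-\be_{E\setminus G} \equiv -\be_{F_{m+1}\setminus G}$ in $\RR^{F_{m+1}\setminus F_m}$, which equals $\overline{\be_{G\setminus F_m}}$ modulo $\be_{F_{m+1}\setminus F_m}$; this is the ray of $\underline\Sigma_{F_{m+1}\setminus F_m}$ associated to the nonempty proper subset $G\setminus F_m \subseteq F_{m+1}\setminus F_m$.

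Finally, I would note that this assignment is inverse to the obvious construction in the other direction: given a compatible pair on $F_1\setminus I$ together with chains on each $F_{m+1}\setminus F_m$, lift each subset back by adjoining $I$ or $F_m$ respectively to recover $(I', \mathscr F')$. Since the map is bijective on cones and rays go to rays, it is an isomorphism of fans. The only mildly subtle point is the case $m=0$, where the stellahedral fan (rather than a permutohedral one) appears because $G$ is allowed to equal $I$; I expect this bookkeeping about the convention $F_0 = \emptyset$ versus the empty-chain case to be the main thing to handle carefully.
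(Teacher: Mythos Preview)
Your proposal is correct and is precisely the natural direct verification from Proposition~\ref{prop:stellafan}: identify the cones containing $\sigma_{I\leq\mathscr F}$, split the new data according to the gaps of $\mathscr F$, and check ray-by-ray that the quotient linear map carries them to the rays of the product fan. The paper itself does not supply a proof, stating the result as an immediate corollary of Proposition~\ref{prop:stellafan} and citing \cite[Proposition 2.7]{BHMPW}; your argument is exactly the expected unpacking of that citation, including the one nontrivial bookkeeping point you flag (that new flats $G\subsetneq F_1$ land in the stellahedral factor $\Sigma_{F_1\setminus I}$, with $G=I$ allowed and mapping to the ray $\rho_\emptyset$).
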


\begin{example}\label{eg:permutostar}
When $(I,\mathscr F) = (\emptyset, \{\emptyset\})$ corresponding to the ray $\rho_\emptyset = \operatorname{cone}(-\be_E)$, we have that $\operatorname{star}_{\rho_\emptyset}\Sigma_E \simeq {\underline\Sigma}_E$.  In particular, we recover that the permutohedral variety ${\underline X}_E$ arise as the $T$-invariant divisor of $X_E$ corresponding to the ray $\rho_\emptyset$, as noted in the introduction.  From the map $\ZZ^E \to \ZZ^E/\ZZ\rho_\emptyset = \ZZ^E/\ZZ\be_E$, we have that the open dense torus of $\underline X_E$ is the projectivization $\PP T = (\kk^*)^E/\kk^*$ of $T$.
\end{example}

We will often use Example~\ref{eg:permutostar} to recover or relate the ``augmented'' structures on stellahedral varieties to the ``non-augmented'' versions on permutohedral varieties.  We will use the more general star structures of the stellahedral fan in \S\ref{subsec:basic}, where we study the restriction of augmented tautological bundles to various torus-invariant subvarieties of the stellahedral variety.

\subsection{Refinements and coarsenings}\label{subsec:refinecoarsen}

We record how the stellahedral fan $\Sigma_E$ arises as either a refinement or a coarsening of certain fans.  First, we note that $\Sigma_E$ is an iterated stellar subdivision of coarser fans in two distinguished ways.  Both statements can be verified via Proposition~\ref{prop:stellafan}.

\begin{proposition}\label{prop:iteratedblowup}
Let $\Sigma_E$ be the stellahedral fan of $E$.  The following hold. 
\begin{enumerate}[label = (\alph*)]\itemsep 5pt
\item Let $\Sigma_n$ be the fan in $\RR^E$ whose maximal cones are the cones generated by the cardinality-$n$ subsets of $\{\be_1, \be_2, \ldots, \be_{n}, -\be_E\}$.  Then $\Sigma_E$ is obtained from $\Sigma_n$ by performing the stellar subdivision of all maximal cones of $\Sigma_n$ that contain the vector $-\be_E$, then performing the stellar subdivision of the inverse images of codimension 1 cones that contain $-\be_E$, and so on.
\item Let $(\Sigma_1)^E$ be the fan in $\RR^E$ whose maximal cones are the $2^{n}$ orthants of $\RR^E$.  Then $\Sigma_E$ is obtained from $(\Sigma_1)^E$ by performing the stellar subdivision of the negative orthant, then performing the stellar subdivision of the codimension-1 faces of the negative orthant, and so on.
\end{enumerate}
\end{proposition}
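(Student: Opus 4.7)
The plan is to verify both parts combinatorially, by tracking the intermediate fans produced at each step of the iterated stellar subdivision and matching them with the compatible-pair description of $\Sigma_E$ given in Proposition~\ref{prop:stellafan}.

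For part (a), first I would observe that $\Sigma_n$ has rays $\{\be_i : i \in E\} \cup \{-\be_E\}$, which in the labeling of $\Sigma_E$ are $\{\rho_i : i \in E\} \cup \{\rho_\emptyset\}$. At step $k$ (for $k = 1, \ldots, n-1$), the codimension-$(k-1)$ cones of $\Sigma_n$ containing $-\be_E$ have the form $\tau_S = \operatorname{cone}(\{\be_j : j \in S\} \cup \{-\be_E\})$ for subsets $S \subsetneq E$ with $|S| = n-k$, and the sum of their primitive ray generators is $\sum_{j \in S} \be_j - \be_E = -\be_{E \setminus S}$, which is exactly the ray $\rho_S$ of $\Sigma_E$. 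Hence the iteration introduces all rays $\rho_S$ with $1 \leq |S| \leq n-1$, giving in total the full ray set of $\Sigma_E$ after $n-1$ steps.

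The main content is the verification of the cone structure. By induction on $k$, I would show that $\Sigma^{(k)}$ is obtained from $\Sigma_E$ by identifying adjacent cones that differ only in rays $\rho_S$ with $1 \leq |S| < n-k$, so that the cones $\sigma_{I \leq \mathscr F}$ in which every $F \in \mathscr F$ satisfies $F = \emptyset$ or $|F| \geq n-k$ appear intact, while cones that use not-yet-introduced rays $\rho_S$ are merged into coarser cones. The combinatorial key is that a nested chain $F_1 \subsetneq \cdots \subsetneq F_j$ in a compatible pair matches a nested sequence of stellar subdivisions: the largest chain element $F_j$ corresponds to the earliest step (namely $k = n - |F_j|$), and because $F_{j-1} \subsetneq F_j$, the ray $\rho_{F_{j-1}}$ lies in the relative interior of a cone of $\Sigma^{(n - |F_j|)}$ containing $\rho_{F_j}$, so the next subdivision introduces $\rho_{F_{j-1}}$ at step $k = n - |F_{j-1}|$, and so on down the chain.

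For part (b), the argument is structurally parallel, starting from $(\Sigma_1)^E$ whose rays $\{\pm \be_i : i \in E\}$ correspond to $\{\rho_i : i \in E\} \cup \{\rho_{E \setminus \{i\}} : i \in E\}$ in the labeling of $\Sigma_E$. At step $k$, the codimension-$(k-1)$ faces of the negative orthant are $\operatorname{cone}(-\be_j : j \in E \setminus T)$ for $T \subsetneq E$ with $|T| = k - 1$, and their primitive-sum barycenters $-\be_{E \setminus T} = \rho_T$ are precisely the rays of $\Sigma_E$ indexed by subsets of size $k-1$. The inductive verification of the cone structure proceeds via the same nested-chain principle.

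The main obstacle will be showing that the iterated subdivisions produce exactly $\Sigma_E$ and not some other refinement sharing the same rays. This reduces to a bijection, at each step, between the new maximal cones created by the subdivisions and the maximal cones $\sigma_{I \leq \mathscr F}$ of $\Sigma_E$ whose compatible pair has a chain element of size matching that step. Establishing and maintaining this bijection throughout the induction is the core combinatorial content of the proof.
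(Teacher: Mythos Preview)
Your proposal is correct and follows exactly the approach the paper indicates: the paper's entire proof is the one-line remark ``Both statements can be verified via Proposition~\ref{prop:stellafan},'' and your combinatorial verification via the compatible-pair description is precisely that verification carried out in detail. The paper provides no further argument, so your write-up is in fact more complete than what appears there.
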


Since the toric varieties of $\Sigma_E$ and $(\Sigma_1)^E$ are $\PP^E$ and $(\PP^1)^E$, respectively, the above two descriptions of $\Sigma_E$ can be rephrased to say that the stellahedral variety $X_E$ is an iterated blow-up along smooth centers from $\PP^E$ and from $(\PP^1)^E$.  The two maps $\pi_E\colon X_E \to \PP^E$ and $\pi_{1^E}\colon X_E \to (\PP^1)^E$ are the blow-down maps.
For $i\in E$, let $\pi_i\colon X_E \to \PP^1$ be the composition of $\pi_{1^E}$ with the projection to the $i$-th $\PP^1$.  These maps from $X_E$ to projective spaces  give the following distinguished divisor classes on $X_E$.

\begin{definition}\label{defn:alphayis}
With notations as above, we denote
\[
\alpha = \pi_E^*(\text{hyperplane class of $\PP^E$}) \quad\text{and}\quad y_i = \pi_i^*(\text{hyperplane class of $\PP^1$}).
\]
\end{definition}

We now describe the stellahedral fan $\Sigma_E$ as a coarsening of a permutohedral fan.  This description of $\Sigma_E$ will be useful for our discussion of the tropical geometry of augmented wonderful varieties in \S\ref{subsec:tropical} and for producing a basis for $\Sigma_E$ in \S\ref{subsec:schubert}.

\medskip
Denote by $\widetilde E = E\sqcup \{0\}$.  
Let $p$ be the isomorphism of lattices
\[
p\colon \ZZ^{\widetilde E} /\ZZ \be_{\widetilde E} \to \ZZ^E \textnormal{ given by } (a_0, a_1, \ldots, a_n) \mapsto (a_1 - a_0, \ldots, a_n - a_0).
\]
That is, for $S\subseteq \widetilde E$ we have $\overline \be_S\mapsto \be_S$ if $0\notin S$ and $\overline\be_S \mapsto -\be_{E\setminus S}$ if $0\in S$.
To show that the stellahedral fan $\Sigma_E$ of $E$ is the image under $p$ of a coarsening of the permutohedral fan $\underline\Sigma_{{\widetilde E}}$ of $\widetilde E$, we use the following notions from \cite{dCP95, FY04} in an equivalent formulation given in \cite[\S7]{Postnikov}.
A \emph{building set} is a collection $\cG$ of subsets of $\widetilde E$ such that $\{i\} \in \cG$ for any $i\in \widetilde E$, and if $S$ and $S'$ are in $\cG$ with $S\cap S' \neq \emptyset$ then so is $S\cup S'$.
The \emph{nested complex} $\mathcal N$ of a building set $\cG$ is a simplicial complex on vertices $\cG$ whose faces are collections $\{X_1, \ldots, X_k\} \subseteq \cG$ such that for every subcollection $\{X_{i_1}, \ldots, X_{i_\ell}\}$ with $\ell \geq 2$ consisting only of pairwise incomparable elements, one has $\bigcup_{j=1}^\ell X_{i_j} \notin \cG$.
When $\widetilde E\in \cG$, the set of cones
\[
\big\{\operatorname{cone}\{\overline\be_{X_1}, \ldots, \overline\be_{X_k}\}\subseteq \RR^{\widetilde E}/\RR\be_{\widetilde E} \mid \{X_1, \ldots, X_k\}\subseteq \cG\setminus\{\emptyset, \widetilde E\} \text{ a face of $\mathcal N$} \big\}
\]
is a smooth fan in $\RR^{\widetilde E}/\RR\be_{\widetilde E}$ that coarsens the permutohedral fan $\underline\Sigma_{{\widetilde E}}$.

\begin{proposition}\label{prop:stellacoarsen}
The collection 
$
\cG = \{S \cup 0 \mid S \subseteq E\} \cup E
$
is a building set whose fan projects isomorphically onto the stellahedral fan $\Sigma_E$ under $p$.
\end{proposition}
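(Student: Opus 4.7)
The plan is to verify the building set axioms, then match the rays of the two fans under $p$, and finally match their cones by translating between the nested-complex description and the compatible-pair description of Proposition~\ref{prop:stellafan}.

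First I would verify that $\cG$ is a building set, interpreting the ``$\cup E$'' in the statement as adjoining the singletons $\{\{i\}\mid i \in E\}$, so that $\{j\} \in \cG$ for every $j \in \widetilde E$ (with $\{0\}$ arising as $\emptyset \cup \{0\}$). Closure of $\cG$ under union of intersecting pairs is a short case analysis: two sets $S_1 \cup \{0\}$ and $S_2 \cup \{0\}$ always meet in $\{0\}$ with union $(S_1 \cup S_2) \cup \{0\} \in \cG$; a set $S \cup \{0\}$ meets $\{j\}$ for $j \in E$ only when $j \in S$, in which case the union is $S \cup \{0\}$ itself; and two singletons meet only when they coincide.

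Next I would identify the rays of the nested-complex fan, indexed by $\cG \setminus \{\emptyset, \widetilde E\}$, and compute their images under $p$. Directly from the formula defining $p$, one has $p(\overline\be_{\{i\}}) = \be_i$ for $i \in E$ and $p(\overline\be_{S \cup \{0\}}) = -\be_{E \setminus S}$ for $S \subsetneq E$ (the case $S = E$ yields $\widetilde E$, which is excluded, consistently with $\overline\be_{\widetilde E} = 0$ in the quotient). Comparing with Proposition~\ref{prop:stellafan}, this gives a bijection onto the ray set of $\Sigma_E$, and $p$ being a lattice isomorphism ensures primitive generators are sent to primitive generators.

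Finally I would match the cones. A compatible pair $(I, \mathscr F)$ with $\mathscr F: F_1 \subsetneq \cdots \subsetneq F_k$ and $I \subseteq F_1$ should correspond to the collection
\[
\mathcal C_{(I,\mathscr F)} \coloneq \{\{i\} \mid i \in I\} \cup \{F_j \cup \{0\} \mid 1 \le j \le k\} \subseteq \cG \setminus \{\emptyset, \widetilde E\}.
\]
I would verify $\mathcal C_{(I,\mathscr F)}$ is a face of the nested complex by checking three cases: the elements $F_j \cup \{0\}$ form a chain, so no pairwise-incomparable subcollection arises among them; the union of any subcollection of $\geq 2$ singletons $\{i\}$ with $i \in I$ is a subset of $E$ of size $\geq 2$, which is not in $\cG$; and the compatibility $I \subseteq F_1$ forces $\{i\} \subseteq F_j \cup \{0\}$ for every $i \in I$ and every $j$, so no mixed incomparable pair appears. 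Conversely, the nested condition applied to pairs $F_{j_1} \cup \{0\}, F_{j_2} \cup \{0\}$ forces them to be comparable (otherwise their union lies in $\cG$), and applied to pairs $\{i\}, F_j \cup \{0\}$ forces $i \in F_j$, so every face of the nested complex recovers a unique compatible pair. Since $p$ sends the cone generated by $\mathcal C_{(I,\mathscr F)}$ exactly to $\sigma_{I \le \mathscr F}$, the fans are isomorphic. The main obstacle is precisely this last bookkeeping step: classifying which unions of pairs in $\cG$ remain inside $\cG$, so that the nested condition for $\cG$ matches the compatibility $I \le \mathscr F$ bijectively.
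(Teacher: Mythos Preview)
Your proof is correct and follows essentially the same approach as the paper's: verify the building set axioms, identify the faces of the nested complex as exactly the collections $\{F_1 \cup 0, \ldots, F_k \cup 0\} \cup I$ with $F_1 \subsetneq \cdots \subsetneq F_k \subsetneq E$ and $I \subseteq F_1$, and then invoke Proposition~\ref{prop:stellafan}. The paper records this in one sentence (``straightforward to check''), whereas you spell out the case analysis; your reading of ``$\cup\, E$'' as adjoining the singletons $\{\{i\}\mid i \in E\}$ is the intended one and is consistent with the paper's later use of the same convention.
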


\begin{proof}
Both the facts that $\cG$ is a building set and that the faces of $\mathcal N$ are
$
\{S_1 \cup 0, \ldots, S_k\cup 0\} \cup I
$,
where $\emptyset \subseteq S_1 \subsetneq \cdots \subsetneq S_k \subseteq E$ and $\emptyset \subseteq I \subseteq S_1$, are straightforward to check.  The rest of the proposition follows from Proposition~\ref{prop:stellafan}.
\end{proof}

\subsection{Polymatroids}
A standard correspondence between polyhedra and divisors on toric varieties \cite[\S6.2]{CLS11} (see also \cite[\S2.4]{ACEP20}) states the following:
For a lattice polytope $Q$ and the toric variety $X_Q$ defined by its normal fan $\Sigma_Q$, the base-point-free torus-invariant divisors on $X_Q$ are in bijection with \emph{deformations} of $Q$, which are lattice polytopes whose normal fans coarsen $\Sigma_Q$.
We show that specializing this to the stellahedral variety $X_E$ gives a correspondence between the set of base-point-free divisor classes on $X_E$ and a family of polytopes called ``polymatroids'' introduced in \cite{edmonds1970submodular}.

\begin{definition}\label{defn:polymat}
For vectors $u,v\in \RR^E$, let us denote $u\geq v$ if $u-v\in \RR^E_{\geq 0}$.
A \emph{polymatroid} on $E$ is a nonempty polytope $P$ in the nonnegative orthant $\RR^E_{\geq 0}$ satisfying the following two properties:
\begin{enumerate}\itemsep 5pt
\item If $v\in \RR^E_{\geq 0}$ such that $u\geq v$ for some $u\in P$, then $v\in P$.
\item For any $v\in \RR^E_{\geq 0}$, every maximal $u\in P$ such that $u\leq v$ has the same coordinate sum $\langle u, \be_E \rangle$.
\end{enumerate}
An \emph{integral polymatroid} is a polymatroid whose vertices lie in $\ZZ^E$.
\end{definition}

We will use the following ``strong normality'' of integral polymatroids in the proof of Proposition~\ref{prop:orbitflag}.

\begin{proposition}\cite[Chapter 18.6, Theorem 3]{Wel76}\label{prop:strongnormal}
Let $P_1, \ldots, P_k$ be integral polymatroids on $E$. Then any lattice point $q\in \ZZ^E$ in the Minkowski sum $P_1 + \cdots + P_k$ is a sum $p_1 + \cdots + p_k$ of lattice points $p_i \in P_i\cap \ZZ^E$.  In particular, an integral polymatroid $P$ is a normal polytope.
\end{proposition}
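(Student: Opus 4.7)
My plan is to reduce to the key case $k = 2$ and then invoke polymatroid intersection theory. Recall that an integral polymatroid on $E$ is precisely a polytope of the form
\[
P_r = \{v \in \RR^E_{\geq 0} : \langle v, \be_S\rangle \leq r(S) \text{ for all } S \subseteq E\}
\]
for some monotone submodular integer-valued $r \colon 2^E \to \ZZ_{\geq 0}$ with $r(\emptyset) = 0$, and conversely. A standard greedy-vertex argument shows that $P_{r_1} + P_{r_2} = P_{r_1 + r_2}$ and that the vertices of $P_{r_1+r_2}$ are lattice points. Hence the class of integral polymatroids is closed under Minkowski sum, and the general case of the proposition follows from the $k = 2$ case by induction on $k$.

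For $k = 2$, write $P_j = P_{r_j}$ and suppose $q \in (P_{r_1} + P_{r_2}) \cap \ZZ^E$. I would seek $p_1 \in P_{r_1} \cap \ZZ^E$ with $q - p_1 \in P_{r_2} \cap \ZZ^E$. The set of valid first summands is
\[
F_q = P_{r_1} \cap (q - P_{r_2}),
\]
which is nonempty. The second factor unpacks as
\[
q - P_{r_2} = \{u \in \RR^E : 0 \leq u \leq q,\ \langle u, \be_S\rangle \geq q(S) - r_2(S)\text{ for all } S\subseteq E\};
\]
since $q$ is modular and $r_2$ is submodular, $S \mapsto q(S) - r_2(S)$ is supermodular, so this is (a bounded version of) a contrapolymatroid. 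Thus $F_q$ is the intersection of a polymatroid with a contrapolymatroid, and by Edmonds' polymatroid intersection theorem its defining inequalities form a totally dual integral system. Consequently $F_q$ has integer vertices, so it contains a lattice point $p_1$, and setting $p_2 = q - p_1$ yields the desired decomposition. The ``in particular'' clause then follows by taking $P_1 = \cdots = P_k = P$.

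The main obstacle is the appeal to polymatroid intersection. A more self-contained route would be induction on $|E|$: choose $i \in E$, split $q_i$ as $a_1 + a_2$ with integers $a_j \in [0, r_j(\{i\})]$, and replace $P_j$ by its contraction to $E \setminus i$ with respect to the vector $a_j \be_i$. The delicate point is picking $(a_1, a_2)$ so that the projection $q|_{E \setminus i}$ still lies in the Minkowski sum of the contracted polymatroids, which boils down to an uncrossing argument on the lattice of tight sets for the submodular constraints.
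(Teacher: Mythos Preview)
The paper does not give its own proof of this proposition; it is simply quoted from \cite{Wel76} without argument, so there is nothing in the paper to compare your attempt against.

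Your argument is correct and follows a standard route. Two small remarks. First, in your description of $q - P_{r_2}$ the lower bound $0 \le u$ is not actually a consequence of $p_2 \in P_{r_2}$ (only $u \le q$ follows from $p_2 \ge 0$); this is harmless, since the intersection with $P_{r_1}$ already imposes $u \ge 0$. Second, the integrality of the intersection of a polymatroid with a contrapolymatroid is not literally ``Edmonds' polymatroid intersection theorem'' (which is usually stated for $P_{r_1} \cap P_{r_2}$) but rather the closely related discrete separation / Edmonds--Giles TDI statement for a system with submodular upper bounds and supermodular lower bounds; the conclusion you need is the same, and your invocation of TDI is the right mechanism. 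The reduction to $k=2$ via $P_{r_1} + P_{r_2} = P_{r_1+r_2}$ is itself a classical fact of Edmonds, so the induction is valid.
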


This property of polymatroids implies that the closure of the image of the map
\[
T \to \PP^{|P_1 \cap \ZZ^E|-1} \times \cdots \times \PP^{|P_k \cap \ZZ^E|-1} \text{ defined by } t\mapsto ([t^m]_{m\in P_1\cap \ZZ^E}, \dots, [t^m]_{m\in P_k\cap \ZZ^E})
\]
is isomorphic to the toric variety of the normal fan of $P_1 + \cdots + P_k$.
For a general discussion of normal polytopes in toric geometry, see \cite[Chapter 2]{CLS11}.

To relate polymatroids to base-point-free divisor classes on $X_E$, we will need the following equivalent description of (integral) polymatroids.
A function $f \colon 2^E \to \RR$ with $f(\emptyset) = 0$ is said to be \emph{non-decreasing} and \emph{submodular} if
\begin{enumerate}\itemsep 5pt
\item[] (non-decreasing) $f(S)\leq f(S')$ whenever $S\subseteq S' \subseteq E$, and
\item[] (submodular) $f(S\cup S') + f(S\cap S') \leq f(S) + f(S')$ for all $S,S'\subseteq E$.
\end{enumerate}

\begin{theorem}\cite[(8)]{edmonds1970submodular}
Polymatroids on $E$ are in bijection with non-decreasing and submodular functions $f \colon 2^E \to \RR$ with $f(\emptyset) = 0$.  The bijection is given by
\begin{align*}
\text{a polytope $P$} \quad&\mapsto\quad f \colon 2^E \to \RR \text{ where } f_P(S) = \max\{ \langle u, \be_S\rangle \mid u\in P\} \text{ for $S\subseteq E$}\\
\text{a function } f \colon 2^E\to \RR \quad&\mapsto\quad P = \{u\in \RR_{\geq 0}^E \mid \langle \be_S, u \rangle \leq f(S) \text{ for all }S\subseteq E\}.
\end{align*}
A polymatroid $P$ is integral if and only if the function $f$ is $\ZZ$-valued.\footnote{In some previous works \cite{DerksenFink, cameron2018flag}, the terminology ``polymatroid''  refers to associating the polytope $\underline P = \{u\in \RR_{\geq 0}^E \mid \langle \be_S, u \rangle \leq f(S) \text{ for all proper $S\subsetneq E$ and } \langle \be_E, u \rangle = f(E)\}$ to a non-decreasing and submodular function $f$ with $f(\emptyset) = 0$.  Our polytope $P$ is equal to $\{u\in \RR_{\geq 0}^E \mid \text{there exists $v\in \underline P$ such that } v - u \in \RR_{\geq 0}^E\}$, and hence contains $\underline P$ as a face.}
\end{theorem}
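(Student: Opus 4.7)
The plan is to verify that both assignments are well-defined and mutually inverse, with Edmonds' greedy algorithm as the central tool. The easy observations come first: $f_P(\emptyset)=0$ is immediate since $P\subseteq\RR^E_{\geq 0}$, and $f_P$ is non-decreasing because $\be_S\leq\be_{S'}$ coordinate-wise on the nonnegative orthant whenever $S\subseteq S'$. Conversely, for any non-decreasing submodular $f$ with $f(\emptyset)=0$, the set $P_f$ is manifestly a down-closed polytope in $\RR^E_{\geq 0}$ (bounded in the direction $\be_E$ by $f(E)$), verifying axiom~(1) of Definition~\ref{defn:polymat}.

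The key step for the direction $f\mapsto P_f$ is Edmonds' greedy theorem. For any ordering $i_1,\ldots,i_n$ of $E$, define $u^{\mathrm{gr}}\in\RR^E$ by $u^{\mathrm{gr}}_{i_k}=f(\{i_1,\ldots,i_k\})-f(\{i_1,\ldots,i_{k-1}\})$. Monotonicity of $f$ forces $u^{\mathrm{gr}}\geq 0$, and iterated submodularity forces $\langle u^{\mathrm{gr}},\be_S\rangle\leq f(S)$ for every $S\subseteq E$, so $u^{\mathrm{gr}}\in P_f$. A standard exchange argument then shows that $u^{\mathrm{gr}}$ maximizes $\langle c,u\rangle$ over $P_f$ for any nonnegative $c$ whose entries decrease along $i_1,\ldots,i_n$. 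Specializing $c=\be_S$ with any ordering listing $S$ first gives $f_{P_f}(S)=f(S)$, so $f\mapsto P_f\mapsto f_{P_f}$ is the identity. Specializing instead to arbitrary $v\in\RR^E_{\geq 0}$ with an ordering in which the coordinates of $v$ decrease shows that the truncated greedy vector maximizes $\langle\be_E,u\rangle$ over $\{u\in P_f:u\leq v\}$, and that the common value depends only on the sorted multiset of coordinates of $v$; this verifies axiom~(2) for $P_f$.

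For the reverse composition $P\mapsto f_P\mapsto P_{f_P}$, the inclusion $P\subseteq P_{f_P}$ is tautological. Submodularity of $f_P$ and the equality $P_{f_P}=P$ are classically intertwined; the route I would take is to first introduce $\rho:\RR^E_{\geq 0}\to\RR_{\geq 0}$ by $\rho(v)=\langle u,\be_E\rangle$ for any maximal $u\leq v$ in $P$, which is well-defined by axiom~(2), and then to establish concavity of $\rho$ via a polymatroid-style exchange argument combining axioms (1) and (2) applied to test vectors of the form $u\vee u'$ and $u\wedge u'$ for $u,u'$ maximal below $M\be_S$ and $M\be_T$. Specializing $v=M\be_S$ for $M$ large transports concavity of $\rho$ to submodularity of $f_P$. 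The equality $P_{f_P}=P$ then follows because both polytopes are down-closed in $\RR^E_{\geq 0}$ with identical support values in each direction $\be_S$, and Edmonds' greedy characterization of the vertices of $P_{f_P}$ exhibits each such vertex as a greedy vector for $f_P$ already realized inside $P$.

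Integrality is then immediate: if $f\in\ZZ^{2^E}$, every greedy vertex lies in $\ZZ^E$ and every vertex of $P_f$ is such a greedy vertex, so $P_f$ is integral; conversely, $f_P(S)$ is attained at a vertex of $P$, so integrality of $P$ forces $f_P\in\ZZ^{2^E}$. The main obstacle is the simultaneous verification of submodularity of $f_P$ and the identity $P=P_{f_P}$ from the \emph{geometric} axioms (1)--(2); all standard proofs pass through an exchange-type statement for polymatroids, which is the structural heart of the theorem, and after which the greedy algorithm synchronizes the two descriptions of $P$.
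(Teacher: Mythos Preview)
The paper does not give its own proof of this statement: it is stated as a classical theorem of Edmonds with the citation \cite[(8)]{edmonds1970submodular} and is used as a black box thereafter. So there is no ``paper's proof'' to compare against; your argument is the standard one going back to Edmonds, organized around the greedy algorithm.

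Your outline is correct in spirit, but two steps deserve more care than you give them. First, in verifying axiom~(2) of Definition~\ref{defn:polymat} for $P_f$, you show that the greedy vector maximizes $\langle \be_E, u\rangle$ over $\{u\in P_f : u\le v\}$, but the axiom demands that \emph{every} $\le$-maximal element of this set has the same coordinate sum, not merely that some maximizer exists. The usual route is an explicit exchange lemma: if $u,u'\in P_f$ with $u,u'\le v$ and $u_i<u'_i$ for some $i$, then there exists $j$ with $u_j>u'_j$ such that $u+\varepsilon(\be_i-\be_j)\in P_f$ for small $\varepsilon>0$; this forces all maximal elements to share a common sum. Second, your derivation of submodularity of $f_P$ from axioms (1)--(2) via concavity of $\rho$ is gestured at rather than carried out; the ``polymatroid-style exchange argument combining axioms (1) and (2) applied to $u\vee u'$ and $u\wedge u'$'' is exactly the content that needs to be written down, and it is the nontrivial heart of the equivalence. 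Once that exchange lemma is in hand, both submodularity of $f_P$ and the equality $P=P_{f_P}$ follow cleanly, and your treatment of integrality is fine.
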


\begin{example}\label{eg:indep}
The independence polytope $I(\M)$ of a matroid $\M$ is an integral polymatroid where the function $f$ is the rank function $\rk_\M$.  It follows that $\operatorname{rk}_\M$ is a non-decreasing and submodular function.  Conversely, the rank function characterization of matroids implies that an integral polymatroid contained in the Boolean cube $[0,1]^E$ is the independence polytope of a matroid.  See \cite{edmonds1970submodular} for details.
\end{example}

The following proposition implies that, up to translation, polymatroids are exactly the deformations of the stellahedron.

\begin{proposition}\label{prop:nefpolymatroid}
For a proper subset $\emptyset\subseteq S\subsetneq E$, let $D_S$ be the torus-invariant divisor on $X_E$ corresponding to the ray $\sigma_{\emptyset\leq \{S\}} = \operatorname{cone}(-\be_{E\setminus S})$ of $\Sigma_E$.  Let $[D_S]$ be its divisor class in $A^1(X_E)$.  Then the map defined by 
\[
(\text{integral polymatroid $P$ defined by $f \colon 2^E \to \ZZ$}) \mapsto \sum_{\emptyset\subseteq S \subsetneq E} f(E\setminus S)[D_S] \in A^1(X_E)
\]
is a bijection between the set of integral polymatroids on $E$ and the set of base-point-free divisor classes on $X_E$.
\end{proposition}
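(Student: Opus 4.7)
The plan is to invoke the standard toric correspondence of \cite[\S6.2]{CLS11} between base-point-free torus-invariant divisor classes on the smooth projective toric variety $X_E$ and lattice polytopes whose normal fan coarsens $\Sigma_E$, modulo translation by lattice points. What needs to be shown is that integral polymatroids are precisely the canonical translates characterized by lying in $\RR_{\geq 0}^E$ with $0$ as the vertex corresponding to the cone $\sigma_{E\leq\emptyset} = \RR_{\geq 0}^E$, and that the stated formula recovers the standard correspondence.

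For the forward direction, given an integral polymatroid $P$ with defining function $f$, I would show that the support function $\varphi_P(v) \coloneq \min_{u \in P} \langle u, v \rangle$ is linear on each cone $\sigma_{I \leq \mathscr F}$ of $\Sigma_E$. For $\mathscr F : F_1 \subsetneq \cdots \subsetneq F_k$ and $v = \sum_{i \in I} c_i \be_i - \sum_{j} d_j \be_{E \setminus F_j}$ in the relative interior, downward-closure of $P$ together with $0 \in P$ implies that the minimum is attained with $u_I = 0$, reducing to maximizing $\sum_j d_j\langle u, \be_{E \setminus F_j}\rangle$ over $P$; the greedy algorithm for polymatroids and Abel summation then yield $\varphi_P(v) = -\sum_j d_j f(E\setminus F_j)$, linear in $v$. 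The values $\varphi_P(\be_i) = 0$ for $i \in E$ and $\varphi_P(-\be_{E\setminus S}) = -f(E\setminus S)$ for $S \subsetneq E$, combined with \cite[Theorem 6.2.1]{CLS11}, identify the corresponding base-point-free divisor class as $\sum_{S \subsetneq E} f(E\setminus S)[D_S]$.

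For the inverse direction, the character relations $[D_j] = \sum_{S \,:\, j \notin S}[D_S]$ in $A^1(X_E)$ (from $\operatorname{div}(\chi^{\be_j})$) show that every divisor class has a unique representative $\sum_{S \subsetneq E} c_S D_S$ with vanishing coefficient on each $D_j$ for $j \in E$; given a base-point-free such class, set $f(T) = c_{E\setminus T}$ for $T \neq \emptyset$ and $f(\emptyset) = 0$. By Edmonds' theorem, the polytope $P_f$ is then an integral polymatroid mapping back to the given class, provided $f$ is non-decreasing and submodular. Monotonicity follows from concavity of $\varphi_D$ combined with its vanishing on $\sigma_{E \leq \emptyset}$: for $T \subseteq T'$, the decomposition $-\be_T = -\be_{T'} + \be_{T'\setminus T}$ and $\varphi_D(\be_{T'\setminus T}) = 0$ give $\varphi_D(-\be_T) \geq \varphi_D(-\be_{T'})$. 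The local submodularity $f(T\cup i) + f(T\cup j) \geq f(T) + f(T\cup\{i,j\})$ for distinct $i, j \notin T$ follows from the vector identity $-\be_T + (-\be_{T\cup\{i,j\}}) = -\be_{T\cup i} + (-\be_{T\cup j})$: when $T \neq \emptyset$, the left-hand summands both lie in the two-cone $\sigma_{\emptyset \leq \{E \setminus (T\cup\{i,j\}) \subsetneq E \setminus T\}}$ on which $\varphi_D$ is linear, so $\varphi_D(-\be_T - \be_{T\cup\{i,j\}}) = \varphi_D(-\be_T) + \varphi_D(-\be_{T\cup\{i,j\}})$, while concavity applied to the right-hand sum yields $\varphi_D(-\be_{T\cup i} - \be_{T\cup j}) \geq \varphi_D(-\be_{T\cup i}) + \varphi_D(-\be_{T\cup j})$; these combine via the vector identity to give the required inequality, and the case $T = \emptyset$ is immediate from concavity of $\varphi_D$ alone. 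The submodularity step is the main obstacle, since the rays $-\be_{T\cup i}$ and $-\be_{T\cup j}$ do not share any common cone of $\Sigma_E$---the trick is to reformulate submodularity as a comparison between two decompositions of the same vector, one of which does lie in a cone of $\Sigma_E$ on which $\varphi_D$ is additive.
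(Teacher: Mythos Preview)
Your proposal is correct and follows the same overall toric dictionary as the paper, but the mechanics differ. The paper works in both directions at once by invoking \cite[Theorem 6.4.9]{CLS11}, which reduces base-point-freeness to the superadditivity inequalities $\varphi_D(\sum u_{\rho_i}) \geq \sum \varphi_D(u_{\rho_i})$ indexed by the \emph{primitive collections} of $\Sigma_E$; Corollary~\ref{cor:nonfaces} supplies these collections explicitly as $\{\rho_i,\rho_S\}$ with $i\notin S$ and $\{\rho_S,\rho_{S'}\}$ with $S,S'$ incomparable, and the two resulting inequalities translate verbatim into monotonicity and submodularity of $f$. Your forward direction instead verifies piecewise linearity of $\varphi_P$ directly via the greedy/Choquet formula for polymatroids, and your reverse direction replaces the primitive-collection criterion with global superadditivity plus the observation that $-\be_T$ and $-\be_{T\cup\{i,j\}}$ span a single two-dimensional cone of $\Sigma_E$. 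Both routes are valid; the paper's is shorter because the primitive-collection criterion packages the ``only the minimal non-faces matter'' step, while yours is more self-contained (it does not cite Corollary~\ref{cor:nonfaces} or \cite[Theorem 6.4.9]{CLS11}) at the cost of a slightly longer argument.
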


For the proof we will need the following consequence of Proposition~\ref{prop:stellafan}, which follows from \cite[Theorem 6.1.7]{CLS11}.

\begin{corollary}\label{cor:nonfaces} (cf.\ \cite[Proposition 2.10]{BHMPW})
A collection of rays in $\Sigma_E$ is a minimal collection of rays that do not form a cone in $\Sigma_E$ if and only if the collection is either
\[
\{\rho_i, \rho_S\} \text{ for $i\notin S \subsetneq E$} \qquad\text{or}\qquad \{\rho_S, \rho_{S'}\} \text{ for incomparable $S,S'\subsetneq E$}.
\]
\end{corollary}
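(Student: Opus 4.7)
The plan is to read this corollary off directly from Proposition~\ref{prop:stellafan}. That proposition tells us a collection of rays $\{\rho_i : i \in I\} \cup \{\rho_S : S \in \mathscr{F}\}$ spans a cone of $\Sigma_E$ precisely when $I \leq \mathscr{F}$ is a compatible pair, i.e., when $\mathscr{F}$ is a chain of proper subsets of $E$ and every element of $I$ lies in every element of $\mathscr{F}$. So the obstructions to a collection of rays forming a cone are local and of exactly two shapes: a containment failure $i \notin S$ between a ``positive'' ray $\rho_i$ and a ``negative'' ray $\rho_S$, or an incomparability between two negative rays $\rho_S, \rho_{S'}$.

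First I would verify that each of the listed 2-element configurations genuinely fails to be a cone. The pair $\{\rho_i, \rho_S\}$ with $i \notin S$ corresponds to $I = \{i\}$ and $\mathscr{F} = \{S\}$, which violates the membership requirement in the definition of compatibility. The pair $\{\rho_S, \rho_{S'}\}$ with $S, S'$ incomparable corresponds to $I = \emptyset$ and $\mathscr{F} = \{S, S'\}$, which violates the requirement that $\mathscr{F}$ be a chain. Since any single ray is trivially a cone of $\Sigma_E$, both of these 2-element collections are minimal non-faces.

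Next I would show these exhaust the minimal non-faces. Given any non-face
\[
R = \{\rho_i : i \in I\} \cup \{\rho_S : S \in \mathcal{A}\},
\]
the pair $(I, \mathcal{A})$ is not compatible, so at least one of the following holds: there exist $S, S' \in \mathcal{A}$ that are incomparable, or there exist $i \in I$ and $S \in \mathcal{A}$ with $i \notin S$. In either case $R$ contains a 2-element non-face of one of the two listed forms, and by minimality of $R$ this containment must be an equality.

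There is essentially no obstacle here: the content is a direct bookkeeping translation of Proposition~\ref{prop:stellafan}. The only care needed is to remember that the indexing of rays distinguishes $\rho_i$ from $\rho_S$ even when $S = \{i\}$, since these generate different rays in $\RR^E$ (namely $\be_i$ versus $-\be_{E \setminus \{i\}}$), and that the convention permits $S = \emptyset$ among the proper subsets, which neither alters the statement nor the argument.
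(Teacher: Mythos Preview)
Your proof is correct and is exactly the argument the paper has in mind: the paper states this corollary as ``an immediate consequence of Proposition~\ref{prop:stellafan}'' without further elaboration, and your unpacking of the compatibility condition into the two obstruction types is precisely the intended reading.
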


\begin{proof}[Proof of Proposition~\ref{prop:nefpolymatroid}]
We begin by noting that the primitive vectors  in the rays of $\Sigma_E$ are $\{\be_i\mid i \in E\} \cup \{-\be_{E\setminus S} \mid S\subsetneq E\}$.  Because the cone spanned by $\{\be_i \mid i\in E\}$ is a maximal cone in $\Sigma_E$, the presentation of the class group $A^1(X_E)$ in terms of torus-invariant divisors, as given in \cite[Theorem 4.1.3]{CLS11}, implies that any divisor class $[D] \in A^1(X_E)$ can be written uniquely as $[D] = \sum_{S \subsetneq E} c_S [D_S]$ with $c_S \in \mathbb{Z}$.  Let us set $c_E = 0$ by convention, and let $D = \sum_{S \subsetneq E} c_S D_S$ be a divisor.  We now need check that the line bundle $\mathcal{O}_{X_E}(D)$ of the divisor $D$ on $X_E$ is base-point-free if and only if the function $f \colon 2^E \to \ZZ$ given by $S \mapsto c_{E \setminus S}$ defines a polymatroid on $E$.\\
\indent For this end, we will use a criterion for base-point-freeness on toric varieties in terms of piecewise linear functions.  Following the conventions of \cite{CLS11}, the divisor $D = \sum_{S \subsetneq E} c_S D_S$ corresponds to the piecewise linear function $\varphi_D$ on $\mathbb{R}^E$ defined by assigning the value $0$ to $\be_i$ for $i\in E$ and the value $-c_S$ to $-\be_{E\setminus S}$ for $S\subsetneq E$.  
Applying a criterion for base-point-freeness \cite[Theorem 6.4.9]{CLS11} to the stellahedral fan along with Corollary~\ref{cor:nonfaces}, one has that $\mathcal O_{X_E}(D)$ is base-point-free if and only if the following two conditions are satisfied:
\begin{enumerate}\itemsep 5pt
\item For $i\in E$ and a subset $S\subsetneq E$ not containing $i$, one has
\[
\varphi_D(\be_i - \be_{E\setminus S}) \geq \varphi_D(\be_i) + \varphi_D(-\be_{E\setminus S}).
\]
Equivalently, since $i\notin S$ implies that $\be_i - \be_{E\setminus S} = -\be_{E\setminus (S\cup i)}$, noting that $\varphi_D(\be_i) = 0$ and $-\varphi_D(-\be_{E\setminus S}) = c_S$ gives
\[
c_{S\cup i} \leq c_{S}.
\]
\item For incomparable proper subsets $S$ and $S'$ of $E$, one has
\[
\varphi_D(-\be_{E\setminus S}-\be_{E\setminus S'}) \geq \varphi_D(-\be_{E\setminus S})  + \varphi_D(-\be_{E\setminus S'}).
\]
Equivalently, since $-\be_{E\setminus S}-\be_{E\setminus S'} = -\be_{E\setminus (S\cap S')}-\be_{E\setminus (S\cup S')}$, and because $\varphi_D$ is linear on $\operatorname{cone}\{-\be_{E\setminus (S\cap S')},-\be_{E\setminus (S\cup S')}\}$, noting that $-\varphi_D(-\be_{E\setminus S}) = c_{S}$ gives
\[
c_{S\cap S'} + c_{S\cup S'} \leq c_{S} + c_{S'}.
\]
Here, note that when $S\cup S' = E$, our convention that $c_{E}=0$ is consistent because $\varphi_D(-e_{E\setminus E}) = \varphi_D(0) = 0$.

\end{enumerate}
In terms of the function $f\colon S\mapsto c_{E\setminus S}$, the first condition is equivalent to $f(S) \leq f(S\cup i)$, and the second condition is equivalent to $f(S\cup S') + f(S\cap S') \leq f(S) + f(S')$.
\end{proof}

For an integral polymatroid $P$, let $D_P = \sum_{S\subsetneq E} f(E\setminus S)D_S$ be the corresponding divisor on $X_E$.
Let $X_P$ be the toric variety of the normal fan of $P$, considered as a fan in $\RR^E$ so that $X_P$ is considered as a $T$-variety.  Note that $X_P$ may have dimension less than $n$, so the action of $T$ on $X_P$ may have a nontrivial kernel.

\begin{example}\label{eg:twomaps}
For any matroid $\M$, we have that the divisor $D_{I(\M)}$ induces a toric morphism $X_E \to X_{I(\M)}$.  In particular, we recover the two distinguished maps from $X_E$ in the introduction:
When $P$ is the simplex $I(\mathrm U_{1,E})$, whose normal fan is $\Sigma_n$, we obtain the map $\pi_E\colon X_E \to \PP^E$.   When $P$ is the boolean cube $I(\mathrm U_{n,E})$, whose normal fan is $(\Sigma_1)^E$, we obtain the map $\pi_{1^E}\colon X_E \to (\PP^1)^E$.
\end{example}

\subsection{Orbit-closure in a flag variety and additive-equivariance}\label{subsec:flagvar}

We have so far described the structure of $X_E$ as a toric variety, i.e., in terms of the $T$-action.  Here we show that $X_E$ admits an action by a larger group that contains the additive group $\mathbb{G}_a^E$.  Let us begin with the 1-dimensional case.

The multiplicative group $\mathbb{G}_m$ acts on the additive group $\mathbb{G}_a$ via $t \cdot b = tb$ for $t \in \mathbb{G}_m$ and $b \in \mathbb{G}_a$. Let $\mathbb{G} = \mathbb{G}_m \ltimes \mathbb{G}_a$ be semi-direct product.
Concretely, the groups $\mathbb{G}_m$, $\mathbb{G}_a$, and $\mathbb{G}$ embed into $GL_2$ as follows.
\begin{equation*}
\mathbb{G}_m, \mathbb{G}_a, \mathbb{G} \hookrightarrow GL_2 \quad\text{via} \quad
t \mapsto \begin{pmatrix}t & 0 \\ 0 & 1\end{pmatrix},\
b \mapsto \begin{pmatrix} 1 & b \\ 0 & 1\end{pmatrix},\
(t,b) \mapsto \begin{pmatrix} t & b \\ 0 & 1\end{pmatrix}.
\end{equation*}
We denote by $\V = \kk^2$ the resulting $\GG$-representation.  The group $\mathbb{G}$ thus acts on $\PP(V) = \mathbb{P}^1$ by
\[
(t,b) \cdot [x : y] = [tx + by : y]
\]
with two orbits $\{[x:1] \mid b\in \kk\}\simeq \AA^1$ and $\{[1:0]\}$, denoted $\{\infty\}$.
When we treat $\PP^1$ as the toric variety of the fan in $\RR^1$ consisting of the three cones $\{\RR_{\geq0}, \RR_{\leq 0}, \{0\}\}$, the orbit $\mathbb{A}^1_o$ is identified with the toric affine chart of $\mathbb{P}^1$ corresponding to $\RR_{\geq 0}$.
In particular, letting $D_{[0,1]}$ be the toric divisor on $\PP^1$ corresponding to the interval $[0,1]\subset \RR^1$,
we may identify $V = H^0(\mathbb{P}^1, \mathcal O_{\PP^1}(1))^\vee$ by giving $T$-linearization of $\mathcal O_{\PP^1}(1)$ as $\mathcal O_{\PP^1}(\infty) = \mathcal O_{\PP^1}(D_{[0,1]})$.

Let us now show that the stellahedral variety $X_E$ admits a $\GG^E$-action.  We do this by realizing $X_E$ as a $\GG^E$-orbit closure in a flag variety.  While there are several alternate ways to exhibit the $\GG^E$-action on $X_E$, as listed in Remark~\ref{rem:additivealts}, the orbit closure description will be useful for defining the augmented tautological bundles in the next section.

From the $\GG$-action on $V = \kk^2$, we endow $V^E \simeq \kk^E \oplus \kk^E$ with the $\GG^E$-action given by $(\mathbf{t}, \mathbf{b}) \mapsto \begin{pmatrix} \text{diag}(\mathbf{t}) & \text{diag}(\mathbf{b}) \\ 0 & I \end{pmatrix}$. Let $\Delta \colon \kk^E \to V^E$ be the diagonal embedding. 

\begin{proposition}\label{prop:orbitflag}
Let $\mathscr L= \{L_1\subseteq \cdots \subseteq L_\ell\}$ be a flag of linear subspaces of $\kk^E$ realizing matroids $\M_1, \ldots, \M_\ell$, and let $P$ be the polymatroid $I(\M_1) + \cdots + I(\M_\ell)$.
Then the $\mathbb{G}^E$-orbit closure of $[\Delta(\mathscr{L})]$ in $Fl(\dim(L_{1}), \dotsc,  \dim(L_{\ell}); V^E)$ is identified with $X_P$. 
\end{proposition}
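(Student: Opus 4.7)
The plan is to identify the $\mathbb{G}^E$-orbit closure with $X_P$ by first reducing it to the $T$-orbit closure of $[\Delta(\mathscr L)]$, and then computing that $T$-orbit closure via Plücker coordinates.

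For the reduction, I would make the $\mathbb{G}^E$-action on $[\Delta(\mathscr L)]$ explicit. An element $(t,b) \in \mathbb{G}^E$ sends each $\Delta(L_i) \subseteq V^E$ to the subspace $\{((t+b)\cdot \ell,\ \ell) : \ell \in L_i\}$, where $\cdot$ denotes the Hadamard product. Writing $s := t+b$, the orbit is parameterized by $s \in \kk^E$, and the subgroup $T \subseteq \mathbb{G}^E$ acts on this parameter by coordinate-wise multiplication. Since the $T$-orbit of the base point $s = (1,\dots,1)$ is $(\kk^*)^E$, which is dense in the parameter space $\kk^E$, the $T$-orbit of $[\Delta(\mathscr L)]$ is dense in the $\mathbb{G}^E$-orbit. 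Taking closures in the flag variety gives
\[
\overline{\mathbb{G}^E \cdot [\Delta(\mathscr L)]} = \overline{T \cdot [\Delta(\mathscr L)]}.
\]

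For the computation, embed the flag variety as $Fl(\dim L_1,\dots,\dim L_\ell;V^E) \hookrightarrow \prod_i \PP(\bigwedge^{\dim L_i} V^E)$ via Plücker--Segre, and identify $V^E = \kk^{\widetilde E}$ with $\widetilde E := E \sqcup E'$ two copies of $E$. A direct check shows that $\Delta(L_i) \subseteq \kk^{\widetilde E}$ realizes the matroid on $\widetilde E$ whose bases are exactly the subsets $S_1 \sqcup S_2' \subseteq \widetilde E$ such that $S_1, S_2 \subseteq E$ are disjoint and $S_1 \sqcup S_2 \in \mathscr B(\M_i)$; this amounts to asking when the projection $\kk^{\widetilde E} \to \kk^{S_1 \sqcup S_2'}$ restricts to an isomorphism on $\Delta(L_i)$. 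Under the $T$-action, the standard basis vectors indexed by $E$ carry weights $\be_1,\dots,\be_n$ while those indexed by $E'$ carry weight zero, so the Plücker coordinate for the basis $(S_1,S_2)$ has $T$-weight $\be_{S_1}$. The set of $T$-weights appearing in the Plücker vector of $[\Delta(L_i)]$ is therefore $\{\be_I : I \text{ independent in } \M_i\}$, whose convex hull is $I(\M_i)$. Via the Segre product, the set of weights appearing in $[\Delta(\mathscr L)]$ under this embedding has convex hull $\sum_i I(\M_i) = P$.

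By the standard description of $T$-orbit closures inside projective $T$-representations, $\overline{T \cdot [\Delta(\mathscr L)]}$ is a projective toric variety whose normalization is $X_P$. The main technical point is the potential non-normality of this orbit closure, which I would resolve by invoking Proposition~\ref{prop:strongnormal}: since $P$ is an integral polymatroid, it is a normal lattice polytope, so the orbit closure equals $X_P$ on the nose. Combined with Step~1, this identifies the $\mathbb{G}^E$-orbit closure with $X_P$.
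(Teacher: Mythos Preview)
Your proposal is correct and follows the same strategy as the paper: reduce the $\mathbb{G}^E$-orbit to the $T$-orbit via the substitution $s=t+b$, compute the weight polytope through Pl\"ucker--Segre coordinates, and invoke polymatroid normality to pass from the normalization to the orbit closure itself. The paper organizes this by treating the case $\ell=1$ first and then deducing the general case, but the content is the same.

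There is one small gap worth tightening. In your final step you say the weight set ``has convex hull $P$'' and then appeal to $P$ being a normal polytope to conclude the orbit closure equals $X_P$. Normality of $P$ by itself is not enough: you also need that \emph{every} lattice point of $P$ occurs as a $T$-weight in the Segre embedding, so that the homogeneous coordinate ring of the orbit closure is the full section ring of $\mathcal O_{X_P}(1)$. This does hold, but it uses the \emph{strong} form of Proposition~\ref{prop:strongnormal}: the weights in the Segre product are sums $p_1+\cdots+p_\ell$ with each $p_i$ a lattice point of $I(\M_i)$, and that proposition says every lattice point of $\sum_i I(\M_i)$ arises this way. Once that is noted, normality of $P$ gives projective normality of the orbit closure, and hence the identification with $X_P$. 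The paper makes both uses of Proposition~\ref{prop:strongnormal} explicit, along with a remark on the character lattice of the embedded torus (generated by the $\be_j$ for non-loops $j$), which you may want to include as well.
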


\begin{proof}
We first consider the case when $\ell = 1$, so we are taking the $\mathbb{G}^E$-orbit closure of $[\Delta(L_1)]$ in $Gr(\dim(L_1); V^E)$. Let $A$ be a matrix whose rows form a basis for $L_1$, so the rows of $\begin{pmatrix} A & A \end{pmatrix}$ form a basis for $\Delta(L_1)$. Then the $\mathbb{G}^E$-action on $Gr( \dim(L_1); V^E)$ is given by
$$(\mathbf{t}, \mathbf{b}) \cdot \left[ \begin{pmatrix} A & A \end{pmatrix} \right] = \left[ \begin{pmatrix} A & A \end{pmatrix} \begin{pmatrix} \text{diag}(\mathbf{t}) & \text{diag}(\mathbf{b}) \\ 0 & I \end{pmatrix}^t \right] =  \left[ \begin{pmatrix} (\mathbf{t} + \mathbf{b})A & A\end{pmatrix} \right].$$
This implies that the $T$-orbit closure coincides with the $\mathbb{G}^E$-orbit closure. 

The normalization of $\overline{T \cdot [\Delta(L_1)]}$ is a toric variety, so it is defined over $\operatorname{Spec} \mathbb{Z}$. We may therefore consider the moment polytope of its complexification, which is given a polarization via the Pl\"{u}cker embedding of the Grassmannian. The vertices of the moment polytope are given by the $T$-weights of the non-zero maximal minors of $\begin{pmatrix} A & A \end{pmatrix}$, where $T$ acts by scaling the first $n$ columns. Every non-zero maximal minor of $\begin{pmatrix} A & A\ \end{pmatrix}$ is given by a subset $S_1$ of the first $n$ rows and a subset $S_2$ of the second $n$ rows such that $S_1 \sqcup S_2$ is a basis for $\M_1$. The $T$-weight of this minor is $\be_{S_1}$, so the moment polytope is $I(\M_1)$.

Let $S$ be the set of non-loops of $\M_1$. The vertices of $I(\M_1)$ generate the lattice $\mathbb{Z}^S$, which implies that the character lattice of the embedded torus in the normalization of $\overline{T \cdot [\Delta(L_1)]}$ is $\mathbb{Z}^S$. Every lattice point in $I(\M_1)$ is a vertex, so the restriction map $H^0(Gr(\dim(L_1); V^E); \mathcal{O}(1)) \to H^0(\overline{T \cdot [\Delta(L_1)]}, \mathcal{O}(1))$ is surjective. By Proposition~\ref{prop:strongnormal}, $\overline{T \cdot [\Delta(L_1)]}$ is projectively normal and therefore normal, so $\overline{T \cdot [\Delta(L_1)]}$ is isomorphic to $X_{I(\M_1)}$. 

We now treat the general case. There is an embedding $Fl(\dim(L_{1}), \dotsc, \dim(L_{\ell}); V^E) \hookrightarrow \prod_{i=1}^{\ell} Gr(\dim(L_i); V^E)$, and the computation above implies that the $T$-orbit closure of $[\Delta(\mathscr{L})]$ is also the $\mathbb{G}^E$-orbit closure. By Proposition~\ref{prop:strongnormal}, the Segre embedding of $\overline{T \cdot [\Delta(\mathcal{L})]}$ corresponds to the Minkowski sum of polytopes (with the complete linear series), which implies that the moment polytope of $\overline{T \cdot [\Delta(\mathscr{L})]}$ is $P$. Using that $P$ is a normal polytope, we get that $\overline{T \cdot[\Delta(\mathscr{L})]}$ is isomorphic to $X_{P}$.
\end{proof}

The flag of matroids realized by a general full flag $\mathscr{L} = \{L_1 \subsetneq L_2 \subsetneq \cdots \subsetneq L_n = \kk^E\}$ over an infinite field $\kk$ are exactly the uniform matroids $\U_{1,E}, \ldots, \U_{n,E}$.  Since the stellahedron $\Pi_E$ is the Minkowski sum $I(\U_{1, E}) + \dotsb + I(\U_{n, E})$, we have the following corollary.

\begin{corollary}\label{cor:stellaorbit}
The $\mathbb{G}^E$-orbit closure of a general full flag of linear subspaces $\mathscr L$, viewed as a point in $Fl(1, \ldots, n; V^E)$ via $\Delta$, is identified with $X_E$. In particular, $X_E$ has the structure of a $\mathbb{G}^E$-variety. 
\end{corollary}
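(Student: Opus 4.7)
The plan is to deduce this corollary directly from Proposition~\ref{prop:orbitflag} by specializing to the case of a generic full flag. First I would verify the claim about which matroids a generic full flag realizes: for a flag $\mathscr L = \{L_1 \subsetneq L_2 \subsetneq \cdots \subsetneq L_n = \kk^E\}$ in general position over an infinite field, the projection $\kk^E \twoheadrightarrow \kk^S$ restricts to an isomorphism $L_r \xrightarrow{\sim} \kk^S$ for every subset $S \subseteq E$ of size $r$, since genericity ensures that $L_r$ does not lie in any of the finitely many coordinate hyperplanes given by $\dim(L_r \cap \kk^{S^c}) > n - r$. Hence the matroid realized by $L_r$ is precisely $\U_{r,E}$.

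Next I would invoke Proposition~\ref{prop:orbitflag} with $\ell = n$ and $\M_r = \U_{r,E}$. That proposition identifies the $\mathbb{G}^E$-orbit closure of $[\Delta(\mathscr L)]$ in $Fl(1,\ldots,n;V^E)$ with the toric variety $X_P$, where
\[
P = I(\U_{1,E}) + I(\U_{2,E}) + \cdots + I(\U_{n,E}).
\]
By the second description of the stellahedron recalled in the introduction, this Minkowski sum is exactly the stellahedron $\Pi_E$, so $X_P = X_E$. This identification is $\mathbb{G}^E$-equivariant by construction, and since the $\mathbb{G}^E$-action on the orbit closure restricts to the usual $T$-action on the dense torus of $X_E$, we obtain the asserted $\mathbb{G}^E$-variety structure on $X_E$ extending its toric structure.

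The only step that requires any real care is the genericity claim in the first paragraph, i.e., identifying which matroids the generic flag realizes. Everything else is a direct application of Proposition~\ref{prop:orbitflag} combined with the Minkowski sum description of $\Pi_E$.
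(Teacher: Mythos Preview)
Your proof is correct and follows exactly the same approach as the paper: a general full flag realizes the uniform matroids $\U_{r,E}$, so Proposition~\ref{prop:orbitflag} identifies the orbit closure with $X_P$ for $P = \sum_{r} I(\U_{r,E}) = \Pi_E$. One minor slip in your genericity argument: the condition to avoid is $\dim(L_r \cap \kk^{E\setminus S}) > 0$ (equivalently $\geq 1$), not $> n-r$, since $\kk^{E\setminus S}$ already has dimension $n-r$ and you want the intersection with the $r$-dimensional $L_r$ to be trivial.
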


\begin{remark}\label{rem:additivealts}
With $\PP^1$ as a $\GG$-variety described above,  $\mathbb{G}^E$ acts on $(\mathbb{P}^1)^E$ with $2^{n}$ orbits.  In \S\ref{subsec:refinecoarsen}, we described $X_E$ as the iterated blow-up of the strict transforms of the proper $\mathbb{G}^E$-orbit closures in increasing order of dimension. The functoriality of the blow-up then gives $X_E$ a $\mathbb{G}^E$-action, and the blow-down map $X_E \to (\mathbb{P}^1)^E$ is $\GG^E$-equivariant.  
\\
\indent Alternatively, one notes that $\PP^E$, viewed as the projective completion $\PP(\kk^E \oplus \kk)$ of $\kk^E$, is a $\GG^E$-equivariant compactification of $\kk^E$ with the obvious action of $\GG^E$.  The proper $\GG^E$-orbit closures in $\PP^E$ are then exactly the coordinate subspaces of $\PP^E$ contained in the hyperplane at infinity $\PP(\kk^E) \subseteq \PP^E$.  In \S\ref{subsec:refinecoarsen}, we described $X_E$ as the iterated blow-up of the strict transforms of these proper $\GG^E$-orbit closures in the increasing order of dimension.  Again, the functoriality of the blow-up gives $X_E$ a $\GG^E$-action with an equivariant blow-down map $X_E \to \PP^E$.
\\
\indent Lastly, one may also appeal to \cite[Theorem 3.4 \& 4.1]{Arzhantsev2017} to show that any toric variety $X_P$ of the normal fan $\Sigma_P$ of a polymatroid $P$ on $E$ admits a $\GG_a^E$-action that is compatible with the torus-action:  One verifies that $\{-\be_i \mid i\in E\}$ form a ``complete collection of Demazure roots'' of $\Sigma_P$ as defined in (\emph{loc.\ cit.}).
\end{remark}

\section{Augmented tautological bundles and classes}

\subsection{Well-definedness}

We now construct the augmented tautological bundles and augmented tautological classes.
Recall the notation $V^E = \kk^E \oplus \kk^E$.  Recall that for any polymatroid $P$ (such as an independence polytope), one has a $T$-equivariant map $X_E \to X_P$ because the normal fan $\Sigma_P$ coarsens $\Sigma_E$.
Let us prepare with the following trivial case.

\begin{lemma}\label{lem:O1s}
Consider the map $X_E \to Gr(n;V^E)$ obtained as the composition of $X_E \to X_{I(\U_{n,E})}$ with the map $X_{I(U_{n,E})} \to Gr(n;V^E)$ given by setting $\ell = 1$ and $L_1 = \kk^E$ in Proposition~\ref{prop:orbitflag}.  The pullback to $X_E$ of the tautological subbundle $\mathcal{S}$ on $Gr(n; V^E)$ is isomorphic to $\bigoplus_{i\in E} \pi_i^*\mathcal{O}_{\mathbb{P}^1}(-1)$, equipped with the unique $T$-linearization that is trivial on the $\GG^E$-orbit $\mathbb{A}^E \subseteq X_E$.  
\end{lemma}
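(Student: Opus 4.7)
The plan is to combine Example~\ref{eg:twomaps} with an explicit description of the orbit-closure inclusion from Proposition~\ref{prop:orbitflag}. Since $I(\U_{n,E}) = [0,1]^E$ is the Boolean cube, Example~\ref{eg:twomaps} identifies the toric morphism $X_E \to X_{I(\U_{n,E})}$ with $\pi_{1^E} \colon X_E \to (\PP^1)^E$. So it suffices to describe the inclusion $X_{I(\U_{n,E})} \hookrightarrow Gr(n;V^E)$ concretely as a morphism out of $(\PP^1)^E$, compute the pullback of $\mathcal S$ along it, and then pull back via $\pi_{1^E}$.

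To describe this inclusion, write $V^E = \bigoplus_{i \in E} V_i$ with $V_i = \operatorname{span}(e_i, f_i) \cong \kk^2$, so that $\Delta(\kk^E) = \bigoplus_{i \in E} \operatorname{span}(e_i + f_i)$. Introduce the Segre-type closed embedding
\[
\Phi \colon \prod_{i \in E} \PP(V_i) \longrightarrow Gr(n; V^E), \qquad ([L_i])_{i \in E} \mapsto \big[\bigoplus_{i \in E} L_i\big],
\]
which is $\GG^E$-equivariant since $\GG^E$ preserves the decomposition $V^E = \bigoplus_i V_i$. Because $[\Delta(\kk^E)]$ lies in the image of $\Phi$, the $\GG^E$-orbit closure $X_{I(\U_{n,E})}$ is contained in $\Phi(\prod_i \PP(V_i))$; since both sides are irreducible of dimension $n$, they coincide. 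A direct comparison of $T$-actions shows this identification matches the toric isomorphism $X_{I(\U_{n,E})} \cong (\PP^1)^E$ via $\PP(V_i) \cong \PP^1$ sending $[e_i] \mapsto \infty$ and $[f_i] \mapsto 0$. The fiber of $\Phi^*\mathcal S$ at $([L_i])$ is $\bigoplus_i L_i$, giving a canonical decomposition $\Phi^*\mathcal S \cong \bigoplus_{i \in E} q_i^* \mathcal O_{\PP(V_i)}(-1)$ with $q_i$ the $i$-th projection; pulling back by $\pi_{1^E}$ then yields $\bigoplus_{i \in E} \pi_i^* \mathcal O_{\PP^1}(-1)$ on $X_E$.

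For the $T$-linearization, consider the $T$-fixed origin $o \in \mathbb{A}^E \subseteq X_E$ corresponding to the maximal cone $\operatorname{cone}(\be_i : i \in E) = \sigma_{E \leq \emptyset}$ of $\Sigma_E$. Under $\pi_{1^E}$, it maps to $\prod_i [f_i]$, and hence to $[\operatorname{span}(f_1, \ldots, f_n)] \in Gr(n; V^E)$. The fiber of $\mathcal S$ there is $\operatorname{span}(f_1, \ldots, f_n)$, on which $T$ acts trivially: $(\mathbf t, \mathbf 0) \in T$ fixes each $f_i$ by the block-matrix description of the action. Hence the induced $T$-weight on each summand $\pi_i^* \mathcal O_{\PP^1}(-1)$ at $o$ is trivial; since a $T$-linearization on a smooth toric variety is pinned down by its weights at the fixed points, this yields the unique $T$-linearization that is trivial on the open $\GG^E$-orbit $\mathbb{A}^E$. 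The main subtlety lies in the second paragraph---verifying that the Segre image coincides with the orbit closure compatibly with both the toric and $\GG^E$-equivariant structures; once that is in place, the pullback computation and the linearization check are essentially tautological.
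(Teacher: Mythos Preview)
Your argument is correct and essentially coincides with the paper's \emph{alternative} proof, which also identifies the map $X_{I(\U_{n,E})}\to Gr(n;V^E)$ with the $E$-fold product of the tautological inclusions $\mathcal O_{\PP^1}(-1)\hookrightarrow \mathcal O_{\PP^1}\otimes V$; your Segre-type embedding $\Phi$ is just a repackaging of that. The paper's primary argument is a shortcut you might find instructive: rather than analyzing the orbit-closure map globally, it simply observes that both the pullback of $\mathcal S$ and $\bigoplus_i \pi_i^*\mathcal O_{\PP^1}(-1)$ (with the specified linearization) are subbundles of $\mathcal O_{X_E}^{\oplus 2n}$ whose fiber over every point of the dense open $\mathbb A^E$ equals $\Delta(\kk^E)$, and two subbundles agreeing on a dense open are equal.
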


\begin{proof}
By construction, the pullback of $\mathcal{S}$ to $X_E$ is a subbundle of $\mathcal{O}^{\oplus 2n}_{X_E}$, and $\bigoplus_{i\in E} \pi_i^*\mathcal{O}_{\mathbb{P}^1}(-1)$ (with the unique $T$-linearization that is trivial on $\mathbb{A}^E$) is a subbundle of $\mathcal{O}^{\oplus 2n}_{X_E}$ whose fiber over any point in $\mathbb{A}^E$ is the diagonal $\Delta(\kk^E)$. It follows from the construction of the map $X_E \to Gr(n; V^E)$ that the pullback of $\mathcal{S}$ has the fiber over any point of $\mathbb{A}^E$ equal to $\Delta(\kk^E)$; the result follows because we may check whether two subbundles of $\mathcal{O}^{\oplus 2n}_{X_E}$ are equal on a dense open subset.\\
\indent Alternatively, we had given $\mathcal O_{\PP^1}(1)$ the $T$-linearization as the line bundle $\mathcal O_{\PP^1}(D_{[0,1]})$, which is trivial on the $\GG$-orbit $\AA^1$ of $\PP(V)$.  This resulted in the identification of $V$ with $H^0(\mathbb{P}^1, \mathcal O_{\PP^1}(1))^\vee$.  Since $I(\U_{n,E}) = [0,1]^E$, we find that $(\PP^1)^E \simeq X_{I(\U_{n,E})} \to Gr(n;V^E)$ is the map induced by the $E$-fold product of the injection of vector bundles $\mathcal O_{\PP^1}(-1) \to \mathcal O_{\PP^1} \otimes V$.
\end{proof}

Given a linear subspace $L\subseteq \kk^E$, we now construct vector bundles fitting into a short exact sequence that is modeled after $0\to L \to \kk^E \to \kk^E/L \to 0$.  Because we would like at least one of the vector bundles to be globally generated, the vector bundles $\cS_L$ and $\cQ_L$ will be defined so that they fit into the short exact sequence
$
0 \to \cS_L \to \bigoplus_{i\in E} \pi_i^*\mathcal{O}_{\mathbb{P}^1}(1) \to \cQ_L \to 0
$
with $\bigoplus_{i\in E} \pi_i^*\mathcal{O}_{\mathbb{P}^1}(1)$ in the middle instead of $\bigoplus_{i\in E} \pi_i^*\mathcal{O}_{\mathbb{P}^1}(-1)$.  As a result, when we define the dual bundle $\cQ_L^\vee$, we are led to consider the orthogonal dual $L^\perp = (\kk^E/L)^\vee \subseteq \kk^E$ of the realization $L\subseteq \kk^E$ of a matroid $\M$, which realizes the dual matroid $\M^\perp$.

\begin{definition}\label{defn:augmentedTauto}
Let $L\subseteq \kk^E$ be a realization of a rank $r$ matroid $\M$ on $E$.
Setting $\ell=2$ and $L_1 = L^{\perp} \subseteq L_2 = \kk^E$ in Proposition~\ref{prop:orbitflag} supplies us with a map
\[
X_E \to X_{I(\M^\perp) + I(\U_{n,E})}\to Fl(n-r,n;V^E).
\]
Define the \emph{augmented tautological bundles} $\cS_L$ and $\cQ_L$ by
\begin{align*}
\cQ_L &=  \text{the dual of the pullback to $X_E$ of the tautological rank $n-r$ subbundle of $Fl(n-r,n;V^E)$}\\
\cS_L &= \text{the dual of the quotient bundle $\bigoplus_{i \in E} \pi_i^* \mathcal{O}(-1)/\mathcal{Q}_L^{\vee}$.}
\end{align*}
\end{definition}

That $\cQ_L^\vee$ is a subbundle of $\bigoplus_{i \in E} \pi_i^* \mathcal{O}(-1)$ follows from Lemma~\ref{lem:O1s} and the fact that Proposition~\ref{prop:orbitflag} supplies us with a commuting diagram
\[
\begin{tikzcd}
& &X_{I(\U_{n,E})} \ar[r,hook] & Gr(n;V^E)\\
&X_{E} \ar[r] \ar[ru]\ar[rd]&X_{I(\U_{n,E}) + I(\M^\perp)}\ar[u]\ar[d]  \ar[u]\ar[d]\ar[r,hook] &Fl(n-r,n;V^E) \ar[u]\ar[d]\\
& &X_{I(\M^\perp)} \ar[r,hook] & Gr(n-r;V^E).
\end{tikzcd}
\]

\begin{remark}\label{rem:SES}
By construction, we have a short exact sequence of $\mathbb{G}^E$-equivariant vector bundles 
$$0 \to \mathcal{S}_L \to \bigoplus_{i \in E} \pi_i^* \mathcal{O}_{\mathbb{P}^1}(1) \to \mathcal{Q}_L \to 0,$$
which, when restricted to the $\GG^E$-orbit $\mathbb{A}^E$, is canonically identified with
$$0 \to \mathcal{O}_{\mathbb{A}^E} \otimes L \to \mathcal{O}_{\mathbb{A}^E} \otimes \kk^E \to \mathcal{O}_{\mathbb{A}^E} \otimes \kk^E/L \to 0.$$
\end{remark}

For arbitrary matroids $\M$, we construct ($T$-equivariant) $K$-classes $[\cS_\M]$ and $[\cQ_\M]$ on $X_E$.  By Theorem~\ref{thm:localization}.\ref{localization:K}, the $T$-equivariant $K$-ring of $X_E$ is identified with a subring of the product ring $\prod_{\Sigma_E(n)} \ZZ[T_1^{\pm 1}, \dotsc, T_n^{\pm 1}]$.  So we will specify these classes by specifying their localization values at each torus-fixed point indexed by a maximal cone of $\Sigma_E$.

By Proposition~\ref{prop:stellafan}, the maximal cones of $\Sigma_E$ are in bijection with compatible pairs $I \leq \mathscr F$ where $\emptyset\subseteq I\subseteq E$ and $\mathscr F$ is a (possibly empty) maximal chain of proper subsets of $E$ containing $I$.  For a chain $\mathscr F$ containing $I$, write $\mathscr F/I$ for the new chain of subsets of $E\setminus I$ obtained by removing $I$ from each subset in the original chain.  A maximal chain $\mathscr F: \emptyset \subsetneq F_1 \subsetneq \cdots \subsetneq F_{n-1}$ orders the ground set by $F_1 < F_2\setminus F_1 < \cdots < E\setminus F_{n-1}$, and for each matroid $\M$ on $E$ we denote:
\begin{enumerate}[label = $\bullet$]\itemsep 5pt
\item $B_{\mathscr F}(\M)$ the minimal basis of $\M$ under the lexicographic ordering, and
\item $B^c_{\mathscr F}(\M)$ the complement of $B_{\mathscr F}(\M)$ in the ground set of $\M$.
\end{enumerate}

\begin{proposition}\label{prop:Kclasses}
For a matroid $\M$ on $E$, the \emph{augmented tautological classes} defined as
\begin{equation*}\begin{split}
[\mathcal S_\M]_{I \leq \mathscr F} &= \operatorname{rk}_\M(I)+ \sum_{i\in B_{\mathscr F/I}(\M/I)} T_i^{-1} \quad\text{and}\\
[\mathcal Q_\M]_{I \leq \mathscr F} &= |I| - \operatorname{rk}_\M(I) + \sum_{i\in B^c_{\mathscr F/I}(\M/I)} T_i^{-1}
\end{split}\end{equation*}
are well-defined $T$-equivariant $K$-classes on $X_E$.  Moreover, if $L$ is a realization of $\M$, then $[\cS_L] = [\cS_\M]$ and $[\cQ_L] = [\cQ_\M$].
\end{proposition}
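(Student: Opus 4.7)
My plan is to establish the two assertions of the proposition separately: first the well-definedness of the proposed localization formulas for an arbitrary matroid $\M$, and second their agreement with $[\cS_L]$ and $[\cQ_L]$ when $L\subseteq \kk^E$ realizes $\M$.

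For well-definedness, I will apply Theorem~\ref{thm:localization}.\ref{localization:K}: it suffices to verify, for every pair of adjacent maximal cones $\sigma,\sigma'$ of $\Sigma_E$ with wall normal $v$ (so that $\RR(\sigma\cap\sigma')=\ker v$), that the difference of the proposed localizations is divisible by $1-T^v$ in $\ZZ[T_1^{\pm 1},\ldots,T_n^{\pm 1}]$. The identity $[\cS_\M]_{I\leq\mathscr F}+[\cQ_\M]_{I\leq\mathscr F}=|I|+\sum_{i\in E\setminus I}T_i^{-1}$ identifies the sum with the localization of $\bigoplus_{i\in E}\pi_i^*\mathcal O_{\PP^1}(1)$ (cf.\ Lemma~\ref{lem:O1s}), so it is enough to verify divisibility for $[\cS_\M]$. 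The description of $\Sigma_E$ via compatible pairs identifies maximal cones with pairs $(I,\,\text{total ordering of }E\setminus I)$, and adjacent cones come in two flavors: (a) swaps of two adjacent entries $i_j,i_{j+1}$ of the ordering, for which a direct computation yields wall normal $v=\be_{i_j}-\be_{i_{j+1}}$; and (b) moves that either push the first entry $i_1$ of the ordering into $I$ or pull some $i\in I$ out to the front of the ordering, with wall normal $\be_{i_1}$ (respectively $\be_i$). For type (a), a short case analysis of the matroid greedy algorithm under a transposition of two adjacent weights shows that $B_{\mathscr F/I}(\M/I)$ either stays fixed or undergoes the exchange $i_j\leftrightarrow i_{j+1}$, contributing a difference of $0$ or $T_{i_{j+1}}^{-1}-T_{i_j}^{-1}$, each visibly divisible by $1-T^v$. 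For type (b), using $(\M/I)/i_1=\M/(I\cup i_1)$ and splitting into cases based on whether $i_1$ is a loop in $\M/I$ yields a difference of $0$ or $1-T_{i_1}^{-1}$, again divisible by $1-T^v$.

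For compatibility with realizations, I will use Proposition~\ref{prop:orbitflag}: $\cQ_L^\vee$ is the pullback to $X_E$ of the tautological subbundle on $Gr(n-r,V^E)$ under the map induced by $\Delta(L^\perp)$, and $\cS_L$ is then determined via the short exact sequence of Remark~\ref{rem:SES} together with Lemma~\ref{lem:O1s}. To compute the localization of $\cQ_L^\vee$ at the $T$-fixed point of the maximal cone $\sigma_{I\leq\mathscr F}$, I choose a one-parameter subgroup $\lambda_v(t)$ with $v$ in the relative interior of $\sigma_{I\leq\mathscr F}$, arranged so that the ordering of the weights $v_i$ reproduces the ordering that $\mathscr F$ induces on $E\setminus I$ (and so that the weights on $I$ are positive and dominate). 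The fiber of $\cQ_L^\vee$ at this fixed point is then $\lim_{t\to 0}\lambda_v(t)\cdot\Delta(L^\perp)\subseteq V^E$, the $T$-fixed subspace supported on the Pl\"ucker coordinate $p_{S,S'}$ of smallest $\lambda_v$-weight, where $(S,S')$ ranges over pairs with $S\cap S'=\emptyset$ and $S\sqcup S'\in\mathscr B(\M^\perp)$. Writing $B=E\setminus(S\sqcup S')\in\mathscr B(\M)$ and using the signs of the $v_i$'s, this optimization reduces to choosing a basis $B$ of $\M$ that is maximally concentrated on $I$ and then greedy on $E\setminus I$ with respect to the $\mathscr F$-induced ordering. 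By optimality of the matroid greedy algorithm, such $B$ is obtained by extending any maximal independent subset of $I$ with $B_{\mathscr F/I}(\M/I)$, giving $S=B^c_{\mathscr F/I}(\M/I)$ and $|S'|=|I|-\operatorname{rk}_\M(I)$. Reading off the $T$-character of the resulting fixed subspace and dualizing yields the proposed formula for $[\cQ_L]$, and the formula for $[\cS_L]$ follows by subtracting from the localization of $\bigoplus_{i\in E}\pi_i^*\mathcal O_{\PP^1}(1)$ computed in the same way.

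The main obstacle will be the limiting Pl\"ucker-coordinate computation in the realizable case: carefully translating the piecewise-linear data of the cone $\sigma_{I\leq\mathscr F}$ into a weighted-matroid optimization on $\M^\perp$, and recognizing the answer via the matroid greedy algorithm on $\M$. Once this translation is in place, both parts of the proposition reduce to standard matroid facts---greedy optimality in the realizable case, and basis exchange under adjacent transpositions in the well-definedness check.
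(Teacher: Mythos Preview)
Your proposal is correct and follows essentially the same approach as the paper: reduce to one of $[\cS_\M],[\cQ_\M]$ via the sum identity, check the localization conditions across the two types of codimension-one walls, and for the realizable case identify the $T$-fixed image in $Gr(n-r;V^E)$ via the minimal-weight Pl\"ucker coordinate. The only stylistic difference is that the paper packages both the realizable computation and the wall-crossing check in terms of the polytope $I(\M^\perp)$: the fixed point corresponds to the vertex of $I(\M^\perp)$ minimizing the functional in $\sigma_{I\le\mathscr F}$, and well-definedness follows because the normal fan of $I(\M^\perp)$ coarsens $\Sigma_E$ (so adjacent maximal cones either land on the same vertex or on vertices differing by an edge parallel to the wall normal). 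Your direct greedy-algorithm case analysis is exactly the combinatorics underlying that polytope statement, so the two arguments are equivalent; the polytope phrasing is a bit more uniform, while your version makes the matroid content explicit.
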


\begin{proof}
First we check that $[\mathcal{Q}_L] = [\mathcal{Q}_\M]$. Then taking the case $L = \{0\}$ gives that 
$$[\bigoplus_{i \in E} \pi_i^* \mathcal{O}_{\mathbb{P}^1}(1)]_{I \le \mathscr{F}} = |I| + \sum_{i \in E \setminus I} T_i^{-1}.$$
As $[\mathcal{S}_L] + [\mathcal{Q}_L] = [\bigoplus_{i \in E} \pi_i^* \mathcal{O}_{\mathbb{P}^1}(1)]$, this implies that $[\mathcal{S}_L] = [\mathcal{S}_\M]$. 

Let $L \subseteq \kk^E$ be a subspace of dimension $r$. Note that the rank $n - r$ tautological subbundle $\mathcal{\mathcal{S}}$ on $Fl(n-r, n; V^E)$ is pulled back from the forgetful map $Fl(n-r, n; V^E) \to Gr(n-r; V^E)$. The image of the $T$-fixed point on $X_E$ corresponding to a maximal compatible pair $I \le \mathscr{F}$ is a $T$-fixed point $\mathrm{p}$ of $Gr(n-r; V^E)$ such that every non-zero Pl\"{u}cker has weight equal to the vertex of $I(\M^{\perp})$ on which any functional in the interior of $\sigma_{I \le \mathscr{F}}$ attains its minimum, which is $\be_{B^c_{\mathscr{F}/I}(\M/I)}$. 
Then
$$[\mathcal{S}]_{\mathrm{p}} = |I| - \rk_\M(I) + \sum_{i \in B^c_{\mathscr{F}/I}(\M/I)} T_i \in K_T(\mathrm{p}).$$
As pullbacks commute with each other, this implies that $[\mathcal{Q}_L^{\vee}]_{I \le \mathscr{F}} = [\mathcal{S}]_{\mathrm{p}} = |I| - \rk_\M(I) + \sum_{i \in B^c_{\mathscr{F}/I}(\M)} T_i$, so applying $D_K$ gives that $[\mathcal{Q}_L] = [\mathcal{Q}_\M]$. In particular, it gives the claimed formula for $[\bigoplus_{i \in E} \pi_i^* \mathcal{O}_{\mathbb{P}^1}(1)] = [\mathcal{Q}_{\{0\}}]$.

Now we check well-definedness. As $[\mathcal{S}_\M] + [\mathcal{Q}_\M] = [\bigoplus_{i \in E} \pi_i^* \mathcal{O}_{\mathbb{P}^1}(1)]$, it suffices to check that $[\mathcal{S}_\M]$ is well-defined. There are two types of codimension $1$ cones in $\Sigma_E$. The first type is given by a compatible pair $I \le \mathscr{F}$ where $I = {F}_1$ and there is some $\ell$ such that ${F}_{\ell + 1} \setminus F_{\ell} = \{i, j\}$. This cone is contained in the kernel of the functional $\be_i - \be_j$. Let $\sigma_{I \le \mathscr{F}_1}$ and $\sigma_{I \le \mathscr{F}_2}$ be the two maximal cones containing $\sigma_{I \le \mathscr{F}}$; they are obtained by inserting either $F_{\ell} \cup i$ or $F_{\ell} \cup j$ into $\mathscr{F}$. Because the normal fan of $I(\M^{\perp})$ coarsens $\Sigma_E$, the vertices of $I(\M^{\perp})$ that functionals in the interiors of $\sigma_{I \le \mathscr{F}_1}$ and $\sigma_{I \le \mathscr{F}_2}$ attain their minimum on are either identical or differ by an edge. Because $\sigma_{I \le \mathscr{F}_1}$ and $\sigma_{I \le \mathscr{F}_2}$ have the same ``$I$,'' this edge must be parallel to $\be_i - \be_j$, and so the symmetric difference of $B_{\mathscr{F}_1/I}(\M/I)$ and $B_{\mathscr{F}_2/I}(\M/I)$ is either $\{i, j\}$ or $\emptyset$. This implies that, along $\sigma_{I \le \mathscr{F}}$, $[\mathcal{S}_\M]$ satisfies the condition of Theorem~\ref{thm:localization}.

The second type of codimension $1$ cone is given by a compatible pair $I \le \mathscr{F}$ when $I \cup j = F_1$, which is contained in the kernel of $\be_j$. Then the maximal cones containing $\sigma_{I \le \mathscr{F}}$ are $\sigma_{I \cup j \le \mathscr{F}}$ and $\sigma_{I \le \tilde{\mathscr{F}}}$, where $\tilde{\mathscr{F}}$ is obtained by adding $I$ to $\mathscr{F}$. Then a similar argument to the first case shows that $B_{\mathscr{F}/I \cup j}(\M/I \cup j)$ and $B_{\tilde{\mathscr{F}}/I}(\M/I)$ either coincide or differ by $\{j\}$. 
\end{proof}

These augmented tautological bundles and classes are related to the non-augmented tautological bundles and classes introduced in \cite{BEST21} as follows. Endow $\mathcal{O}_{\underline{X}_E}^{\oplus E}$ with the inverse $T$-equivariant structure, i.e., $(t_1, \dotsc, t_n) \cdot (x_1, \dotsc, x_n) = (t_1^{-1} x_1, \dotsc, t_n^{-1} x_n)$.

\begin{definition}\label{defn:tauto}
Let $L \subseteq \kk^E$ be a realization of a matroid $\M$. Then the (non-augmented) \emph{tautological bundles} $\underline{\mathcal{S}}_L$ and $\underline{\mathcal{Q}}_L$ are the unique $T$-equivariant vector bundles on $\underline{X}_E$ that fit into a short exact sequence 
$$0 \to \underline{\mathcal{S}}_L \to \mathcal{O}_{\underline{X}_E}^{\oplus E} \to \underline{\mathcal{Q}}_L \to 0$$
where the fiber over the identity is identified with
$$0 \to L \to \kk^E \to \kk^E/L \to 0.$$
\end{definition}

One can show that the short exact sequence in the above definition is the restriction to $\underline X_E$ of the short exact sequence $0 \to \cS_L \to \bigoplus_{i\in E} \pi_i^* \mathcal{O}_{\mathbb{P}^1}(1) \to \cQ_L \to 0$.

For each matroid $\M$, the authors of \cite{BEST21} define classes $[\underline{\mathcal{S}}_\M]$ and $[\underline{\mathcal{Q}}_\M]$ in $K_T(\underline{X}_E)$. The $T$-fixed points on $\underline{X}_E$ are in bijection with complete flags $\mathscr{F}$ of subsets of $E$. The tautological classes are described by 
$$[\underline{\mathcal{S}}_\M]_{\mathscr{F}} = \sum_{i \in B_{\mathscr{F}}(\M)} T_i^{-1} \quad \text{ and }  \quad[\underline{\mathcal{Q}}_\M]_{\mathscr{F}} = \sum_{i  \in  B_{\mathscr{F}}^c(\M)} T_i^{-1}.$$
In particular, these are restrictions to $\underline X_E$ of the augmented tautological classes $[\cS_\M]$ and $[\cQ_\M]$.

\subsection{Basic properties}\label{subsec:basic}
We now develop some basic properties of augmented tautological classes. These properties and their proofs are similar to those considered in \cite[Section 5]{BEST21}.

\begin{proposition}\label{prop:firstChern}
For a matroid $\M$, we have that $[\det \cQ_\M]$ equals the $K$-class of the line bundle corresponding under Proposition~\ref{prop:nefpolymatroid} to the polymatroid $I(\M^\perp)$.
\end{proposition}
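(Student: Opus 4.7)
The plan is to verify the claimed equality of $T$-equivariant $K$-classes by comparing localizations at each $T$-fixed point of $X_E$; by the injectivity part of Theorem~\ref{thm:localization}.\ref{localization:K}, this is sufficient. The fixed points are indexed by maximal compatible pairs $(I,\mathscr{F})$ with $\mathscr{F}:F_1\subsetneq\cdots\subsetneq F_{n-|I|}$, and I will do the check at each such pair.

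Using Proposition~\ref{prop:Kclasses}, the $K$-class $[\cQ_\M]_{I\le\mathscr{F}}$ decomposes as a trivial character of multiplicity $|I|-\rk_\M(I)$ together with the distinct characters $T_i^{-1}$ for $i\in B^c_{\mathscr{F}/I}(\M/I)$. Multiplicativity of the determinant on characters immediately gives
\[
[\det\cQ_\M]_{I\le\mathscr{F}} \;=\; \prod_{i\in B^c_{\mathscr{F}/I}(\M/I)} T_i^{-1}.
\]
On the other side, the line bundle $\mathcal{O}(D)$ attached by Proposition~\ref{prop:nefpolymatroid} to $I(\M^\perp)$, with $D=\sum_{S\subsetneq E}\rk_{\M^\perp}(E\setminus S)\,D_S$, has localization at $(I,\mathscr{F})$ determined by the unique $m_\sigma\in\mathbb{Z}^E$ satisfying $(m_\sigma)_i=0$ for $i\in I$ and $\langle m_\sigma,\be_{E\setminus F_j}\rangle=\rk_{\M^\perp}(E\setminus F_j)$ for each $F_j\in\mathscr{F}$; matching signs with the computation of $[\bigoplus_i\pi_i^*\mathcal{O}_{\PP^1}(1)]$ in the proof of Proposition~\ref{prop:Kclasses} pins down the convention. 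It therefore suffices to show that $m_\sigma = \be_{B^c_{\mathscr{F}/I}(\M/I)}$.

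The vanishing on $I$-indexed coordinates is automatic since $B^c_{\mathscr{F}/I}(\M/I)\subseteq E\setminus I$. For the remaining rank conditions, the decisive fact is the standard greedy-basis property: the lex-minimal basis $B:=B_{\mathscr{F}/I}(\M/I)$ of $\M/I$ restricts to a basis on every initial segment of $E\setminus I$ under the order induced by $\mathscr{F}/I$, and the sets $F_j\setminus I$ are exactly the initial segments of this order. This gives $|B\cap(F_j\setminus I)|=\rk_{\M/I}(F_j\setminus I)=\rk_\M(F_j)-\rk_\M(I)$. Combining with $|B|=\rk(\M)-\rk_\M(I)$ and the duality identity $\rk_{\M^\perp}(E\setminus F_j)=(n-|F_j|)+\rk_\M(F_j)-\rk_\M(E)$, a short arithmetic yields the required equality $|B^c_{\mathscr{F}/I}(\M/I)\cap(E\setminus F_j)|=\rk_{\M^\perp}(E\setminus F_j)$, completing the check.

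The main obstacle is simply bookkeeping: aligning the sign convention relating a vertex of the polymatroid to the corresponding $T$-character, and treating uniformly the two edge cases for maximal compatible pairs (namely $I=\emptyset$ with $F_1=\emptyset$, versus $I\neq\emptyset$ with $F_1=I$). Once this is set up, the combinatorial content collapses to the one-line greedy-basis statement and the familiar matroid duality formula.
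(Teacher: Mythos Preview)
Your proof is correct and follows essentially the same approach as the paper: both compute $[\det\cQ_\M]_{I\le\mathscr F}=\prod_{i\in B^c_{\mathscr F/I}(\M/I)}T_i^{-1}$ from Proposition~\ref{prop:Kclasses} and then identify this with the localization of $[\mathcal O_{X_E}(D_{I(\M^\perp)})]$ by recognizing $\be_{B^c_{\mathscr F/I}(\M/I)}$ as the vertex of $I(\M^\perp)$ minimized by functionals in the interior of $\sigma_{I\le\mathscr F}$. The paper simply asserts this last vertex identification (having already used it in the proof of Proposition~\ref{prop:Kclasses}), whereas you spell out the greedy-basis and duality arithmetic explicitly.
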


\begin{proof}
As a $T$-equivariant $K$-class, we have from Proposition~\ref{prop:Kclasses} that
\[
[\det \cQ_\M]_{I\leq \mathscr F} = \prod_{i\in B_{\mathscr F/I}^c(\M/I)}T_i^{-1}
\]
for a maximal cone $\sigma_{I\leq \mathscr F}$ of $\Sigma_E$.  Since the vertex of $I(\M^\perp)$ that minimizes the pairing with a vector in the interior of $\sigma_{I\leq \mathscr F}$ is $\be_{B_{\mathscr F/I}^c(\M/I)}$, the result follows.\\
\indent Alternatively, by appealing to Proposition~\ref{prop:tautoval} one can reduce to the case where $\M$ admits a realization $L$, in which case the diagram above Remark~\ref{rem:SES} implies that $\det\cQ_L$ defines the map $X_E \to X_{I(\M^\perp)}$ given by the line bundle $\mathcal O_{X_E}(D_{I(\M^\perp)})$.
\end{proof}

\begin{proposition}\label{prop:tautoval}
Any function that maps a matroid $\M$ to a fixed polynomial expression involving symmetric powers, exterior powers, tensor products, and direct sums of $[\mathcal{S}_\M], [\mathcal{Q}_\M], [\mathcal{S}_\M]^{\vee},$ and $[\mathcal{Q}_\M]^{\vee}$ is valuative, and similarly for a fixed polynomial expression in the Chern classes of the augmented tautological classes. 
\end{proposition}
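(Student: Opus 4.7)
The plan is to combine $T$-equivariant localization with the polytope-algebra machinery developed in the appendix.

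By Theorem~\ref{thm:localization}.\ref{localization:K}, the localization injection $K_T(X_E) \hookrightarrow \prod_{\sigma \in \Sigma_E(n)} K_T(\operatorname{pt}_\sigma)$ is compatible with symmetric powers, exterior powers, tensor products, direct sums, and Chern classes, via the formulas of Section~\ref{subsec:equivConsts}. Hence it suffices to verify, for each maximal cone $\sigma$, that the map $\M \mapsto F\bigl([\mathcal{S}_\M]_{\operatorname{pt}_\sigma}, [\mathcal{Q}_\M]_{\operatorname{pt}_\sigma}, \ldots\bigr)$ into the Laurent polynomial ring $K_T(\operatorname{pt}_\sigma)$ is valuative, with the analogous reduction for Chern classes into $A^\bullet_T(\operatorname{pt}_\sigma)$.

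Next, I would analyze the localizations explicitly. Writing $\sigma = \sigma_{I \le \mathscr{F}}$ with $\mathscr{F}: I = F_1 \subsetneq F_2 \subsetneq \cdots \subsetneq F_{n-|I|}$, declaring $F_{n-|I|+1} = E$ by convention, and with $F_{i+1} = F_i \cup \{a_i\}$, the greedy characterization of the lex-minimal basis yields
\[
\mathbf{1}\bigl[a_i \in B_{\mathscr{F}/I}(\M/I)\bigr] = \rk_\M(F_{i+1}) - \rk_\M(F_i).
\]
By Proposition~\ref{prop:Kclasses}, this writes $[\mathcal{S}_\M]_{\operatorname{pt}_\sigma}$ and $[\mathcal{Q}_\M]_{\operatorname{pt}_\sigma}$ as Laurent polynomials in $T_1^{\pm 1}, \ldots, T_n^{\pm 1}$ whose coefficients are $\mathbb{Z}$-linear combinations of rank values $\rk_\M(G)$ for $G \subseteq E$. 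Since the rank function is a standard valuative invariant (see, e.g., \cite{AFR10, Ard22}), this immediately establishes that the assignments $\M \mapsto [\mathcal{S}_\M]$ and $\M \mapsto [\mathcal{Q}_\M]$ are themselves valuative maps into $K_T(X_E)$.

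The delicate remaining point is that operations such as $\operatorname{Sym}^j$ and $\bigwedge^j$ are not group homomorphisms on $K$-classes, so valuativity of $[\mathcal{S}_\M]$ does not formally propagate to arbitrary polynomial expressions like $\operatorname{Sym}^j[\mathcal{S}_\M]$. To bridge this gap, I would invoke the isomorphism between $K(X_E)$ and a polytope algebra developed in the appendix. Under this isomorphism, $[\mathcal{S}_\M]$ and $[\mathcal{Q}_\M]$ factor through valuative assignments from $\mathrm{Mat}_r(E)$ into the polytope algebra, consistent with Proposition~\ref{prop:firstChern}, which identifies $c_1(\mathcal{Q}_\M)$ with the divisor class of the independence polytope $I(\M^\perp)$. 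The bundle operations in question correspond to polytope-algebra operations that respect the valuative quotient, so their composition with a valuative map remains valuative. The main obstacle is precisely this polytope-algebraic content: checking that the operations corresponding to $\operatorname{Sym}^j$, $\bigwedge^j$, $\otimes$, $\oplus$, and Chern classes all factor through valuative equivalence. Once this is in place, the rest of the argument is a routine combination of localization and greedy-basis combinatorics.
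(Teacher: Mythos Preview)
Your first two paragraphs are correct and match the paper's setup: localization reduces the problem to showing, at each torus-fixed point $\sigma=\sigma_{I\le\mathscr F}$, that the assignment $\M\mapsto F\bigl([\mathcal S_\M]_\sigma,[\mathcal Q_\M]_\sigma,\ldots\bigr)$ is valuative, and you correctly observe that the localization is determined by the lex-minimal basis $B_{\mathscr F/I}(\M/I)$. You are also right that the delicate point is nonlinearity: $\operatorname{Sym}^j$, $\bigwedge^j$, etc.\ are not additive, so valuativity of $\M\mapsto[\mathcal S_\M]$ does not formally imply valuativity of $\M\mapsto\operatorname{Sym}^j[\mathcal S_\M]$.

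The genuine gap is in your proposed fix. Invoking the polytope-algebra isomorphism $K(X_E)\simeq\overline{\mathbb I}(\Sigma_E)$ does not resolve the nonlinearity issue: that isomorphism is of rings, so the same problem persists on the polytope side, and your sentence ``the bundle operations correspond to polytope-algebra operations that respect the valuative quotient'' is exactly what is at stake and is never established. You acknowledge this yourself as ``the main obstacle.'' There is also a logical-order concern: the polytope-algebra results for $X_E$ in Section~\ref{SectionValuativeGroup} rely on Corollary~\ref{cor:ctop}, whose valuativity statement in turn cites this very proposition.

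The paper's argument avoids all of this with a single clean observation you nearly reached. Rather than recording only the $0/1$ coordinates $\mathbf 1[a_i\in B]$ (which are linear in $\rk_\M$), record the \emph{standard basis vector} $\be_{B_\sigma}\in\ZZ^{2^E}$ indexed by the subset $B_\sigma=B_{\mathscr F/I}(\M/I)$. The map
\[
\M\ \longmapsto\ \sum_{\sigma\in\Sigma_E(n)}\be_{B_\sigma}\ \in\ \bigoplus_{\sigma}\ZZ^{2^E}
\]
is valuative by \cite[Theorem~4.6]{McM09} (or \cite[Theorem~5.4]{AFR10}): this is the statement that sending a polytope to its vertex in a fixed generic direction is valuative. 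Because $\ZZ^{2^E}$ is free on the set of subsets, \emph{any} set-function $g\colon 2^E\to A$ extends uniquely to a $\ZZ$-linear map $\ZZ^{2^E}\to A$, so the composite $\M\mapsto g(B_\sigma)$ is valuative regardless of whether $g$ is linear in the $0/1$ coordinates. Since every polynomial expression in the tautological classes or their Chern classes localizes, at each $\sigma$, to such a function of $B_\sigma$, the proposition follows immediately. This is the missing idea: factor through the free group on subsets, not through the rank function.
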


For instance, the proposition implies that the assignments $\M\mapsto c(\cQ_\M)$ and $\M \mapsto s(\cQ_\M^\vee)$ are valuative.

\begin{proof}
Let $\ZZ^{2^E}$ be the free abelian group with the standard basis indexed by the subsets of $E$.  Consider the function
\[
\operatorname{Mat}(E) \to \bigoplus_{\Sigma_E(n)} \mathbb{Z}^{2^{E}} \text{ given by } \M \mapsto \sum_{\sigma_{I \le \mathscr{F}} \in \Sigma_E(n)} \be_{B_{\mathscr{F}/I}(\M/I)}.
\]
By Proposition~\ref{prop:faceval}, this function is valuative; see also \cite[Theorem 5.4]{AFR10}. Any fixed polynomial expression in the augmented tautological classes or their Chern classes factors through this map and is therefore valuative. 
\end{proof}

We now consider how augmented tautological classes restrict to $T$-invariant subvarieties of $X_E$.
By Corollary~\ref{cor:stars}, for a (not necessarily maximal) compatible pair $I \leq \mathscr F: F_1 \subsetneq \cdots \subsetneq F_k$, the corresponding $T$-invariant subvariety $Z_{I \leq \mathscr F} \subseteq X_E$ corresponding to the cone $\sigma_{I\leq \mathscr F}$ is naturally identified with
\[
Z_{I\leq\mathscr F} \simeq X_{F_1 \setminus I} \times \prod_{i = 1}^k \underline{X}_{{F_{i+1}\setminus F_i}}.
\]
This identification then induces isomorphisms
$$K_T(Z_{I\leq\mathscr F}) \stackrel{\sim}{\to} K_T(X_{F_1 \setminus I}) \otimes \bigotimes_{i=1}^{k} K_T(\underline{X}_{{F_{i+1}\setminus F_i}}) \text{ and } A_T^{\bullet}(Z_{I\leq\mathscr F}) \stackrel{\sim}{\to} A_T^{\bullet}(X_{F_1 \setminus I}) \otimes \bigotimes_{i=1}^{k} A_T^{\bullet}(\underline{X}_{{F_{i+1}\setminus F_i}}).$$

\begin{proposition}\label{prop:restrict}
Under the above identification, we have that
\begin{equation*}
\begin{split}
[\mathcal S_\M]|_{Z_{I\leq \mathscr F}} &= \operatorname{rk}_\M(I)[\mathcal O_{Z_{I\leq \mathscr F}}] + [\mathcal S_{\M|F_1/I}] \otimes 1^{\otimes k} + \sum_{i = 1}^k 1^{\otimes (i-1)} \otimes [\underline {\mathcal S}_{\M|F_{i+1}/F_i}] \otimes 1^{\otimes (k-i)},\quad\text{and}\\
[\mathcal Q_\M]|_{Z_{I\leq \mathscr F}} &= (|I| -\operatorname{rk}_\M(I))[\mathcal O_{Z_{I\leq \mathscr F}}] + [\mathcal Q_{\M|F_1/I}]\otimes 1^{\otimes k} + \sum_{i = 1}^k 1^{\otimes (i-1)} \otimes [\underline {\mathcal Q}_{\M|F_{i+1}/F_i}] \otimes 1^{\otimes (k-i)}.
\end{split}
\end{equation*}
In particular, when $\mathcal  F = \emptyset$, we have that $c(\mathcal S_\M)|_{Z_I} \simeq c(\mathcal S_{\M/I})$ as a class in $A^\bullet(Z_I) \simeq A^\bullet(X_{{E\setminus I}})$, and similarly for $\mathcal Q_\M$.
\end{proposition}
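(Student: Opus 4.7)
The plan is to apply $T$-equivariant localization (Theorem~\ref{thm:localization}\ref{localization:K}) and verify the stated identity pointwise at every $T$-fixed point of $Z_{I\leq\mathscr F}$. By Corollary~\ref{cor:stars}, such a fixed point corresponds under the product decomposition to a tuple $(J \leq \mathscr G_0, \mathscr G_1, \ldots, \mathscr G_k)$ consisting of a maximal compatible pair $J \leq \mathscr G_0$ on $F_1\setminus I$ and maximal chains $\mathscr G_i$ of nonempty proper subsets on each slab $F_{i+1}\setminus F_i$. This data lifts to a unique maximal compatible pair $I' \leq \mathscr F'$ in $\Sigma_E$ above $\sigma_{I\leq \mathscr F}$, with $I' = I \sqcup J$ and $\mathscr F'$ obtained by splicing each $\mathscr G_i$ into the appropriate slab of $\mathscr F$. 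Setting $\M_0 = \M|F_1/I$ and $\M_i = \M|F_{i+1}/F_i$, the identity to check at this fixed point becomes, by Proposition~\ref{prop:Kclasses}, an equality of Laurent polynomials in the characters $T_j$.

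The combinatorial core of the argument is the block-wise decomposition
\[
B_{\mathscr F'/I'}(\M/I') \;=\; B_{\mathscr G_0/J}(\M_0/J) \;\sqcup\; \bigsqcup_{i=1}^k B_{\mathscr G_i}(\M_i),
\]
together with the rank identity $\operatorname{rk}_\M(I') = \operatorname{rk}_\M(I) + \operatorname{rk}_{\M_0}(J)$. I would establish the decomposition by running the greedy matroid algorithm slab by slab along the ordering of $E \setminus I'$ induced by $\mathscr F'/I'$: the picks on the initial slab $F_1 \setminus I'$ form the lex-minimal basis of $(\M/I')|_{F_1 \setminus I'} = \M_0/J$, and for each subsequent slab $F_{i+1}\setminus F_i$, the standard matroid identity $(\M/B_S)|_T = (\M/S)|_T$ (for $T \subseteq E \setminus S$ and $B_S$ any basis of $\M|S$) reduces the greedy computation to the lex-minimal basis of $\M_i$. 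I expect this matroid-theoretic step to be the main obstacle; everything else is bookkeeping.

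Given the decomposition, the localization formulas from Proposition~\ref{prop:Kclasses} turn the claim into an immediate comparison at each fixed point: the scalar $\operatorname{rk}_\M(I')$ splits as $\operatorname{rk}_\M(I) + \operatorname{rk}_{\M_0}(J)$, producing the $\operatorname{rk}_\M(I)[\mathcal O_{Z_{I \leq \mathscr F}}]$ contribution together with the constant part of $[\mathcal S_{\M_0}]_{p_0}$, while the character sum $\sum_j T_j^{-1}$ over $B_{\mathscr F'/I'}(\M/I')$ splits block by block into $[\mathcal S_{\M_0}]_{p_0}$ on the $X_{F_1\setminus I}$-factor and $[\underline{\mathcal S}_{\M_i}]_{p_i}$ on each $\underline{X}_{F_{i+1}\setminus F_i}$-factor, using the formula for $[\underline{\mathcal S}_\M]$ recalled just before the proposition and the fact that the global characters $T_j$ restrict correctly to the factor tori (with $T_j$ becoming trivial for $j \in I$). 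The formula for $[\mathcal Q_\M]$ follows by the same argument applied to the complementary bases $B^c$. Finally, the ``In particular'' statement is the $\mathscr F = \emptyset$ case: the equivariant formula specializes to $[\mathcal S_\M]|_{Z_I} = \operatorname{rk}_\M(I)[\mathcal O_{Z_I}] + [\mathcal S_{\M/I}]$, and passing to non-equivariant total Chern classes (using that $c(\mathcal O^{\oplus \operatorname{rk}_\M(I)}) = 1$) yields $c(\mathcal S_\M)|_{Z_I} = c(\mathcal S_{\M/I})$ in $A^\bullet(X_{E\setminus I})$.
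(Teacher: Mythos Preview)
Your proof is correct and follows essentially the same strategy as the paper: verify the identity at every $T$-fixed point of $Z_{I\leq\mathscr F}$ via localization, reducing to a combinatorial block decomposition. The only difference is in how that combinatorial core is packaged. The paper observes that the face of $I(\M^\perp)$ on which functionals in the interior of $\sigma_{I\leq\mathscr F}$ attain their minimum is naturally identified with $I((\M|F_1/I)^\perp)\times\prod_{i=1}^k P((\M|F_{i+1}/F_i)^\perp)$, and then notes that the localization at a fixed point depends only on the vertex of $I(\M^\perp)$ selected by that cone, so the product structure of the face immediately yields the product structure of the class. You instead run the greedy algorithm slab by slab to obtain the decomposition $B_{\mathscr F'/I'}(\M/I') = B_{\mathscr G_0/J}(\M_0/J)\sqcup\bigsqcup_i B_{\mathscr G_i}(\M_i)$ directly. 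These are two phrasings of the same fact: the vertices of that face of $I(\M^\perp)$ are precisely the indicator vectors $\be_{B^c}$ for the greedy bases you compute, so the polytope product decomposition is equivalent to your block decomposition. The paper's formulation is slightly slicker in that it invokes a standard structural fact about faces of independence polytopes and avoids rerunning the greedy argument; your formulation has the advantage of being self-contained and making the matroid identity $(\M/B_S)|_T = (\M/S)|_T$ explicit.
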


\begin{proof}
The fan of $Z_{I \le \mathscr{F}}$ is the star of $\sigma_{I \le \mathscr{F}}$, and the localization of an augmented tautological class to a $T$-fixed point of $Z_{I \le \mathscr{F}}$ is the same as the localization to the $T$-fixed point of $X_E$ at the corresponding maximal cone of $\Sigma_E$. \\
\indent The face of $I(\M^{\perp})$ on which functionals in the (relative) interior of $\sigma_{I \le \mathscr{F}}$ attain their minimum is naturally identified with $I((\M|F_1/I)^{\perp}) \times \prod_{i=1}^{k} P((\M|{F_{i+1}}/F_i)^{\perp})$, and this identification is compatible with the corresponding identification for $\Pi_E$. As the localizations of augmented tautological classes to a fixed point corresponding to a maximal cone of $\Sigma_{E}$ depend only on vertex of $I(\M^{\perp})$ on which any functional in the interior of that maximal cone attains its minimum, this product decomposition gives the result. 
\end{proof}

\section{Augmented wonderful varieties and Bergman classes}

\subsection{Augmented wonderful varieties}

\begin{definition}
Let $L\subseteq \kk^E$ be a linear subspace.  With $\kk^E$ identified with the toric affine chart of $X_E$ corresponding to the cone $\sigma_{E\leq\emptyset} = \RR_{\geq 0}^E$ of $\Sigma_E$, the \emph{augmented wonderful variety} $W_L$ of $L$ is defined as the closure of $L$ in $X_E$.
\end{definition}

We note an equivalent description of the augmented wonderful variety, which can be deduced from Proposition~\ref{prop:iteratedblowup}. For a flat $F\subseteq E$ of $\M$, let $L_F = L \cap (\kk^{E\setminus F} \oplus 0^F)$.  The projective completion $\mathbb{P}(L \oplus \kk)$ of $L$ contains a copy of $\mathbb{P}(L)$ as the hyperplane at infinity, and so it contains a subspace identified with $\mathbb{P}(L_F)$ for every flat $F$ of $\M$.
Under the iterated blow-up $\pi_E\colon X_E \to \PP^E$, the augmented wonderful variety $W_L$ is the strict transform of $\PP(L \oplus \kk) \subseteq \PP(\kk^E \oplus \kk) = \PP^E$, fitting into the diagram
\[
\begin{tikzcd}
&W_L \ar[r, hook] \ar[d] &X_E \ar[d]\\
&\PP(L\oplus \kk) \ar[r, hook] &\PP^E.
\end{tikzcd}
\]
This makes $W_L$ equal to the variety obtained by blowing up $\mathbb{P}(L \oplus k)$ at the linear spaces $\PP(L_F)$ corresponding to corank $1$ flats of $\M$, then blowing up at the strict transforms of linear spaces corresponding to corank $2$ flats of $\M$, and so on.

We relate augmented wonderful varieties to augmented tautological bundles as follows.

\begin{theorem}\label{thm:vanishingsection}
For a linear subspace $L\subseteq \kk^E$, the augmented wonderful variety $W_L$ is the vanishing locus of a distinguished global section of $\mathcal{Q}_L$.
\end{theorem}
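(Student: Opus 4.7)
The plan is to construct the global section $\bar{s}\in H^0(X_E,\mathcal{Q}_L)$ as the image of a natural section of $\bigoplus_{i\in E}\pi_i^*\mathcal{O}_{\mathbb{P}^1}(1)$, verify that its vanishing locus on the open $\mathbb{G}^E$-orbit $\mathbb{A}^E\subseteq X_E$ is $L$, propagate the identification to the boundary strata, and finally upgrade to a scheme-theoretic equality via a regularity argument. Concretely, I would let $x\in H^0(\mathbb{P}^1,\mathcal{O}_{\mathbb{P}^1}(1))$ be the section whose restriction to $\mathbb{A}^1\subseteq\mathbb{P}^1$ is the coordinate function, set $s=(\pi_i^* x)_{i\in E}\in H^0\bigl(X_E,\bigoplus_{i\in E}\pi_i^*\mathcal{O}_{\mathbb{P}^1}(1)\bigr)$, and define $\bar{s}$ to be its image under the surjection in Remark~\ref{rem:SES}.

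To check vanishing on $\mathbb{A}^E$, note that by Remark~\ref{rem:SES} the short exact sequence $0\to\mathcal{S}_L\to\bigoplus\pi_i^*\mathcal{O}_{\mathbb{P}^1}(1)\to\mathcal{Q}_L\to 0$ restricts over $\mathbb{A}^E$ to the trivial sequence associated to $0\to L\to\kk^E\to\kk^E/L\to 0$. Under this identification, $s|_{\mathbb{A}^E}$ is the tautological map $v\mapsto v$ on $\mathbb{A}^E\simeq\kk^E$, since the $\mathbb{G}^E$-equivariant trivialization of $\pi_i^*\mathcal{O}_{\mathbb{P}^1}(1)$ on $\mathbb{A}^E$ is the one in which $x$ becomes the $i$-th coordinate function. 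Composing with the quotient to $\mathcal{O}_{\mathbb{A}^E}\otimes\kk^E/L$, the restriction $\bar{s}|_{\mathbb{A}^E}$ is the map $v\mapsto v+L$, whose scheme-theoretic vanishing is exactly $L$. Taking closures in $X_E$ gives $W_L=\overline{L}\subseteq Z(\bar{s})$.

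For the reverse containment, I would analyze $\bar{s}$ on each $T$-invariant subvariety $V(\sigma_{I\le\mathscr F})\subseteq X_E$. Corollary~\ref{cor:stars} identifies $V(\sigma_{I\le\mathscr F})\simeq X_{F_1\setminus I}\times\prod_{i=1}^k\underline{X}_{F_{i+1}\setminus F_i}$, and Proposition~\ref{prop:restrict} decomposes $\mathcal{Q}_L|_{V(\sigma_{I\le\mathscr F})}$ as a sum of trivial summands, an augmented $\mathcal{Q}$-bundle on $X_{F_1\setminus I}$ for the minor $\M|F_1/I$, and non-augmented $\underline{\mathcal{Q}}$-bundles on each permutohedral factor for $\M|F_{i+1}/F_i$. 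One checks that $\bar{s}|_{V(\sigma_{I\le\mathscr F})}$ decomposes compatibly into tautological sections on the factors, and then an induction on $|E|$ (using the analogous fact, underlying \cite{BEST21}, that the non-augmented tautological section cuts out the de Concini--Procesi wonderful variety on each permutohedral factor, together with the present theorem applied to the smaller stellahedral factor) identifies the vanishing locus of $\bar{s}|_{V(\sigma_{I\le\mathscr F})}$ with $W_L\cap V(\sigma_{I\le\mathscr F})$. Summing over strata gives $Z(\bar{s})=W_L$ set-theoretically.

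To conclude scheme-theoretic equality, observe that $W_L$ has codimension $n-r=\operatorname{rank}\mathcal{Q}_L$, so $\bar{s}$ is a regular section; hence $Z(\bar{s})$ is Cohen--Macaulay of pure codimension $n-r$, and being generically reduced (since $Z(\bar{s})\cap\mathbb{A}^E=L$ is smooth), it is reduced, so $Z(\bar{s})=W_L$ as subschemes of $X_E$. The hard part will be the stratum-by-stratum boundary analysis in the third paragraph, where one must carefully match the restriction of $\bar{s}$ with tautological sections on each product factor via the $\mathbb{G}^E$-equivariant identifications of Remark~\ref{rem:SES} and Lemma~\ref{lem:O1s}. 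An alternative that may bypass some of this bookkeeping uses Corollary~\ref{cor:stellaorbit}: at $y\in X_E$ with associated flag $L_{1,y}\subseteq L_{2,y}\subseteq V^E$, the section $\bar{s}$ becomes the restriction to $L_{1,y}$ of a specific linear functional $\phi\in(V^E)^\vee$ (summing the first-copy coordinates of $V^E=\kk^E\oplus\kk^E$), exhibiting $Z(\bar{s})$ as the incidence locus $\{y\in X_E:L_{1,y}\subseteq\ker\phi\}$, which can be matched with $W_L$ directly from the orbit-closure structure.
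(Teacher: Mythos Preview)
Your construction of the section $\bar s$ and its identification on the open orbit $\mathbb{A}^E$ coincide with the paper's.  Where you diverge is in ruling out extra components of $Z(\bar s)$ in the boundary $X_E\setminus\mathbb{A}^E$: you propose a stratum-by-stratum induction via Proposition~\ref{prop:restrict} and the analogous non-augmented statement from \cite{BEST21}, whereas the paper sidesteps the boundary entirely by using the $\mathbb{G}^E$-action.  The paper observes that the $\mathbb{G}^E$-orbit of the vector $v=(1,\dots,1,0,\dots,0)\in(V^E)^\vee$ is dense, so its image $\bar s$ is a \emph{general} element of a subspace of $H^0(X_E,\mathcal{Q}_L)$ that globally generates $\mathcal{Q}_L$; a short lemma then shows that for a general section of a globally generated bundle, if the vanishing locus meets some dense open in an integral subscheme of the expected codimension, then the full vanishing locus is integral.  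This replaces all of your inductive bookkeeping with a single irreducibility argument for a general fiber of a dominant map.

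Your route is viable, but two points need attention.  First, Proposition~\ref{prop:restrict} is a statement about $T$-equivariant $K$-classes, not about vector bundles, so to carry out your induction you must upgrade it to an honest splitting of $\mathcal{Q}_L|_{Z_{I\le\mathscr F}}$ (this is true and can be extracted from the flag-variety description of $\mathcal{Q}_L^\vee$, but it is not in the paper).  Second, you must verify that on strata with $I$ dependent in $\M$ the trivial summand $\mathcal{O}^{|I|-\operatorname{rk}_\M(I)}$ of $\mathcal{Q}_L|_{Z_{I\le\mathscr F}}$ receives a nowhere-vanishing component of $\bar s$, and similarly that strata whose chain $\mathscr F$ contains a non-flat are disjoint from $Z(\bar s)$.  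The paper's equivariance argument is cleaner and shorter; your approach, by contrast, would yield an explicit description of $W_L\cap Z_{I\le\mathscr F}$ for every compatible pair, which is of independent interest.
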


We prepare to prove Theorem~\ref{thm:vanishingsection} with the following lemma.

\begin{lemma}
Let $\cQ$ be a vector bundle of rank $k$ on a smooth variety $X$, and let $L \subseteq H^0(X, \mathcal{Q})$ be a subspace which generates $\mathcal{Q}$. Suppose there exists a nonempty open $U\subseteq X$ such that for a general $s \in L$, the vanishing locus $V(s)$ is nonempty and the intersection $V(s)\cap U$ is integral of codimension $k$.  Then $V(s)$ is integral for a general $s \in L$.
\end{lemma}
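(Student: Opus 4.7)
The plan is to study the incidence variety $Z = \{(x,s) \in X \times L : s(x) = 0\}$ and its two projections. Since $L$ generates $\mathcal{Q}$, the evaluation $\mathcal{O}_X \otimes L \twoheadrightarrow \mathcal{Q}$ is surjective with kernel a vector bundle of rank $\dim L - k$, and $Z \to X$ is the total space of this kernel; thus $Z$ is smooth and irreducible of dimension $\dim X + \dim L - k$. The fiber of the second projection $p \colon Z \to L$ over $s$ is $V(s)$, and by hypothesis $V(s)$ is nonempty for general $s$, so $p$ is dominant. Comparing dimensions, the general fiber $V(s)$ has dimension exactly $\dim X - k$; since $s$ is locally cut out by $k$ equations in the smooth variety $X$, general $V(s)$ is therefore a pure-dimensional local complete intersection, in particular Cohen--Macaulay.

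To upgrade the integrality hypothesis on $V(s) \cap U$ to all of $V(s)$, I will show that every irreducible component of $V(s)$ meets $U$ for general $s$. Let $Y_1, \ldots, Y_m$ denote the irreducible components of $X \setminus U$, each satisfying $\dim Y_j < \dim X$ by irreducibility of $X$. The same argument applied to each $Y_j$ shows that the incidence $Z_{Y_j} = \{(y,s) \in Y_j \times L : s(y) = 0\}$ is an irreducible vector bundle over $Y_j$ of dimension $\dim Y_j + \dim L - k$. Projecting to $L$, for general $s$ the set $V(s) \cap Y_j$ is empty or has dimension at most $\dim Y_j - k < \dim X - k$. Taking the intersection of these dense open conditions for $j = 1, \ldots, m$ together with the pure-dimensionality condition from the first paragraph, for general $s$ every irreducible component of $V(s)$ (each of dimension exactly $\dim X - k$) meets $U$, so $V(s) \cap U$ is dense in $V(s)$.

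It then follows that $V(s)$ is the closure in $X$ of the integral scheme $V(s) \cap U$, hence is irreducible and generically reduced. Combining generic reducedness with the Cohen--Macaulay property from the first paragraph, which rules out embedded components, $V(s)$ is in fact reduced, and hence integral. The main potential obstacle is the pure-dimensionality step in the first paragraph; without it one could not conclude that $V(s)$ is Cohen--Macaulay, and generic reducedness would not propagate to global reducedness. This is precisely where the globally generated hypothesis on $\mathcal{Q}$ is used, since it makes $Z \to X$ a vector bundle of constant rank and lets the dimension count come out sharp.
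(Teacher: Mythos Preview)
Your proof is correct and follows essentially the same approach as the paper. Both arguments build the incidence variety as the total space of the kernel bundle $\mathcal{S} = \ker(\mathcal{O}_X \otimes L \twoheadrightarrow \mathcal{Q})$, use its irreducibility and dimension to get pure-dimensionality of the general fiber $V(s)$, observe that the restriction over $X \setminus U$ has strictly smaller dimension and hence cannot contain an irreducible component of the general fiber, and conclude integrality from the unmixedness/Cohen--Macaulay property of a codimension-$k$ zero locus of a section of a rank-$k$ bundle; the only cosmetic difference is that the paper treats $X\setminus U$ as a single closed set rather than decomposing it into irreducible components.
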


\begin{proof}
Once we show that $V(s)$ is irreducible, the unmixedness theorem \cite[Corollary 18.14]{Eis95} implies that $V(s)$, which is of codimension $k$, has no embedded points, and hence is integral.  To show that $V(s)$ is irreducible, let $\cS$ be the kernel of $\mathcal O_X \otimes L \twoheadrightarrow \cQ$, and let $\AA(\cS)$ be the total space of $\cS$, which is irreducible.
We consider the map $\pi: \AA(\cS) \to X\times L \to L$.  For $s\in L$, the fiber $\pi^{-1}(s)$ is isomorphic to the vanishing locus $V(s)$.  Since $V(s)$ is nonempty for a general $s$, the map $\pi$ is a dominant map between varieties, and hence a general fiber of $\pi$ is pure-dimensional.  Now, let $Z$ be the total space of the restriction of $\cS$ to the closed subvariety $X\setminus U$.  Since $\dim Z < \dim \AA(\cS)$, we see that $Z$ cannot contain a component of a general fiber of $\pi$.  Hence, a general fiber of $\pi$ is irreducible, as desired. 
\end{proof}

\begin{proof}[Proof of Theorem~\ref{thm:vanishingsection}]
Take the vector $v = (1, \ldots, 1, 0, \ldots, 0) \in \kk^{E} \oplus \kk^E$.  Let us identify $\kk^{E} \oplus \kk^E = H^0(X_E, \bigoplus_{i\in E} \pi^*_i\mathcal O(1)) = (V^E)^\vee$.
The vector $v$ then defines a global section of $\bigoplus_{i\in E}\pi^*_i\mathcal O(1)$, and hence a global section of $\mathcal Q_L$ via the surjection $\bigoplus_{i\in E}\pi^*_i\mathcal O(1) \twoheadrightarrow \cQ_L$.
On the $\GG^E$-orbit $\mathbb{A}^E$ of $X_E$, Remark~\ref{rem:SES} identifies the restriction of $v$ with the section
\[
(x_1, \dotsc, x_n) \in \big(\kk[x_1, \dotsc, x_n]\big)^E =  H^0(\AA^E, \mathcal{O}_{\AA^E} \otimes \kk^E).
\]
So the image of $v$ in $H^0(\AA^E, \mathcal{O}_{\AA^E} \otimes \kk^E/L)$ vanishes exactly on $L$.
The $\mathbb{G}^E$-orbit of $v$ is dense in $\kk^{E} \oplus \kk^E$.  Hence, by $\mathbb{G}^E$-equivariance, the $\mathbb{G}^E$-orbit of the image of $v$ in $H^0(X_E, \mathcal Q_L)$ is dense in a subspace of $H^0(X_E, \mathcal Q_L)$ that globally generates $\mathcal Q_L$. 
In other words, the section $v$ is a sufficiently general section satisfying the conclusion of the above lemma, from which the theorem now follows.
\end{proof}

\begin{corollary}\label{cor:koszul} Let $L\subseteq \kk^E$ be a linear subspace of dimension $r$.
\begin{enumerate}\itemsep 5pt
\item The normal bundle $\mathcal N_{W_L/X_E}$ is identified with the restriction $\cQ_L |_{W_L}$.
\item The $K$-class of the structure sheaf $[\mathcal O_{W_L}]\in K(X_E)$ equals $\sum_{i = 0}^{n-r} (-1)^{i} [\textstyle{\bigwedge^i}\cQ_L^\vee]$.
\end{enumerate}
\end{corollary}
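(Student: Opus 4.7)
The plan is to derive both parts of the corollary as standard consequences of Theorem~\ref{thm:vanishingsection} once we verify that the distinguished section of $\mathcal{Q}_L$ constructed there is a \emph{regular} section, i.e., its vanishing locus has codimension equal to the rank of $\mathcal{Q}_L$.

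First I would compute ranks and dimensions. From the short exact sequence in Remark~\ref{rem:SES}, the rank of $\mathcal{Q}_L$ equals $n - \dim L = n - r$. On the other hand, $W_L$ is birational to $\PP(L \oplus \kk)$ via the map $W_L \hookrightarrow X_E \to \PP^E$ appearing in the diagram right after the definition of $W_L$, so $\dim W_L = r$ and hence $\operatorname{codim}_{X_E} W_L = n-r$. Thus the codimension of the zero locus $W_L = V(s)$ agrees with $\operatorname{rank}(\mathcal{Q}_L)$. Since $X_E$ is a smooth projective toric variety and in particular Cohen--Macaulay, this codimension condition guarantees that $s$ is a regular section: locally $s$ is given by a regular sequence of length $n-r$ in $\mathcal{O}_{X_E}$.

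With this in hand, both parts follow from the standard theory of regular sections of vector bundles. For part (1), the conormal sequence identifies $\mathcal{N}^{\vee}_{W_L/X_E}$ with the restriction of $\mathcal{Q}_L^\vee$ to $W_L$ via the map $\mathcal{Q}_L^\vee \to \mathcal{O}_{X_E}$ induced by pairing with $s$; dualizing gives $\mathcal{N}_{W_L/X_E} \simeq \mathcal{Q}_L|_{W_L}$. For part (2), the Koszul complex associated to $s$,
\[
0 \to \textstyle\bigwedge^{n-r}\mathcal{Q}_L^\vee \to \cdots \to \bigwedge^2 \mathcal{Q}_L^\vee \to \mathcal{Q}_L^\vee \to \mathcal{O}_{X_E} \to \mathcal{O}_{W_L} \to 0,
\]
is exact because $s$ is regular. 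Taking the alternating sum of $K$-classes in this resolution yields the identity $[\mathcal{O}_{W_L}] = \sum_{i=0}^{n-r}(-1)^i[\bigwedge^i \mathcal{Q}_L^\vee]$ in $K(X_E)$.

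The main potential obstacle is the verification that the section from Theorem~\ref{thm:vanishingsection} is regular, rather than merely cutting out $W_L$ set-theoretically. The dimension count above handles this: regularity of a section of a rank-$k$ vector bundle on a Cohen--Macaulay scheme is equivalent to the zero locus having codimension $k$. No further delicate ingredient is needed, since the vanishing locus is automatically scheme-theoretically correct -- the proof of Theorem~\ref{thm:vanishingsection} already shows that a general $\mathbb{G}^E$-translate of $s$ cuts out an integral subscheme of the right codimension, and $\mathbb{G}^E$-equivariance transports this to $s$ itself.
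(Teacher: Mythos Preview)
Your proposal is correct and follows essentially the same approach as the paper: both argue that since $W_L$ is the vanishing locus of a section of the rank-$(n-r)$ bundle $\mathcal{Q}_L$ and has the expected codimension $n-r$, the Koszul complex resolves $\mathcal{O}_{W_L}$, from which both statements follow. The paper's proof is simply more terse, invoking smoothness of $W_L$ in dimension $r$ directly rather than spelling out the Cohen--Macaulay regularity argument as you do.
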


\begin{proof}
As $W_L$ is a smooth subvariety of $X_E$ of dimension $r$, that $W_L$ is the vanishing locus of a global section of $\cQ_L$ implies that the Koszul complex
\[
0 \to \textstyle{\bigwedge^{n- r} \cQ_L^\vee}\to \cdots \to  \textstyle{\bigwedge^2 \cQ_L^\vee} \to \cQ_L^\vee \to \mathcal O_{X_E}
\]
is a resolution of $\mathcal O_{W_L}$.  Both statements now follow.
\end{proof}

\subsection{Augmented Bergman classes}\label{subsec:MW}

We describe the Chern classes of augmented tautological classes and recover the augmented Bergman class as the top Chern class.  We use the language of Minkowski weights, defined as follows.

\begin{definition}\label{defn:MW}
A $d$-dimensional \emph{Minkowski weight} on a unimodular fan $\Sigma$ is a function $w \colon \Sigma(d) \to \ZZ$ such that the following balancing condition is satisfied: for every cone $\tau'\in \Sigma(d-1)$
\[
\sum_{\tau\succ \tau'} w(\tau)u_{\tau'\setminus\tau} \in \operatorname{span}(\tau')
\]
where the summation is over all cones $\tau\in \Sigma(d)$ containing $\tau'$, and $u_{\tau'\setminus\tau}$ denotes the primitive generator of the  unique ray of $\tau$ that is not in $\tau'$.  Write $\operatorname{MW}_d(\Sigma)$ for the set of $d$-dimensional Minkowski weights on $\Sigma$.
\end{definition}
Minkowski weights play the role of homology classes on smooth complete toric varieties in the following sense.

\begin{theorem}\cite[Theorem 3.1]{FultonSturmfels}\label{thm:FS}
Let $\Sigma$ be a complete unimodular fan of dimension $m$, and let $X_\Sigma$ be its toric variety.  Then, for every $0\leq d \leq m$, one has an isomorphism
\[
A^{m-d}(X_\Sigma) \overset\sim\to \operatorname{MW}_d(\Sigma) \quad\text{defined by}\quad \xi \mapsto \left( \tau \mapsto \int_X \xi \cdot [Z_\tau]\right).
\]
\end{theorem}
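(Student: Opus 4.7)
The plan is to identify both sides of the claimed isomorphism with $\operatorname{Hom}(A_{m-d}(X_\Sigma), \ZZ)$, via Poincar\'e duality on one side and a combinatorial presentation of the Chow group on the other.

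First, I would verify that the assignment $\xi \mapsto w_\xi$, where $w_\xi(\tau) := \int_X \xi \cdot [Z_\tau]$, lands in $\operatorname{MW}_d(\Sigma)$. Fix $\tau' \in \Sigma(d-1)$. For any $u \in M = N^\vee$ vanishing on $\operatorname{span}(\tau')$, $u$ descends to a torus character on the smooth $(m-d+1)$-dimensional toric variety $Z_{\tau'}$, whose fan is $\operatorname{star}_{\tau'}\Sigma$. Its principal divisor on $Z_{\tau'}$ is $\sum_{\tau \succ \tau'} \langle u, u_{\tau'\setminus\tau}\rangle [Z_\tau]$, where the sum runs over $\tau \in \Sigma(d)$ containing $\tau'$ and $[Z_\tau]$ is the class of the $T$-invariant divisor $Z_\tau \subset Z_{\tau'}$. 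This divisor is rationally equivalent to zero on $Z_{\tau'}$, so pushing forward by the closed embedding $Z_{\tau'} \hookrightarrow X_\Sigma$ yields $\sum_{\tau \succ \tau'} \langle u, u_{\tau'\setminus\tau}\rangle [Z_\tau] = 0$ in $A_{m-d}(X_\Sigma)$. Capping with $\xi$ and applying the projection formula then gives $\sum_{\tau \succ \tau'} \langle u, u_{\tau'\setminus\tau}\rangle w_\xi(\tau) = 0$. Since this vanishes for every $u$ in the annihilator of $\operatorname{span}(\tau')$, the sum $\sum_\tau w_\xi(\tau)\, u_{\tau'\setminus\tau}$ lies in $\operatorname{span}(\tau')$, which is the balancing condition.

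Next, I would establish the presentation: $A_{m-d}(X_\Sigma)$ is generated by $[Z_\tau]$ for $\tau \in \Sigma(d)$, with the kernel of the resulting surjection $\bigoplus_{\tau \in \Sigma(d)} \ZZ[Z_\tau] \twoheadrightarrow A_{m-d}(X_\Sigma)$ generated by the relations $R(\tau', u) := \sum_{\tau \succ \tau'} \langle u, u_{\tau'\setminus\tau}\rangle [Z_\tau]$ for $\tau' \in \Sigma(d-1)$ and $u \in M$. Generation is standard and follows from the stratification of $X_\Sigma$ by torus orbits, each of which is an affine space, via the excision sequence for Chow groups. That the $R(\tau', u)$ exhaust the kernel requires a filtration argument along the fan: induct on the codimension-$k$ skeleton of $\Sigma$, and check that the principal divisors on each $Z_{\tau'}$ account for all new relations introduced when gluing in the next stratum. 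Applying $\operatorname{Hom}(-, \ZZ)$ to this presentation identifies $\operatorname{Hom}(A_{m-d}(X_\Sigma), \ZZ)$ with the integer-valued functions on $\Sigma(d)$ killed by every $R(\tau', u)$, which by the computation in the first step is precisely $\operatorname{MW}_d(\Sigma)$.

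Finally, since $X_\Sigma$ is smooth and complete, Poincar\'e duality supplies a perfect pairing $A^{m-d}(X_\Sigma) \times A_{m-d}(X_\Sigma) \to \ZZ$ — and the Chow groups of smooth complete toric varieties are torsion-free, so no Ext term obstructs the identification $A^{m-d}(X_\Sigma) \cong \operatorname{Hom}(A_{m-d}(X_\Sigma), \ZZ)$. Composing with the identification of the previous step gives $A^{m-d}(X_\Sigma) \cong \operatorname{MW}_d(\Sigma)$, and unwinding the definitions shows the composite is exactly $\xi \mapsto (\tau \mapsto \int_X \xi \cdot [Z_\tau])$. The main obstacle is the second step: showing that the balancing relations already generate the entire kernel is not formal and needs either the structure theory of Chow groups of toric varieties, or an independent rank count (e.g.\ via the $h$-vector of $\Sigma$) to upgrade the evident surjection $\bigoplus \ZZ[Z_\tau]/\langle R(\tau',u)\rangle \twoheadrightarrow A_{m-d}(X_\Sigma)$ to an isomorphism. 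Everything else is bookkeeping with projection formulas and principal divisors on toric varieties.
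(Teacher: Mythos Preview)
The paper does not prove this theorem; it simply cites \cite[Theorem 3.1]{FultonSturmfels} and uses the result as a black box. So there is no proof in the paper to compare your proposal against.

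That said, your outline is essentially the strategy of the original Fulton--Sturmfels argument: identify $A^{m-d}(X_\Sigma)$ with $\operatorname{Hom}(A_{m-d}(X_\Sigma),\ZZ)$ via the perfect pairing, present $A_{m-d}(X_\Sigma)$ by orbit closures modulo the divisor relations on each $Z_{\tau'}$, and recognize the dual of that presentation as the balancing condition. You have correctly flagged the only nontrivial step, namely that the relations $R(\tau',u)$ exhaust the kernel; this is \cite[Proposition 1.1]{FultonSturmfels}, proved there by the orbit filtration argument you sketch. One small point: you invoke Poincar\'e duality for smooth complete varieties, but the relevant perfect pairing over $\ZZ$ for Chow groups is specific to smooth complete toric varieties (via the explicit presentation and the $h$-vector rank count), not a general fact, so you should cite that rather than appeal to Poincar\'e duality in the usual sense.
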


For a smooth complete toric variety $X_\Sigma$, when a Chow class $\xi\in A^\bullet(X_\Sigma)$ maps to a Minkowski weight $w \in \operatorname{MW}_\bullet(\Sigma)$ by the isomorphism in Theorem~\ref{thm:FS}, we say that $w$ and $\xi$ are \emph{Poincar\'e duals} of each other, which is notated by writing 
\[
\xi \cap [X_{\Sigma}] = w.
\]

We compute the Chern classes of the augmented tautological classes in terms of Minkowski weights on $\Sigma_E$.  By Theorem~\ref{thm:FS}, this amounts to computing how they intersect with the various torus-invariant strata of $X_E$, for which we use Proposition~\ref{prop:restrict} to reduce to understanding the Chern classes in the top degrees.  We hence begin by computing what happens in the top degrees.

\begin{lemma}\label{lem:degree}
We have that 
\begin{align*}
\int_{X_E} c(\mathcal{Q}_\M) &= \begin{cases} 1 & \M = \U_{0,E} \\ 0 & \text{otherwise,} \end{cases} \qquad\text{ and}\\
\int_{X_E} c(\mathcal{S}_\M) &= \begin{cases} 1 & \M = \U_{n,E} \\ 0 & \text{otherwise}. \end{cases}
\end{align*}
\end{lemma}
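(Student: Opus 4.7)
The plan is to bound the degree of the Chern polynomial of each augmented tautological class using the explicit formula from Proposition~\ref{prop:Kclasses}, and then evaluate the remaining top-degree integrals directly in the two realizable cases that survive the bound.

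First, I would observe that at the $T$-fixed point of $X_E$ indexed by a maximal cone $\sigma_{I\le \mathscr F}$, the formula of Proposition~\ref{prop:Kclasses} writes $[\cQ_\M]_{I\le \mathscr F}$ as the sum of $|I|-\operatorname{rk}_\M(I)$ copies of the trivial character together with $T_i^{-1}$ for each $i\in B^c_{\mathscr F/I}(\M/I)$; the total number of characters, counted with multiplicity, is
\[
\big(|I|-\operatorname{rk}_\M(I)\big)+\big((n-|I|)-(r-\operatorname{rk}_\M(I))\big)=n-r,
\]
where $r=\operatorname{rk}(\M)$. By the formula for the localized equivariant Chern polynomial recalled in \S\ref{subsec:equivConsts}, $c^T(\cQ_\M,u)_{I\le\mathscr F}$ is therefore a product of $n-r$ linear factors in $u$. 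Localization (Theorem~\ref{thm:localization}.\ref{localization:A}) then gives $c_i^T(\cQ_\M)=0$ for every $i>n-r$, and the same holds after descending to $A^\bullet(X_E)$. An identical computation with $[\cS_\M]_{I\le \mathscr F}$ yields a sum of $r$ characters at every fixed point, so $c_i(\cS_\M)=0$ for $i>r$.

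Since $\int_{X_E}$ annihilates classes of degree less than $n$, these bounds immediately give $\int_{X_E}c(\cQ_\M)=0$ whenever $r>0$ and $\int_{X_E}c(\cS_\M)=0$ whenever $r<n$. The only surviving cases are $\M=\U_{0,E}$ (the unique rank $0$ matroid on $E$) and $\M=\U_{n,E}$ (the unique rank $n$ matroid on $E$). Both are realizable, by $L=\{0\}$ and $L=\kk^E$ respectively, and in each case the short exact sequence of Remark~\ref{rem:SES} degenerates to an isomorphism $\cQ_{\{0\}}\cong\bigoplus_{i\in E}\pi_i^*\mathcal O_{\PP^1}(1)\cong \cS_{\kk^E}$, using that $\cS_{\{0\}}=0$ and $\cQ_{\kk^E}=0$. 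By Proposition~\ref{prop:Kclasses}, this gives $c_n(\cQ_{\U_{0,E}})=c_n(\cS_{\U_{n,E}})=\prod_{i\in E}y_i=\pi_{1^E}^*\big(\prod_{i\in E}h_i\big)$, where $h_i$ is the hyperplane class on the $i$-th $\PP^1$-factor. The projection formula together with the birationality of $\pi_{1^E}$ (Proposition~\ref{prop:iteratedblowup}(b)) then yields $\int_{X_E}\prod_i y_i=\int_{(\PP^1)^E}\prod_i h_i=1$, completing the proof.

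There is no serious obstacle here; the only mild subtlety is that for non-realizable $\M$ the $K$-class $[\cQ_\M]$ is not \emph{a priori} the class of an actual vector bundle, so the vanishing of high Chern classes must be read off from the equivariant fixed-point formula rather than from a naive rank argument on a genuine bundle.
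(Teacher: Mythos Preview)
Your proof is correct and follows essentially the same approach as the paper. The paper's argument is terser---it simply says that $[\cQ_\M]$ has rank $n-r$, so $c_n(\cQ_\M)=0$ unless $r=0$, and then identifies $\cQ_{\U_{0,E}}$ with $\bigoplus_i\pi_i^*\mathcal O_{\PP^1}(1)$ to get degree $1$---but this ``rank'' argument for a $K$-class that is not a priori a vector bundle is exactly the localization computation you spell out, and your explicit justification via the fixed-point formula is a welcome clarification of that point.
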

\begin{proof}
We do the case of $\mathcal{S}_{\M}$.  The case of $\mathcal{Q}_{\M}$ is similar. If $\M \neq \U_{n,E}$, then $\mathcal{S}_{\M}$ has rank less than $n$, so $c_{n}(\mathcal{S}_{\M}) = 0$. If $\M = \U_{n,E}$, then $\mathcal{S}_{\M} = \bigoplus_{i \in E} \pi_i^* \mathcal{O}_{\mathbb{P}^1}(1)$, so we have that $\deg c_{n}(\mathcal{S}_{\M}) = 1$. 
\end{proof}
We will also need the analogous statement for tautological bundles. 
\begin{lemma}\cite[Lemma 7.3]{BEST21}
We have that 
$$\int_{\underline{X}_{E}} c(\underline{\mathcal{Q}}_{\M}) = \begin{cases} 1 & \M = \U_{1,E} \text{ or }\M = \U_{0,1} \\ 0 & \text{otherwise}, \end{cases} \text{ and}$$
$$\int_{\underline{X}_{E}} c(\underline{\mathcal{S}}_{\M}) = \begin{cases} (-1)^{n-1} & \M = \U_{n-1,E} \text{ or } \M = \U_{1,1} \\ 0 & \text{otherwise}. \end{cases}$$
\end{lemma}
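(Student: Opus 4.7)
The plan is to reduce to realizable matroids via valuativity and then combine a rank count with the identification of the top Chern class of $\cQ_\M$ with the Bergman class. By Proposition~\ref{prop:tautoval} the two assignments $\M \mapsto \int_{\underline{X}_E} c(\underline{\cQ}_\M)$ and $\M \mapsto \int_{\underline{X}_E} c(\underline{\cS}_\M)$ are valuative. Every rank $\le 1$ matroid is realizable, and dually every corank $\le 1$ matroid is realizable; as will be shown below, all other matroids contribute zero for a rank reason. It therefore suffices to assume $\M$ admits a realization $L\subseteq \kk^E$, and we set $r = \dim L = \rk \M$.

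For $\underline{\cQ}_\M$, since $\dim \underline{X}_E = n-1$, the integral extracts $c_{n-1}(\underline{\cQ}_\M)$; as $\underline{\cQ}_\M$ has rank $n-r$, this Chern class vanishes unless $r \le 1$. When $r = 0$, the only matroid is $\U_{0,E}$ realized by $L = \{0\}$, so $\underline{\cQ}_L$ is the trivial bundle $\mathcal{O}^{\oplus E}$ non-equivariantly and its positive-degree Chern classes vanish; the integral is $0$ for $n \ge 2$ and equals $1$ when $n = 1$, where $\underline{X}_E$ is a point. When $r = 1$, restricting the identity $c_{n-r}(\cQ_\M) \cap [X_E] = [\Sigma_\M]$ stated in the excerpt to $\underline{X}_E$ yields $c_{n-1}(\underline{\cQ}_L) \cap [\underline{X}_E] = [\underline{\Sigma}_\M]$, and since $[\underline{\Sigma}_\M]$ is the constant weight one on the Bergman fan when $\M$ is loopless and vanishes when $\M$ has a loop, the integral equals $1$ precisely when $\M = \U_{1,E}$.

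For $\underline{\cS}_\M$, dualize the defining short exact sequence $0 \to \underline{\cS}_L \to \mathcal{O}^{\oplus E} \to \underline{\cQ}_L \to 0$ of Definition~\ref{defn:tauto} and apply the canonical identifications $(\mathcal{O}^{\oplus E})^\vee \cong \mathcal{O}^{\oplus E}$, $(\kk^E/L)^\vee \cong L^\perp$, and $L^\vee \cong \kk^E/L^\perp$ coming from the standard pairing on $\kk^E$. The resulting short exact sequence identifies $\underline{\cS}_L^\vee$ and $\underline{\cQ}_L^\vee$ non-equivariantly with $\underline{\cQ}_{L^\perp}$ and $\underline{\cS}_{L^\perp}$ respectively, where $L^\perp$ realizes the dual matroid $\M^\perp$. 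Consequently $c_i(\underline{\cS}_\M) = (-1)^i c_i(\underline{\cQ}_{\M^\perp})$, whence
\[
\int_{\underline{X}_E} c(\underline{\cS}_\M) = (-1)^{n-1} \int_{\underline{X}_E} c(\underline{\cQ}_{\M^\perp}),
\]
and applying the preceding paragraph together with $\U_{n-1,E}^\perp = \U_{1,E}$ and $\U_{1,1}^\perp = \U_{0,1}$ gives the stated values. The main obstacle is producing the duality identification cleanly, but since only non-equivariant Chern classes enter the integrals, the $T$-equivariant twist by a character is harmless, and the remaining case analysis is routine.
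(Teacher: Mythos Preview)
Your argument for $\underline{\cQ}_\M$ is correct. The rank count forces $r\le 1$, and your handling of the $r=0$ and $r=1$ cases via the trivial bundle and the Bergman class is fine. (In fact the valuativity reduction is unnecessary: the rank bound $c_j(\underline{\cQ}_\M)=0$ for $j>n-r$ already follows from the localization formula for the $K$-class, and every rank $\le 1$ matroid is realizable anyway.)

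The argument for $\underline{\cS}_\M$, however, has a genuine gap. The dualized sequence
\[
0 \longrightarrow \underline{\cQ}_L^\vee \longrightarrow (\mathcal{O}^{\oplus E})^\vee \longrightarrow \underline{\cS}_L^\vee \longrightarrow 0
\]
does \emph{not} identify with the defining sequence for $L^\perp$, even non-equivariantly. The issue is not merely a character twist. Over a torus point $[t]\in\PP T$, the fiber of $\underline{\cS}_{L^\perp}\subset\mathcal{O}^{\oplus E}$ is $t^{-1}L^\perp$ (from the inverse $T$-structure in Definition~\ref{defn:tauto}), whereas the fiber of $\underline{\cQ}_L^\vee\subset(\mathcal{O}^{\oplus E})^\vee\cong\mathcal{O}^{\oplus E}$ is $(t^{-1}L)^\perp=tL^\perp$. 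These are different subbundles, and one checks (e.g.\ for $n=3$, $\M=\U_{2,3}$) that $c_1(\underline{\cS}_L^\vee)\neq c_1(\underline{\cQ}_{L^\perp})$ in $A^1(\underline{X}_E)$. Thus your claimed identity $c_i(\underline{\cS}_\M)=(-1)^i c_i(\underline{\cQ}_{\M^\perp})$ is false in general.

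The correct relation is $\underline{\cS}_L^\vee \cong \mathrm{crem}^*\,\underline{\cQ}_{L^\perp}$, where $\mathrm{crem}\colon\underline{X}_E\to\underline{X}_E$ is the Cremona involution induced by $t\mapsto t^{-1}$. Since $\mathrm{crem}$ is an automorphism, one then gets
\[
\int_{\underline{X}_E} c_{n-1}(\underline{\cS}_\M)
= (-1)^{n-1}\int_{\underline{X}_E} \mathrm{crem}^* c_{n-1}(\underline{\cQ}_{\M^\perp})
= (-1)^{n-1}\int_{\underline{X}_E} c_{n-1}(\underline{\cQ}_{\M^\perp}),
\]
which salvages your conclusion. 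The paper itself cites this lemma from \cite{BEST21} without proof; its own proof of the analogous augmented statement (Lemma~\ref{lem:degree}) avoids duality altogether and proceeds by a direct rank count plus an explicit computation in the extremal case.
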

We now compute the intersection numbers of the Chern classes of $[\mathcal{S}_\M]$ and $[\mathcal{Q}_\M]$ with the boundary stata. When the minimal element of $\mathscr{F}$ is the empty set, we recover \cite[Proposition 7.4]{BEST21}.

\begin{proposition}\label{prop:minkowski}
Let $I \le \mathscr{F}: F_1 \subsetneq F_2 \subsetneq \dotsc \subsetneq F_k$ be a compatible pair, and set $\ell = \operatorname{codim} Z_{I \le \mathscr{F}}$.
As before, we set $F_{k + 1} = E$, and when $\mathscr{F}$ is empty we interpret $F_1$ as $E$.
Let $[Z_{I\leq \mathcal F}] \in A^\bullet(X_\Sigma)$ be the Chow class of the $T$-invariant subvariety $Z_{I\leq \mathcal F}$.
Then
\[
  \int_{X_E} c_{n - \ell}(\mathcal{Q}_{\M}) \cdot [Z_{I \le \mathscr{F}}] = \begin{cases}1 & \begin{aligned} &\text{$F_1 \subseteq \operatorname{cl}_{\M}(I)$, and for $i = 1, \ldots, k$, exactly $k + \operatorname{rk}_{\M}(I) - \operatorname{rk}_{\M}(\M)$ of} \\ & \text{the minors $\M|F_{i+1}/F_{i}$ are loops, and the rest are  $U_{1,F_{i+1}\setminus F_{i}}$,}\end{aligned}\\
\\
0 & \text{otherwise, and}\end{cases}
\]
\[
  \int_{X_E} c_{n - \ell}(\mathcal{S}_{\M}) \cdot [Z_{I \le \mathscr{F}}] = \begin{cases}(-1)^{\epsilon} & \begin{aligned} &\operatorname{rk}_{\M}(F_1) - \operatorname{rk}_{\M}(I) = |F_1| - |I|\text{, and for $i = 1, \ldots, k$, exactly} \\ & \text{$k + \operatorname{rk}_{\M}(\M) - \operatorname{rk}_{\M}(I) - n$ of the minors $\M|F_{i+1}/F_{i}$ are coloops,} \\ & \text{and the rest are  $U_{|F_{i+1}\setminus F_{i}| - 1 ,F_{i+1}\setminus F_{i}}$,}\end{aligned}\\
\\
0 & \text{otherwise,}\end{cases}
\]
where $\epsilon = n - k - |F_1|$. 
\end{proposition}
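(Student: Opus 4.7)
The plan is to combine the restriction formula (Proposition~\ref{prop:restrict}) with Lemma~\ref{lem:degree} and its tautological analog on the K\"unneth factors of
\[
Z_{I\le\mathscr F} \;\cong\; X_{F_1\setminus I} \times \prod_{i=1}^{k}\underline X_{F_{i+1}\setminus F_i}.
\]

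Concretely, Proposition~\ref{prop:restrict} gives
\[
[\mathcal Q_\M]|_{Z_{I\le\mathscr F}} = (|I| - \operatorname{rk}_\M(I))[\mathcal O_{Z_{I\le\mathscr F}}] + [\mathcal Q_{\M|F_1/I}] + \sum_{i=1}^{k}[\underline{\mathcal Q}_{\M|F_{i+1}/F_i}]
\]
(summands pulled back from the respective factors). Since the trivial summand has trivial total Chern class, the Whitney formula yields
\[
c(\mathcal Q_\M)|_{Z_{I\le\mathscr F}} = c(\mathcal Q_{\M|F_1/I}) \cdot \prod_{i=1}^{k} c(\underline{\mathcal Q}_{\M|F_{i+1}/F_i}),
\]
and the projection formula together with K\"unneth expresses the pairing in the statement as
\[
\int_{X_{F_1\setminus I}} c(\mathcal Q_{\M|F_1/I})\;\cdot\;\prod_{i=1}^{k}\int_{\underline X_{F_{i+1}\setminus F_i}} c(\underline{\mathcal Q}_{\M|F_{i+1}/F_i}),
\]
since integration on each factor automatically isolates its top-degree Chern class. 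Lemma~\ref{lem:degree} identifies the first factor as $1$ iff $\M|F_1/I = \U_{0,F_1\setminus I}$, which is equivalent to $F_1 \subseteq \operatorname{cl}_\M(I)$, and its tautological analog identifies each $\underline X$-factor as $1$ iff $\M|F_{i+1}/F_i$ is either the rank-one uniform matroid $\U_{1,F_{i+1}\setminus F_i}$ or a single loop. A short rank count using $\operatorname{rk}_\M(\M) = \operatorname{rk}_\M(I) + \sum_i \operatorname{rk}(\M|F_{i+1}/F_i)$, once $\M|F_1/I$ has been forced to contribute $0$, then pins down the stated number $k + \operatorname{rk}_\M(I) - \operatorname{rk}_\M(\M)$ of loop minors.

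The proof for $\mathcal S_\M$ runs in parallel using the dual facts: $\int_{X_{E'}} c(\mathcal S_{\M'}) = 1$ iff $\M' = \U_{|E'|,E'}$, and $\int_{\underline X_{E'}} c(\underline{\mathcal S}_{\M'}) = (-1)^{|E'|-1}$ iff $\M'$ is a corank-$1$ uniform matroid or a single coloop. The individual signs $(-1)^{|F_{i+1}\setminus F_i|-1}$ multiply across the $k$ factors to
\[
(-1)^{\sum_i (|F_{i+1}\setminus F_i|-1)} = (-1)^{n - |F_1| - k} = (-1)^{\epsilon}.
\]
The only real care required is matroid-theoretic bookkeeping to match the enumerative characterization extracted from the top-Chern identities with the combinatorial count in the proposition; the geometric content has already been packaged into Proposition~\ref{prop:restrict} and Lemma~\ref{lem:degree}, so no further geometric input is needed.
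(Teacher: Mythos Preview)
Your argument is correct and follows essentially the same approach as the paper: restrict via Proposition~\ref{prop:restrict}, factor through K\"unneth, and apply Lemma~\ref{lem:degree} together with its permutohedral analog on each factor, then do a rank count. Your sign computation for $\mathcal S_\M$ is in fact a shade cleaner than the paper's: since the coloop factors contribute $(-1)^{1-1}=1$, one may sum $(|F_{i+1}\setminus F_i|-1)$ over all $k$ factors directly to obtain $n-|F_1|-k$, bypassing the decomposition of $n$ into four pieces that the paper records.
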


\begin{proof}
We do the case of $\mathcal{S}_{\M}$, the case of $\mathcal{Q}_{\M}$ is similar. By Proposition~\ref{prop:restrict}, we have that
$$c(\mathcal{S}_{\M}, u)|_{Z_{I \le \mathscr{F}}} = c(\mathcal{S}_{\M|F_1/I}, u) \otimes \bigotimes_{i=1}^{k} c(\underline{\mathcal{S}}_{\M|_{F_{i+1}}/F_{i}}, u) \in A^{\bullet}(X_{{F_1 \setminus I}}) \otimes \bigotimes_{i=1}^{k} A^{\bullet}(\underline X_{{F_{i+1} \setminus F_{i}}}).$$
Then Lemma~\ref{lem:degree} implies that the intersection number vanishes unless $\M|F_1/I$ is boolean, and each $\M|F_{i+1}/F_i$ is either a coloop or is a corank $1$ uniform matroid. Note that $\M|F_1/I$ is boolean if and only if $\operatorname{rk}_{\M}(F_1) - \operatorname{rk}_{\M}(I) = |F_1| - |I|$, and the fact that $\operatorname{rk}_{\M}(\M) = \operatorname{rk}_\M(I) + \operatorname{rk}_{\M}(\M|F_1/I) + \dotsb +  \sum \operatorname{rk}_{\M}(\M|F_{i+1}/F_i)$ implies that, if the intersection number is non-zero, then exactly $k + \operatorname{rk}_{\M}(\M) - \operatorname{rk}_{\M}(I) - n$ of the minors $\M|F_{i+1}/F_{i}$ are coloops. In this case, the intersection number is $(-1)^{\epsilon}$, where 
$$\epsilon = \sum\left(|F_{i+1}/F_i| - 1 \right),$$
where the sum is over the minors such that $\M|F_{i+1}/F_i$ is not a coloop. 
The set $E$ decomposes into a disjoint union of elements where the corresponding minor is a coloop, is in $I$, is in a non-coloop minor, or is in $F_1 \setminus I$, so
$$n = (k + \operatorname{rk}_{\M}(\M) - \operatorname{rk}_{\M}(I) - n) + |I| + (\sum |F_{i+1}/F_i|) + (|F_1| - |I|).$$
We also have that the number of non-coloops is $n + \operatorname{rk}_{\M}(I) - \operatorname{rk}_{\M}(\M)$. Substituting, we see that $\epsilon = n - k - |F_1|$. 
\end{proof}

We now define and derive certain properties of augmented Bergman fans and augmented Bergman classes.

\begin{definition}\label{defn:augmentedBergman}
For a matroid $\M$ of rank $r$ on $E$, the \emph{augmented Bergman fan}, denoted $\Sigma_\M$, is the subfan of $\Sigma_E$ consisting of cones $\sigma_{I \leq \mathscr {F}}$ where the subset $I\subseteq E$ is independent in $\M$ and the flag $\mathscr F$ consists of proper flats of $\M$.
The \emph{augmented Bergman class} $[\Sigma_\M]$ of $\M$ is the weight
\[
[\Sigma_\M] \colon \Sigma_E(r) \to \ZZ \quad\text{defined by}\quad \sigma \mapsto \begin{cases}
1 & \text{if $\sigma\in \Sigma_\M$}\\
0 & \text{otherwise.}
\end{cases}
\]
\end{definition}

\cite[Proposition 2.8]{BHMPW} states that, up to scaling, the augmented Bergman class is the unique way to assign weights to the cones of the augmented Bergman fan that results in a Minkowski weight.

\begin{corollary}\label{cor:ctop}
Let $\M$ be a matroid of rank $r$ on $E$.
\begin{enumerate}\itemsep 5pt
\item We have that $c_{n-r}(\mathcal{Q}_\M) = [\Sigma_\M]$.  In particular, the augmented Bergman class $[\Sigma_\M]$ is a well-defined Minkowski weight.
\item The assignment $\M\mapsto [\Sigma_\M]$ is valuative.
\item If $L\subseteq \kk^E$ is a realization of $\M$, then $[\Sigma_\M] = [W_L]$.
\end{enumerate}
\end{corollary}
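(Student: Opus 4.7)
All three parts follow from identifying $c_{n-r}(\mathcal{Q}_\M)$ with the Poincar\'e dual of the augmented Bergman class. For part (1), since $\mathcal{Q}_\M$ has rank $n-r$, the top Chern class $c_{n-r}(\mathcal{Q}_\M) \in A^{n-r}(X_E)$ corresponds via Theorem~\ref{thm:FS} to a Minkowski weight on $\Sigma_E(r)$, whose value on a cone $\sigma_{I \leq \mathscr F}$ of dimension $r$ is $\int_{X_E} c_{n-r}(\mathcal{Q}_\M) \cdot [Z_{I \leq \mathscr F}]$. I would compute this using Proposition~\ref{prop:minkowski} (applied to $\dim Z_{I \leq \mathscr F} = n - r$), then translate the resulting nonvanishing conditions into the defining conditions of a maximal cone of $\Sigma_\M$. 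The well-definedness of $[\Sigma_\M]$ as a Minkowski weight will then follow for free, since top Chern classes automatically produce balanced weights via the cap product with $[X_E]$.

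The main obstacle is the combinatorial translation. The max-dimension constraint $|I|+k=r$ forces the loop-minor count $k+\operatorname{rk}_\M(I)-r$ to equal $\operatorname{rk}_\M(I)-|I|$, which is non-positive; non-negativity then demands $\operatorname{rk}_\M(I)=|I|$ (so $I$ is independent) and zero loop minors, making every $\M|F_{i+1}/F_i$ equal to $U_{1, F_{i+1}\setminus F_i}$. The delicate step is showing that these conditions force each $F_i$ to be a flat of $\M$: inductively, $\M/F_k = U_{1, \cdot}$ implies $F_k$ is a flat, and once $F_i$ is known to be a flat, the loop-free contraction $\M|F_i/F_{i-1}$ forces $\operatorname{cl}_\M(F_{i-1}) \cap F_i = F_{i-1}$, while $\operatorname{cl}_\M(F_{i-1}) \subseteq \operatorname{cl}_\M(F_i) = F_i$ gives $\operatorname{cl}_\M(F_{i-1}) = F_{i-1}$. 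Combined with the condition $F_1 \subseteq \operatorname{cl}_\M(I)$ upgrading to $F_1 = \operatorname{cl}_\M(I)$, this reproduces exactly the characterization ``$I$ independent and $\mathscr F$ a chain of proper flats with $I \subseteq F_1$'' for $\sigma_{I \leq \mathscr F}$ to be a maximal cone of $\Sigma_\M$.

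Parts (2) and (3) are then formal. For (2), Proposition~\ref{prop:tautoval} gives valuativity of $\M \mapsto c_{n-r}(\mathcal{Q}_\M)$ on rank-$r$ matroids, so by part (1), $\M \mapsto [\Sigma_\M]$ is valuative. For (3), Theorem~\ref{thm:vanishingsection} presents $W_L$ as the vanishing locus of a regular section of $\mathcal{Q}_L$ (regular because $\operatorname{codim}_{X_E} W_L = n-r = \operatorname{rank}(\mathcal{Q}_L)$), so by the standard identification of the fundamental class of a regular zero scheme with the top Chern class of the bundle (or equivalently via the Koszul resolution of Corollary~\ref{cor:koszul}(2)), $[W_L] = c_{n-r}(\mathcal{Q}_L)$ in $A^{n-r}(X_E)$. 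Proposition~\ref{prop:Kclasses} gives $[\mathcal{Q}_L]=[\mathcal{Q}_\M]$ in $K(X_E)$, so their Chern classes coincide, and $[W_L] = c_{n-r}(\mathcal{Q}_\M) = [\Sigma_\M]$ by part (1).
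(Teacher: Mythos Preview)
Your proposal is correct and follows the same route as the paper: part (1) from Proposition~\ref{prop:minkowski}, part (2) from Proposition~\ref{prop:tautoval}, and part (3) from Theorem~\ref{thm:vanishingsection}. The paper's proof is a three-line citation of these results; you have supplied the combinatorial unpacking of Proposition~\ref{prop:minkowski} that the paper leaves implicit, and your argument that the nonvanishing conditions there are equivalent to ``$I$ independent, $\mathscr F$ a maximal chain of proper flats'' is accurate. One small point: you explicitly argue only that the Proposition~\ref{prop:minkowski} conditions force $I$ independent and the $F_i$ flats, whereas the equivalence also needs the easier converse (a maximal cone of $\Sigma_\M$ satisfies those conditions); this follows from the rank count $r = \operatorname{rk}_\M(F_1) + \sum_i(\operatorname{rk}_\M(F_{i+1}) - \operatorname{rk}_\M(F_i)) \ge |I| + k = r$, forcing $F_1 = \operatorname{cl}_\M(I)$ and each minor to have rank one, and is implicit in your phrase ``reproduces exactly the characterization.''
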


\begin{proof}
The first statement follows from Proposition~\ref{prop:minkowski}.  The second statement follows from the first by Proposition~\ref{prop:tautoval}. The third statement follows from the first by Theorem~\ref{thm:vanishingsection}.
\end{proof}

By restricting to the permutohedral variety, we recover properties of ``non-augmented'' Bergman fans and classes as follows.  Note that for a loopless matroid $\M$, the augmented Bergman fan $\Sigma_\M$ contains the ray $\rho_\emptyset$.

\begin{definition}
The (non-augmented) \emph{Bergman fan} of a loopless matroid $\M$ on $E$ is $\underline\Sigma_\M = \operatorname{star}_{\rho_\emptyset} \Sigma_\M$.  Equivalently, it is the subfan of $\underline\Sigma_E$ consisting of cones $\underline\sigma_{\mathscr F}$ where the flag $\mathscr F$ consists of nonempty proper flats of $\M$.  The (non-augmented) \emph{Bergman class} $[\underline\Sigma_\M]$ is the Minkowski weight on $\underline\Sigma_E$ defined by assigning weight 1 to the cones of $\underline\Sigma_\M$.
\end{definition}

The Bergman class of a matroid with a loop is defined to be zero.  Since $[\cQ_\M]$ restricts to $[\underline\cQ_\M]$ on $\underline X_E$ and $[\Sigma_\M]$ restricts to $[\underline\Sigma_\M]$, Corollary~\ref{cor:ctop} recovers the properties of Bergman classes stated in \cite[Corollary 7.11]{BEST21}.

\subsection{Tropical geometry of augmented Bergman fans}\label{subsec:tropical}
The contents of this subsection are not logically necessary for the rest of the paper, but will be useful elsewhere. We explain how augmented Bergman fans are related to tropicalizations.  We point to \cite{MS15} for a background in tropical geometry.

\begin{proposition}\label{prop:trop}
Let $L\subseteq \kk^E$ be a realization of a matroid $\M$ of rank $r$.  For a general $b\in \mathbb{G}_a^E$, the tropicalization of the very affine variety $\mathring{L}_b = (L+b) \cap T$ equals the support of the augmented Bergman fan $\Sigma_\M$.
\end{proposition}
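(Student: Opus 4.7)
The plan is to reduce the statement to the classical theorem (Ardila--Klivans) that the tropicalization of a projective linear subspace intersected with $\PP T$ equals the support of the Bergman fan of its matroid, by passing to the homogenization with respect to the auxiliary element $0 \in \widetilde E$ introduced in \S\ref{subsec:refinecoarsen}.  First I would consider the $(r+1)$-dimensional subspace
\[
\widetilde L_b \coloneq \operatorname{span}\bigl\{(1,b)\bigr\} + \bigl(\{0\}\oplus L\bigr) \subseteq \kk^{\widetilde E}.
\]
The map $x \mapsto [1\!:\!x]$ restricts to an isomorphism $\mathring L_b \overset\sim\to \PP(\widetilde L_b) \cap \PP T_{\widetilde E}$ of very affine varieties, and on cocharacter lattices this isomorphism is the inclusion $v\mapsto\overline{(0,v)}$, i.e., the inverse of the lattice isomorphism $p$ from \S\ref{subsec:refinecoarsen}. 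Consequently $\operatorname{trop}(\mathring L_b) = p\bigl(\operatorname{trop}(\PP(\widetilde L_b)\cap\PP T_{\widetilde E})\bigr)$, and Ardila--Klivans identifies the right-hand side with $p(|\underline\Sigma_{\widetilde\M_b}|)$, where $\widetilde\M_b$ is the matroid realized by $\widetilde L_b$ on $\widetilde E$.

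Next I would identify $\widetilde\M_b$ for generic $b$. A short rank computation on coordinate projections of $\widetilde L_b$ shows that $\widetilde\M_b$ is the \emph{free coextension} $\widetilde\M$ of $\M$ by $0$, with rank function
\[
\rk_{\widetilde\M}(S) = \begin{cases} \min(|S|, \rk_\M(S)+1) & \text{if $0\notin S$}, \\ \rk_\M(S\setminus 0)+1 & \text{if $0\in S$}. \end{cases}
\]
From this formula one reads off that $\widetilde\M$ is loopless, that its flats containing $0$ are exactly the sets $F\cup 0$ for $F$ a flat of $\M$, and that its flats contained in $E$ are exactly the independent sets of $\M$.  Hence every chain of proper nonempty flats of $\widetilde\M$ takes the form
\[
I_1\subsetneq\cdots\subsetneq I_j\subsetneq F_1\cup 0\subsetneq\cdots\subsetneq F_\ell\cup 0,
\]
with each $I_\alpha$ a nonempty independent set of $\M$, each $F_\alpha$ a proper flat of $\M$, and $I_j\subseteq F_1$ when both parts are nonempty.

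Finally it remains to verify the support equality $p(|\underline\Sigma_{\widetilde\M}|) = |\Sigma_\M|$.  Under $p$ we have $\overline\be_{\{i\}} \mapsto \be_i$ for $i\in E$ and $\overline\be_{F\cup 0} \mapsto -\be_{E\setminus F}$, so the $p$-image of the Bergman chain cone displayed above lies in the augmented Bergman cone $\sigma_{I_j\leq\{F_1\subsetneq\cdots\subsetneq F_\ell\}}$ of $\Sigma_\M$.  Conversely, each cone $\sigma_{I\leq\mathscr F}$ of $\Sigma_\M$ is exhausted by the $p$-images of Bergman chain cones obtained by refining $\operatorname{cone}\{\be_i : i\in I\}$ via the standard triangulations coming from chains $\{i_1\}\subsetneq\{i_1,i_2\}\subsetneq\cdots\subsetneq I$ appended before $F_1\cup 0\subsetneq\cdots\subsetneq F_\ell\cup 0$.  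The main obstacle will be this last comparison of supports: because $p$ does not send rays of $\underline\Sigma_{\widetilde\M}$ bijectively to rays of $\Sigma_\M$ (for $|I|>1$ the independent-set ray $\overline\be_I$ lands at the interior vector $\be_I = \sum_{i\in I}\be_i$ of $\sigma_{I\leq\emptyset}$), so the equality holds only on supports and is established only after the explicit simplicial subdivision of $\operatorname{cone}\{\be_i:i\in I\}$ described above.
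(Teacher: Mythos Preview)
Your proof is correct and follows essentially the same route as the paper: homogenize $L+b$ to a linear subspace $\widetilde L_b \subseteq \kk^{\widetilde E}$, identify the matroid it realizes for general $b$ as the free coextension $\widetilde\M$, invoke the classical result that the tropicalization of a linear space is the Bergman fan, and then compare $p(|\underline\Sigma_{\widetilde\M}|)$ with $|\Sigma_\M|$. The only difference is in this last step: the paper packages the support comparison into a lemma (a restatement of \cite[\S5.1]{MM21}) that produces a building set on the lattice of flats of $\widetilde\M$ whose associated fan structure on $|\underline\Sigma_{\widetilde\M}|$ maps \emph{isomorphically as a fan} onto $\Sigma_\M$ under $p$, whereas you argue the support equality directly via the flat description and the barycentric-type triangulation of $\operatorname{cone}\{\be_i : i\in I\}$. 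Your direct argument is perfectly adequate for the proposition, which only concerns supports; the paper's building-set formulation gives the slightly stronger fan-level statement but is not needed here.
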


\begin{proof}
Let $\widetilde E = E\sqcup \{0\}$ and let $p \colon \ZZ^{\widetilde E}/\ZZ\be_{\widetilde E} \to \ZZ^E$ be the isomorphism described in \S\ref{subsec:refinecoarsen}.  Under the isomorphism $p$, we may identify $T$ with the projectivization $\PP \widetilde T$ of the torus $\widetilde T = (\kk^*)^{\widetilde E}$.  We show that the tropicalization of $\mathring{L}_b \subseteq \PP \widetilde T$ is the support of a subfan in $\underline\Sigma_{\widetilde E}$ that maps isomorphically under $p$ onto the augmented Bergman fan $\Sigma_\M$.\\
\indent Let $L = \{\mathbf x \in \kk^E \mid A^\perp\mathbf x = 0\}$ for an $(n-r)\times n$ matrix $A^\perp$.  For an element $b\in \mathbb{G}_a^E$, let $b'\in \mathbb{G}_a^E$ be such that $L+b = \{\mathbf x \in \kk^E \mid A^\perp \mathbf x = b'\}$.  In other words, the closure of $L+b$ in the projective completion $\PP(\kk^E \oplus \kk) = \PP(\kk^{\widetilde E})$ is the projectivization of the linear subspace $\{(\mathbf x, x_0) \in \kk^{\widetilde E} \mid A^\perp \mathbf x - b' x_0 = 0\}$.
Since $b'$ is general because $b$ was, this linear subspace is a realization of the matroid $\widetilde \M = \M \times 0$ on $\widetilde E$ called the \emph{free coextension} of $\M$, whose set of bases is defined as
\[
\{B\cup 0 \mid B \text{ a basis of $\M$}\} \cup \{S \subseteq E \mid \text{$S$ contains a basis of $\M$ and $|S| = r+1$}\}.
\]
It is a classical statement \cite{Stu02, AK06} that the tropicalization of a linear subspace is the support of the Bergman fan of the corresponding matroid.  Thus, it suffices now to show that the support of the Bergman fan of the free coextension is equal to that of the augmented Bergman fan under the isomorphism $p$.  This follows from the lemma below, which is a restatement of the discussion in \cite[\S5.1]{MM21}.
\end{proof}

\begin{lemma}
Let $\M$ be a matroid on $E$, and $\widetilde \M$ its free coextension matroid on $\widetilde E$.  The collection
\[
\cG = \{F\cup 0 \mid F\subseteq E\text{ a flat of $\M$}\} \cup \{i \in E \mid i \text{ not a loop in $\M$}\}
\]
is a building set on the lattice of flats of $\widetilde \M$ that induces the fan structure on the support $|\underline\Sigma_{\widetilde \M}| \subseteq \RR^{\widetilde E}/\RR\be_{\widetilde E}$ of the Bergman fan of $\widetilde \M$ consisting of cones
\[
\operatorname{cone}\{\overline\be_i \mid i\in I\} + \operatorname{cone}\{\overline\be_{F\cup 0} \mid F\in \mathscr F\}
\]
for each compatible pair $I\leq \mathscr  F$ with $I\subseteq E$ independent in $\M$ and $\mathscr F$ a flag of nonempty proper flats of $\M$.
\end{lemma}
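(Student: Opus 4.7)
The plan is to verify the two assertions of the lemma separately: that $\cG$ is a building set on $\mathscr{L}(\widetilde{\M})$, and that its associated nested fan transports through $p$ to the cones described. I would begin by computing the flats of $\widetilde{\M}$ from its rank function, which is immediate from the basis description: $\operatorname{rk}_{\widetilde{\M}}(S) = \operatorname{rk}_\M(S\setminus 0)+1$ if $0\in S$, and $\min(\operatorname{rk}_\M(S)+1,\, |S|)$ otherwise. A direct case analysis then shows that flats of $\widetilde{\M}$ come in exactly two types: (i) $F\cup 0$ for some flat $F$ of $\M$, and (ii) an independent subset $G\subseteq E$ of $\M$. In particular, every element of $\cG$ is a flat of $\widetilde{\M}$, and $\widetilde{\M}$ is loopless.

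For the building set axiom, the only flats of $\widetilde{\M}$ not already lying in $\cG$ are the type (ii) flats $G\subseteq E$ with $|G|\geq 2$. For any such $G$, every subset of $G$ is independent in $\M$ and hence a type (ii) flat of $\widetilde{\M}$, so the interval $[\hat 0, G]$ in $\mathscr{L}(\widetilde{\M})$ is the Boolean lattice on $G$; its maximal $\cG$-elements are exactly the singletons $\{\{i\}:i\in G\}$, yielding the standard decomposition $[\hat 0, G]\cong \prod_{i\in G}[\hat 0,\{i\}]$. To identify the nested subfamilies of $\cG$ with compatible pairs, I would compute joins in $\mathscr{L}(\widetilde{\M})$: the joins of two type (i) elements, and of a type (i) element with a type (ii) singleton, always land in $\cG$ (being of the form $\operatorname{cl}_\M(\cdots)\cup 0$), while the join of two type (ii) singletons $\{i\}, \{j\}$ lies in $\cG$ precisely when $\{i,j\}$ is dependent in $\M$. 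From these observations, a family $\mathscr{N}\subseteq \cG$ is nested if and only if its type (i) elements form a chain $\{F\cup 0: F\in \mathscr{F}\}$, its type (ii) singletons form $\{\{i\}:i\in I\}$ with $I$ independent in $\M$, and $I\subseteq F$ for every $F\in \mathscr{F}$ --- precisely the data of a compatible pair $I\leq \mathscr{F}$.

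To finish, I would use that $p$ sends $\overline{\be}_{\{i\}}$ to $\be_i$ and $\overline{\be}_{F\cup 0}$ to $-\be_{E\setminus F}$, so the nested cone associated to $(I,\mathscr{F})$ is carried to $\operatorname{cone}\{\be_i : i\in I\} + \operatorname{cone}\{-\be_{E\setminus F} : F\in \mathscr{F}\} = \sigma_{I\leq \mathscr{F}}$, the corresponding cone of the augmented Bergman fan $\Sigma_\M$. The principal technical obstacle will be the case analysis for the flats of $\widetilde{\M}$, particularly the treatment of loops of $\M$ (where the minimum type (i) flat becomes the set of loops together with $0$, rather than $\{0\}$), and ensuring the nested condition yields exactly the claimed compatible pairs with no surplus or missing families.
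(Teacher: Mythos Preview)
The paper does not actually prove this lemma; it states the result as ``a restatement of the discussion in \cite[\S5.1]{MM21}'' and moves on. Your proposal therefore fills in what the paper omits, and the overall strategy---classify the flats of $\widetilde\M$ via the rank function, verify the building-set decomposition on the flats outside $\cG$, then characterize nested families---is correct and is exactly how one would unpack the cited reference.

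There is one genuine gap in your nested-set argument. You verify only the \emph{pairwise} join condition among type~(ii) singletons, concluding that the join of $\{i\}$ and $\{j\}$ lies in $\cG$ precisely when $\{i,j\}$ is dependent in $\M$. But the nested-set axiom requires that \emph{every} antichain of size $\ge 2$ have join outside $\cG$. If $I$ contains a circuit $C$ of $\M$ with $|C|\ge 3$, then every pair in $C$ is independent (so the pairwise check passes), yet $C$ itself is dependent in $\M$, hence $\operatorname{cl}_{\widetilde\M}(C)$ contains $0$ and lies in $\cG$, violating the nested condition for the full antichain $\{\{i\}:i\in C\}$. The fix is easy: show directly that for any $J\subseteq I$ with $|J|\ge 2$, the closure $\operatorname{cl}_{\widetilde\M}(J)$ lies in $\cG$ if and only if $J$ is dependent in $\M$ (your rank computation already gives this), so the antichain condition on all of $I$ forces every such $J$ to be independent, i.e.\ $I$ itself is independent. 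With this correction the characterization of nested families as compatible pairs $(I,\mathscr F)$ is complete.

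A minor point: you should also note that the top element $E\cup 0 = \widetilde E$ is removed from $\cG$ when forming the fan (since $\overline\be_{\widetilde E}=0$), which is why $\mathscr F$ consists of \emph{proper} flats; and your final application of $p$ goes one step beyond what the lemma itself asserts (the lemma describes cones in $\RR^{\widetilde E}/\RR\be_{\widetilde E}$, and the passage to $\Sigma_\M$ via $p$ is used only in the surrounding proposition).
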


We remark that the tropicalization of $(L+b) \cap T$ for a non-general $b$ can differ from the support of $\Sigma_\M$.  Nonetheless, by $\GG^E$-equivariance, the homology class of the closure $W_{L+b}$ of $L+b$ in the stellahedral variety $X_E$ is independent of $b\in \kk^E$.  Taking $b$ to be general, Proposition~\ref{prop:trop} gives an alternate proof that $[W_L] = [\Sigma_\M]$, for instance by \cite[Proposition 9.4]{Katz2009}.

\section{Exceptional isomorphisms}\label{sec:exceptIsom}

We construct the pair of isomorphisms between $K(X_E)$ and $A^\bullet(X_E)$ that were stated in Theorem~\ref{thm:exceptIsom}.  The two isomorphisms will be related via the two involutions $D_K$ and $D_A$ described in \S\ref{subsec:equivConsts}.

We begin by recalling Theorem~\ref{thm:localization}, which identifies the $T$-equivariant $K$-ring $K_T(X_E)$ with a subring of the product ring $\prod_{\sigma \in \Sigma_E(n)}\ZZ[T_1^{\pm 1}, \ldots, T_n^{\pm 1}]$ of Laurent polynomial rings, and identifies the $T$-equivariant Chow ring $A^\bullet_T(X_E)$ with a subring of the product ring $\prod_{\sigma\in \Sigma_E(n)} \ZZ[t_1, \ldots, t_n]$ of polynomial rings.
Let $A^\bullet_T(X_E)[\prod_{i\in E} (1+t_i)^{-1}]$ be the ring obtained by adjoining the inverse of the polynomial $\prod_{i\in E} (1+t_i)$ to the ring $A^\bullet_T(X_E)$.  For an element $f$ in such product rings, denote by $f_\sigma$ the (Laurent) polynomial corresponding to $\sigma \in \Sigma_E(n)$.

\begin{theorem}\label{thm:zeta}
The map $\zeta_T\colon K_T(X_E) \to A^\bullet_T(X_E)[\prod_{i\in E} (1+t_i)^{-1}]$ defined by sending
\[
f_\sigma(T_1, \dotsc, T_n)\mapsto f_\sigma(1+ t_1, \dotsc, 1+t_n) \text{ for any $\sigma\in \Sigma_E(n)$}
\]
is a ring isomorphism, which descends to a ring isomorphism $\zeta \colon K(X_E)\to A^\bullet(X_E)$.
\end{theorem}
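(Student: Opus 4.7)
My plan is to work componentwise via the localization theorem (Theorem~\ref{thm:localization}). The substitution $T_i \mapsto 1 + t_i$ extends to a ring isomorphism
\[
\phi: \ZZ[T_1^{\pm 1}, \ldots, T_n^{\pm 1}] \overset{\sim}{\longrightarrow} \ZZ[t_1, \ldots, t_n]\bigl[\textstyle\prod_i (1+t_i)^{-1}\bigr]
\]
with inverse $t_i \mapsto T_i - 1$. So $\zeta_T$ is componentwise an isomorphism of the ambient product rings $\prod_{\sigma\in \Sigma_E(n)} K_T(\operatorname{pt})$ and $\prod_{\sigma\in \Sigma_E(n)} A_T^\bullet(\operatorname{pt})[\prod(1+t_i)^{-1}]$; the real content of the theorem is that $\zeta_T$ identifies the two subrings cut out by the $K$-theoretic and Chow-theoretic wall conditions.

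The key observation, used already in the proof of Proposition~\ref{prop:Kclasses}, is that every codimension-one wall $\sigma \cap \sigma'$ of $\Sigma_E$ has primitive lattice normal $v$ of one of only two types: $v = \be_j$ or $v = \be_i - \be_j$ for distinct $i, j \in E$. For such $v$ I will verify that $\phi$ carries $(1 - T^v)$ to $(t_v)$ up to units in the localized ring. When $v = \be_j$, $\phi(1 - T_j) = -t_j$ exactly. When $v = \be_i - \be_j$,
\[
\phi(1 - T_i T_j^{-1}) = 1 - \frac{1 + t_i}{1 + t_j} = \frac{-(t_i - t_j)}{1 + t_j},
\]
and $1 + t_j$ is a unit in the localized ring. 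In the reverse direction $\phi^{-1}(t_i - t_j) = T_i - T_j = -T_j(1 - T_iT_j^{-1})$, with $T_j$ a unit. This bijective correspondence between wall conditions shows that $\zeta_T$ restricts to the stated ring isomorphism $K_T(X_E) \overset{\sim}{\to} A_T^\bullet(X_E)[\prod(1+t_i)^{-1}]$.

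For the descent to $\zeta: K(X_E) \to A^\bullet(X_E)$, I will use that $I_K$ is generated by $\{T_j - 1 : j \in E\}$ and $I_A$ by $\{t_j : j \in E\}$, since any global difference $f(T) - f(1, \ldots, 1)$ lies in the ideal generated by $T_j - 1$ by Taylor expansion, and analogously on the Chow side. Since $\phi(T_j - 1) = t_j$, $\zeta_T$ identifies the two extended ideals. After modding out by $\{t_j\}$ each $1 + t_j$ becomes the unit $1$, so inverting $\prod(1 + t_i)$ becomes trivial and the quotient collapses to $A^\bullet(X_E)$, yielding the induced isomorphism $\zeta$.

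The main obstacle will be the wall-ideal correspondence: for a general unimodular complete fan the substitution $T_i \mapsto 1+t_i$ simply does not descend to the subrings. For instance, if $\Sigma_E$ admitted a wall with primitive normal $\be_i + \be_j$, one would need $\phi(1 - T_iT_j) = -t_i - t_j - t_it_j$ to be a multiple of $t_i + t_j$ in the localized ring, which it is not (its image in $R/(t_i+t_j)$ is $t_i^2 \neq 0$). Thus the theorem depends essentially on the combinatorial fact that the only wall normals of $\Sigma_E$ are $\be_j$ and $\be_i - \be_j$, a feature peculiar to the stellahedral fan.
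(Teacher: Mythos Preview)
Your proof is correct and follows essentially the same approach as the paper: both use the localization theorem, identify the two possible wall normals $\be_j$ and $\be_i - \be_j$ of $\Sigma_E$, and verify that the substitution $T_i \mapsto 1+t_i$ carries the $K$-theoretic wall conditions to the Chow-theoretic ones up to units. Your treatment of the descent via the explicit generators $T_j - 1$ and $t_j$ of $I_K$ and $I_A$ is a clean variant of the paper's argument, and your closing paragraph explaining why a wall normal like $\be_i + \be_j$ would obstruct the construction is a worthwhile addition not present in the paper.
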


\begin{proof}
Every edge of the stellahedron $\Pi_E$ is parallel to either $\be_i$ for some $i\in E$ or to $\be_i - \be_j$ for some $i\neq j \in E$.  Thus, the conditions $f_\sigma(T_1, \dotsc, T_n) - f_{\sigma'} (T_1, \dotsc, T_n)\equiv 0 \ \operatorname{mod}\  1-T^v$ appearing in Theorem~\ref{thm:localization}.\ref{localization:K}, in the case of $K_T(X_E)$, state that either $f_{\sigma} - f_{\sigma'} \equiv 0 \ \operatorname{mod} \ 1 - T_i$ or $f_{\sigma} - f_{\sigma'} \equiv 0 \ \operatorname{mod} \ 1 - \frac{T_i}{T_j}$.  The latter is equivalent to stating that $f_{\sigma} - f_{\sigma'} \equiv 0 \ \operatorname{mod} \ T_j - T_i$.  Under the transformation $T_i \mapsto 1+t_i$ defining $\zeta_T$, these two conditions become $f_{\sigma}(1 + t_1, \dotsc, 1 + t_n) - f_{\sigma'}(1+ t_1, \dotsc, 1 + t_n) \equiv 0 \ \operatorname{mod} \ t_i$ and $f_{\sigma}(1 + t_1, \dotsc, 1 + t_n) - f_{\sigma'}(1 + t_1, \dotsc, 1 + t_n) \equiv 0 \ \operatorname{mod} \ t_j - t_i$, which are exactly the conditions appearing in Theorem~\ref{thm:localization}.\ref{localization:A} in the case of $A_T^\bullet(X_E)$.  Hence, the map $\zeta_T$ is well-defined and is clearly an isomorphism.\\
\indent We now check that the isomorphism $\zeta_T$ descends to a ring isomorphism on the non-equivariant rings.  We recall from Theorem~\ref{thm:localization} that the kernel $I_K$ of the quotient map $K_T(X_E) \to K(X_E)$ is the ideal in $K_T(X_E)$ generated by $f - f(1,\ldots, 1)$ for $f$ a global Laurent polynomial, and that the kernel $I_A$ of the quotient map $A^\bullet_T(X_E) \to A^\bullet(X_E)$ is the ideal in $A^\bullet_T(X_E)$ generated by $f - f(0,\ldots, 0)$ for $f$ a global polynomial.
Note that the polynomial $\prod_{i\in E}(1+t_i)$ whose inverse was adjoined to $A_T^\bullet(X_E)$ maps to 1 under this quotient map.   It thus remains only to show that $\zeta_T$ maps $I_K$ isomorphically onto $I_A' = I_A[\prod_{i\in E}(1+t_i)^{-1}]$.  But both $\zeta_T(I_K) \subseteq I_A'$ and $\zeta_T(I_K) \supseteq I_A'$ are straightforward to verify by considering their generators.
\end{proof}

By conjugating $\zeta$ by the two involutions $D_K$ and $D_A$, we have the ``dual'' isomorphism.

\begin{definition}
Let $\phi \colon K(X_E)\to A^\bullet(X_E)$ be the isomorphism defined by $\phi = D_A \circ \zeta \circ D_K$.
\end{definition}

We remark that, similarly to Theorem~\ref{thm:zeta}, one can show that the map $\phi_T \colon K_T(X_E) \to A^\bullet_T(X_E)[\prod_{i\in E} (1-t_i)^{-1}]$ defined by sending
\[
f(T_1, \dotsc, T_n)\mapsto f((1-t_1)^{-1}, \dotsc, (1 - t_n)^{-1}) \text{ for a Laurent polynomial $f \in \ZZ[T_1^{\pm 1}, \dotsc, T_n^{\pm 1}]$}
\]
is an isomorphism, which descends to the non-equivariant isomorphism $\phi$.

We now show that $\zeta$ and $\phi$ behave particularly well with respect to $K$-classes with ``simple Chern roots,'' a notion introduced in \cite{BEST21}.

\begin{definition}
A $T$-equivariant $K$-class $[\mathcal{E}] \in K_T(X_E)$ has \emph{simple Chern roots} if for each maximal $\sigma \in \Sigma_E$, there is a sequence $(a_{\sigma, 0}, a_{\sigma, 1}, \ldots, a_{\sigma, n})$ such that $[\mathcal{E}]_{\sigma} = a_{\sigma, 0} + \sum_{i=1}^{n} a_{\sigma, i} T_i$. 
\end{definition}

Note that $[\mathcal{Q}_{\M}]^{\vee}$ and $[\mathcal{S}_{\M}]^{\vee}$ have simple Chern roots.

\begin{proposition}\label{prop:simpleChern} Let $[\mathcal{E}] \in K_T(X_E)$ have simple Chern roots.  With $u$ a formal variable, we have
\begin{align*}
\sum_{j \ge 0} \zeta_T( \textstyle\bigwedge^j[\mathcal{E}])u^j &= (u + 1)^{\operatorname{rk}_{\M}(\mathcal{E})} c^T\left (\mathcal{E}, \frac{u}{u + 1} \right),\\
\sum_{j \ge 0} \phi_T( \textstyle\bigwedge^j[\mathcal{E}])u^j &= (u + 1)^{\operatorname{rk}_{\M}(\mathcal{E})} s^T(\mathcal{E}^{\vee}) c^T\left (\mathcal{E}^{\vee}, \frac{1}{u + 1} \right),\\
\sum_{j \ge 0}^{} \zeta_T(\operatorname{Sym}^j [\mathcal{E}])u^j &= \frac{1}{(1 - u)^{\operatorname{rk}_{\M}(\mathcal{E})}} s^T\left (\mathcal{E}, \frac{u}{u-1}\right ), \text{ and}\\
\sum_{j \ge 0} \phi_T(\operatorname{Sym}^j [\mathcal{E}])u^j &= \frac{c^T(\mathcal{E}^{\vee})}{(1 - u)^{\operatorname{rk}_{\M}(\mathcal{E})}} s^T\left(\mathcal{E}^{\vee}, \frac{1}{1-u}\right).
\end{align*}
\end{proposition}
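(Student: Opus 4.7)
My plan is to verify each of the four identities by localizing at each maximal cone $\sigma \in \Sigma_E(n)$. By Theorem~\ref{thm:localization}, both $K_T(X_E)$ and the relevant localized equivariant Chow ring inject into their product-of-fixed-points rings, and the maps $\zeta_T$ and $\phi_T$ are defined fixed-point by fixed-point (via $T_i\mapsto 1+t_i$ and $T_i\mapsto (1-t_i)^{-1}$ respectively). The symmetric- and exterior-power operations, as well as the (equivariant) total Chern and Segre classes, are all described fixed-point-wise by the formulas recalled in \S\ref{subsec:equivConsts}. Consequently each of the four claimed equalities reduces to a purely algebraic identity between two (Laurent) power series in $u$ with coefficients in $\mathbb{Z}[t_1,\dotsc,t_n]$ (possibly with $\prod_i(1\pm t_i)$ inverted), one for each maximal cone $\sigma$.

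First I would treat the exterior-power/$\zeta_T$ identity, which is the base case. Fix $\sigma$ and write $[\mathcal{E}]_\sigma = a_{\sigma,0} + \sum_{i=1}^n a_{\sigma,i} T_i$, so that the weights appearing are $0$ (with multiplicity $a_{\sigma,0}$) and $T_i$ (with multiplicity $a_{\sigma,i}$). The standard formula from \S\ref{subsec:equivConsts} gives
\[
\sum_{j\ge 0} \bigl[\bigwedge^j \mathcal{E}\bigr]_\sigma u^j \;=\; (1+u)^{a_{\sigma,0}} \prod_{i=1}^n (1+T_i u)^{a_{\sigma,i}}.
\]
Applying $\zeta_T$, i.e.\ substituting $T_i\mapsto 1+t_i$, yields $(1+u)^{a_{\sigma,0}}\prod_{i=1}^n\bigl(1+(1+t_i)u\bigr)^{a_{\sigma,i}}$, which after the elementary rewriting $1+(1+t_i)u = (1+u)\bigl(1+t_i\tfrac{u}{1+u}\bigr)$ and pulling out $(1+u)^{\operatorname{rk}(\mathcal{E})} = (1+u)^{a_{\sigma,0}+\sum_i a_{\sigma,i}}$ matches the localization $(u+1)^{\operatorname{rk}(\mathcal{E})} c^T(\mathcal{E},\tfrac{u}{u+1})_\sigma$ (using $c^T(\mathcal{E},v)_\sigma = \prod_i(1+t_i v)^{a_{\sigma,i}}$). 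This proves the first identity.

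For the $\phi_T$ version of the exterior-power formula, I use the definition $\phi_T = D_A\circ \zeta_T\circ D_K$. Since $\bigwedge^j$ commutes with duality, $D_K(\bigwedge^j[\mathcal{E}])_\sigma = a_{\sigma,0}+\sum_i a_{\sigma,i} T_i^{-1}$ combined via the same multiplicative formula, then $\zeta_T$ sends $T_i^{-1}\mapsto (1+t_i)^{-1}$, and $D_A$ sends $t_i\mapsto -t_i$. This produces $(1+u)^{a_{\sigma,0}}\prod_i\bigl(1+(1-t_i)^{-1} u\bigr)^{a_{\sigma,i}} = (1+u)^{a_{\sigma,0}}\prod_i\bigl(\tfrac{1-t_i+u}{1-t_i}\bigr)^{a_{\sigma,i}}$. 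On the other side, $c^T(\mathcal{E}^\vee,v)_\sigma = \prod_i(1-t_i v)^{a_{\sigma,i}}$ and $s^T(\mathcal{E}^\vee)_\sigma = \prod_i(1-t_i)^{-a_{\sigma,i}}$, and a short manipulation (pulling out $(u+1)^{\operatorname{rk}(\mathcal{E})}$ and clearing the common denominators) shows the two sides agree.

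The two symmetric-power identities follow the same pattern. Starting from
\[
\sum_{j\ge 0}\bigl[\operatorname{Sym}^j\mathcal{E}\bigr]_\sigma u^j \;=\; (1-u)^{-a_{\sigma,0}}\prod_{i=1}^n(1-T_i u)^{-a_{\sigma,i}},
\]
one applies either $\zeta_T$ or $\phi_T = D_A\circ\zeta_T\circ D_K$ and compares against the Segre-polynomial localizations $s^T(\mathcal{E},v)_\sigma = \prod_i(1+t_i v)^{-a_{\sigma,i}}$ and $s^T(\mathcal{E}^\vee,v)_\sigma = \prod_i(1-t_i v)^{-a_{\sigma,i}}$; the substitutions $v = \tfrac{u}{u-1}$ or $v=\tfrac{1}{1-u}$, together with extracting the scalar $(1-u)^{-\operatorname{rk}(\mathcal{E})}$ or $c^T(\mathcal{E}^\vee)_\sigma(1-u)^{-\operatorname{rk}(\mathcal{E})}$, produce matching expressions. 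The only real ``obstacle'' is purely bookkeeping: keeping careful track of signs from $D_A$, of reciprocals from $D_K$, and of the distinction between $c^T(\mathcal{E},v)$ and $s^T(\mathcal{E},v)$; once these are organized, each identity collapses to a one-line rational-function manipulation performed independently at each fixed point, so no further geometric input is required beyond the fixed-point formulas of \S\ref{subsec:equivConsts} and the definitions of $\zeta_T$, $\phi_T$.
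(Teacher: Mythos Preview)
Your proof is correct and takes essentially the same approach as the paper: both localize at each torus-fixed point, use the simple-Chern-roots hypothesis to write $[\mathcal{E}]_\sigma$ explicitly, and then reduce each identity to a one-line manipulation of generating functions after applying the substitutions defining $\zeta_T$ and $\phi_T$. The only cosmetic difference is that the paper applies $\phi_T$ directly via $T_i\mapsto (1-t_i)^{-1}$ rather than routing through $D_A\circ\zeta_T\circ D_K$, but the computations are equivalent.
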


\begin{proof}
We prove the formulas involving $\phi$. The formulas involving $\zeta$ are similar (and the first formula follows from \cite[Proposition 10.5]{BEST21}). 
Since $[\mathcal{E}]$ has simple Chern roots, we have that $[\mathcal{E}]_{\sigma} = a_{\sigma, 0} + \sum_{i \in I_{\sigma}} T_i$ for some multiset $I_{\sigma}$. We then compute
\begin{align*}
\sum_{j \ge 0} \phi_T( \textstyle\bigwedge^j\displaystyle [\mathcal{E}])_{\sigma}u^j & = (u + 1)^{a_{\sigma, 0} + |I_{\sigma}|} \prod_{i \in I_{\sigma}}(1/(1 - t_i)) (1 - t_i/(u + 1)) \\
& = (u + 1)^{\operatorname{rk}_{\M}(\mathcal{E})} s^T(\mathcal{E}^{\vee})_{\sigma} c^T\left (\mathcal{E}^{\vee}, \frac{1}{u + 1} \right)_{\sigma},\quad\text{and}\\
\sum_{j \ge 0} \phi_T(\operatorname{Sym}^j [\mathcal{E}])_{\sigma}u^j & = \frac{1}{(1 - u)^{a_{\sigma, 0} + |I_{\sigma}|}} \prod_{i \in I_{\sigma}} \frac{1 - t_i}{1 - t_i/(1-u)} = \frac{c^T(\mathcal{E}^{\vee})_{\sigma}}{(1 - u)^{\operatorname{rk}_{\M}(\mathcal{E})}} s^T\left (\mathcal{E}^{\vee}, \frac{1}{1-u} \right)_{\sigma},
\end{align*}
as desired.
\end{proof}

We note in particular the following consequence of Proposition~\ref{prop:simpleChern}.

\begin{corollary}\label{cor:phizetaontauto}
Let $\M$ be a matroid of rank $r$ on $E$.  Let $D_{I(\M^\perp)}$ be the $T$-invariant divisor associated to $I(\M^\perp)$ as discussed above Example~\ref{eg:twomaps}.
\begin{enumerate}\itemsep 5pt
\item One has $\phi([\mathcal O_{X_E}(D_{I(\M^\perp)})]) = c(\cQ_{\M})$ and  $\zeta([\mathcal O_{X_E}(D_{I(\M^\perp)})]) = s(\cQ_{\M}^\vee)$.
\item If $L\subseteq \kk^E$ realizes $\M$, then $\zeta([\mathcal O_{W_L}]) = [W_L]$.
\end{enumerate}
\end{corollary}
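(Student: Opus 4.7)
The plan is to prove both identities via explicit computations at the $T$-fixed points of $X_E$, using the localization descriptions from Theorem~\ref{thm:localization} and the simple-Chern-roots formulas in Proposition~\ref{prop:simpleChern}. Proposition~\ref{prop:firstChern} identifies the line bundle class $[\mathcal O_{X_E}(D_{I(\M^\perp)})]$ with $[\det \cQ_\M]$, whose localization at the fixed point attached to a maximal cone $\sigma_{I\le\mathscr F}$ is $\prod_{i\in B^c_{\mathscr F/I}(\M/I)} T_i^{-1}$, as observed in the proof of Proposition~\ref{prop:firstChern}.

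For part (1), I would apply the equivariant upgrade $\phi_T$ of $\phi$, under which $T_i^{-1}\mapsto 1-t_i$. This sends the localization above to $\prod_{i\in B^c_{\mathscr F/I}(\M/I)}(1-t_i)$. On the other hand, the formula of Proposition~\ref{prop:Kclasses} exhibits $[\cQ_\M]$ as a class with simple Chern roots: at $\sigma_{I\le \mathscr F}$ its Chern roots are the trivial root $0$ with multiplicity $|I|-\operatorname{rk}_\M(I)$ together with $-t_i$ for $i\in B^c_{\mathscr F/I}(\M/I)$, so $c^T(\cQ_\M)_{I\le\mathscr F}=\prod_{i\in B^c_{\mathscr F/I}(\M/I)}(1-t_i)$. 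An entirely parallel computation with $\zeta_T$ (which sends $T_i^{-1}\mapsto(1+t_i)^{-1}$) matches $s^T(\cQ_\M^\vee)$ on the same set of cones. Both identities descend to the non-equivariant rings through the quotient maps of Theorem~\ref{thm:localization}.

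For part (2), I would combine the Koszul expression $[\mathcal O_{W_L}]=\sum_{j=0}^{n-r}(-1)^j[\textstyle\bigwedge^j\cQ_L^\vee]$ from Corollary~\ref{cor:koszul} with the first formula of Proposition~\ref{prop:simpleChern} applied to $\mathcal E=\cQ_L^\vee$, then specialize to $u=-1$. Expanding, the right-hand side of the specialized formula is
\[
\sum_{i=0}^{n-r} c_i(\cQ_L^\vee)\,u^i\,(u+1)^{n-r-i}\Big|_{u=-1}=(-1)^{n-r}c_{n-r}(\cQ_L^\vee),
\]
since every $i<n-r$ term carries a factor of $(u+1)$. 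Applying the identities $c_{n-r}(\cQ_L^\vee)=(-1)^{n-r}c_{n-r}(\cQ_L)$ and $c_{n-r}(\cQ_L)\cap[X_E]=[\Sigma_\M]=[W_L]$ from Corollary~\ref{cor:ctop} then gives $\zeta([\mathcal O_{W_L}])=[W_L]$.

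The substantive content here is merely a matching of localizations; the main obstacle is bookkeeping, particularly tracking the duality signs between $\cQ_\M$ and $\cQ_\M^\vee$, ensuring that the $(u+1)^{n-r-i}$ factor does vanish for $i<n-r$, and checking that the equivariant identities remain valid after descending modulo the kernels $I_K$ and $I_A$ of Theorem~\ref{thm:localization}. The last point follows immediately because $\phi_T$ and $\zeta_T$ send global Laurent polynomials to global (Laurent) polynomials compatible with evaluation at the identity.
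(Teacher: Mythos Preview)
Your proof is correct. For part (2) it is essentially identical to the paper's argument: both invoke Corollary~\ref{cor:koszul} to write $[\mathcal O_{W_L}]=\sum_j(-1)^j[\textstyle\bigwedge^j\cQ_L^\vee]$, apply the first formula of Proposition~\ref{prop:simpleChern} with $[\mathcal E]=[\cQ_L^\vee]$, specialize at $u=-1$, and finish via Corollary~\ref{cor:ctop}.

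For part (1) the approaches differ slightly. You verify the identity directly at each $T$-fixed point: $\phi_T$ sends $\prod_{i\in B^c}T_i^{-1}$ to $\prod_{i\in B^c}(1-t_i)$, which is exactly $c^T(\cQ_\M)$ at that point, and likewise for $\zeta_T$ and $s^T(\cQ_\M^\vee)$. The paper instead works entirely through the generating-function formulas of Proposition~\ref{prop:simpleChern}: it conjugates the first formula by the duality involutions to obtain $\sum_j\phi(\textstyle\bigwedge^j[\mathcal E]^\vee)u^j=(u+1)^{\operatorname{rk}\mathcal E}c(\mathcal E,-u/(u+1))$, sets $[\mathcal E]=[\cQ_\M]^\vee$, and reads off $\phi([\det\cQ_\M])=c(\cQ_\M)$ as the coefficient of $u^{n-r}$. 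Your route is more elementary in that it avoids the generating-function manipulation altogether; the paper's route has the virtue of deriving part (1) and part (2) from the same formula (the wedge identity), making the structural parallel between the two statements more visible.
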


\begin{proof}
Applying $\zeta = D_A \circ \phi \circ D_K$ to the first formula in the proposition gives
\[
\sum_{j \ge 0} \phi( \textstyle\bigwedge^j[\mathcal{E}]^\vee)u^j = (u + 1)^{\operatorname{rk}_{\M}(\mathcal{E})} c(\mathcal{E}, -\frac{u}{u + 1})
\]
for $[\mathcal E]\in K(X_E)$ with simple Chern roots.  Since $[\cQ_\M]^\vee$ has simple Chern roots with $\operatorname{rk}(\cQ_\M) = n - r$, and since $[\bigwedge^{n-r}\cQ_\M] = [\det\cQ_\M] = [\mathcal O_{X_E}(D_{I(\M^\perp)})]$ by Proposition~\ref{prop:firstChern}, the first statement now follows by setting $[\mathcal E] = [\cQ_\M]^\vee$ and noting that $c(\mathcal E,-u) = c(\mathcal E^\vee,u)$.
The second statement follows from the first formula in the proposition and Corollary~\ref{cor:koszul}.
\end{proof}

\begin{example}\label{eg:alphays}
Note that $[\det \cQ_{\U_{n-1,E}}] = [\mathcal O_{X_E}(D_{I(\U_{1,E})})]$ and $[\det \cQ_{\U_{0,E}}]  = [\mathcal{O}_{X_E}(D_{I(\U_{n,E})})]$.  Because the line bundles $\mathcal O_{X_E}(D_{I(\U_{1,E})})$ and $\mathcal O_{X_E}(D_{I(\U_{n,E})})$ induce the maps $\pi_E\colon X_E \to \PP^E$ and $\pi_{1^E}\colon X_E \to (\PP^1)^E$, respectively, we have
\[
\phi([\mathcal{O}_{X_E}(D_{I(\U_{1,E})})]) = 1+\alpha \quad\text{and}\quad \phi([\mathcal{O}_{X_E}(D_{I(\U_{n,E})})]) = \prod_{i\in E} (1+y_i) =  c\big(\bigoplus_{i\in E}\pi_i^*\mathcal O_{\PP^1}(1)\big).
\]
Here, recall the notation that $\alpha = c_1(\pi_E^* \mathcal O_{\PP^E}(1))$ and $y_i = c_1(\pi_i^* \mathcal O_{\PP^1}(1))$.
\end{example}

\begin{remark}\label{rem:compareHRR}
Let us remark on how the maps $\phi$ and $\zeta$ here are related to the exceptional isomorphism for permutohedral varieties given in \cite[Theorem D]{BEST21}. Just as for augmented tautological bundles, classes, and Bergman classes, the first relation comes from considering $\underline X_E$ as a $T$-fixed divisor on $X_E$: The restriction of $\zeta$ to $\underline X_E$ recovers the isomorphism $\underline\zeta$ between $K(\underline X_E)$ and $A^\bullet(\underline X_E)$ in \cite[Theorem D]{BEST21}.\\
\indent Let us now sketch a different relation.  Let $\widetilde E = E \sqcup \{0\}$ as in \S\ref{subsec:refinecoarsen}, where we noted that the stellahedral fan $\Sigma_E$ can be considered as a coarsening of the permutohedral fan $\underline\Sigma_{\widetilde E}$.  In other words, we have a $T$-equivariant birational map $p \colon \underline{X}_{\widetilde E} \to X_E$.  One can show that there is a commuting diagram
\[
\begin{tikzcd}
K(X_{E}) \arrow[r, "\zeta"] \arrow[d, hook]
& A^{\bullet}(X_{E}) \arrow[d, hook] \\
K(\underline X_{\widetilde E}) \arrow[r, "\underline{\zeta}"]
& A^{\bullet}(\underline X_{\widetilde E})
\end{tikzcd}
\]
where the two vertical maps are the respective pullback maps, and one has similar commuting diagrams for $\phi$ and the $T$-equivariant versions of $\zeta$ and $\phi$.  Both Theorem~\ref{thm:exceptIsom} and Theorem~\ref{thm:fakeHRR} can then be deduced from the commutativity of the diagrams and \cite[Theorem D]{BEST21}.
\end{remark}

\section{Valuative group, homology, and the intersection pairing}\label{SectionValuativeGroup}

\subsection{The polytope algebra and the proof of Theorem~\ref{mainthm:equivalences}}\label{subsec:polytopealgebra}
For the proof of Theorem~\ref{mainthm:equivalences}, the last remaining ingredient is the polytope algebra introduced in \cite{McM89}.
For a polytope $Q\subseteq \RR^E$, define the function $\mathbf 1_Q \colon \RR^E \to \ZZ$ by $\mathbf 1_Q(u) = 1$ if $u\in P$ and 0 otherwise.  Recall that a (lattice) polytope $P$ is said to be a (lattice) deformation of $Q$ if its normal fan $\Sigma_{P}$ coarsens that of $Q$.

\begin{definition}
Let $\Sigma$ be the normal fan of a smooth polytope $Q\subseteq \RR^E$.  Let $\mathbb I(\Sigma)$ be the subgroup of $\ZZ^{\RR^E}$ generated by $\{\mathbf 1_P \mid P \text{ a lattice deformation of $Q$}\}$, and let $\operatorname{transl}(\Sigma)$ to be the subgroup of $\mathbb I(\Sigma)$ generated by $\{\mathbf 1_P - \mathbf 1_{P+u} \mid u \in \ZZ^E\}$.  We define the \emph{polytope algebra} to be the quotient
\[
\overline{\mathbb I}(\Sigma) = \mathbb I(\Sigma) / \operatorname{transl}(\Sigma).
\]
\end{definition}

For a lattice deformation $P$, let us denote by $[P]$ its class in the polytope algebra $\overline{\mathbb I}(\Sigma)$.  The polytope algebra, as the terminology suggests, is a ring with multiplication induced by Minkowski sum, that is, by $[P]\cdot[P'] = [P+ P']$.
It was well-known among experts that the polytope algebra is naturally identified with $K(X_{\Sigma})$; this is realized in Theorem~\ref{thm:folklore}. When we apply the theorem to the stellahedral variety, noting that deformations of the stellahedron are exactly polymatroids (Proposition~\ref{prop:nefpolymatroid}), we deduce the following.

\begin{theorem}\label{thm:polytopealgebrastella}
The map sending an integral polymatroid $P$ on $E$ to $[\mathcal O_{X_E}(D_P)]$ defines an isomorphism $\overline{\mathbb I}(\Sigma_E) \simeq K(X_E)$.
\end{theorem}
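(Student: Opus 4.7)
The plan is to deduce this result as a direct consequence of a general statement (Theorem~\ref{thm:folklore}, proven in the appendix) asserting that for any smooth projective toric variety $X_\Sigma$ arising from a lattice polytope $Q$, the assignment $[P] \mapsto [\mathcal O_{X_\Sigma}(D_P)]$ induces a ring isomorphism $\overline{\mathbb I}(\Sigma) \overset\sim\to K(X_\Sigma)$ from the polytope algebra of lattice deformations of $Q$ to the $K$-ring.  Granting this general theorem, the statement for the stellahedral variety is obtained by observing that Proposition~\ref{prop:nefpolymatroid} identifies integral polymatroids on $E$ with representatives, modulo translation, of the base-point-free classes on $X_E$; equivalently, integral polymatroids are exactly the integral deformations of the stellahedron $\Pi_E$.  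In particular, the generators $[P]$ of $\overline{\mathbb I}(\Sigma_E)$ and $[\mathcal O_{X_E}(D_P)]$ of $K(X_E)$ match up under the stated map, so the general theorem specializes to the claim.

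For the general theorem, the construction of the map proceeds on the generators of $\mathbb I(\Sigma)$.  Translation invariance is immediate: if $P' = P+u$ for $u \in \ZZ^E$, then the torus-invariant divisors satisfy $D_{P'} = D_P + \operatorname{div}(\chi^{-u})$, so $[\mathcal O(D_{P'})] = [\mathcal O(D_P)]$ in $K(X_\Sigma)$.  Multiplicativity follows because Minkowski sum corresponds to tensor product of the associated base-point-free line bundles, matching the polytope-algebra product $[P]\cdot[P']=[P+P']$.  Surjectivity is standard: on a smooth projective toric variety, $K(X_\Sigma)$ is generated as a ring by classes of line bundles of base-point-free divisors, which are exactly the $[\mathcal O(D_P)]$ with $P$ a deformation of $Q$.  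For injectivity, one compares the two sides via torus localization: by Theorem~\ref{thm:localization}\ref{localization:K}, the class $[\mathcal O(D_P)]$ localized at the fixed point $p_\sigma$ of a maximal cone $\sigma \in \Sigma(d)$ equals $T^{v_\sigma(P)}$, where $v_\sigma(P)$ is the vertex of $P$ minimized by any vector in the relative interior of $\sigma$; thus the map factors through (and can be inverted from) the vertex data of $P$ at each maximal cone.

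The main obstacle is showing that the assignment $\mathbf 1_P \mapsto [\mathcal O(D_P)]$ actually descends from a function on the set of lattice deformations to a well-defined group homomorphism on $\mathbb I(\Sigma)$, i.e., that every $\ZZ$-linear relation $\sum_i c_i \mathbf 1_{P_i}=0$ among indicator functions lifts to the corresponding relation $\sum_i c_i[\mathcal O(D_{P_i})]=0$ in $K(X_\Sigma)$.  This is the content of the valuative nature of the $K$-class, and is essentially a form of the Khovanskii--Pukhlikov/Morelli correspondence.  The cleanest route is to combine the localization description above with the classical fact that the assignment sending a polytope $P$ to its vertex selection $\sigma \mapsto v_\sigma(P)$ is valuative on lattice deformations of $Q$; the kernel of the polytope-algebra relations is then precisely matched with the kernel of the localization map, yielding both well-definedness and injectivity simultaneously.
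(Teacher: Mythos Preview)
Your reduction of the statement to the general Theorem~\ref{thm:folklore} combined with Proposition~\ref{prop:nefpolymatroid} is exactly the paper's argument, and your treatment of translation invariance, multiplicativity, and well-definedness (via localization plus valuativity of the vertex-selection map, citing McMullen) matches the appendix.

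However, your sketch of the general theorem has two gaps relative to the paper's proof.  For surjectivity, the assertion that $K(X_\Sigma)$ is generated as a ring by classes $[\mathcal O(D_P)]$ with $P$ a deformation is not free: once one knows that $K_T(X_\Sigma)$ is generated by line bundle classes and that every line bundle is a tensor quotient of ample ones, one still must show that $[\mathcal O(-D_P)]$ lies in the image.  The paper does this by an explicit identity (Lemma~\ref{lem:surjInd}) expressing $[\mathcal O(-D_P)]$ as an alternating sum of classes $[\mathcal O(D_{kP - p_S})]$ over subsets $S$ of lattice points of $P$, verified at each torus-fixed point.

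More seriously, your injectivity argument does not go through.  Knowing that $[\mathcal O(D_P)]$ localizes to $T^{v_\sigma(P)}$ at each fixed point shows the map factors through vertex data, but a relation $\sum_i c_i T^{v_\sigma(P_i)} = 0$ for every $\sigma$ does not obviously force $\sum_i c_i \mathbf 1_{P_i} = 0$ as functions on $\RR^E$; and on the non-equivariant level you have not argued that the kernel of $K_T \to K$ corresponds exactly to $\operatorname{transl}(\Sigma)$.  The paper's approach is genuinely different: it constructs an explicit section $K_T(X_\Sigma) \to \ZZ^{\QQ^n}$ in the style of Morelli, sending $[\mathcal E]$ to the function $m/k \mapsto \chi(X_\Sigma;\Psi^k[\mathcal E])_m$ using Adams operations.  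Because $H^0(X_\Sigma;\mathcal O(D_{\ell P}))$ is the span of lattice points of $\ell P$ and higher cohomology vanishes, this section sends $[\mathcal O(D_P)]$ back to $\mathbf 1_P$, which immediately gives injectivity at both the equivariant and non-equivariant levels.
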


We now prove Theorem~\ref{thm:valuativeiso} by showing that we have a sequence of isomorphisms
\[
\bigoplus_{r = 0}^{n} \mathrm{Val}_r(E) \simeq \overline{\mathbb I}(\Sigma_E) \simeq K(X_E) \simeq A^\bullet(X_E).
\]
We prepare for the first isomorphism in the sequence with the following lemma.

\begin{lemma}\label{lem:polymatspan}
The intersection of an integral polymatroid with an integral translate of the boolean cube $[0,1]^E$, if nonempty, is a translate of the independence polytope of a matroid.
\end{lemma}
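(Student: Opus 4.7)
The plan is to realize $Q := P \cap (v + [0,1]^E)$, after a suitable translation, as an integer polymatroid contained in $[0,1]^E$, and then invoke Example~\ref{eg:indep}. Let $f \colon 2^E \to \ZZ$ denote the non-decreasing submodular function defining $P$.

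First I reduce to the case $v \in \ZZ_{\geq 0}^E$. If some $v_i \leq -2$ then $Q$ is empty (excluded by hypothesis), and if $v_i = -1$ then the inequalities $0 \leq u_i \leq v_i + 1$ force $u_i = 0$. Restricting to the coordinate subspace $\{u_i = 0 : v_i = -1\}$ yields an integer polymatroid on the smaller ground set $E' = E \setminus \{i : v_i = -1\}$ defined by the restriction of $f$; the reduced coordinates will eventually appear as loops in the final matroid. After this reduction, the down-closedness of $P$ in $\RR_{\geq 0}^E$ combined with $Q \neq \emptyset$ yields $v \in P$.

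The key step is to show that the translated set
\[
P_v := \{u \in \RR_{\geq 0}^E : u + v \in P\} = \{u \in \RR_{\geq 0}^E : \langle u, \be_S \rangle \leq f(S) - \langle v, \be_S \rangle \text{ for all } S \subseteq E\}
\]
is itself an integer polymatroid. Define $\tilde{g}(S) := \min_{T \supseteq S}\bigl(f(T) - \langle v, \be_T \rangle\bigr)$. Non-negativity $\tilde{g} \geq 0$ follows from $v \in P$, the value $\tilde{g}(\emptyset) = 0$ is attained by $T = \emptyset$, and monotonicity is immediate. For submodularity, pick $T_S, T_{S'}$ attaining the respective minima; applying submodularity of $T \mapsto f(T) - \langle v, \be_T \rangle$ (a submodular function minus a modular one) together with the containments $T_S \cup T_{S'} \supseteq S \cup S'$ and $T_S \cap T_{S'} \supseteq S \cap S'$ gives $\tilde{g}(S) + \tilde{g}(S') \geq \tilde{g}(S \cup S') + \tilde{g}(S \cap S')$. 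Since for $u \in \RR_{\geq 0}^E$ the tightest consequence of $\{\langle u, \be_T \rangle \leq f(T) - \langle v, \be_T \rangle : T \supseteq S\}$ on $\langle u, \be_S \rangle$ is exactly $\tilde{g}(S)$, we identify $P_v$ as the integer polymatroid defined by $\tilde{g}$.

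Finally, $Q - v = P_v \cap [0, \be_E]$ is an integer polymatroid sitting inside $[0,1]^E$: down-closedness is preserved by intersection with the box, and the polymatroid exchange axiom transfers because, for any $y \in \RR_{\geq 0}^E$, the maximal elements of $(P_v \cap [0, \be_E]) \cap \{u \leq y\}$ coincide with those of $P_v \cap \{u \leq \min(y, \be_E)\}$, all of which share the same coordinate sum by the polymatroid axiom for $P_v$. Example~\ref{eg:indep} then identifies $Q - v$ with $I(\M)$ for some matroid $\M$ on $E$, giving $Q = v + I(\M)$ as desired. The main obstacle is the polymatroid structure of $P_v$—in particular producing the correct submodular function $\tilde{g}$ via the min-over-supersets construction and verifying its submodularity; the reduction to $v \geq 0$ and the box-intersection step are then essentially formal consequences.
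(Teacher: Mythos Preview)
Your argument is correct and takes a genuinely different route from the paper's.  The paper argues geometrically: it observes from Definition~\ref{defn:polymat} that intersecting a polymatroid with any coordinate half-space $H_{i,a}^{\pm}$ yields a translate of a polymatroid, so iterating gives that $P\cap(v+[0,1]^E)$ is a translate of a polymatroid; it then cites Edmonds' polymatroid intersection theorem to conclude integrality of the vertices.  You instead construct the submodular function $\tilde g(S)=\min_{T\supseteq S}\bigl(f(T)-\langle v,\be_T\rangle\bigr)$ explicitly for the shifted polymatroid $P_v$, and then verify the polymatroid axioms for $P_v\cap[0,1]^E$ by hand.  Your approach is more self-contained and gives an explicit formula for the rank function of the intermediate polymatroid $P_v$; the paper's approach is shorter because it outsources the work to a citation.

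There is one small gap.  In the final step you assert that $P_v\cap[0,1]^E$ is an \emph{integer} polymatroid, but you only verify the two polymatroid axioms, not integrality of its rank function $g'(S)=\max\{\langle u,\be_S\rangle: u\in P_v\cap[0,1]^E\}$.  This does not follow formally from $\tilde g$ being integer-valued: the intersection of two lattice polytopes need not be a lattice polytope.  The cleanest fix in your framework is the greedy argument: for any order on $S$, increasing coordinates one at a time inside $P_v$ subject to $u\le\be_S$ stays integral at every step because each bound $\min\bigl(1,\ \tilde g(T)-\sum_{j\in T\setminus\{i\}}u_j\bigr)$ is an integer, and the resulting vector is maximal by the polymatroid axiom for $P_v$.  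Alternatively one may cite Edmonds exactly as the paper does.
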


\begin{proof}
For $i\in E$ and $a\in \ZZ$, let us define the hyperplane $H_{i,a} = \{u\in \RR^E \mid \langle \be_i, u\rangle = a\}$ and its half-spaces $H^{\pm}_{i,a} = \{u\in \RR^E \mid \langle \pm\be_i, u\rangle \geq \pm a\}$.
It follows from Definition~\ref{defn:polymat} that a polymatroid intersected with any half-space $H^{+}_{i,a}$ or $H^{-}_{i,a}$ is a translate of a polymatroid if it isn't empty.
So, the intersection of an integral polymatroid with an integer translate of the boolean cube is a translate of a polymatroid if nonempty.  By Example~\ref{eg:indep}, it now suffices to verify that this polymatroid is integral.\\
\indent By \cite[(35)]{edmonds1970submodular}, the intersection of two integral polymatroids is a polytope whose vertices lie in $\ZZ^E$. By intersecting an integral polymatroid $P$ with integral polymatroids of the form $\prod_{i=1}^{n} [0, a_i]$, for $a_i \in \mathbb{Z}_{\ge 0}$, we see that all vertices of the intersection of $P$ with an integral translate of the boolean cube are in $\mathbb{Z}^E$. 
\end{proof}

\begin{proposition}\label{prop:val&ind}
The map $\bigoplus_{r = 0}^{n} \mathrm{Val}_r(E) \to \overline{\mathbb I}(\Sigma_E)$ defined by $\M \mapsto [I(\M^\perp)]$ is an isomorphism.
\end{proposition}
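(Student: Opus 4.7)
My plan is to establish that the map $\M \mapsto [I(\M^\perp)]$ is well-defined, surjective, and injective in turn.

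For well-definedness, I would show that $\M \mapsto \mathbf{1}_{I(\M^\perp)}$ is valuative on $\mathrm{Mat}_r(E)$, so the composition $\M \mapsto [I(\M^\perp)] \in \overline{\mathbb{I}}(\Sigma_E)$ factors through $\mathrm{Val}_r(E)$. Two standard facts suffice: matroid duality $\M \leftrightarrow \M^\perp$ is a valuative bijection, and the independence polytope construction is valuative, since a matroid subdivision of $P(\M)$ into pieces $P(\M_i)$ lifts to a matroid subdivision of $I(\M)$ into pieces $I(\M_i)$ with matching inclusion-exclusion relations (a result of Derksen--Fink).

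For surjectivity, given an integral polymatroid $P$ representing an arbitrary class in $\overline{\mathbb I}(\Sigma_E)$, I would choose finitely many $v_1, \dotsc, v_k \in \ZZ^E$ so that the closed cubes $C_j = [0,1]^E + v_j$ cover $P$, and apply inclusion-exclusion:
\[
\mathbf{1}_P \;=\; \sum_{\emptyset \ne S \subseteq \{1,\dotsc,k\}} (-1)^{|S|+1}\, \mathbf{1}_{P \cap \bigcap_{j\in S} C_j}.
\]
Each intersection $\bigcap_{j\in S} C_j$ is either empty or a translated unit cube, and by Lemma~\ref{lem:polymatspan} each nonempty $P \cap \bigcap_{j\in S} C_j$ is a translate of an independence polytope $I(\M_S)$. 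Passing to $\overline{\mathbb I}(\Sigma_E)$, where translates are identified, yields $[P] = \sum_S (-1)^{|S|+1}[I(\M_S)] = \sum_S (-1)^{|S|+1}[I((\M_S^\perp)^\perp)]$, so $[P]$ is in the image of the map.

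For injectivity, suppose $\sum_r \sum_i c_i^r\,\mathbf{1}_{I((\M_i^r)^\perp)}$ lies in $\operatorname{transl}(\Sigma_E)$, i.e., equals a sum of relations $\mathbf{1}_Q - \mathbf{1}_{Q+u}$. My strategy is to filter by coordinate sum: the polytope $I((\M_i^r)^\perp)$ attains its maximum coordinate sum $n-r$ on the top face $P((\M_i^r)^\perp) = \be_E - P(\M_i^r)$. Letting $r_0$ denote the smallest rank with a nontrivial coefficient, I restrict both sides to the hyperplane $H_{r_0} = \{u : \langle u, \be_E \rangle = n - r_0\}$. On the left this kills the rank-$r'$ terms for $r' > r_0$ (their maximum coordinate sum is strictly below $n - r_0$) and singles out $\sum_i c_i^{r_0}\,\mathbf{1}_{\be_E - P(\M_i^{r_0})}$. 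On the right, the restriction produces combinations that one must show are valuatively trivial for matroid base polytopes of rank $n-r_0$, yielding $\sum_i c_i^{r_0}\M_i^{r_0} = 0$ in $\mathrm{Val}_{r_0}(E)$; subtracting and iterating gives the conclusion.

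\textbf{The main obstacle} is the last step of injectivity, namely controlling how a general translation relation $\mathbf{1}_Q - \mathbf{1}_{Q+u}$ restricts to the hyperplane $H_{r_0}$: when $\langle u, \be_E\rangle \ne 0$ the relation couples slices of $Q$ at different heights, so it is not immediate that its restriction is a valuative relation among rank-$(n-r_0)$ matroid base polytopes. A cleaner resolution might bypass this by evaluating the map on a combinatorial basis of $\mathrm{Val}_r(E)$ (for instance the Schubert matroid basis of Derksen--Fink) and directly verifying that the images $\{[I(\M^\perp)]\}$ along this basis are linearly independent in $\overline{\mathbb I}(\Sigma_E)$, which, combined with the surjectivity established above, would give the isomorphism.
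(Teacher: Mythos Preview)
Your well-definedness and surjectivity arguments are essentially the same as the paper's, and they are correct.

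The injectivity argument, however, has a genuine gap. You correctly identify the obstacle in your hyperplane-slicing approach: a translation relation $\mathbf{1}_Q - \mathbf{1}_{Q+u}$ with $\langle u,\be_E\rangle \neq 0$ restricts to a relation between slices of $Q$ at \emph{different} heights, and there is no reason these slices should be base polytopes of matroids of the correct rank, or that the resulting relation should be valuatively trivial. You do not resolve this.

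The paper takes a different route. It first proves the following key lemma: if $\sum_i a_i\,\mathbf{1}_{I(\M_i)+u_i}=0$ as a function on $\RR^E$ (for arbitrary $u_i\in\ZZ^E$), then already $\sum_i a_i\,\mathbf{1}_{I(\M_i)}=0$, and hence $\sum_i a_i\,\mathbf{1}_{P(\M_i)}=0$ by passing to the top face. The proof is an induction on the set of loops: among the $\M_i$, only those with a fixed loop set $S$ have independence polytopes meeting the relative interior of the face $[0,1]^{E\setminus S}\times\{0\}^S$ of the cube, so one peels off those terms first. With this lemma in hand, injectivity follows: given $\sum_i a_i[I(\M_i)]=0$ in $\overline{\mathbb I}(\Sigma_E)$, one lifts to an identity $\sum_i a_i\,\mathbf{1}_{I(\M_i)}+\sum b_{P,m}(\mathbf{1}_{P+m}-\mathbf{1}_P)=0$ in $\ZZ^{\RR^E}$, uses Lemma~\ref{lem:polymatspan} to rewrite each polymatroid term as a sum of translated independence polytopes, and then applies the key lemma to strip all translations at once.

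Your fallback suggestion of invoking the Derksen--Fink Schubert basis would work as an external input, but note that in the paper's logical structure that result (Corollary~\ref{cor:DerksenFink}) is \emph{derived} from this proposition via Theorems~\ref{thm:valuativeiso} and~\ref{thm:schubert}; importing it here would invert the flow of the argument. If you do go that route, you still owe a verification that the images of the Schubert matroids are independent in $\overline{\mathbb I}(\Sigma_E)$, which in practice comes down to a rank count against $K(X_E)$ via Theorem~\ref{thm:polytopealgebrastella}.
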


\begin{proof}
To see that the given map is well-defined, note that the base polytope of the dual $P(\M^\perp)$ is $-(P(\M) - \be_E)$, and that the independence polytope $I(\M^\perp)$ is the intersection with $[0,1]^E$ of the Minkowski sum $P(\M^\perp) + [-1,0]^E$.  Each of these operations---translation, negation, Minkowski sum, and intersection---preserves valuative relations.  Surjectivity of the map is immediate from Lemma~\ref{lem:polymatspan}, since given an integral polymatroid $P$, by tiling $\RR^E$ with integer translates of the boolean cube, we can express $[P]\in \overline{\mathbb  I}(\Sigma_E)$ as a linear combination of the classes of independence polytopes of matroids.\\
\indent For injectivity, first we show that the only relations between indicator functions of translates of independence polytopes come from valuativity. 
Suppose we have $\sum_{i=1}^k a_i \mathbf{1}_{I(\M_i) + u_i} = 0$ for $a_i\in \ZZ$, $u_i \in \mathbb{Z}^n$, and $\M_i$ a matroid on $E$.  We show that then $\sum_{i=1}^k a_i \mathbf 1_{I(\M_i)} = 0$ as an element in $\ZZ^{\RR^E}$.
By Proposition~\ref{prop:faceval}, this implies that $\sum_{i=1}^k a_i \mathbf 1_{P(\M_i)} = 0$ because each $I(\M_i)$ has $P(\M_i)$ as the face maximizing the pairing with $\be_E$.  For a subset $S\subseteq E$, let $\ell_S$ be the subset of $\{\M_1, \ldots, \M_k\}$ consisting of matroids whose set of loops is equal to $S$, or equivalently, the smallest coordinate subspace containing the independence polytope of the matroid is $\RR^S\subseteq \RR^E$.
Let us pick a linear ordering $(S_0 = \emptyset, S_1, S_2, \ldots, S_{2^{n}} = E)$ of the subsets of $E$ that refines the partial order by inclusion. We claim by induction that $\sum_{\M_j \in \ell_{S_i}} a_j \mathbf 1_{I(\M_j)} = 0$.  In the base case $S_0 = \emptyset$, the polytopes $I(\M_j)$ for all $\M_j\in \ell_{S_0}$ nontrivially intersect the interior of the boolean cube $[0,1]^E$, whereas none of those of $\M_{j'} \in \ell_{S_i}$ for $i>0$ do.  Hence that $\sum_{i=1}^k a_i \mathbf 1_{I(\M_i) + u_i} = 0$ implies that $\sum_{\M_j \in \ell_{S_0}} a_j \mathbf 1_{I(\M_j)} = 0$.  For the induction step at $S_i$, we may assume that $\ell_{S_0}, \ldots, \ell_{S_{i-1}}$ are empty.  
Then, we repeat the argument with ``the interior of the boolean cube'' replaced by ``the relative interior of the cube $[0,1]^{S_i} \times \{0\}^{E\setminus S_i}$''.  That is, the polytopes $I(\M_j)$ for all $\M_j\in \ell_{S_i}$ nontrivially intersect the relative interior of the cube $[0,1]^{S_i} \times \{0\}^{E\setminus S_i}$, whereas none of those of $\M_{j'} \in \ell_{S_{i'}}$ for $i'>i$ do.
Hence, again we conclude $\sum_{\M_j \in \ell_{S_i}} a_j \mathbf 1_{I(\M_j)} = 0$ from $\sum_{i=1}^k a_i \mathbf 1_{I(\M_i) + u_i} = 0$, completing the induction.\\
\indent Now suppose that $\sum_{i=1}^k a_i [I(\M_i)] = 0$ for $a_i \in \mathbb{Z}$ and $\M_i$ a matroid on $E$. This means that
$$\sum_{i=1}^k a_i \mathbf{1}_{I(\M_i)} + \sum_{P, m} b_{P, m} (\mathbf{1}_{P + m} - \mathbf{1}_{P}) = 0$$
for some collection of polymatroids $P$, vectors $m \in\ZZ^n$, and integers $b_{P,m}$.
Using Lemma~\ref{lem:polymatspan}, we can rewrite this as 
$$\sum_{i=1}^{k} a_i \mathbf{1}_{I(\M_i)} + \sum_{j=1}^{\ell} c_{j} (\mathbf{1}_{I(\M'_j) + m_{j}} - \mathbf{1}_{I(\M'_j)}) = 0$$
for some collection of matroids $\M'_j$ and vectors $m_j \in \mathbb{Z}^n$. Then the previous discussion implies that equality still holds when we remove the second sum, as desired.
\end{proof}

\begin{proof}[Proof of Theorem~\ref{thm:valuativeiso}]
In Proposition~\ref{prop:val&ind}, we have constructed an isomorphism $\bigoplus_{r = 0}^{n} \mathrm{Val}_r(E) \to \overline{\mathbb I}(\Sigma_E)$ defined by $\M \mapsto [I(\M^\perp)]$.  Now, composing the isomorphism $\overline{\mathbb I}(\Sigma_E) \simeq K(X_E)$ in Theorem~\ref{thm:polytopealgebrastella} with the isomorphism $\phi \colon K(X_E) \to A^\bullet(X_E)$ in \S\ref{sec:exceptIsom}, we obtain an isomorphism $\overline{\mathbb I}(\Sigma_E) \to A^\bullet(X_E)$, which by Corollary~\ref{cor:phizetaontauto} maps $[I(\M^\perp)]$ to $c(\cQ_\M)$ for a matroid $\M$.  By Corollary~\ref{cor:ctop}, the top nonvanishing degree part $c_{n-\operatorname{rk}(\M)}(\cQ_\M)$ of $c(\cQ_\M)$ is the augmented Bergman class $[\Sigma_\M]$, so we conclude from the graded structure of $A^\bullet(X_E)$ that $\bigoplus_{r = 0}^{n} \mathrm{Val}_r(E) \to A^\bullet(X_E)$ defined by $\M \mapsto [\Sigma_\M]$ is an isomorphism of abelian groups.
\end{proof}

With Theorem~\ref{thm:valuativeiso}, we can now complete the proof of Theorem~\ref{thm:exceptIsom}.

\begin{proof}[Proof of Theorem~\ref{thm:exceptIsom}]
That $\zeta$ and $\phi$ are ring isomorphisms was proved in Section \ref{sec:exceptIsom}, and that they satisfy the stated properties is Corollary~\ref{cor:phizetaontauto}.  To verify that the stated properties characterize the maps, note first that $A^1(X_E)$ generates $A^\bullet(X_E)$ as a ring, and that the augmented Bergman classes of matroids of rank $n-1$ span $A^1(X_E)$ because $\operatorname{Val}_{n-1}(E) \simeq A^1(X_E)$ by Theorem~\ref{thm:valuativeiso}.  The result now follows because every matroid of rank $n-1$ is realizable over any field, and if $L\subset \kk^E$ realizes a matroid $\M$ of rank $n-1$ then $[W_L]= [\Sigma_\M]$ and $c(\cQ_L) = 1+c_1(\cQ_L) = 1 + [\Sigma_\M]$ by Corollary~\ref{cor:ctop}.
\end{proof}

We now prove Theorem~\ref{thm:intersectionaugmented} by using Lemma~\ref{lem:polymatspan} with Corollary~\ref{cor:phizetaontauto} and Corollary~\ref{cor:ctop}.

\begin{proof}[Proof of Theorem~\ref{thm:intersectionaugmented}]
If $\operatorname{crk}(\M) + \operatorname{crk}(\M') > n \ge \operatorname{crk}(\M \wedge \M')$, then the result vacuously holds, so we may assume that $\operatorname{crk}(\M) + \operatorname{crk}(\M') \le n$. 
Note that, by Corollary~\ref{cor:ctop}, the degree $\operatorname{crk}(\M) + \operatorname{crk}(\M')$ part of $c(\mathcal{Q}_\M)c(\mathcal{Q}_{\M'})$ is $[\Sigma_\M] \cdot [\Sigma_{\M'}]$, so by Corollary~\ref{cor:phizetaontauto} it suffices to compute the degree  $\operatorname{crk}(\M) + \operatorname{crk}(\M')$ part of $\phi([I(\M^{\perp})] \cdot [I(\M'^{\perp})])$. By Lemma~\ref{lem:polymatspan}, we may write $[I(\M^{\perp})] \cdot [I(\M'^{\perp})] = [I(\M^{\perp}) + I(\M'^{\perp})]$ as a sum of the classes of independence polytopes of matroids by intersecting it with the tiling of $\mathbb{R}^E$ by translates of the boolean cube and using inclusion-exclusion on the faces. This gives an expression for non-equivariant $K$-class $[I(\M^{\perp})] \cdot [I(\M'^{\perp})]$ as a sum of the $K$-classes of independence polytopes of matroids. \\
\indent The intersection of $I(\M^{\perp}) + I(\M'^{\perp})$ with the boolean cube is $I((\M \wedge \M')^{\perp})$. The image of $[I((\M \wedge \M')^{\perp})]$ under $\phi$ is $[\Sigma_{\M \wedge \M'}]$ in degree $\operatorname{crk}(\M \wedge \M')$. 
Therefore, it suffices to show that the images under $\phi$ of all of the other terms in the expression of $[I(\M^{\perp})+I(\M'^{\perp})]$ as a sum of the classes of independence polytopes of matroids are zero in degrees at least $\operatorname{crk}(\M) + \operatorname{crk}(\M')$.
Every other polytope appearing requires a nontrivial translation towards the origin to realize it as an independence polytope, since an independence polytope always contains the origin.
As the lattice distance from the origin of any vertex of $I(\M^{\perp}) + I(\M'^{\perp})$ is bounded by $\operatorname{crk}(\M) + \operatorname{crk}(\M')$, this means that, after translating one of these polytopes so that it is the independence polytope of a matroid, that matroid has rank at most $\operatorname{crk}(\M) + \operatorname{crk}(\M') - 1$.
Then the result follows from Proposition~\ref{prop:minkowski}.
\end{proof}

We showed in the discussion following Corollary~\ref{cor:ctop} that $[\Sigma_\M]$ restricts to $[\underline\Sigma_\M]$ on $\underline X_E$.  Hence, by restricting to $\underline X_E \subseteq X_E$, we obtain Corollary~\ref{cor:intersectbergman} from Theorem~\ref{thm:intersectionaugmented}.  We also deduce that if $\M, \M'$, and $\M \wedge \M'$ are loopless, then $\operatorname{crk}(\M) + \operatorname{crk}(\M') = \operatorname{crk}(\M\wedge \M')$.

\subsection{A Schubert basis}\label{subsec:schubert}

For a total order $<$ on $E$ and two subsets $I = \{i_1 < \cdots < i_r\}$ and $J = \{j_1 < \cdots <j_r\}$ of $E$ with same cardinality, let us say that $I\leq J$ if $i_k \leq j_k$ for all $k = 1, \ldots, r$.

\begin{definition}
A \textbf{Schubert matroid} on $E$ of rank $r$ is a matroid whose set of bases is
\[
\{B \subseteq E \mid |B| = r \text{ and }B \leq I \}
\]
for some total order $<$ on $E$ and a subset $I\subseteq E$ with $|I| = r$.
\end{definition}

Because $I\leq J$ if and only if $(E\setminus I) \geq (E\setminus J)$, the dual of a Schubert matroid is a Schubert matroid.
We note the following equivalent description of the bases of a Schubert matroid.

\begin{remark}\label{rem:schubertequiv}
Let $<$ be a total order on $E$, and $I = \{i_1 < \cdots < i_r\}$.  Define
\[
I_\text{jumps} = \{i_j \in I \mid j=r \text{ or there exists $e\in E$ such that $i_j < e < i_{j+1}$}\}.
\]
Writing $I_\text{jumps} = \{\ell_1< \cdots < \ell_k\}$, define a chain $F_1, \ldots, F_k$ of subsets of $E$ and positive integers $d_1, \ldots, d_k$ by
\[
F_j = \{e \in E \mid e\leq \ell_j\} \quad\text{and}\quad d_1+ \cdots + d_j = |F_j \cap I| \qquad\text{for $j = 1, \ldots, k$}.
\]
Note that by construction, we have $d_1 \leq |F_1|$ and $d_j < |F_j \setminus F_{j-1}|$ for all $j = 2, \ldots, k$.  The set $\{B\subseteq E \mid |B| = r \text{ and } B\leq I\}$ of the bases of the Schubert matroid associated to $<$ and $I$ then can be described equivalently as the set
\[
\{ B= \{b_1 < \cdots < b_r\} \subseteq E \mid \{b_1, \ldots, b_{d_1 + \cdots + d_j}\} \subseteq F_j \text{ for all $j = 1, \ldots, k$}\}.
\]
\end{remark}

Schubert matroids appear in the literature under various other guises such as nested matroids \cite{Ham17}, Bruhat interval polytopes \cite{TW15}, generalized Catalan matroids \cite{BdM06}, and shifted matroids \cite{Ard03}.

\begin{theorem}\label{thm:schubert}
The augmented Bergman classes of Schubert matroids on $E$ form a basis for $A^\bullet(X_E)$.
\end{theorem}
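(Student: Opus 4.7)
The plan is to reduce the statement to the Derksen--Fink theorem on a basis of the valuative group of matroids~\cite{DerksenFink}, using the isomorphism from Theorem~\ref{thm:valuativeiso}. First, Theorem~\ref{thm:valuativeiso} combined with Poincar\'e duality for the smooth projective variety $X_E$ gives a group isomorphism
\[
\bigoplus_{r=0}^n \mathrm{Val}_r(E) \overset\sim\to A^\bullet(X_E), \qquad \M \longmapsto [\Sigma_\M].
\]
Under this isomorphism, the desired basis statement becomes the claim that the Schubert matroids on $E$ form a $\ZZ$-basis of the total valuative group $\bigoplus_r \mathrm{Val}_r(E)$. Up to identifying our Schubert matroids (Remark~\ref{rem:schubertequiv}) with the basis elements constructed in~\cite{DerksenFink}, this is their main theorem, so invoking it concludes the proof.

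For a self-contained argument, I would work in two steps. First, match the counts: since $X_E$ is a smooth projective toric variety admitting a Bia\l{}ynicki--Birula affine paving, the rank of $A^\bullet(X_E)$ equals the number of maximal cones of $\Sigma_E$, which by Proposition~\ref{prop:stellafan} equals the number of compatible pairs $I\le\mathscr F$ with $|I|+|\mathscr F|=n$. I would then exhibit a bijection between such compatible pairs and Schubert matroids on $E$ by reading off the canonical data $(F_\bullet;d_\bullet)$ of Remark~\ref{rem:schubertequiv} directly from $(I,\mathscr F)$: the chain $\mathscr F$ yields the chain $F_\bullet$ (after adjoining a suitable terminal subset), while the sizes $|F_{j+1}\cap I|-|F_j\cap I|$ together with $|I|$ recover the integer sequence $d_\bullet$.

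Second, I would prove that the augmented Bergman classes of Schubert matroids generate $A^\bullet(X_E)$ as a $\ZZ$-module; once counts match, a generating set of the correct size in a free abelian group is automatically a basis. By Proposition~\ref{prop:val&ind}, this reduces to showing that the classes of independence polytopes of Schubert matroids generate the polytope algebra $\overline{\mathbb{I}}(\Sigma_E)$. I would use matroid polytope subdivisions: an arbitrary matroid polytope admits a subdivision by hyperplanes of the form $\{u_i = u_j\}$ into pieces that are translates of Schubert matroid polytopes, and by Lemma~\ref{lem:polymatspan} this subdivision yields a valuative expression as a signed sum of Schubert matroid classes. The main obstacle is orchestrating the subdivision to terminate with Schubert matroids and controlling the combinatorics of the inductive step---this is precisely the technical heart of~\cite{DerksenFink}, which is why in practice the cleanest route is to invoke their theorem rather than reprove it.
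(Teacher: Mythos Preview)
Your primary approach---invoking Theorem~\ref{thm:valuativeiso} together with the Derksen--Fink theorem---is logically correct and gives a valid proof of the statement. However, it runs in the opposite direction from the paper: the paper proves Theorem~\ref{thm:schubert} \emph{independently} of Derksen--Fink and then deduces Derksen--Fink as Corollary~\ref{cor:DerksenFink}. With your argument in place, that corollary would become circular, so the paper's stated goal of giving a geometric interpretation of Derksen--Fink would be lost.

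The paper's route goes through Lemma~\ref{lem:schubert}, which uses the Feichtner--Yuzvinsky presentation of $A^\bullet(X_E)$ (via the building-set description of $\Sigma_E$ in Proposition~\ref{prop:stellacoarsen}) to produce an explicit monomial basis $\{h_{F_1}^{d_1}\cdots h_{F_k}^{d_k}\}$ in the nef divisor classes $h_F = [\Sigma_{\mathrm{H}_F}]$, where $\mathrm{H}_F$ is the corank-$1$ matroid with unique circuit $F$. Theorem~\ref{thm:intersectionaugmented} then converts each basis monomial into the augmented Bergman class of a single matroid intersection $\mathrm{H}_{F_1}^{\wedge d_1}\wedge\cdots\wedge\mathrm{H}_{F_k}^{\wedge d_k}$, and one checks directly from Remark~\ref{rem:schubertequiv} (after dualizing to matroid unions of rank-$1$ matroids) that these are exactly the Schubert matroids. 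What this buys is an independent proof of Derksen--Fink and an explanation of \emph{why} Schubert matroids appear: they are precisely the matroids obtained by iterated intersection of the simplest corank-$1$ matroids, mirroring the monomial structure of the Chow ring. Your route is shorter if Derksen--Fink is taken as input, but---as you yourself concede---making it self-contained amounts to reproving Derksen--Fink, and your sketched bijection and subdivision arguments would need substantial work to carry that out.
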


We prepare the proof with the following lemma.

\begin{lemma}\label{lem:schubert}
For $\emptyset\subsetneq F \subseteq E$, denote by $h_F$ the divisor $D_{I(\U_{1,F} \oplus \U_{0,E\setminus F})}$ corresponding to $I(\U_{1,F} \oplus \U_{0,E\setminus F})$ under Proposition~\ref{prop:nefpolymatroid}.
Then, the set of monomials
\[
\left\{h_{F_1}^{d_1}\cdots h_{F_k}^{d_k} \ \middle| \ \emptyset \subsetneq F_1 \subsetneq \cdots \subsetneq F_k \subseteq E,\ d_1 \leq |F_1|,\ d_i < |F_{i}\setminus F_{i-1}|\ \forall i = 2, \ldots, k \right\}.
\]
form a basis for the Chow cohomology ring $A^\bullet(X_E)$.
\end{lemma}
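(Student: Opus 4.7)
The plan is to derive the lemma from two facts: (i) the cardinality of the claimed monomial set equals $\dim_{\ZZ} A^{\bullet}(X_E)$, and (ii) these monomials span $A^{\bullet}(X_E)$ as a $\ZZ$-module. Since $A^{\bullet}(X_E)$ is a free abelian group (being the Chow ring of a smooth projective toric variety), any spanning set of the correct cardinality is automatically a $\ZZ$-basis.

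For the count in (i), observe that $\dim_{\ZZ} A^{\bullet}(X_E)$ equals the number of maximal cones of $\Sigma_E$, which by Proposition~\ref{prop:stellafan} is the number of maximal compatible pairs $I \leq \mathscr{F}$. Organizing by $|I| = i$, one checks that in such a maximal pair necessarily $F_1 = I$ and $|F_j| = i + j - 1$ for $j = 1,\ldots,n-i$, yielding $\binom{n}{i}(n-i)! = n!/i!$ pairs for each $i$, so $\sum_{i=0}^n n!/i!$ in total. On the other hand, by Remark~\ref{rem:schubertequiv} the data $(F_{\bullet}, d_{\bullet})$ parameterizing the claimed monomials is in bijection with Schubert matroids on $E$, and a parallel count (equivalently, a direct bijection with the maximal compatible pairs above) shows that the number of Schubert matroids on $E$ is also $\sum_{i=0}^n n!/i!$.

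For the spanning in (ii), I would proceed by induction using the iterated blow-up description of Proposition~\ref{prop:iteratedblowup}(a): set $X^{(0)} = \PP^E$ and, for $1 \leq k \leq n-1$, let $X^{(k)}$ be obtained from $X^{(k-1)}$ by simultaneously blowing up the (disjoint) strict transforms of all coordinate subspaces $\PP(\kk^S) \subseteq \PP(\kk^E) \subseteq \PP^E$ with $|S| = k$, so $X^{(n-1)} = X_E$. On $X^{(k)}$, the divisor $h_F$ for $|F| \geq n - k$ is well-defined as the pullback of the hyperplane class under the morphism $X^{(k)} \to \PP^F$ that extends the rational projection $\PP^E \dashrightarrow \PP^F$, which becomes regular once the indeterminacy locus $\PP(\kk^{E \setminus F})$ is blown up at stage $|E \setminus F|$. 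The inductive claim is that $A^{\bullet}(X^{(k)})$ is spanned by the monomials from the lemma's set satisfying the extra condition $|F_1| \geq n - k$; the base case $k = 0$ is the standard basis $\{1, \alpha, \ldots, \alpha^n\}$ of $A^{\bullet}(\PP^E)$. For the inductive step, the standard blow-up formula applied to the disjoint smooth codimension-$(n-k+1)$ centers $Z_S \cong \underline{X}_S$ in $X^{(k-1)}$ gives the additive decomposition
\[
A^{\bullet}(X^{(k)}) = A^{\bullet}(X^{(k-1)}) \oplus \bigoplus_{|S|=k} \bigoplus_{j=1}^{n-k} A^{\bullet}(\underline{X}_S) \cdot [E_S]^{j}.
\]
The new monomials at stage $k$ are precisely those with $|F_1| = n - k$, i.e., $F_1 = E \setminus S$ for some $|S| = k$, with $d_1 \in \{1,\ldots,n-k\}$; the $n-k$ exponent values match the $n-k$ values of $j$ via the key identity $h_{E\setminus S} = \alpha - [E_S]$ holding on $X^{(k)}$ (so $h_{E\setminus S}^{d_1}$ has leading term $(-[E_S])^{d_1}$), and the suffix monomials $h_{F_2}^{d_2} \cdots h_{F_k}^{d_k}$ should account for the $A^{\bullet}(\underline{X}_S)$-factors. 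The main obstacle is making this last correspondence precise: one needs an analogous (non-augmented) Schubert-style basis for the Chow ring $A^{\bullet}(\underline{X}_S)$ of the permutohedral variety, which I would establish by a parallel induction on the simpler iterated permutohedral blow-up $\underline{X}_S \to \PP^{|S|-1}$ (or alternatively extract from standard results on Chow rings of wonderful compactifications).
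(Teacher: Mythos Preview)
Your approach is genuinely different from the paper's. The paper does not count or induct at all: it invokes Proposition~\ref{prop:stellacoarsen} to identify $\Sigma_E$ with the fan of the building set $\cG = \{S\cup 0 \mid S\subseteq E\}\cup E$ on $\widetilde E$, then quotes Feichtner--Yuzvinsky \cite[Corollaries~1 and~2]{FY04} to obtain a ready-made monomial basis of $A^\bullet(X_E)$ in the divisor classes $z_{F\cup 0}$. A single upper-triangular change of variables $z_{F\cup 0}\rightsquigarrow \widetilde h_F := -\sum_{G\supseteq F} z_{G\cup 0}$ converts this into the claimed basis, and a short computation using Proposition~\ref{prop:nefpolymatroid} identifies $\widetilde h_F$ with $h_F$. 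The entire inductive bookkeeping you are setting up is thus absorbed into the cited results on building sets.

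Your strategy is reasonable, and the counting in (i) is fine. But part (ii) is not yet a proof, for two reasons. First, the gap you flag is real and not minor: to make the correspondence between the suffix monomials $h_{F_2}^{d_2}\cdots h_{F_k}^{d_k}$ (with each $F_i\supsetneq F_1=E\setminus S$) and a basis of $A^\bullet(\underline X_S)$, you must identify the restrictions $h_{F_i}|_{Z_S}$ with the appropriate nef divisor classes on $\underline X_S$ and then prove the exact permutohedral analogue of the lemma. That analogue is most cleanly obtained from \cite{FY04} applied to the braid building set---at which point the paper's direct application of \cite{FY04} to the stellahedral building set is strictly shorter. Second, the triangularity step is underspecified: from $h_{E\setminus S}=\alpha-[E_S]$ you only get that $h_{E\setminus S}^{d_1}$ differs from $(-[E_S])^{d_1}$ by terms involving $\alpha$, and you still need to argue carefully that the resulting transition matrix between your monomials and the blow-up basis $\{[E_S]^j\cdot(\text{basis of }A^\bullet(\underline X_S))\}$ is unitriangular over $\ZZ$ in a suitable ordering, not merely over $\QQ$.
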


\begin{proof}
Let $\cG = \{S\cup 0 \mid S\subseteq E\} \cup E$ be the building set on $\widetilde E = E \sqcup \{0\}$ in Proposition~\ref{prop:stellacoarsen}, and let $\Sigma_{\mathcal{G}}$ denote the corresponding fan.  Then, \cite[Corollary 2]{FY04} states that the Chow cohomology ring of $\Sigma_\cG$ has a presentation
\[
A^\bullet(\Sigma_\cG) = \frac{\ZZ[z_X \mid X\in \cG]}{\left\langle z_{X_1}\cdots z_{X_k} \mid \{X_1, \ldots, X_k\} \text{ not a face of $\mathcal N$}\right\rangle +\left \langle \sum_{X \ni i} z_X \mid i \in \widetilde E \right\rangle},
\]
and moreover, \cite[Corollary 1]{FY04} states that the set of monomials
\[
\left\{z_{F_1\cup 0}^{d_1}\cdots z_{F_k\cup 0}^{d_k} \ \middle| \ \emptyset \subsetneq F_1 \subsetneq \cdots \subsetneq F_k \subseteq E,\ d_1 \leq |F_1|,\ d_i < |F_{i}\setminus F_{i-1}|\ \forall i = 2, \ldots, k \right\}
\]
form a basis for $A^\bullet(\Sigma_\cG)$.  We modify this basis by performing an upper triangular linear change of variables as follows.
For $\emptyset\subsetneq F \subseteq E$, let
\[
\widetilde h_F = \sum_{F\subseteq G \subseteq E} -z_{G\cup 0}.
\]
When $\cG$ is given any total order that refines the partial order by inclusion, replacing $z_{F\cup 0}$ by $\widetilde h_F$ is an upper triangular linear change of variables.  Hence, we have that 
\[
\left\{\widetilde h_{F_1}^{d_1}\cdots \widetilde h_{F_k}^{d_k} \ \middle| \ \emptyset \subsetneq F_1 \subsetneq \cdots \subsetneq F_k \subseteq E,\ d_1 \leq |F_1|,\ d_i < |F_{i}\setminus F_{i-1}|\ \forall i = 2, \ldots, k \right\}
\]
is a basis of $A^\bullet(\Sigma_\cG)$.
It remains only to verify that for any $\emptyset\subsetneq F \subseteq E$, the element $\widetilde h_F \in A^1(\Sigma_\cG)$ corresponds to $h_F \in A^\bullet(X_E)$ under the isomorphism $p\colon \Sigma_\cG \to \Sigma_E$ of Proposition~\ref{prop:stellacoarsen}.\\
\indent In the presentation of $A^1(\Sigma_\cG)$ above, for $\emptyset\subseteq S \subsetneq E$, the variable $z_{S\cup 0}$ represents the torus-invariant divisor associated to the ray $\operatorname{cone}(\overline \be_{S\cup 0})$ of $\Sigma_\cG$, which under the isomorphism $p\colon \Sigma_\cG \to \Sigma_E$ in Proposition~\ref{prop:stellacoarsen} maps to the ray $\rho_{S}$ of $\Sigma_E$.  Moreover, it follows from the linear relation $\sum_{X\ni 0}z_X = 0$ in $A^\bullet(\Sigma_\cG)$ that the expression $\sum_{F\subseteq G \subseteq E} -z_{G\cup 0}$ for $\widetilde h_F$ can be rewritten as
\[
\widetilde h_F= \sum_{\substack{\emptyset\subseteq S \subsetneq E\\ F\not\subseteq S}} z_{S\cup 0}.
\]
Hence, the isomorphism $p\colon \Sigma_\cG \to \Sigma_E$ maps $\widetilde h_F$ to the element 
\[
\sum_{\substack{\emptyset\subseteq S \subsetneq E\\ F\not\subseteq S}} [D_S] \in A^1(X_E),
\]
which by Proposition~\ref{prop:nefpolymatroid} corresponds to $I(\U_{1,F}\oplus \U_{0,E\setminus F})$ because the rank function $\rk$ of the matroid $\U_{1,F}\oplus \U_{0,E\setminus F}$ is given by $\rk(E\setminus S) = 1$ if $F\not\subseteq S$ and 0 otherwise.
\end{proof}

For matroids $\M$ and $\M'$ on $E$, there is a dual notion to matroid intersection, \emph{matroid union}, defined by  ${\M}\vee \M' := (\M^{\perp} \wedge \M'^{\perp})^{\perp}$. The bases of $\M \vee \M'$ are the maximal elements among the unions of the basis of $\M$ and $\M'$.

\begin{proof}[Proof of Theorem~\ref{thm:schubert}]
For $\emptyset\subsetneq F \subseteq E$, let $\mathrm{H}_F$ be the corank 1 matroid whose unique circuit is $F$.  Equivalently, its dual matroid $\mathrm{H}_F^\perp$ is the matroid $\U_{1,F} \oplus \U_{0,E\setminus F}$.  We note from Proposition~\ref{prop:firstChern} and Corollary~\ref{cor:ctop} that
\[
h_F = [D_{I(\mathrm{H}_F^\perp)}] = c_1(\cQ_{\mathrm{H}_F}) = [\Sigma_{\mathrm{H}_F}].
\]
Now, applying Theorem~\ref{thm:intersectionaugmented} to Lemma~\ref{lem:schubert} yields the theorem once we show the following: For an element $h_{F_1}^{d_1}\cdots h_{F_k}^{d_k}$ in the monomial basis of $A^\bullet(X_E)$ given in Lemma~\ref{lem:schubert}, the matroid intersection
\[
\H_{F_1}^{\wedge d_1} \wedge \cdots \wedge \H_{F_k}^{\wedge d_k} = 
\underbrace{\H_{F_1} \wedge\cdots\wedge \H_{F_1}}_{d_1\ \textnormal{times}} \wedge \cdots \wedge  \underbrace{\H_{F_k} \wedge\cdots\wedge \H_{F_k}}_{d_k\ \textnormal{times}}
\]
is a Schubert matroid of corank $d_1 + \cdots + d_k$, and every Schubert matroid arises in this way.
Since the dual of a Schubert matroid is a Schubert matroid, we may instead prove the dual statement that the matroid union
\[
\underbrace{\H_{F_1}^\perp \vee\cdots\vee \H_{F_1}^\perp}_{d_1\ \textnormal{times}} \vee \cdots \vee  \underbrace{\H_{F_k}^\perp \vee\cdots\vee \H_{F_k}^\perp}_{d_k\ \textnormal{times}}
\]
is a Schubert matroid, and that every Schubert matroid of rank $d_1 + \cdots + d_k$ arises in this way.\\
\indent Since every matroid in the above matroid union is of rank 1, a basis of the matroid union is obtained by selecting $d_i$ elements of $F_i$ for each $i = 1, \ldots, k$ such that the union of all the selected elements has as large cardinality as possible.
By Remark~\ref{rem:schubertequiv}, we see that such matroid union are exactly the Schubert matroids of rank $d_1 + \cdots + d_k$.
\end{proof}

Combining Theorem~\ref{thm:valuativeiso} with Theorem~\ref{thm:schubert} recovers the following result of Derksen and Fink \cite[Theorem 5.4]{DerksenFink}.

\begin{corollary}\label{cor:DerksenFink}
Schubert matroids on $E$ of rank $r$ form a basis for $\operatorname{Val}_r(E)$.
\end{corollary}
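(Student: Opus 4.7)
The plan is to deduce the corollary immediately by combining Theorem~\ref{thm:valuativeiso} with Theorem~\ref{thm:schubert}, together with the observation that the rank grading on matroids matches the codimension grading on $A^\bullet(X_E)$ under the augmented Bergman class map. Concretely, Theorem~\ref{thm:valuativeiso} provides an isomorphism $\operatorname{Val}_r(E) \overset\sim\to H_{2r}(X_E,\mathbb{Z})$ sending $\M \mapsto [\Sigma_\M]$, and under the usual Poincar\'e duality identification $H_{2r}(X_E,\mathbb{Z}) \simeq A^{n-r}(X_E)$, the augmented Bergman class of a rank $r$ matroid lies in the degree $n-r$ part of $A^\bullet(X_E)$ (by Corollary~\ref{cor:ctop}, since $[\Sigma_\M] = c_{n-r}(\mathcal{Q}_\M)$).

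Next I would invoke Theorem~\ref{thm:schubert}, which asserts that the augmented Bergman classes of all Schubert matroids on $E$ form a $\mathbb{Z}$-basis for $A^\bullet(X_E)$. Because $A^\bullet(X_E)$ is graded, this basis decomposes into bases of the graded pieces: for each $r$, the Schubert matroids on $E$ of rank $r$ contribute precisely a basis for $A^{n-r}(X_E) \simeq H_{2r}(X_E,\mathbb{Z})$. Pulling this basis back along the isomorphism of Theorem~\ref{thm:valuativeiso}, the Schubert matroids on $E$ of rank $r$ form a basis of $\operatorname{Val}_r(E)$.

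There is essentially no obstacle once Theorems~\ref{thm:valuativeiso} and \ref{thm:schubert} are in hand; the only subtlety is the bookkeeping to make sure the rank $r$ part of the Schubert matroid basis maps into $A^{n-r}(X_E)$, which is immediate from Corollary~\ref{cor:ctop}. Thus the corollary follows as a direct consequence, and also recovers Derksen--Fink's result \cite[Theorem 5.4]{DerksenFink}.
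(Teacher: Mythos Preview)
Your proposal is correct and takes essentially the same approach as the paper, which simply states that the corollary follows by combining Theorem~\ref{thm:valuativeiso} with Theorem~\ref{thm:schubert}. Your added remark that Corollary~\ref{cor:ctop} ensures the rank-$r$ part of the basis lands in $A^{n-r}(X_E)$ makes the grading bookkeeping explicit, but this is implicit in the paper's one-line justification.
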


Because Schubert matroids are realizable over any infinite field, combining Corollary~\ref{cor:ctop} and Corollary~\ref{cor:phizetaontauto} with Theorem~\ref{thm:schubert} also yields the following.

\begin{corollary}\label{cor:Lspan}
The $K$-classes $[\mathcal O_{W_L}]$ of augmented wonderful varieties span $K(X_E)$ as an abelian group.
\end{corollary}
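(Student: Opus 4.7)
The proof will be a direct combination of three previously established facts, so no new machinery needs to be introduced. My plan is to exhibit an explicit basis of $K(X_E)$ among the classes $[\mathcal{O}_{W_L}]$, by transporting the Schubert basis of $A^\bullet(X_E)$ through the exceptional isomorphism $\zeta$.

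First I would invoke Corollary~\ref{cor:phizetaontauto}(2), which says that for any linear subspace $L \subseteq \kk^E$ realizing a matroid $\M$, we have $\zeta([\mathcal{O}_{W_L}]) = [W_L]$ in $A^\bullet(X_E)$. Combined with Corollary~\ref{cor:ctop}(3), this gives the identity
\[
\zeta([\mathcal{O}_{W_L}]) = [\Sigma_\M] \in A^\bullet(X_E).
\]
Next, since the ground field $\kk$ is algebraically closed and hence infinite, every Schubert matroid on $E$ admits a realization over $\kk$ (as noted just before the statement of the corollary). For each Schubert matroid $\M$ on $E$, choose such a realization $L_\M \subseteq \kk^E$.

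By Theorem~\ref{thm:schubert}, the augmented Bergman classes $\{[\Sigma_\M]\}_{\M \text{ Schubert}}$ form a $\ZZ$-basis of $A^\bullet(X_E)$. Applying the displayed identity to each $L_\M$, we see that the classes $\zeta([\mathcal{O}_{W_{L_\M}}])$ form a basis of $A^\bullet(X_E)$. Since $\zeta \colon K(X_E) \overset\sim\to A^\bullet(X_E)$ is an isomorphism of abelian groups by Theorem~\ref{thm:exceptIsom}, the preimages $[\mathcal{O}_{W_{L_\M}}]$ form a basis of $K(X_E)$ as an abelian group. In particular, the full collection $\{[\mathcal{O}_{W_L}] : L \subseteq \kk^E\}$ spans $K(X_E)$, as claimed.

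Since all the work has already been done in Theorem~\ref{thm:schubert} (constructing the Schubert basis on the Chow side) and in Theorem~\ref{thm:exceptIsom} together with Corollary~\ref{cor:phizetaontauto} (identifying $\zeta$ on augmented wonderful structure sheaves), there is no genuine obstacle; the only point worth checking carefully is that every Schubert matroid is realizable over $\kk$, which follows from $\kk$ being infinite and the explicit description of Schubert matroids (e.g., from Remark~\ref{rem:schubertequiv}).
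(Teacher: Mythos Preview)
Your proof is correct and follows essentially the same approach as the paper, which simply notes that combining Corollary~\ref{cor:ctop}, Corollary~\ref{cor:phizetaontauto}, and Theorem~\ref{thm:schubert} (together with the realizability of Schubert matroids over infinite fields) yields the result.
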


\section{Numerical properties}

\subsection{The Hirzebruch--Riemann--Roch-type formulas}\label{subsec:fakeHRR}

We now prove Theorem~\ref{thm:fakeHRR} using Corollary~\ref{cor:Lspan}.  While one can prove Theorem~\ref{thm:fakeHRR} by mimicking the proof of \cite[Theorem D]{BEST21}, we present a proof that avoids the use of the Atiyah--Bott localization formula.  Recall the notation $\alpha = \pi_E^* c_1(\mathcal O_{\PP^E}(1))$.

\begin{proof}[Proof of Theorem~\ref{thm:fakeHRR}]
We first verify the formula involving the $\zeta$ map, i.e.,\ that 
\[
\chi \big([\mathcal E] \big) = \int \zeta\big([\mathcal E] \big)\cdot (1+\alpha + \cdots + \alpha^{n})
\]
for any $[\mathcal E]\in K(X_E)$.  Corollary~\ref{cor:Lspan} implies that it suffices to show this for the case $[\mathcal E] = [\mathcal O_{W_L}]$ for any linear subspace $L\subseteq \kk^E$.  Now, we have $\chi([\mathcal O_{W_L}]) = 1$ since $W_L$ is obtained from a projective space by a sequence of blow-ups along smooth centers.  On the other hand, using Corollary~\ref{cor:phizetaontauto} and applying the projection formula to $\pi_E$ gives that $$\int_{X_E} [W_L] \cdot (1 + \alpha + \dotsb + \alpha^n) = \int_{\mathbb{P}^E} c_1(\mathcal{O}_{\mathbb{P}^E}(1))^{\dim L} \cdot (1 + c_1(\mathcal{O}_{\mathbb{P}^E}(1)) + \dotsb + c_1(\mathcal{O}_{\mathbb{P}^E}(1))^{n}) = 1.$$  
\indent Having established the formula involving $\zeta$, we now use Serre duality to derive the formula involving $\phi$, i.e.,\
\[
\chi \big([\mathcal E] \big) = \int \phi\big([\mathcal E] \big)\cdot c \big(\bigoplus_{i\in E}\pi_i^*\mathcal O_{\PP^1}(1)\big).
\]
First, by \cite[Theorem 8.1.6]{CLS11}, the anti-canonical divisor of $X_E$ is the $\sum_{S\subsetneq E} D_S + \sum_{i\in E} D_i$, where $D_S$ denotes the torus-invariant divisor of the ray $\rho_S$, and $D_i$ that of the ray $\rho_i$ in $\Sigma_E$.  By Proposition~\ref{prop:nefpolymatroid}, one checks that $\sum_{S\subsetneq E} D_S = D_{I(\mathrm \U_{1,E})}$ and $ \sum_{i\in E} D_i = D_{I(\mathrm \U_{n,E})}$.  In summary, we have that the anti-canonical bundle $\omega_{X_E}^\vee$ of $X_E$ is
\[
\omega_{X_E}^\vee = \mathcal O_{X_E}(D_{I(\mathrm \U_{1,E})} + D_{I(\mathrm \U_{n,E})}).
\]
Corollary~\ref{cor:phizetaontauto}, in the form of Example~\ref{eg:alphays}, thus gives $\phi([\omega_{X_E}^\vee]) = (1+\alpha)\cdot c \big(\bigoplus_{i\in E}\pi_i^*\mathcal O_{\PP^1}(1)\big)$.
Applying Serre duality, along with the definition that $\zeta = D_A \circ \phi \circ D_K$, we conclude
\begin{align*}
\chi([\mathcal E]) &= (-1)^{n} \chi([\mathcal E]^\vee\cdot[\omega_{X_E}])\\
&= (-1)^{n} \int_{X_E} \zeta\big([\mathcal E]^\vee\cdot[\omega_{X_E}] \big)\cdot (1+\alpha + \cdots + \alpha^{n})\\
&= (-1)^{n} \int_{X_E} D_A\big(\phi([\mathcal E]) \cdot \phi([\omega_{X_E}^\vee])\big) \cdot (1+\alpha + \cdots + \alpha^{n})\\
&= (-1)^{n} \int_{X_E} D_A\Big(\phi([\mathcal E]) \cdot (1+\alpha) \cdot c \big(\bigoplus_{i\in E}\pi_i^*\mathcal O_{\PP^1}(1)\big) \Big) \cdot (1+\alpha + \cdots + \alpha^{n})\\
&= (-1)^{n} \int_{X_E} D_A\Big(\phi([\mathcal E]) \cdot c \big(\bigoplus_{i\in E}\pi_i^*\mathcal O_{\PP^1}(1)\big) \Big)\\
&= \int_{X_E} \phi([\mathcal E]) \cdot c \big(\bigoplus_{i\in E}\pi_i^*\mathcal O_{\PP^1}(1)\big),
\end{align*}
as desired.
\end{proof}

\subsection{Tutte polynomial formulas}
We show that two specializations of the Tutte polynomial arise as volume polynomials of augmented tautological classes. The first is the rank-generating function of a matroid, i.e., $T_\M(u + 1, v + 1)$. This computation does not show that the rank-generating function has any log-concavity property because it involves the Chern class of $[\mathcal{S}_\M]$, and Proposition~\ref{prop:minkowski} shows that $c(\mathcal{S}_\M)$ is rarely nef or anti-nef.
We also compute the intersection numbers of a second set of classes, which gives a more complicated specialization of the Tutte polynomial. This computation can be used to show that the result is Lorentzian and therefore has log-concavity properties.  Recall the notation that $y_i = \pi_i^*(c_1(\mathcal{O}_{\mathbb{P}^1}(1)))$ for $i\in E$, and let $u^I = \prod_{i \in I} u_i$ for $I \subseteq E$. 

\begin{theorem}\label{thm:SQintersect}
Let $\M$ be a matroid on $E$ of rank $r$. For $I\subseteq E$, we have
\[
\int_{X_E} c(\mathcal S_\M,z) \cdot w^{n-r} \cdot c(\mathcal Q_\M,w^{-1})  \cdot \prod_{i \in I} y_i = z^{r - \operatorname{rk}_\M(I)} w^{|I| - \operatorname{rk}_\M(I)}.
\]
In particular, summing over all $I \subseteq E$, we have that
\[
\int_{X_E} c(\mathcal S_\M,z) \cdot w^{n-r} \cdot c(\mathcal Q_\M,w^{-1}) \cdot \prod_{i = 1}^{n} (1 + y_i u_i) = \sum_{I \subseteq E} z^{r-\operatorname{rk}_\M(I)}w^{|I| -\operatorname{rk}_\M(I)} u^I.
\]
\end{theorem}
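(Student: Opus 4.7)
The plan is to reduce the identity to the case $I = \emptyset$ by restricting to the torus-invariant subvariety $Z_I := Z_{I \le \emptyset} \simeq X_{E \setminus I}$, and to handle the case $I = \emptyset$ using the short exact sequence relating $\cS_\M$ and $\cQ_\M$.

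First I would establish $[\cS_\M] + [\cQ_\M] = [\bigoplus_{i \in E} \pi_i^* \mathcal{O}_{\PP^1}(1)]$ in $K(X_E)$: this is immediate upon summing the two localization formulas in Proposition~\ref{prop:Kclasses}, since $B_{\mathscr F/I}(\M/I) \sqcup B^c_{\mathscr F/I}(\M/I) = E \setminus I$. Passing to total Chern polynomials then yields
\[
c(\cS_\M, z) \cdot c(\cQ_\M, z) = \prod_{i \in E}(1 + y_i z) \quad \text{in } A^\bullet(X_E)[z].
\]
Those same localization formulas show that at each $T$-fixed point, $[\cS_\M]$ is an effective sum of $r$ characters and $[\cQ_\M]$ is an effective sum of $n - r$ characters, so $c_j(\cS_\M) = 0$ for $j > r$ and $c_k(\cQ_\M) = 0$ for $k > n - r$. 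Consequently the coefficient of $z^n$ in the displayed identity forces $c_r(\cS_\M) \cdot c_{n-r}(\cQ_\M) = \prod_{i \in E} y_i$.

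For $I = \emptyset$, the same rank bounds collapse $\int_{X_E} c(\cS_\M, z) \cdot w^{n-r} \cdot c(\cQ_\M, w^{-1})$ to $z^r \int_{X_E} c_r(\cS_\M) \cdot c_{n-r}(\cQ_\M) = z^r \int_{X_E} \prod_i y_i$. Since $\prod_i y_i = \pi_{1^E}^*[\mathrm{pt}]$ and $\pi_{1^E} \colon X_E \to (\PP^1)^E$ is birational, the projection formula gives $\int_{X_E} \prod_i y_i = 1$, producing the required $z^r$.

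For general $I$, I would first verify $\prod_{i \in I} y_i = [Z_I]$: each $y_i$ coincides with $[D_{\rho_i}]$ for $\rho_i = \operatorname{cone}(\be_i)$ (since $\rho_i$ is the unique ray of $\Sigma_E$ with positive $i$-th coordinate, $\pi_i^*[\{0\}] = [D_{\rho_i}]$), and because $\{\rho_i : i \in I\}$ spans the cone $\sigma_{I \le \emptyset}$ of the smooth fan $\Sigma_E$, the standard toric intersection formula gives $\prod_{i \in I} [D_{\rho_i}] = [Z_{\sigma_{I \le \emptyset}}] = [Z_I]$. By Corollary~\ref{cor:stars}, $Z_I \simeq X_{E \setminus I}$, and the concluding assertion of Proposition~\ref{prop:restrict} (applied with $\mathscr{F} = \emptyset$) gives $c(\cS_\M)|_{Z_I} = c(\cS_{\M/I})$ and $c(\cQ_\M)|_{Z_I} = c(\cQ_{\M/I})$. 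The projection formula then reduces the integral to
\[
\int_{X_{E \setminus I}} c(\cS_{\M/I}, z) \cdot w^{n-r} \cdot c(\cQ_{\M/I}, w^{-1}).
\]
Writing $w^{n-r} = w^{|I| - \rk_\M(I)} \cdot w^{(n-|I|) - (r - \rk_\M(I))}$ and applying the already-established $I = \emptyset$ case to the matroid $\M/I$ (of rank $r - \rk_\M(I)$ on ground set $E \setminus I$) yields the required $z^{r - \rk_\M(I)} w^{|I| - \rk_\M(I)}$. No step presents substantive difficulty; each reduction is a direct application of the cited structural results, with the main content concentrated in the $K$-theoretic identity that extends the short exact sequence of realizable bundles to all matroids.
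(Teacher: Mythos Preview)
Your proposal is correct and follows essentially the same approach as the paper: reduce via Proposition~\ref{prop:restrict} to the contraction $\M/I$ on $X_{E\setminus I}$, then handle the base case using $[\cS_\M]+[\cQ_\M]=[\bigoplus_i \pi_i^*\mathcal O_{\PP^1}(1)]$ together with the rank bounds to force the only surviving term to be $c_r(\cS_\M)c_{n-r}(\cQ_\M)=\prod_i y_i$. The paper's own proof is a terse two-sentence version of exactly this, whereas you have spelled out the intermediate steps (the identification $\prod_{i\in I}y_i=[Z_I]$ and the computation $\int_{X_E}\prod_i y_i=1$) more explicitly.
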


\begin{proof}
By Proposition~\ref{prop:restrict}, the restriction of the Chern classes of augemented tautological classes to $X_{{E \setminus I}}$ are the Chern classes of the augmented tautological classes of the contraction $\M/I$.  Now one notes that $\int_{X_E} c(\mathcal S_\M,z) \cdot c(\mathcal Q_\M, w) = \int_{X_E} c_r(\mathcal S_\M) \cdot z^r \cdot c_{n-r}(\mathcal Q_\M) \cdot w^{n-r} = z^rw^{n-r}$ since $[\mathcal S_\M] + [\mathcal Q_\M] = [\bigoplus_{i \in E} \pi_i^* \mathcal{O}_{\mathbb{P}^1}(1)]$.
\end{proof}

Theorem~\ref{thm:tutterank} is immediate from Theorem~\ref{thm:SQintersect}.
We now prove Theorem~\ref{thm:tutteintersection}. The proof uses the Hirzebruch--Riemann--Roch-type formulas for both $\zeta$ and $\phi$ to obtain the equality of certain intersection numbers.
We first state a combinatorial lemma that will be used twice in the proof of Theorem~\ref{thm:tutteintersection}.

\begin{lemma}\label{lem:tuttecontraction}
Let $\M$ be a matroid of rank $r$ on $E$. Then
$$\sum_{I \subseteq E} a^{|I|} b^{r - \operatorname{rk}_\M(I)} c^{n - |I| - r + \operatorname{rk}_\M(I)} T_{\M/I}\left(\frac{d}{b}, \frac{b+c}{c}\right) = (a + b)^r c^{n-r} T_\M\left(\frac{a + d}{a+b}, \frac{a + b + c}{c}\right).$$
\end{lemma}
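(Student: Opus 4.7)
The plan is to verify the identity by a direct expansion of both sides using the definition
\[
T_\N(x,y) = \sum_{S\subseteq E(\N)} (x-1)^{\rk_\N(E(\N)) - \rk_\N(S)}(y-1)^{|S| - \rk_\N(S)},
\]
together with the standard contraction-rank formula $\rk_{\M/I}(S) = \rk_\M(S \cup I) - \rk_\M(I)$ for $S \subseteq E\setminus I$, and then matching the two resulting sums term-by-term. I expect the only ``obstacle'' to be careful bookkeeping of exponents, so I sketch the regrouping that does the work.

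First I would expand the inner Tutte polynomial on the left-hand side. Using $\rk_{\M/I}(E\setminus I) = r - \rk_\M(I)$ and the contraction-rank formula, one gets
\[
T_{\M/I}\!\left(\tfrac{d}{b},\tfrac{b+c}{c}\right) = \sum_{S\subseteq E\setminus I} \left(\tfrac{d-b}{b}\right)^{r - \rk_\M(S\cup I)}\left(\tfrac{b}{c}\right)^{|S| - \rk_\M(S\cup I) + \rk_\M(I)}.
\]
Multiplying by $a^{|I|} b^{r - \rk_\M(I)} c^{n - |I| - r + \rk_\M(I)}$ and collecting powers of $b$ and $c$, the exponent of $b$ telescopes to $|S|$ and the exponent of $c$ collapses to $n - |I\cup S| - r + \rk_\M(I\cup S)$ (using that $I$ and $S$ are disjoint, so $|I|+|S|=|I\cup S|$). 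Thus the left-hand side becomes
\[
\sum_{I\subseteq E}\sum_{S\subseteq E\setminus I} a^{|I|}\, b^{|S|}\, (d-b)^{r - \rk_\M(I\cup S)}\, c^{n - |I\cup S| - r + \rk_\M(I\cup S)}.
\]

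Next I would reindex by $T = I\sqcup S$. For each $T\subseteq E$, the pairs $(I,S)$ with $I\cap S=\emptyset$ and $I\cup S = T$ are exactly $(I, T\setminus I)$ for $I\subseteq T$, so the inner double sum collapses via the binomial identity $\sum_{I\subseteq T} a^{|I|} b^{|T\setminus I|} = (a+b)^{|T|}$. Hence the left-hand side equals
\[
\sum_{T\subseteq E} (a+b)^{|T|}\, (d-b)^{r - \rk_\M(T)}\, c^{n - |T| - r + \rk_\M(T)}.
\]

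Finally I would expand the right-hand side directly:
\[
(a+b)^r c^{n-r} \sum_{T\subseteq E} \left(\tfrac{d-b}{a+b}\right)^{r - \rk_\M(T)} \left(\tfrac{a+b}{c}\right)^{|T| - \rk_\M(T)},
\]
and collect powers of $(a+b)$ and $c$: the exponent of $(a+b)$ becomes $r - (r-\rk_\M(T)) + (|T|-\rk_\M(T)) = |T|$, while the exponent of $c$ becomes $n - r - |T| + \rk_\M(T)$. This matches the expression obtained for the left-hand side term-by-term, completing the proof.
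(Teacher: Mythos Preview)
Your proof is correct and follows essentially the same approach as the paper: both expand $T_{\M/I}$ via the rank-generating function, use the contraction-rank identity $\rk_{\M/I}(S)=\rk_\M(S\cup I)-\rk_\M(I)$, reindex the double sum by the union $T=I\sqcup S$ (the paper writes this as $\sum_{I\subseteq J\subseteq E}$), and then collapse the inner sum via the binomial theorem before matching against the expansion of the right-hand side.
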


\begin{proof}
Using the rank generating function for the Tutte polynomial, we compute
\begin{equation*}
\begin{split}
&\sum_{I \subseteq E} a^{|I|} b^{r - \operatorname{rk}_\M(I)} c^{n - |I| - r + \operatorname{rk}_\M(I)} T_{\M/I}\left(\frac{d}{b}, \frac{b+c}{c}\right) \\
&= \sum_{I \subseteq E} a^{|I|} b^{r - \operatorname{rk}_\M(I)} c^{n - |I| - r + \operatorname{rk}_\M(I)} \sum_{J \supseteq I} \left(\frac{d - b}{b} \right)^{r - \operatorname{rk}_\M(J)} \left(\frac{b}{c} \right)^{|J| - |I| - \operatorname{rk}_\M(J) + \operatorname{rk}_\M(I)} \\
&= \sum_{I \subseteq J \subseteq E} a^{|I|} b^{|J| - |I|} c^{n - r - |J| + \operatorname{rk}_\M(J)} (d-b)^{r - \operatorname{rk}_\M(J)} \\
&= \sum_{J \subseteq E} b^{|J|} c^{n - r - |J| + \operatorname{rk}_\M(J) } (d-b)^{r - \operatorname{rk}_\M(J)} \sum_{I \subseteq J} a^{|I|}b^{-|I|} \\
& = \sum_{J \subseteq E} b^{|J|} c^{n - r - |J| + \operatorname{rk}_\M(J) } (d-b)^{r - \operatorname{rk}_\M(J)} \left(\frac{a + b}{b} \right)^{|J|} \\
&= (a + b)^{r} c^{n-r} \sum_{J \subseteq E} \left( \frac{d -b}{a + b}\right)^{r - \operatorname{rk}_\M(J)} \left(\frac{a + b}{c} \right)^{|J| - \operatorname{rk}_\M(J)} \\
&= (a + b)^r c^{n-r} T_\M\left(\frac{a + d}{a+b}, \frac{a + b + c}{c}\right),
\end{split}
\end{equation*}
as desired.
\end{proof}

\begin{proof}[Proof of Theorem~\ref{thm:tutteintersection}]
Note that $s(\pi_E^* \mathcal{O}_{\mathbb{P}^E}(-1), x) = 1 + \alpha x + \alpha^2 x^2 + \dotsb$.  We prove the result in three steps.\\
\\
\emph{Step 1}: We show that
\begin{equation}\label{eq:step1}
\int_{X_E} s(\mathcal Q_\M^\vee,z) \cdot c(\mathcal Q_\M,w) = z^r w^{n-r} T_\M(0, 1 +  \textstyle\frac{z}{w}).
\end{equation}
As $[\mathcal{S}_\M] + [\mathcal{Q}_\M] = [\bigoplus_{i \in E} \pi_i^* \mathcal{O}_{\mathbb{P}^1}(1)]$, we have $s(\mathcal Q_\M^\vee, z) =  c(\bigoplus_{i \in E} \pi_i^* \mathcal O(-1), z)^{-1}\cdot c(\mathcal S_\M^\vee, z) =  c(\bigoplus_{i \in E} \pi_i^*\mathcal{O}_{\mathbb{P}^1}(1),z) \cdot c(\mathcal S_\M^\vee, z)$.  We compute

\begin{equation*}
\begin{split}
\int_{X_E} s(\mathcal Q_\M^\vee,z) \cdot c(\mathcal Q_\M,w) &= \int_{X_E} c(\bigoplus_{i \in E} \pi_i^* \mathcal{O}_{\mathbb{P}^1}(1), z) \cdot c(\mathcal S_\M^\vee,z)  \cdot c(\mathcal Q_\M,w)\\
&= \int_{X_E} \sum_{I \subseteq E} (\prod_{i\in I}y_i) \cdot z^{|I|} \cdot c(\mathcal S_\M,-z) \cdot c(\mathcal Q_\M,w)\\
&= \sum_{I\subseteq E} z^{|I|} \int_{X_{{E\setminus I}}} c(\mathcal S_{\M/I},-z) \cdot c(\mathcal Q_{\M/I},w)\\
&= \sum_{I\subseteq E} z^{|I|} (-z)^{r-\operatorname{rk}_\M(I)} w^{n-|I| - (r-\operatorname{rk}_\M(I))}\\
&= z^rw^{n-r} \sum_{I\subseteq E} (-1)^{r-\operatorname{rk}_\M(I)} (z/w)^{|I|-\operatorname{rk}_\M(I)} = z^rw^{n-r} T_\M(0,1+ \textstyle\frac{z}{w}).
\end{split}
\end{equation*}
\\
\emph{Step 2}:  We show that
\begin{equation}\label{eq:step2}
\int_{X_E} (1+\alpha x + \alpha^2x^2 + \dotsb) \cdot s(\mathcal Q_\M^\vee, z) \cdot  c(\mathcal Q_\M, w)  = z^r(x + w)^{n-r} T_\M\Big(\frac{x}{z}, \frac{x + z + w}{x + w}\Big).
\end{equation}
As the result is homogeneous, it suffices to prove the claimed formula after evaluating $x = 1$. We compute $\chi((\sum_{i \ge 0} \wedge^i [\mathcal{Q}_\M]^{\vee} w^i)(\sum_{j \ge 0} \operatorname{Sym}^j [\mathcal{Q}_\M]^{\vee} z^j ))$ in two different ways, using Proposition~\ref{prop:simpleChern} and the Hirzebruch--Riemann--Roch-type formulas for both $\zeta$ and $\phi$. We then get that
\begin{equation*} \begin{split} &\int_{X_E} (1 + \alpha + \alpha^2 +  \dotsb) \cdot (w + 1)^{n-r} \cdot c \left(\mathcal{Q}_\M^{\vee}, \frac{w}{w + 1}\right ) \cdot (1-z)^{r -n} \cdot s \left (\mathcal{Q}_\M^{\vee}, \frac{z}{z-1} \right) \\
&= \int_{X_E} c(\bigoplus_{i \in E} \pi_i^* \mathcal{O}_{\mathbb{P}^1}(1)) \cdot (w + 1)^{n-r} \cdot c \left(\mathcal{Q}_\M, \frac{1}{w + 1} \right) \cdot (1-z)^{r-n} \cdot s\left (\mathcal{Q}_\M, \frac{1}{1-z} \right).\end{split}\end{equation*}
Replacing $w$ by $-w/(w + 1)$ and $z$ by $z/(z-1)$ and cancelling common terms, we obtain that
\begin{equation*} \begin{split} &\int_{X_E} (1 + \alpha + \dotsb) \cdot c(\mathcal{Q}_\M, w) \cdot  s(\mathcal{Q}_\M^{\vee}, z) 
= \int_{X_E} c(\bigoplus_{i \in E} \pi_i^* \mathcal{O}_{\mathbb{P}^1}(1)) \cdot c(\mathcal{Q}_\M, w + 1) \cdot s(\mathcal{Q}_\M, 1-z).\end{split}\end{equation*}
Now we apply (\ref{eq:step1}), noting $s(\cQ_\M,1-z) = s(\cQ_\M^\vee,z-1)$, to obtain that
$$\int_{X_E} c(\mathcal{Q}_\M, w + 1) \cdot s(\mathcal{Q}_\M^{\vee}, z-1) = (z -1)^r (w+1)^{n-r} T_\M \left (0, \frac{z + w}{w + 1} \right).$$
Arguing as in Step 1 and using Proposition~\ref{prop:restrict}, the above equation implies that 
\begin{equation*}\begin{split}
& \int_{X_E} \prod_{i\in E}(1 + y_i u_i) \cdot c(\mathcal{Q}_\M, w + 1) \cdot s(\mathcal{Q}_\M^{\vee}, z-1)  
\\ &= \sum_{I \subseteq E} u^I (z - 1)^{r - \operatorname{rk}_\M(I)} (w+1)^{n- |I| - r + \operatorname{rk}_\M(I)} T_{\M/I}\left(0, \frac{z + w}{w + 1} \right).
\end{split}\end{equation*}
Setting each $u_i$ to  $1$ and using that $\prod(1 + y_i) = c(\bigoplus \pi_i^* \mathcal{O}_{\mathbb{P}^1}(1))$, we get that
\begin{equation*}\begin{split}
&\int_{X_E} c(\bigoplus_{i \in E} \pi_i^* \mathcal{O}_{\mathbb{P}^1}(1)) \cdot c(\mathcal{Q}_\M, w + 1) \cdot s(\mathcal{Q}_\M^{\vee}, z-1) \\ &= \sum_{I \subseteq E}  (z - 1)^{r - \operatorname{rk}_\M(I)} (w+1)^{n- |I| - r + \operatorname{rk}_\M(I)} T_{\M/I} \left (0, \frac{z + w}{w + 1} \right).
\end{split}\end{equation*}
Applying Lemma~\ref{lem:tuttecontraction} with $a = 1$, $b = z - 1$, $c = w + 1$, and $d = 0$, we obtain (\ref{eq:step2}).\\
\\
\emph{Step 3}: We finish the computation. We have that
\begin{equation*}\begin{split}
&\int_{X_E} (1 + \alpha x + \alpha^2 x^2 + \dotsb) \cdot  c(\bigoplus_{i \in E} \pi_i^* \mathcal{O}_{\mathbb{P}^1}(1), y) \cdot s(\mathcal{Q}_\M^{\vee}, z) \cdot c(\mathcal{Q}_\M, w) \\
&= \sum_{I \subseteq E} y^{|I|}\int_{X_{{E \setminus I}}}(1 + \alpha x + \alpha^2 x^2 + \dotsb) \cdot s(\mathcal{Q}_{\M/I}^{\vee}, z) \cdot c(\mathcal{Q}_{\M/I}, w) \\
&= \sum_{I \subseteq E} y^{|I|} z^{r - \operatorname{rk}_\M(I)}(x+w)^{n - |I| -r + \operatorname{rk}_\M(I)} T_{\M/I}\Big(\frac{x}{z}, \frac{x+ z+ w}{x+w}\Big).
\end{split} \end{equation*}
Then the result follows from Lemma~\ref{lem:tuttecontraction} with $a = y$, $b = z$, $c = x + w$, and $d = x$. 
\end{proof}

\subsection{Positivity properties}\label{subsec:positive}

We now use Theorem~\ref{thm:tutteintersection} to prove Theorem~\ref{thm:tuttelorentzian}, which states that the 4-variable transformation of the Tutte polynomial in Theorem~\ref{thm:tutteintersection} is a denormalized Lorentzian polynomial.  Let us begin by reviewing the language of Lorentzian polynomials developed in \cite{BH20}.

For a homogeneous degree $d$ polynomial $f = \sum_{u \in \ZZ_{\geq0}^m} a_{u} x^{u} \in \RR[x_1, \ldots, x_m]$, its \emph{normalization} is $N(f) = \sum_{u \in \ZZ_{\geq0}^{m}} a_{u} \frac{x^{u}}{u!}$ where $u! = u_1! \cdots u_m!$.  The polynomial $f$ is said to be the \emph{denormalization} of $N(f)$.  The polynomial $f$ is a \emph{strictly Lorentzian polynomial} if every monomial of degree $d$ has a positive coefficient and every $(d-2)$-th coordinate partial derivative of $f$ is a quadric form with signature $(+,-,-, \ldots, -)$.  It is a \emph{Lorentzian polynomial} if $f$ is a limit of strictly Lorentzian polynomials.
Lorentzian polynomials satisfy a strong log-concavity property \cite[Example 2.26]{BH20} and are preserved under nonnegative linear change of variables \cite[Theorem 2.10]{BH20}.  Polynomials whose normalization is Lorentzian, called \emph{denormalized Lorentzian polynomials}, share similar properties \cite[\S4.3]{BLP}.

We now place the strategy used in the proof of \cite[Theorem 9.13]{BEST21} into an axiomatic framework and use the framework to deduce the theorem.
The key tool will be the theory of \emph{Lefschetz fans}, a notion introduced in \cite[Definition 1.5]{ADH22}. Lefschetz fans are certain (possibly non-complete) simplicial quasi-projective balanced fans whose Chow ring satisfies an analogue of the K\"{a}hler package. We summarize their fundamental properties.

\begin{theorem}\label{thm:lef}
The following hold.
\begin{enumerate}[label = (\arabic*)]\itemsep 5pt
    \item\label{lef:supp} \cite[Theorem 1.6]{ADH22} If $\Sigma$ is a Lefschetz fan, then any quasi-projective simplicial fan with the same support as $\Sigma$ is Lefschetz. 
    \item\label{lef:prod} \cite[Lemma 5.27]{ADH22} A product of Lefschetz fans is Lefschetz. 
    \item \label{lef:Bergman} \cite[Theorem 8.9]{AHK18} The Bergman fan of a loopless matroid is Lefschetz. 
    \item \label{lef:lorentzian} \cite[Theorem 4.6]{BH20}, \cite[Theorem 5.20]{ADH22}, see also \cite[Lemma 9.12]{BEST21} Let $\Sigma$ be an $\ell$-dimensional smooth projective fan, and let $\Sigma'$ be a $d$-dimensional subfan that is Lefschetz and defines the Minkowski weight $[\Sigma'] \in A^{\ell-d}(X_{\Sigma})$ as a balanced fan. Then, for any base-point-free divisors $D_1, \ldots, D_m \in A^1(X_{\Sigma})$, the polynomial 
    $$\sum_{i_1 + \dotsb + i_m = d} \left(\int_{X_{\Sigma}} D_1^{i_1} \cdot \dotsb \cdot D_m^{i_m} \cdot [\Sigma']\right)x_1^{i_1} \dotsb x_m^{i_m}$$
    is denormalized Lorentzian. 
\end{enumerate}
\end{theorem}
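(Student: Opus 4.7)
The statement to be proved is a package of four assertions, each attributed in the excerpt to prior work. My proposal is therefore to indicate, for each item, the route I would follow to reproduce the argument, since the statements themselves are accepted foundations of tropical Hodge theory rather than novel contributions of this paper.

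For part \ref{lef:supp}, the plan is to show that the Chow cohomology ring of a simplicial balanced fan depends, in a precise sense, only on the support together with the choice of an ample class. Concretely, given two quasi-projective simplicial fans $\Sigma$ and $\Sigma'$ with common support $|\Sigma|$, I would pass to a common simplicial refinement $\widetilde\Sigma$ and compare $A^\bullet(X_\Sigma)$ and $A^\bullet(X_{\widetilde\Sigma})$ via the pullback induced by the toric birational morphism $X_{\widetilde\Sigma}\to X_\Sigma$, restricted to the Minkowski weight coming from the balanced structure on $|\Sigma|$. The Lefschetz property (Poincar\'e duality, hard Lefschetz, Hodge--Riemann for the restricted ``tropical'' Chow ring $A^\bullet(\Sigma)$) is preserved because the quadratic form and the Lefschetz operator transfer through the refinement formula; this is exactly the content of \cite[\S4--5]{ADH22}, and I would cite it rather than reconstruct it.

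For part \ref{lef:prod}, the plan is to combine the K\"unneth decomposition $A^\bullet(\Sigma_1\times\Sigma_2) \cong A^\bullet(\Sigma_1)\otimes A^\bullet(\Sigma_2)$ for the tropical Chow rings with the standard fact that the tensor product of two Poincar\'e duality algebras satisfying hard Lefschetz and Hodge--Riemann with respect to $\ell_1$ and $\ell_2$ satisfies the same package with respect to $\ell_1\otimes 1 + 1\otimes \ell_2$. The Hodge--Riemann part is the delicate step, requiring the mixed Hodge--Riemann relations in the style of Gromov/Timorin, but it is the standard argument recorded in \cite[Lemma 5.27]{ADH22}.

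For part \ref{lef:Bergman}, I would invoke the Adiprasito--Huh--Katz theorem directly: the Bergman fan $\underline\Sigma_\M$ of a loopless matroid is Lefschetz, because $A^\bullet(\underline\Sigma_\M)$ satisfies Poincar\'e duality, hard Lefschetz, and the Hodge--Riemann bilinear relations with respect to any strictly submodular class. The proof in \cite{AHK18} proceeds by matroidal flips---a combinatorial analogue of semistable degeneration---that reduce to the Boolean matroid, where the Chow ring is the cohomology of a smooth projective toric variety and the classical K\"ahler package applies. I will not reproduce that argument.

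For part \ref{lef:lorentzian}, the plan is to translate the Lefschetz/Hodge--Riemann properties into the Lorentzian condition on the volume polynomial. Writing $V(x_1,\dots,x_m) = \sum \int_{X_\Sigma} D_1^{i_1}\cdots D_m^{i_m}\cdot[\Sigma']\, x_1^{i_1}\cdots x_m^{i_m}$, one differentiates $d-2$ times in coordinate directions and recognizes the resulting quadratic form as the Hodge--Riemann form on a codimension-$1$ piece of $A^\bullet(\Sigma')$, which has signature $(+,-,\dots,-)$ by \ref{lef:supp} applied to any quasi-projective simplicial refinement of $|\Sigma'|$ and by the fact that the $D_i$ are nef. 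The base-point-free hypothesis supplies the needed limiting/continuity step so that $V$ lies in the closure of strictly Lorentzian polynomials. This is the argument of \cite[Theorem 4.6]{BH20} and \cite[Theorem 5.20]{ADH22}; the expected obstacle, and the one that required most work in those papers, is precisely verifying the signature statement for boundary (non-strictly-ample) classes, which is handled by the limit-stability of Lorentzian polynomials combined with the perturbation of each $D_i$ by an ample class. I would cite these proofs rather than rewrite them.
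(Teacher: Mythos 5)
Your proposal is correct and matches the paper's treatment: the paper states all four items as quoted results with the same citations and gives no proof of its own, exactly as you do. Your added sketches of how the cited arguments go (refinement-invariance of the K\"ahler package, K\"unneth plus mixed Hodge--Riemann for products, matroidal flips for Bergman fans, and the signature-to-Lorentzian translation with a limiting argument for base-point-free classes) are accurate summaries of the referenced proofs, but the logical content is identical to the paper's.
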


Let us now set up the axiomatic framework.  For a finite set $S$, denote
\[
\mathsf{Mat}^\circ_{S} = \text{the set of loopless and coloopless matroids with ground set $S$.}
\]
We say that a map $\varphi \colon \mathsf{Mat}^\circ_S \to G$ taking values in an abelian group $G$ is valuative if it is a restriction to $\mathsf{Mat}^\circ_S$ of a valuative map on the set of all matroids on $S$.
Let $N$ be a nonnegative integer that depends on $n$ (e.g.\ $N = 2n$), and let $[N] = \{1, \ldots, N\}$.
Our framework consists of three objects $(F,T,X)$:
\begin{enumerate}[label = $\bullet$]\itemsep 5pt
\item a map $F_{(\cdot)}\colon \mathsf{Mat}^\circ_{E} \to \mathsf{Mat}^\circ_{[N]}$,
\item a torus $T$ with an action on $\kk^N$ via a map $\varphi \colon T \to \mathbb{G}_{m}^N$, and
\item a smooth projective $T$-variety $X$ with a dense open $T$-orbit $\overline T$ (which is a quotient torus of $T$), such that $\varphi$ naturally descends to $\overline \varphi \colon \overline T\to \GG_m^N/\GG_m$.
\end{enumerate}

We require that these objects satisfy the following properties:
\begin{enumerate}[label = (\roman*)]\itemsep 5pt
\item The assignment $\M\mapsto [\underline{\Sigma}_{F_\M}]$, sending a matroid $\M$ on $E$ to the Bergman class of the matroid $F_\M$ on $[N]$, is valuative.
\item There is a map
\[
F^\kk_{(\cdot)}\colon \coprod_{r=0}^n Gr(r;E)(\kk) \to \coprod_{R=0}^N Gr(R;[N])(\kk)
\]
such that for any realization $L\subseteq \kk^{E}$ of $\M\in \mathsf{Mat}_E^\circ$, the matroid $F_{\M}$ equals the matroid on $[N]$ realized by  $F^\kk_L$.  We often abuse notation and write $F$ for $F^\kk$ also.
\item For any $L\subseteq \kk^{E}$,  specifying the fibers over $\overline t \in \overline T$ to be $\varphi(t^{-1}) F_L$ defines a $T$-equivariant vector subbundle $\mathcal F_L$ of $\mathcal{O}_{X}^{\oplus N}$ on $X$.
\item The Segre class $s(\mathcal F_L) \in A^\bullet(X)$ depends only on the matroid that $L$ realizes.
\item The assignment $\M\mapsto s(\mathcal F_{(L\text{ realizing } \M)})$ from the set of $\kk$-realizable matroids in $\mathsf{Mat}^\circ_E$ to $A^\bullet(X)$ is valuative.
\end{enumerate}
Because every matroid in $\mathsf{Mat}^\circ_E$ is valuatively equivalent to a linear combination of $\kk$-realizable matroids in $\mathsf{Mat}^\circ_E$ \cite[Lemma 5.9]{BEST21}, 
the conditions (iv) and (v) imply that we have a unique valuative extension $\M\mapsto s(\mathcal F_\M) \in A^\bullet(X)$ such that $s(\mathcal F_\M) = s(\mathcal F_L)$ whenever $L$ realizes $\M$.
Thus, we may define the following.

\begin{definition}
With $F$, $T$, and $X$ satisfying the conditions above, for a matroid $\M\in \mathsf{Mat}^\circ_E$ we define $[\mathbb{P}(\mathcal F_\M)] \in A^\bullet(X\times \mathbb{P}^{N-1})$ by
\[
[\mathbb{P}(\mathcal F_\M)] = \sum_{i = 0}^{N-R} s_i(\mathcal F_\M) \delta^{N-R-i}
\]
where $R$ is the rank of $F_\M$ and $\delta = c_1(\mathcal O(1))$ is the hyperplane class of $\mathbb{P}^{N-1}$ pulled back to $X\times \mathbb{P}^{N-1}$.
\end{definition}

When $\M$ is realized by $L \subseteq \kk^E$, then $[\mathbb{P}(\mathcal{F}_\M)] = [\mathbb{P}(\mathcal{F}_L)]$ by \cite[Proposition 9.13]{EH}.

\begin{example}
In the setting of \cite{BEST21}, we let $n=N$ with $T = \mathbb{G}_{m}^E$ acting on $X = \underline X_{E}$ naturally via $T \to \mathbb{P} T$, and acting on $\kk^{E}$ by the inverse standard action.  If we set $F$ to be the identity map, which satisfies the conditions listed above, we then have $\mathcal F_L = \underline{\mathcal S}_L$.   If we set $F$ to be the matroid duality map (i.e.,\ $\M\mapsto \M^\perp$ and $L\mapsto L^\perp$), which also satisfies the conditions listed above, we then have $\mathcal F_L = \underline{\mathcal Q}^\vee_L$.
\end{example}

\begin{example}
Let $N = 2n$, and let $T = \mathbb{G}_{m}^E$ act on $\kk^{E} \times \kk^E$ by $t \cdot (x,y) = (t^{-1}x,y)$, and act on $X_E$ as its open dense torus.  Let $pre'F$ be the map that adds parallel element to each element in a matroid $\M$ on $E$ to get a matroid $pre'F_\M$ on $E\sqcup E$.  Note that $\M \mapsto [\underline\Sigma_{pre'F_\M}]$ is valuative, since $[\underline\Sigma_{pre'F_\M}]$ is the image of $[\underline\Sigma_\M]$ under the diagonal embedding $x \mapsto (x,x)$.  In fact, the map $\M\mapsto pre'F_\M$ itself is valuative.  If we set $F$ to be $pre'F$ precomposed with matroid duality map, we then have $\mathcal F_L = \mathcal Q_L^\vee$.  If we set $F$ to be $pre'F$ precomposed and then post-composed with matroid duality maps (note that one duality takes place on $E$ and the other on $E \sqcup E$), we get $\mathcal F_L = \mathcal K_L$, where $\mathcal K_L$ is defined by the exact sequence
$$0 \to \mathcal{K}_L \to \mathcal{O}_{X_{E}}^{\oplus E} \oplus \mathcal{O}_{X_{E}}^{\oplus E} \to \mathcal{Q}_L \to 0.$$
Note that the $K$-class $[\mathcal{K}_L]$ depends only on the matroid that $L$ represents because $[\mathcal{K}_L] = [\mathcal{O}_{X_{E}}^{\oplus E} \oplus \mathcal{O}_{X_{E}}^{\oplus E}] - [\mathcal{Q}_L]$. Note also that $s(\mathcal{K}_L) = c(\mathcal{Q}_L)$.

\end{example}

\begin{theorem}\label{thm:constructLefschetz}
Under the conditions above, there exists a smooth projective $(\overline T \times \mathbb{G}_{m}^N/\mathbb{G}_{m})$-toric variety $Y_\Sigma$ with a birational toric morphism $\pi \colon Y_\Sigma \to X\times \mathbb{P}^{N-1}$ such that for every matroid $\M\in \mathsf{Mat}^\circ_E$, there exists a Lefschetz subfan $\Sigma_{X,F_{\M}}$ of $\Sigma$ such that $\pi_*[\Sigma_{X,F_{\M}}] = [\mathbb{P}(\mathcal F_{\M})]$, where $[\Sigma_{X,F_{\M}}]$ denotes the Chow cohomology class on $Y_\Sigma$ that is Poincar\'{e} dual to the Minkowski weight of constant weight 1 on the Lefschetz fan $\Sigma_{X,F_{\M}}$.
\end{theorem}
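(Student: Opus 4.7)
The plan is to construct $Y_\Sigma$ as a toric refinement of $X\times\mathbb{P}^{N-1}$, and to model each $\Sigma_{X,F_\M}$ as a lattice shear of the product $\Sigma_X\times\underline\Sigma_{F_\M}$, where $\Sigma_X$ denotes the fan of $X$ in $N(\overline T)_\RR$.  Write $M = N(\overline T)\oplus \ZZ^{[N]}/\ZZ\be_{[N]}$ for the cocharacter lattice of $\overline T\times \mathbb{G}_m^{[N]}/\mathbb{G}_m$, and let $s\colon M_\RR\to M_\RR$ be the shear $(v,w)\mapsto (v,w+\overline\varphi_*(v))$, where $\overline\varphi_*$ is the map on cocharacters induced by $\overline\varphi\colon \overline T\to \mathbb{G}_m^{[N]}/\mathbb{G}_m$.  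For each $\M\in \mathsf{Mat}^\circ_E$, set
\[
\Sigma_{X,F_\M}\ \coloneq\ s^{-1}\bigl(\Sigma_X\times \underline\Sigma_{F_\M}\bigr),
\]
equipped with the constant weight $1$ on its top-dimensional cones.  Since there are only finitely many matroids on $E$, standard toric constructions yield a smooth projective fan $\Sigma$ in $M_\RR$ that refines both $\Sigma_X\times \Sigma_{\mathbb{P}^{N-1}}$ and every $\Sigma_{X,F_\M}$; I take $Y_\Sigma$ to be its toric variety and $\pi\colon Y_\Sigma\to X\times\mathbb{P}^{N-1}$ the induced birational toric morphism.

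For the Lefschetz property, the fan $\Sigma_X$ is smooth, projective, and complete, so it is Lefschetz (the Chow ring of a smooth projective toric variety satisfies the K\"ahler package), and $\underline\Sigma_{F_\M}$ is Lefschetz by Theorem~\ref{thm:lef}.\ref{lef:Bergman}; their product is Lefschetz by Theorem~\ref{thm:lef}.\ref{lef:prod}.  Since $s^{-1}$ is a lattice automorphism, $\Sigma_{X,F_\M}$ is isomorphic as a weighted simplicial fan to $\Sigma_X\times \underline\Sigma_{F_\M}$, and therefore is itself Lefschetz; if a quasi-projective refinement within $\Sigma$ is required, Theorem~\ref{thm:lef}.\ref{lef:supp} handles the replacement.

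The identity $\pi_*[\Sigma_{X,F_\M}]=[\mathbb{P}(\mathcal F_\M)]$ will be proved first for realizable $\M$ and then extended by valuativity.  Suppose $L\subseteq \kk^E$ realizes $\M$, and set $Z\coloneq \mathbb{P}(\mathcal F_L)\cap\bigl(\overline T\times(\mathbb{G}_m^{[N]}/\mathbb{G}_m)\bigr)$.  The torus automorphism $(\overline t,[v])\mapsto(\overline t,[\varphi(t)v])$ carries $Z$ isomorphically onto $\overline T\times\bigl(\mathbb{P}(F_L)\cap(\mathbb{G}_m^{[N]}/\mathbb{G}_m)\bigr)$, whose tropicalization is $N(\overline T)_\RR\times |\underline\Sigma_{F_\M}|$; tropicalizing the automorphism recovers $s$, so $\mathrm{trop}(Z)$ has multiplicity-one weights supported on $|\Sigma_{X,F_\M}|$.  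By Sturmfels--Tevelev applied to $\pi$, the closure of $Z$ in $Y_\Sigma$ has class $[\Sigma_{X,F_\M}]$, and pushforward gives $\pi_*[\Sigma_{X,F_\M}]=[\mathbb{P}(\mathcal F_L)]=[\mathbb{P}(\mathcal F_\M)]$.  For arbitrary $\M\in\mathsf{Mat}^\circ_E$, the left-hand side is valuative by condition (i) together with the linearity of $s$, and the right-hand side is valuative by condition (v); since every $\M\in \mathsf{Mat}^\circ_E$ is valuatively equivalent to a $\ZZ$-combination of $\kk$-realizable matroids in $\mathsf{Mat}^\circ_E$, the identity propagates.

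The main obstacle is the Sturmfels--Tevelev step in the augmented $\overline T$-equivariant setting: one must verify that $\mathbb{P}(\mathcal F_L)$ meets the open orbit $\overline T\times (\mathbb{G}_m^{[N]}/\mathbb{G}_m)$ in a dense subset and that the resulting tropicalization is generically reduced along $|\Sigma_{X,F_\M}|$, so that the pushforward of the tropical cycle matches the pushforward of the scheme-theoretic closure.  Everything else, including the Lefschetz assembly and the valuativity extension, is formal manipulation with fans, shears, and Minkowski weights.
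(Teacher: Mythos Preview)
Your argument is correct and follows essentially the same strategy as the paper's proof.  The only cosmetic difference is where the shear is placed: the paper builds the twist into the toric morphism itself, taking $\pi$ on tori to be $(t,t')\mapsto (t,\varphi(t)t')$ so that $\Sigma_{X,F_\M}$ can be taken as the subfan of $\Sigma$ with support the \emph{untwisted} product $\operatorname{Cochar}(\overline T)_\RR\times|\underline\Sigma_{F_\M}|$, whereas you keep $\pi$ as the identity on cocharacters and instead shear the subfan via $s^{-1}$.  These are equivalent bookkeeping choices.  One minor efficiency in the paper's version is that it refines the single fan $\Sigma_X\times\underline\Sigma_{[N]}$ (which already contains every Bergman fan $\underline\Sigma_{F_\M}$ as a subfan) rather than separately refining the finitely many $\Sigma_{X,F_\M}$; and it defines $\Sigma_{X,F_\M}$ directly as a subfan of $\Sigma$, avoiding the replacement step you flag.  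Your concern about the Sturmfels--Tevelev step is well-placed but resolves easily: since $F_\M$ is loopless, $\PP(F_L)$ meets the torus of $\PP^{N-1}$ densely, and the Bergman fan of a linear space carries multiplicity one, so the tropicalization of $Z$ has weight one on $|\Sigma_{X,F_\M}|$ as needed.
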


\begin{proof}
First, we set the birational toric morphism $\pi$ restricted to the tori to be given by $(t,t') \mapsto (t, \varphi(t) t')$.  Now, we can take $\Sigma$ to be any unimodular projective fan inside $\operatorname{Cochar}(\overline T)_\mathbb{R} \times (\mathbb{R}^N/\mathbb{R})$ such that it refines $\text{(the fan of }X) \times \underline\Sigma_{[N]}$ and makes $Y_\Sigma \to X\times \mathbb{P}^{N-1}$ into a valid toric morphism.\\
\indent We take $\Sigma_{X,F_\M}$ to be the subfan of $\Sigma$ with support $\operatorname{Cochar}(\overline T)_\mathbb{R} \times \underline\Sigma_{F_\M}$.  By Theorem~\ref{thm:lef}.\ref{lef:Bergman}, the support of the fan $\Sigma_{X, F_\M}$ is equal to the support of a product of two Lefschetz fans, and hence by Theorem~\ref{thm:lef}.\ref{lef:supp} and \ref{lef:prod}, $\Sigma_{X, F_\M}$ is a Lefschetz fan.  By the assumptions, the assignment $\M\mapsto [\mathcal F_\M]$ and the assignment $\M\mapsto [\mathbb{P}(\mathcal F_\M)]$ are valuative.  On the other hand, the assumption that $\M\mapsto [\underline\Sigma_{F_\M}]$ is valuative implies that $\M\mapsto [\Sigma_{X,F_\M}]$ is also valuative.  Thus, for the desired equality $\pi_*[\Sigma_{X,F_\M}] = [\mathbb{P}(\mathcal F_\M)]$, it suffices to show it when $\M$ has a $\kk$-realization $L$.\\ 
\indent For a loopless matroid $\M'$ on a set $E'$ realized by a linear subspace $L'\subseteq \kk^{E'}$, the Minkowski weight with constant weight 1 on the Bergman fan $\Sigma_{\M'}$ is the tropicalization of $\PP(L') \cap \GG_m^{E'}/\GG_m$ \cite{Stu02, AK06}.  Hence, the Minkowski weight with constant weight 1 on $\Sigma_{X,F_\M}$ is the tropicalization of $\overline T \times (\mathbb{P}(F_L) \cap \mathbb{G}_{m}^N/\mathbb{G}_{m})$, so the Chow class $[\Sigma_{X,F_\M}]$ equals the class of the closure of $\overline T \times (\mathbb{P}(F_L) \cap \mathbb{G}_{m}^N/\mathbb{G}_{m})$ inside $Y_\Sigma$.  On the other hand, by construction the map $\pi$ bijectively maps $\overline T \times (\mathbb{P}(F_L) \cap \mathbb{G}_{m}^N/\mathbb{G}_{m})$ to an open subset of $\mathbb{P}(\mathcal F_L)$, an irreducible subvariety of $X\times \mathbb{P}^{N-1}$. Then the result follows.
\end{proof}

\begin{remark}
If there are several maps $F^{(1)}, \ldots, F^{(k)}$ from $\mathsf{Mat}^\circ_{E}$ to $\mathsf{Mat}^\circ_{[N^{(k)}]}$, each satisfying the conditions listed above with a common $X$ and $T$ fixed throughout, the theorem easily generalizes to the multi-projectivization $[\mathbb{P}(\mathcal F_\M^{(1)})\times_X \cdots \times_X \mathbb{P}(\mathcal F_{\M}^{(k)})]$.
\end{remark}

\begin{proof}[Proof of Theorem~\ref{thm:tuttelorentzian}]
First we assume that $\M$ is loopless and coloopless. 
Note the $\mathcal{Q}_L^{\vee}$ embeds into $\mathcal{O}_{X_E}^{\oplus E \sqcup E}$ because $\bigoplus_{i \in E} \pi_i^* \mathcal{O}(-1)$ does, and we can apply Theorem~\ref{thm:constructLefschetz} to this embedding. 
Therefore there is a smooth projective toric variety $Y_{\Sigma}$ with torus $\mathbb{G}_m^E \times \mathbb{G}_m^{E \sqcup E}/\mathbb{G}_m \times \mathbb{G}_m^{E \sqcup E}/\mathbb{G}_m$, 
a map $\pi \colon Y \to X_E \times \mathbb{P}^{2n - 1} \times \mathbb{P}^{2n - 1}$, and a Lefschetz subfan $\Sigma_{X_E, \M}$ of $\Sigma$ such that $\pi_*[\Sigma_{X_E, \M}] = [\mathbb{P}(\mathcal{K}_\M)  \times_{X_E} \mathbb{P}(\mathcal{Q}^{\vee}_\M)]$. 
Let $\delta$ and $\epsilon$ be the first Chern classes of the pullbacks of $\mathcal{O}(1)$ to $X_E \times \mathbb{P}^{2n - 1} \times \mathbb{P}^{2n - 1}$ from the two projective spaces. Then, with the shorthand $\frac{1}{1-a} = 1 + a + a^2 + \cdots + a^n$, we have
\begin{equation*}\begin{split}
&\int_{X_E} \frac{1}{1-\alpha x} \cdot c(\bigoplus_{i \in E} \pi_i^* \mathcal{O}_{\mathbb{P}^1}(1), y) \cdot s(\mathcal{Q}_\M^{\vee}, z) \cdot c(\mathcal{Q}_\M, w) \\
&= \int_{X_E \times \mathbb{P}^{2n - 1} \times \mathbb{P}^{2n - 1}}\frac{1}{1-\alpha x} \cdot c(\bigoplus_{i \in E} \pi_i^* \mathcal{O}_{\mathbb{P}^1}(1), y) \cdot\frac{\delta^{n + r - 1}}{1 - \delta z}  \cdot
\frac{\epsilon^{n - r - 1}}{1 - \epsilon w}\cdot[\mathbb{P}(\mathcal{K}_\M) \times_{X_E} \mathbb{P}(\mathcal{Q}_\M^{\vee})] \\
&= \int_{Y_{\Sigma}}\frac{1}{1 - \pi^*\alpha x}\cdot \pi^*c(\bigoplus_{i \in E} \pi_i^* \mathcal{O}_{\mathbb{P}^1}(1), y)\cdot\frac{\pi^* \delta^{n + r -1}}{1 -  \pi^*\delta z}\cdot \frac{\pi^* \epsilon^{n - r - 1}}{1 - \pi^*\epsilon w  } \cdot [\Sigma_{X_E, \M}], 
\end{split}\end{equation*}
where we have used $\alpha$ and $c(\bigoplus_{i \in E} \pi_i^* \mathcal{O}_{\mathbb{P}^1}(1))$ to refer also to their pullbacks to $X_E \times \mathbb{P}^{2n - 1} \times \mathbb{P}^{2n - 1}$. Then the result follows from Theorem~\ref{thm:lef}.\ref{lef:lorentzian}, using that $c(\bigoplus_{i \in E} \pi_i^* \mathcal{O}(1))$ is the Chern class of a direct sum of nef line bundles.\\
\indent Any matroid $\M$ of rank $r$ on $E$ can be written as the direct sum of matroids $\U_{0, j} \oplus \U_{\ell, \ell} \oplus \M'$, where $\M'$ is a loopless and coloopless of rank $r - \ell$ on a ground set of size $n - j - \ell$. Because the Tutte polynomial is multiplicative for direct sums of matroids, we have that 
\begin{multline*}
(y + z)^{r} (x + w)^{n-r} T_\M\left( \frac{x + y}{y + z}, \frac{x + y + z + w}{x + w}\right) = \\
(x + y + z + w)^j (x + y)^\ell (y + z)^{r - \ell } (x + w)^{n- j - r} T_{\M'}\left( \frac{x + y}{y + z}, \frac{x + y + z + w}{x + w}\right).
\end{multline*}
By \cite[Corollary 3.8]{BH20}, products of denormalized Lorentzian polynomials are denormalized Lorentzian, which implies the result. 
\end{proof}

\begin{remark}\label{rmk:strengthen}
One can obtain stronger log-concavity results by replacing $c(\bigoplus_{i \in E} \pi_i^* \mathcal{O}_{\mathbb{P}^1}(1), y)$ with $\prod_{i \in E}(1 + y_i u_i)$ to obtain a Lorentzian polynomial in $n + 3$ variables $x, z, w, u_1, \dotsc, u_n$. 
Using that specializations of Lorentzian polynomials are Lorentzian \cite[Theorem 2.10]{BH20}, we obtain that the polynomial $t_{\M}(x, y,z,w)$ in Theorem~\ref{thm:tuttelorentzian} is Lorentzian after each $x^a y^b z^c w^d$ term is replaced by $\frac{x^a y^b z^c w^d}{a!c!d!}$. By setting $x=z=0$, this gives a new proof of \cite[Corollary 9]{HSW}. 
\end{remark}

\section{Chern--Schwartz--MacPherson classes}\label{sec:csm}

\subsection{Log tangent bundles}
There is a natural log structure on $X_E$ obtained by viewing it as a simple normal crossings (snc) compactification of $\mathbb{A}^E$; let $\partial X_{E}$ denote the boundary divisor. Note that this is not the usual log structure on a toric variety. We obtain a log structure on $W_L$ for any linear space $L$ by declaring the inclusion $W_L \hookrightarrow X_E$ to be strict. Equivalently, we view $W_L$ as an snc compactification of $L$.  Let $\partial W_L$ be the boundary divisor of $W_L$; note that $\partial W_L = \partial X_{E} \cap W_L$. For an snc pair $(X, D)$ (i.e., a smooth variety $X$ with an snc divisor $D$) over $\kk$, we use $\Omega^1_X(\log D)$ to denote the log cotangent bundle of $(X, D)$ over $\kk$, and $\mathcal{T}_X(- \log D) := \Omega^1_X(\log D)^{\vee}$ to denote the log tangent bundle. 
Recall that we identified $\mathcal{Q}_L|_{W_L}$ with $N_{W_L/X_E}$ in Corollary~\ref{cor:koszul}.

\begin{lemma}\label{lemma:normalsequence}
 Let $\iota \colon Y \hookrightarrow X$ be an inclusion of smooth varieties over $\kk$, and let $D$ be an snc divisor on $X$ such that $(Y, D \cap Y)$ is an snc pair. Then there is an exact sequence
$$0 \to \mathcal{T}_{Y}(- \log D|_{Y}) \to \iota^* \mathcal{T}_{X}(-\log D) \to N_{Y/X} \to 0,$$
where $N_{Y/X}$ is the normal bundle of $Y \hookrightarrow X$. If a group scheme $G$ acts on $X$ preserving $D$ and $Y$, then this is an exact sequence of $G$-equivariant sheaves. 
\end{lemma}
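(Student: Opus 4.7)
The plan is to first establish the dual short exact sequence
\[
0 \to N_{Y/X}^{\vee} \to \iota^* \Omega^1_X(\log D) \to \Omega^1_Y(\log D|_Y) \to 0
\]
and then dualize, which is legitimate because all three sheaves are locally free. To construct the maps canonically, I would view $\iota \colon (Y, D|_Y) \hookrightarrow (X, D)$ as a morphism of log smooth pairs, so that functoriality of log K\"ahler differentials supplies a canonical surjection $\iota^* \Omega^1_X(\log D) \twoheadrightarrow \Omega^1_Y(\log D|_Y)$. The classical conormal inclusion $N_{Y/X}^\vee \hookrightarrow \iota^* \Omega^1_X$ composes with the evident map $\iota^* \Omega^1_X \hookrightarrow \iota^* \Omega^1_X(\log D)$ to give the map on the left. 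Both arrows are built from canonical data, and they are therefore automatically $G$-equivariant once the three sheaves are given their natural $G$-structures (which exist because $G$ preserves $X$, $Y$, and $D$).

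Next, I would verify exactness by a local coordinate computation. The snc hypothesis on $(Y, D \cap Y)$ implies that near any point of $Y$ one can choose \'etale coordinates $x_1, \ldots, x_n$ on $X$ with $D = V(x_1 \cdots x_k)$ and $Y = V(x_{m+1}, \ldots, x_n)$ satisfying $\{m+1, \ldots, n\} \cap \{1, \ldots, k\} = \varnothing$; equivalently, no component of $D$ contains $Y$, and $Y$ meets transversally every component of $D$ that it intersects. In these coordinates $\iota^* \Omega^1_X(\log D)$ has basis $\{dx_i/x_i : 1 \le i \le k\} \cup \{dx_j : k < j \le n\}$, $\Omega^1_Y(\log D|_Y)$ has basis $\{dx_i/x_i : 1 \le i \le k\} \cup \{dx_j : k < j \le m\}$, and $N_{Y/X}^\vee$ has basis $\{dx_j : m < j \le n\}$, so the sequence is visibly short exact.

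Finally, dualizing this exact sequence of locally free sheaves produces the claimed sequence of log tangent sheaves and the normal bundle, with $G$-equivariance preserved under dualization. The only substantive verification is the transversality encoded in the snc hypothesis, which is precisely what ensures $m \ge k$ in the local coordinates so that the three sets of generators partition cleanly; everything else is a formal local check.
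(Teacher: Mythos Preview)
Your proof is correct and aligns with the paper's alternative argument: the paper likewise establishes the dual conormal sequence for log differentials and dualizes, organizing the exactness check via the snake lemma applied to the residue sequences $0 \to \Omega_X|_Y \to \Omega_X(\log D)|_Y \to \bigoplus_i \mathcal O_{D_i}|_Y \to 0$ and its analogue on $Y$, rather than your direct local-coordinate verification. The paper also records a second, more structural proof using Olsson's log cotangent complex: identifying $L_{Y/\kk}$ and $L_{X/\kk}$ with the log cotangent sheaves and $L_{Y/X}$ with $N_{Y/X}^\vee[1]$, the exact triangle for the composite $(Y,D|_Y) \to (X,D) \to \operatorname{Spec}\kk$ yields the sequence directly. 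Your hands-on approach has the advantage of making the role of the snc hypothesis completely transparent, while the Olsson route packages the $G$-equivariance and functoriality automatically.
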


\begin{proof}
By \cite[{1.1(iii)}]{Olsson}, we have that $L_{Y/S}, L_{X/S}$ are $\Omega^1_{Y}(- \log D|_{Y})$, $\Omega^1_{X}(- \log D)$. By \cite[{1.1(ii)}]{Olsson}, $L_{Y/X}$ can be identified with $N^{\vee}_{Y/X}[1]$. Then the result follows from \cite[{1.1(v)}]{Olsson} and dualizing. The last statement follows from functoriality. 
Alternatively, one can deduce the lemma from the map of short exact sequences
\[
\begin{tikzcd}
0 \ar[r] &\Omega_X|_Y \ar[r] \ar[d]&\Omega_X(\log D)|_Y \ar[r] \ar[d]&\bigoplus_i \mathcal O_{D_i}|_Y \ar[d,equal]\ar[r] & 0\\
0 \ar[r] &\Omega_Y \ar[r] &\Omega_Y(\log D|_Y) \ar[r] &\bigoplus_i \mathcal O_{D_i|_Y} \ar[r] & 0
\end{tikzcd}
\]
by applying the snake lemma.
\end{proof}

\begin{theorem}\label{thm:logtangent}
As an $L$-equivariant sheaf, $\mathcal{T}_{W_L}(-\log \partial W_L)$ can be identified with $\mathcal{S}_L|_{W_L}$, in such a way that the exact sequence $0 \to \mathcal{S}_L \to \bigoplus_{i \in E} \pi_i^* \mathcal{O}_{\mathbb{P}^1}(1) \to \mathcal{Q}_L \to 0$ restricts to the exact sequence $0 \to \mathcal{T}_{W_L}(-\log \partial W_L) \to \iota^* \mathcal{T}_{X_E}(-\log \partial X_{E}) \to N_{W_L/X_E} \to 0$. 
\end{theorem}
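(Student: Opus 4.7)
The plan is to identify two short exact sequences of locally free $L$-equivariant sheaves on $W_L$. The first is obtained by restricting the sequence of Remark~\ref{rem:SES} to $W_L$:
\[
0 \to \mathcal{S}_L|_{W_L} \to \bigoplus_{i\in E} \pi_i^*\mathcal{O}_{\mathbb{P}^1}(1)|_{W_L} \to \mathcal{Q}_L|_{W_L} \to 0. \qquad (\mathrm{A})
\]
The second is obtained by applying Lemma~\ref{lemma:normalsequence} to $\iota\colon W_L\hookrightarrow X_E$ with $D=\partial X_E$. That $(W_L,\partial W_L)$ is an snc pair follows from the description of $W_L$ as an iterated blowup of $\mathbb{P}(L\oplus \kk)$ along the smooth subvarieties $\mathbb{P}(L_F)$, which results in a compactification of $L$ by a simple normal crossings boundary. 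This yields
\[
0\to \mathcal{T}_{W_L}(-\log \partial W_L) \to \iota^*\mathcal{T}_{X_E}(-\log \partial X_E) \to N_{W_L/X_E}\to 0. \qquad (\mathrm{B})
\]

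The first step is to identify the middle terms. A direct computation gives $\mathcal{T}_{\mathbb{P}^1}(-\log \infty) \cong \mathcal{O}_{\mathbb{P}^1}(1)$, and the product formula yields $\mathcal{T}_{(\mathbb{P}^1)^E}(-\log \partial)\cong \bigoplus_i \mathrm{pr}_i^*\mathcal{O}_{\mathbb{P}^1}(1)$. By Proposition~\ref{prop:iteratedblowup}(b), the toric morphism $\pi_{1^E}\colon X_E\to (\mathbb{P}^1)^E$ is an iterated blowup along smooth centers that are strata of the snc toric boundary, and such blowups are log étale; consequently $\pi_{1^E}^*\Omega^1_{(\mathbb{P}^1)^E}(\log \partial) \cong \Omega^1_{X_E}(\log \partial X_E)$, which can be verified by an explicit local computation with logarithmic differentials at the exceptional divisor of each stellar subdivision. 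Dualizing gives
\[
\mathcal{T}_{X_E}(-\log \partial X_E)\cong \pi_{1^E}^*\mathcal{T}_{(\mathbb{P}^1)^E}(-\log \partial)\cong \bigoplus_{i\in E}\pi_i^*\mathcal{O}_{\mathbb{P}^1}(1)
\]
as $\mathbb{G}_a^E$-equivariant vector bundles. The rightmost terms of $(\mathrm{A})$ and $(\mathrm{B})$ are identified by Corollary~\ref{cor:koszul}, via the distinguished section $v$ of Theorem~\ref{thm:vanishingsection}. So $(\mathrm{A})$ and $(\mathrm{B})$ have the same middle and right terms.

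To conclude, I need to verify that the two surjections onto $\mathcal{Q}_L|_{W_L}$ agree under these identifications; equivalently, that the composition $\mathcal{S}_L|_{W_L}\hookrightarrow \iota^*\mathcal{T}_{X_E}(-\log \partial X_E) \twoheadrightarrow N_{W_L/X_E}$ vanishes. Both sequences are $L$-equivariant (indeed $\mathbb{G}_a^E$-equivariant on $X_E$ before restriction), and on the dense open orbit $L=W_L\cap \mathbb{A}^E$ they both canonically reduce to the constant sequence $0\to L\otimes \mathcal{O}_L\to \kk^E\otimes \mathcal{O}_L\to (\kk^E/L)\otimes \mathcal{O}_L\to 0$: Remark~\ref{rem:SES} gives this for $(\mathrm{A})$, while $(\mathrm{B})|_L$ is just $0\to \mathcal{T}_L\to \mathcal{T}_{\mathbb{A}^E}|_L\to N_{L/\mathbb{A}^E}\to 0$, trivialized by $\{\partial_{x_i}\}$. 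Under the isomorphism of Step~1, these two trivializations of the middle term agree up to a diagonal automorphism of $\bigoplus_i\pi_i^*\mathcal{O}_{\mathbb{P}^1}(1)$ which is absorbed in the identification. Hence the composition vanishes on a dense open subset of $W_L$, and therefore everywhere by torsion-freeness. Consequently, $\mathcal{S}_L|_{W_L}$ factors as a map into $\mathcal{T}_{W_L}(-\log \partial W_L)$, which is an isomorphism of rank $r$ locally free sheaves since it is one on the dense open $L$.

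The main obstacle is the compatibility check in the final step: tracking precisely how the isomorphism $\mathcal{T}_{X_E}(-\log \partial X_E)\cong \bigoplus_{i\in E}\pi_i^*\mathcal{O}_{\mathbb{P}^1}(1)$ of Step~1 relates the translation-invariant trivialization of $\mathcal{T}_{\mathbb{A}^E}$ by $\{\partial_{x_i}\}$ to the trivialization of $\bigoplus_i\pi_i^*\mathcal{O}_{\mathbb{P}^1}(1)|_{\mathbb{A}^E}$ coming from the distinguished section $v$. Once this is in hand, $L$-equivariance and density reduce everything to a formal diagram chase, completing the identification of $(\mathrm{A})$ with $(\mathrm{B})$ and hence of $\mathcal{S}_L|_{W_L}$ with $\mathcal{T}_{W_L}(-\log \partial W_L)$.
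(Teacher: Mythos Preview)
Your proposal is correct and follows essentially the same strategy as the paper: identify the middle terms via the $n=1$ case, products, and log-\'etaleness of the blowups; identify the right terms via Corollary~\ref{cor:koszul}; and then check compatibility of the two surjections on the dense open $L\subset W_L$. The one place the paper is sharper is exactly the obstacle you flag at the end: to pin down the isomorphism $\mathcal{T}_{X_E}(-\log\partial X_E)\cong\bigoplus_i\pi_i^*\mathcal{O}_{\mathbb{P}^1}(1)$ as a $\mathbb{G}_a^E$-equivariant map (rather than ``up to a diagonal automorphism''), the paper invokes the fact that $\mathcal{O}_{\mathbb{P}^1}(1)$ carries a \emph{unique} $\mathbb{G}_a$-linearization, which forces the equivariant identification and makes the fiberwise check at $0\in\mathbb{A}^E$ suffice.
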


Theorem~\ref{thm:logtangent} is closely related to \cite[Theorem 8.8]{BEST21}. The $\mathbb{G}_m$-equivariant structure on $S_L|_{W_L}$ is different from the $\mathbb{G}_m$-equivariant structure on $\mathcal{T}_{W_L}(-\log \partial W_L)$ in general.

\begin{proof}
First we do the case of $n=1$, in which case the stellahedron $\Pi_1$ is the interval $[0,1]$.  In other words, we have $\mathbb{P}^1$ with the log structure given by the divisor $\partial \mathbb{P}^1 = \infty$, where $\infty$ is the point $[1:0]\in \PP^1$.  The exact sequence
$$0 \to \mathcal{O}(-2) \to \Omega^1_{\mathbb{P}^1}(\log \partial \mathbb{P}^1) \to \mathcal{O}_{\infty} \to 0$$
implies that $\mathcal{T}_{\mathbb{P}^1}(-\log \partial \mathbb{P}^1)$ is isomorphic to $\mathcal{O}_{\mathbb{P}^1}(1)$. By \cite[Proposition 2.3]{HassettTschinkel}, there is a unique $\mathbb{G}_a$-equivariant structure on $\mathcal{O}_{\mathbb{P}^1}(1)$, so $\mathcal{T}_{\mathbb{P}^1}(-\log \partial \mathbb{P}^1)$ is isomorphic to $\mathcal{O}_{\mathbb{P}^1}(1)$ with the $\mathbb{G}_a$-equivariant structure described in \S\ref{subsec:flagvar}.
As the formation of the log tangent bundle behaves well with respect to products, the log tangent bundle of $(\mathbb{P}^1)^E$ (viewed as a compactification of $\mathbb{A}^E$) is $\boxplus_{i \in E} \mathcal{O}_{\mathbb{P}^1}(1)$, with the induced $\mathbb{G}_a^E$-equivariant structure. 
Now, since $X_E \to (\mathbb{P}^1)^E$ is a composition of blow-ups at the boundary, the pullback $\bigoplus_{i \in E} \pi_i^* \mathcal{O}_{\mathbb{P}^1}(1)$ of $\boxplus_{i \in E} \mathcal{O}_{\mathbb{P}^1}(1)$ is isomorphic to the log-tangent bundle of $X_E$ as $\mathbb{G}_a^E$-equivariant sheaves (see, for example, the proof of \cite[Lemma 2.1]{Brion2009}).\\
\indent Now we do the general case. By Lemma~\ref{lemma:normalsequence}, it suffices to see that the following square commutes, as that will identify $\mathcal{S}_L|_{W_L}$ with the kernel of the map $\mathcal{T}_{X_E}(- \log \partial X_{E})|_{W_L} \to N_{W_L/X_E}$.
\begin{center}
\begin{tikzcd}
\bigoplus_{i \in E} \pi_i^* \mathcal{O}_{\mathbb{P}^1}(1)|_{W_L} \arrow[r] \arrow[d]
& \mathcal{Q}_L|_{W_L} \arrow[d] \\
\mathcal{T}_{X_E}(- \log \partial X_{E})|_{W_L} \arrow[r]
& N_{W_L/X_E} 
\end{tikzcd}
\end{center}
It suffices to check that this diagram commutes after restricting to a dense open subset. As the top and bottom maps are maps of $L$-equivariant sheaves, it suffices to note that this diagram commutes on the fiber over $0 \in \mathbb{A}^E$. At the fiber over $0$, both horizontal maps can be identified with the natural projection $\kk^E \to \kk^E/L$, and the vertical maps with the identity. 
\end{proof}

\subsection{Chern--Schwartz--MacPherson classes of matroid Schubert varieties}\label{ssec:CSM}
First we review the theory of Chern--Schwartz--MacPherson (CSM) classes. 
As CSM classes are defined only for varieties over a field of characteristic zero, we fix $\kk = \mathbb{C}$ and work with singular homology instead of Chow. Then, for any locally closed subset $Z$ of a proper variety $X$, there is a homology class $c_{SM}(\mathbf{1}_Z) \in H_{\bullet}(X, \mathbb{Z})$. If $X$ is smooth and $Z = X$, then the CSM class agrees with the Poincar\'{e} dual of the total Chern class of the tangent bundle. Together with its functorial properties, this property completely determines the CSM class of any variety. If $f \colon X \to Y$ is a morphism between proper varieties that restricts to an isomorphism over $Z$, then $f_*(c_{SM}(\mathbf{1}_Z)) = c_{SM}(\mathbf{1}_{f(Z)})$.

We now prove Theorem~\ref{thm:csm}.
Let $L \subseteq \kk^E$ be a linear space of dimension $r$, and let $Y_L$ be the closure of $L$ in $(\mathbb{P}^1)^E$, the matroid Schubert variety of $L$. 
Recall from the introduction that the singular homology $H_{2k}(Y_L, \mathbb{Z})$ has a basis labeled by the flats of rank $k$. For a flat $F$, set $L^F = L/L_F$. The closure of a cell labeled by $F$ can be identified with the matroid Schubert variety of the linear space $L^F$. For a flat $F$, let $y_F \in H_{2k}(Y_L, \mathbb{Z})$ denote the class of the closure of the cell corresponding to $F$. Because $(\mathbb{P}^1)^E$ is the Schubert variety for the boolean matroid, in particular we obtain a basis for the singular homology of $(\mathbb{P}^1)^E$, where each $I\subseteq E$ defines the class $y_I \in H_{2|I|}((\PP^1)^E,\ZZ)$.
Note that the product $\prod_{i\in I} y_i$ of the divisor classes in Definition~\ref{defn:alphayis} is Poincar\'e dual to $y_I$ in the sense that for $I'\subseteq E$, we have $(\prod_{i\in I'} y_i) \cap y_I = 1$ if $I = I'$ and is $0$ otherwise.

\begin{lemma}\label{lem:push}
The pushforward $H_{\bullet}(Y_L, \mathbb{Z}) \to H_{\bullet}((\mathbb{P}^1)^E, \mathbb{Z})$ sends $y_F$ to $\sum_{I} y_I$, where the sum is over bases of $\M|F$. 
\end{lemma}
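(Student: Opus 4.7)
The plan is to first reduce the statement to the special case $F = E$, and then compute that base case directly by intersecting with an appropriate dual cycle.

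For the reduction, recall from the discussion immediately preceding the lemma that the closure of the cell $U^F$ inside $Y_L$ is identified with the matroid Schubert variety $Y_{L^F}$ of $L^F \subseteq \kk^F$, which realizes the restricted matroid $\M|F$. The inclusion $(\PP^1)^F \hookrightarrow (\PP^1)^E$ as $(\PP^1)^F \times \{\infty\}^{E \setminus F}$ sends the closure $\overline{U^B}\subseteq (\PP^1)^F$ onto $\overline{U^B}\subseteq (\PP^1)^E$, so this inclusion carries the Schubert class $y_B \in H_\bullet((\PP^1)^F)$ to $y_B \in H_\bullet((\PP^1)^E)$ for every $B \subseteq F$. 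Consequently, the pushforward of $y_F$ from $Y_L$ to $(\PP^1)^E$ agrees with the pushforward of $[Y_{L^F}]$ from $(\PP^1)^F$ to $(\PP^1)^E$, and the general case follows once we prove the base statement: for any $L \subseteq \kk^E$ realizing a matroid $\M$ of rank $r$, we have $\iota_{L*}[Y_L] = \sum_{B} y_B$ in $H_{2r}((\PP^1)^E,\ZZ)$, with the sum over bases $B$ of $\M$.

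For the base case, I would write $\iota_{L*}[Y_L] = \sum_{|I|=r} a_I\, y_I$ in the Schubert basis and determine $a_I$ by intersecting with the dual cycle $Z_I = \prod_{i \in I}\{q_i\} \times \prod_{i \notin I}\PP^1$ for generically chosen $q_i \in \kk$; here $[Z_I]$ has class $\prod_{i \in I} h_i$, and the intersection pairing gives $y_J \cdot [Z_I] = \delta_{I,J}$. The intersection $Y_L \cap Z_I$ can then be decomposed along the affine paving of $Y_L$: in the cell $U^F$ the condition $p_i = q_i \in \kk$ for $i \in I$ forces $I \subseteq F$, and once we make the identification $U^F \simeq L^F \subseteq \kk^F$, the slice $U^F \cap Z_I$ is the preimage of $(q_i)_{i \in I}$ under the linear projection $L^F \to \kk^I$, whose image has dimension $\rk_\M(I)$ (since $I \subseteq F$ and $F$ is a flat).

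Two observations finish the computation. If $I$ is not a basis of $\M$, then $\rk_\M(I) < r = |I|$, so for generic $(q_i)$ the point $(q_i)_{i \in I}$ lies outside the image of every such projection $L^F \to \kk^I$, forcing $a_I = 0$. If $I$ is a basis of $\M$, then the projection is surjective precisely when $\rk(F) \geq r$, which combined with $F$ being a flat forces $F = E$; in $U^E = L$ the preimage is a single point and transversality follows from $L \cap \ker(\pi_I) = 0$, yielding $a_I = 1$. I expect the main technical obstacle to be arranging the genericity of $(q_i)$ so that the intersection in each contributing cell has the expected dimension and nothing unexpected happens in passing to the closure, but this is handled by standard dimension-of-fibers arguments together with the fact that any closure point of $U^F$ meeting $Z_I$ satisfies $p_i = q_i \in \kk$ for $i \in I$, which already confines such closure points to cells $U^{F'}$ with $I \subseteq F'$.
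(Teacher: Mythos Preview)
Your reduction to the case $F=E$ is exactly what the paper does. For the base case, the paper simply invokes \cite[Theorem 1.3c]{ArdilaBoocher}, which computes the class $[Y_L]$ in $H_{2r}((\PP^1)^E,\ZZ)$ directly. Your argument instead proves this base case by hand: you pair $\iota_{L*}[Y_L]$ against the dual classes $[Z_I]=\prod_{i\in I}h_i$ realized by generic point-slices, decompose $Y_L\cap Z_I$ over the affine cells $U^F$, and observe that for generic $(q_i)$ the intersection is empty unless $I$ is a basis, in which case it is a single transverse point in the open cell $L$. This is correct and self-contained; the one point worth noting is that the unique intersection point lies in the smooth locus $L\subset Y_L$, so transversality in the singular variety $Y_L$ is not an issue. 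Your approach has the advantage of avoiding an external citation and making the mechanism transparent, at the cost of a longer argument; the paper's version is a one-line appeal to a known result.
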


\begin{proof}
In degree $r$, this follows from \cite[Theorem 1.3c]{ArdilaBoocher}. The general case then follows from the identification of the closure of the cell indexed by $F$ with the matroid Schubert variety of $L^F$. 
\end{proof}

\begin{proof}[Proof of Theorem~\ref{thm:csm}]
Because the $W_L$ is an snc compactification of $L$, the CSM class of $L$ in $W_L$ is $c(\mathcal{T}_{W_L}(- \log \partial W_L)) \cap [W_L]$ by \cite[Theorem 1]{Aluffi99}. Let $\iota \colon W_L \to X_E$ be the inclusion. As $\mathcal{T}_{W_L}(- \log \partial W_L) = \iota^* \mathcal{S}_L$ and $[W_L] = c_{n - r}(\mathcal{Q}_L)$, the projection formula implies that 
$$\iota_* (c(\mathcal{T}_{W_L}(- \log \partial W_L)) \cap [W_L]) = c(\mathcal{S}_L) \cup c_{n - r}(\mathcal{Q}_L) \cap [X_E].$$ 
Using Theorem~\ref{thm:SQintersect} and Theorem~\ref{thm:logtangent}, one can show that 
$$\int_{X_E} \iota_* c(\mathcal{T}_{W_L}(- \log \partial W_L)) \cdot \prod_{i \in I}y_i = \begin{cases} 1, & I \text{ independent} \\ 0, & \text{otherwise}.\end{cases}$$
Therefore, the pushforward of $c_{SM}(\mathbf{1}_L) \in H_{\bullet}(W_L, \mathbb{Z})$ to $H_{\bullet}((\mathbb{P}^1)^E, \mathbb{Z})$ is $\sum_{I \text{ independent}}y_I$. The functoriality of CSM classes implies that this is the pushforward of the CSM class of $L$ in $Y_L$. From Lemma~\ref{lem:push}, we note that the pushforward on homology from $Y_L$ to $(\mathbb{P}^1)^E$ is injective, and $\sum_{F} y_F$ pushes forward to the claimed class. 
\end{proof}

\begin{remark}
Using the stratification of $Y_L$ by cells which are identified with matroid Schubert varieties for restrictions to flats of $\M$, Theorem~\ref{thm:csm} implies that
$$c_{SM}(\mathbf{1}_{Y_L}) = \sum_{F \in \mathscr L(\M)} |\{G\in \mathscr L(\M) \mid G\supseteq F\}| \cdot y_F.$$
\end{remark}

\appendix
\section{Polytope algebras and K-rings of toric varieties}

The notion of valuativity and the polytope algebra both have many variants, sometimes equivalent and sometimes not.  In this mostly expository appendix, we collect these together, and record their relationship to the K-ring of toric varieties.

\subsection{Variants of valuativity}

Valuative functions have been studied extensively as combinatorial generalizations of measures.
We point to \cite{McM93b} and \cite[\S6]{Sch14} as references and give a brief summary here.

\medskip
For $S\subseteq \RR^n$ (or $\QQ^n$), denote its indicator function by $\one_S \colon \RR^n \text{ (or $\QQ^n$)} \to \ZZ$ defined as 
$$
\one_S(x) = \begin{cases}
1 & \textnormal{if }x\in S\\
0 & \textnormal{otherwise}.
 \end{cases}
$$
Let $\mathscr S \subseteq 2^{\RR^n}$ be a collection of nonempty\footnote{Some authors allow $\emptyset \in \mathscr S$ and then impose by convention a triviality for $\emptyset$, such as $f(\emptyset) = 0$ for a function $f$ on $\mathscr S$.  See for instance \cite{Sal68, McM89}.  Here, we prefer to begin with collections of nonempty subsets.}
subsets of $\RR^n$.  We write
    $$
    \ind(\mathscr {S}) := \ZZ\{\one_S \mid S\in\mathscr{S}\}
    $$
for the $\ZZ$-module generated by the indicator functions of elements of $\mathscr {S}$.
For a hyperplane $H \subseteq \RR^n$, let $H^+$ and $H^-$ denote the two closed half-spaces that it defines.
The notion of valuative functions on $\mathscr S$ has many variants:

\begin{definition}\label{defn:vals}
For an abelian group $A$, we say a function {$f\colon \mathscr S{\cup \{\emptyset\}} \to A$
{with $f(\emptyset) = 0$}} is 
\begin{enumerate}\itemsep 5pt
    \item[(a)] \textbf{weakly valuative} if $f(S) = f(S \cap H^+) + f(S\cap H^-) - f(S \cap H)$
 for any $S \in \mathscr S$ and hyperplane $H$ such that $S\cap H^+, S \cap H^-, S \cap H \in \mathscr  S$,
    
    \item[(b)] (when $\mathscr S$ consists of polyhedra) satisfies the \textbf{weak inclusion-exclusion principle} if for any polyhedral subdivision $S = \bigcup_{i=1}^k S_i$ such that $S\in \mathscr S$ and $\bigcap_{j\in J}S_j \in \mathscr S {\cup \{\emptyset\}}$ 
    for every $J \subseteq \{1,\ldots, k\}$, the inclusion-exclusion relation $f(S) = \sum_{J \subseteq \{1,\ldots, k\}} (-1)^{|J|-1} f(\bigcap_{j\in J}S_j) $ holds, 
    
\item[(c)] is \textbf{additive} (a.k.a.\ \textbf{valuative}) if $f(S_1 \cup S_2) + f(S_1 \cap S_2) = f(S_1) + f(S_2)$ for any pair $S_1, S_2 \in \mathscr S$ such that $S_1\cup S_2, S_1\cap S_2 \in \mathscr S{\cup \{\emptyset\}}$,
\item[(d)] satisfies the \textbf{inclusion-exclusion principle} if for any union $S = \bigcup_{i=1}^k S_i$ such that $S\in \mathscr S$ and $\bigcap_{j\in J}S_j \in \mathscr S{\cup \{\emptyset\}}$ for every $J \subseteq \{1,\ldots, k\}$, the inclusion-exclusion relation $f(S) = \sum_{J \subseteq \{1,\ldots, k\}} (-1)^{|J|-1} f(\bigcap_{j\in J}S_j) $ holds,
\item[(e)] is \textbf{strongly valuative} if there exists a (unique) map of $\ZZ$-modules $\widehat f  \colon \ind(\mathscr S) \to A$ such that $f(S) = \widehat f(\one_S)$ for all $S \in \mathscr S$.
\end{enumerate}
\end{definition}

The following implications between the various notions of valuativity are immediate.
$$
\xymatrix
{
&(c)  \ar@{=>}[d] &(d)\ar@{=>}[l] \ar@{=>}[d] &(e) \ar@{=>}[l]\\
&(a) &(b) \ar@{=>}[l]
}
$$
Whether some or all of the implications can be reversed in the diagram for a given collection $\mathscr S$ is a difficult problem in general.  We collect some previous results here.

\begin{theorem}\label{thm:valequiv} As before, let $\mathscr S$ be a collection of nonempty subsets of $\RR^n$.
\begin{enumerate}\itemsep 5pt
\item \cite{Gro78} If $\mathscr S$ is intersection-closed, i.e.,\ $S_1, S_2 \in \mathscr S \implies S_1\cap S_2 =\emptyset \text { or } S_1\cap S_2\in \mathscr S$, then we have $(c) \iff (d) \iff (e)$.  For example, the family of all convex bodies in $\RR^n$ is intersection closed.
\item \cite{Sal68, Vol57} If $\mathscr S = \mathscr P$, the family of all polytopes in $\RR^n$ (which is intersection-closed) then we further have $(a) \iff (c)$ so all five notions are equivalent.  A minor modification of the proof also shows that the same holds for $\mathscr Q$, the family of all polyhedra in $\RR^n$ (see \cite[\S3.2]{McM09} for an explicit proof).
\item \cite{McM09} If $\mathscr S =\mathscr Q_\Lambda$ or $\mathscr P_\Lambda$, where $\mathscr Q_\Lambda$ is the family of all $\Lambda$-polyhedra in $\RR^n$ for a rank $n$ lattice $\Lambda\subseteq \RR^n$ (similarly $\mathscr P_\Lambda$ is the family of all $\Lambda$-polytopes), then we have $(c) \iff (d) \iff (e)$.  Note that $\mathscr Q_\Lambda$ and $\mathscr P_\Lambda$ are not intersection-closed.
\end{enumerate}
\end{theorem}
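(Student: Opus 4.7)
The plan is to handle the three parts in sequence, noting first that the implications $(e) \Rightarrow (d) \Rightarrow (c)$ and $(d) \Rightarrow (b) \Rightarrow (a)$ are immediate from the definitions and the identity $\one_{\bigcup_i S_i} = \sum_{\emptyset \neq J} (-1)^{|J|-1} \one_{\bigcap_{j\in J} S_j}$ on indicator functions. Only the reverse implications require work, and in each part the substance is to build (or extend) $\widehat f$ out of the additivity of $f$.

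For part (1), following Groemer, I would define $\widehat f \colon \ind(\mathscr S) \to A$ on generators by $\widehat f(\one_S) = f(S)$ and verify well-definedness. The hypothesis that $\mathscr S$ is intersection-closed means the Boolean algebra generated by finitely many $S_1,\ldots,S_k \in \mathscr S$ can be analyzed using only intersections, which remain in $\mathscr S$. Any $\ZZ$-linear relation $\sum_i c_i \one_{S_i} = 0$ can then be rewritten, by refining along the common intersections and iteratively applying $\one_{S \cup T} = \one_S + \one_T - \one_{S \cap T}$, as a formal consequence of the additivity identities which $f$ satisfies by (c). The equivalence with (d) then falls out by expanding $\one_{\bigcup_i S_i}$ via inclusion-exclusion and applying $\widehat f$.

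For part (2), the key step is $(a) \Rightarrow (c)$ on $\mathscr P$ (resp. $\mathscr Q$). Given $S_1, S_2 \in \mathscr P$ with $S_1 \cup S_2 \in \mathscr P$, the plan is to interpolate between the pair $(S_1, S_2)$ and the pair $(S_1 \cup S_2, S_1 \cap S_2)$ through a sequence of hyperplane cuts, each preserving the sum $f(\cdot) + f(\cdot)$ by (a). Following Volland and Sallee, one chooses a finite collection of hyperplanes (namely, those supporting the facets of $S_1$ and $S_2$) so that the cells of the resulting arrangement refine both $S_1 \cup S_2$ and $S_1 \cap S_2$; iterating the two-piece cut identity along this arrangement transforms $\one_{S_1} + \one_{S_2}$ into $\one_{S_1\cup S_2} + \one_{S_1 \cap S_2}$ one cut at a time. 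Passing to unbounded $\mathscr Q$ requires only the verification that all intermediate pieces remain polyhedra.

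For part (3), the main obstacle, and the essence of McMullen's argument, is that $\mathscr Q_\Lambda$ and $\mathscr P_\Lambda$ are \emph{not} intersection-closed: the intersection of two lattice polytopes is only rational. So Groemer's theorem from (1) does not apply directly to $f$, and one cannot simply invoke part (2). The plan is to first extend $f$ to the intersection-closure of $\mathscr P_\Lambda$ inside $\mathscr P$, i.e.\ to $\Lambda_\QQ$-rational polytopes, by a dilation trick: each $\Lambda_\QQ$-polytope $P$ has $NP \in \mathscr P_\Lambda$ for some positive integer $N$, and one defines $\widetilde f(P)$ via a rational-polynomial interpolation of the values $f(NP), f(2NP), \ldots$ using the polynomiality of valuations on dilates (a consequence of additivity together with part (2) applied to the ambient $\mathscr P$). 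One must then verify that $\widetilde f$ is additive on this intersection-closed family, at which point part (1) yields the $\ZZ$-linear extension on $\ind(\mathscr P_{\Lambda_\QQ})$, which restricts to the desired $\widehat f$ on $\ind(\mathscr P_\Lambda)$. The hardest step is checking that this rational extension is consistent with additivity across lattice refinements — this is where the lattice structure genuinely enters and where one cannot avoid a careful combinatorial argument on common lattice dissections.
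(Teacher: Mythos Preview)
The paper does not supply its own proof of this theorem; it simply records the results with citations to Groemer, Volland--Sallee, and McMullen. Your outlines for parts (1) and (2) are reasonable summaries of those classical arguments.

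Your plan for part (3), however, has a genuine gap. You propose to extend $f$ from $\mathscr P_\Lambda$ to rational polytopes by interpolating the values $f(NP), f(2NP), \ldots$ as a polynomial in the dilation parameter and then evaluating at $1/N$. But $f$ takes values in an \emph{arbitrary} abelian group $A$: neither the polynomiality of $k \mapsto f(kP)$ nor the evaluation of such a polynomial at a non-integer argument makes sense without dividing by integers in $A$. You also invoke part (2) ``applied to the ambient $\mathscr P$'' to justify the polynomiality step, but $f$ is defined only on $\mathscr P_\Lambda$, not on all of $\mathscr P$, so part (2) is not available for it. What must actually be shown is a statement purely about the source: that the kernel of $Z(\mathscr P_\Lambda) \to \ind(\mathscr P_\Lambda)$ is already generated over $\ZZ$ by the additivity relations within $\mathscr P_\Lambda$. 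This is the content of McMullen's argument in \cite{McM09}, and because it concerns only the relations among indicator functions it holds for any target group $A$ without any divisibility hypothesis. Your dilation-interpolation route would at best recover the theorem when $A$ is a $\QQ$-vector space.
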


When $\mathscr S$ is the family of extended generalized permutohedra, i.e.,\ lattice polyhedra in $\RR^n$ whose normal fans coarsen (possibly convex subfans of) the normal fan of the  standard permutohedron of dimension $n-1$ in $\RR^n$, Derksen and Fink showed that $(b) \iff (e)$ \cite[Theorem 3.5]{DerksenFink}.
We ask whether the equivalence holds more generally:

\begin{question}\label{ques:defvalequiv}
How are the different variants of valuativity in Definition~\ref{defn:vals} related to each other when $\mathscr S$ is the set of all (lattice) polytopes whose normal fans coarsen a fixed complete (smooth and/or projective) rational fan?
\end{question}

We record here a useful consequence of Theorem~\ref{thm:valequiv} that taking faces of polytopes is a strongly valuative operation.  For a vector $v\in \RR^n$ and a polytope $P \subset \RR^n$, let $\operatorname{face}(P,v)$ be the face of $P$ on which the standard inner product with $v$ is minimized.

\begin{proposition}\label{prop:faceval}
Let $P_1, \ldots, P_k$ be (lattice) polytopes in $\RR^n$, and suppose  $\sum_{i=1}^k a_i \one_{P_i} = 0$ for some $a_1, \ldots, a_k \in \ZZ$.  Then, for any $v\in \RR^n$, one has $\sum_{i = 1}^k a_i \one_{\operatorname{face}(P_i,v)} =0$.
\end{proposition}

\begin{proof}
In other words, we need show that the function on the set of all (lattice) polytopes sending $P$ to $\one_{\operatorname{face}(P,v)}$ is strongly valuative.  By Theorem~\ref{thm:valequiv}, it suffices to show that this function is additive in the sense of Definition~\ref{defn:vals}(c), and this additivity is an immediate consequence of \cite[Theorem 4.6]{McM09}.
\end{proof}

\subsection{Variants of polytope algebras}

Fix a positive integer $n$.    
For a family $\mathscr S$ of nonempty subsets in $\RR^n$, let
\[
Z(\mathscr S) := \Big\{\sum_{S\in \mathscr S} a_S S \mid a_S\in \ZZ \text{ all but finitely many non-zero}\Big\}
\]
be the free abelian group generated by the set $\mathscr S$.  Define the following subgroups of $Z(\mathscr S)$:
\begin{align*}
\operatorname{val}(\mathscr S) = & \text{ the subgroup generated by the additive (a.k.a.\ valuative) relations, i.e.,}\\
 & P + Q - P\cup Q - P\cap Q \text{ whenever $P,Q,P\cap Q,P\cup Q\in \mathscr S$,}\\
\operatorname{stVal}(\mathscr S) = & \text{ the kernel of the map $Z(\mathscr S) \to \ind(\mathscr S)$ defined by $S \mapsto \one_S$, and}\\
\operatorname{transl}(\mathscr S) = &\text{ the subgroup generated by translation invariance relations, i.e.,}\\
& P - (P+v) \text{ whenever $P$ and $P+v\in \mathscr S$ for $v\in \RR^n$}.
\end{align*}
We may consider the following four quotient groups
\begin{align*}
\Pi(\mathscr S) &= Z(\mathscr S)/\operatorname{val}(\mathscr S),\\
\overline\Pi(\mathscr S) &= Z(\mathscr S)/(\operatorname{val}(\mathscr S)+\operatorname{transl}(\mathscr S)),\\
\ind(\mathscr S) &= Z(\mathscr S)/\operatorname{stVal}(\mathscr S), \text{ and}\\
\overline\ind(\mathscr S) &= Z(\mathscr S)/(\operatorname{stVal}(\mathscr S) +\operatorname{transl}(\mathscr S)).
\end{align*}
In each these four cases, for an element $P\in \mathscr S$ we denote by $[P]$ its image in the quotient group.  For a commutative ring $A$, we write $\Pi_A = \Pi \otimes A$, and similarly for $\overline\Pi$, $\ind$, and $\overline\ind$.\\
\indent We now consider the case where $\mathscr S$ is a family of polytopes.  In good cases, one may give these quotients groups a ring structure as in the following lemma, which is a minor variation of \cite[Lemma 6]{McM89}.
In this appendix, we use $\uplus$ for the Minkowski sum of polytopes when it is helpful to distinguish it notationally from the addition in $Z(\mathscr S)$.

\begin{lemma}
Suppose $\mathscr S$ is a Minkowski-sum-closed family of polytopes in $\RR^n$.  That is, if $P$ and $Q$ are polytopes in $\mathscr S$, then so is their Minkowski sum $P \msum Q$.    Then, for the quotient groups $\Pi(\mathscr S)$ and $\overline\Pi(\mathscr S)$, the multiplication given by
\[
[P]\cdot[Q] = [P\msum Q] \text{ for $P,Q\in \mathscr S$, and extended linearly to the whole group},
\]
is well-defined.  In particular, if further $\mathscr S$ contains the origin $\mathbf o$ of $\RR^n$, then the quotient groups are unital commutative rings with $[\mathbf o]$ the unit.
\end{lemma}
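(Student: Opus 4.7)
The key geometric input is a classical distributivity identity for Minkowski sums: for polytopes $P, Q, R$ in $\mathscr S$ with $P \cup Q$ convex, one has
\begin{align*}
(P + R) \cup (Q + R) &= (P \cup Q) + R, \\
(P + R) \cap (Q + R) &= (P \cap Q) + R.
\end{align*}
The first identity holds in general by writing elements as $p + r$ with $p \in P \cup Q$. For the second, the nontrivial inclusion $(P + R) \cap (Q + R) \subseteq (P \cap Q) + R$ is established by a connectedness argument: given $x = p + r_1 = q + r_2$ with $p \in P$, $q \in Q$, and $r_1, r_2 \in R$, the path $r_t = (1 - t) r_1 + t r_2$ lies in $R$ by convexity, and $x - r_t = (1 - t) p + t q$ lies in $P \cup Q$ by convexity of $P \cup Q$. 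The closed sets $A = \{t \in [0,1] : x - r_t \in P\}$ and $B = \{t \in [0,1] : x - r_t \in Q\}$ cover $[0,1]$ with $0 \in A$ and $1 \in B$, so connectedness of $[0, 1]$ forces $A \cap B \neq \emptyset$, producing a $t^{\ast}$ with $x - r_{t^{\ast}} \in P \cap Q$ and hence $x \in (P \cap Q) + R$.

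Granted this identity, one checks that right-multiplication by any $R \in \mathscr S$ carries valuative generators to valuative generators: Minkowski-summing $P + Q - P \cup Q - P \cap Q$ with $R$ produces
$$(P + R) + (Q + R) - \bigl((P + R) \cup (Q + R)\bigr) - \bigl((P + R) \cap (Q + R)\bigr),$$
which is itself a valuative generator for the pair $P + R, Q + R$. All four polytopes above lie in $\mathscr S$ by Minkowski-sum-closure of $\mathscr S$, and convexity of $(P + R) \cup (Q + R) = (P \cup Q) + R$ is automatic as a Minkowski sum of convex sets. Thus the assignment $[P] \cdot [R] := [P + R]$ extends bilinearly to a well-defined product on $\Pi(\mathscr S)$. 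For $\overline{\Pi}(\mathscr S)$, one further observes that translation commutes with Minkowski sum, $(P + v) + Q = (P + Q) + v$, so the translation relations are also preserved and the product descends.

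The remaining ring axioms are routine: commutativity and associativity transfer from the generator-level identities $P + Q = Q + P$ and $(P + Q) + R = P + (Q + R)$, and distributivity is built into the bilinear extension. When $\mathbf o \in \mathscr S$, the identity $\mathbf o + P = P$ gives $[\mathbf o] \cdot [P] = [P]$, so $[\mathbf o]$ is the multiplicative unit. The class $[\emptyset]$, when $\emptyset \in \mathscr S$, is absorbing under multiplication since $\emptyset + P = \emptyset$, and is identified with the additive zero by the standard convention in the polytope algebra literature (equivalently, one adjoins the relation $[\emptyset] = 0$). The main obstacle in the whole argument is the intersection identity $(P + R) \cap (Q + R) = (P \cap Q) + R$; its proof genuinely requires convexity of both $R$ and $P \cup Q$, whereas every other step is formal.
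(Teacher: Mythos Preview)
Your proof is correct and follows essentially the same approach as the paper: both rest on the distributivity identities $(P\cup Q)+R=(P+R)\cup(Q+R)$ and $(P\cap Q)+R=(P+R)\cap(Q+R)$ when $P\cup Q$ is convex, and then observe that these identities transport valuative generators to valuative generators under Minkowski sum with $R$. The only difference is that the paper cites Hadwiger for the identities whereas you supply a direct connectedness argument, and you are slightly more explicit about the convention $[\emptyset]=0$.
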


\begin{proof}
\cite[1.2.2]{Had57} shows that if $Q_1$ and $Q_2$ are polytopes such that $Q_1\cup Q_2$ is a polytope, then
\[
P\msum(Q_1\cup Q_2) = (P\msum Q_1) \cup (P\msum Q_2) \quad\text{and}\quad
P\msum(Q_1\cap Q_2) = (P\msum Q_1) \cap (P\msum Q_2)
\]
for any polytope $P \subseteq \RR^n$.  Hence, the multiplication via Minkowski sum is well-defined.  
\end{proof}

For a subring $R$ of $\mathbb{R}$, let $\mathscr P_R$ be the set of all nonempty $R$-polytopes in $\mathbb{R}^n$, i.e.,\ the polytopes that have vertices in $R^n$.  Usually $R$ will be either $\ZZ$, $\QQ$, or $\RR$.  When $R$ is $\QQ$ or $\RR$, Theorem~\ref{thm:valequiv}.(1) implies that $\Pi(\mathscr P_R) = \ind(\mathscr P_R)$, and hence $\overline\Pi(\mathscr P_R) = \overline\ind(\mathscr P_R)$ also.  The same conclusion holds when $R = \ZZ$ by Theorem~\ref{thm:valequiv}.(3).  The ring $\overline\Pi_\RR(\mathscr P_\RR)$ is what is often called McMullen's \emph{polytope algebra} as defined in \cite{McM89, McM93a}.

\medskip
For polytopes $P$ and $Q$, one says that $Q$ is a \emph{weak Minkowski summand} of $P$ if there is a polytope $Q'$ and $\lambda>0$ such that $\lambda Q \msum Q' = P$.  It is straightforward to show that this is equivalent to stating that the normal fan of $Q$ coarsens that of $P$.

\begin{definition}
Given a complete fan $\Sigma$ in $\RR^n$, we define the subfamily $\mathscr P_{R,\Sigma}\subseteq \mathscr P_R$ to be the set of $R$-polytopes whose normal fan coarsens $\Sigma$.  Let us define
\[
\Pi(R,\Sigma) = \text{the image of $Z(\mathscr P_{R,\Sigma})\subseteq Z(\mathscr P_R)$ in $\Pi(\mathscr P_R)$},
\]
and likewise for $\overline\Pi(R,\Sigma)$, $\ind(R,\Sigma)$, and $\overline\ind(R,\Sigma)$.  
\end{definition}

Note that, per Question~\ref{ques:defvalequiv}, it is unclear whether $\Pi(\mathscr P_{R,\Sigma}) = \Pi(R,\Sigma)$.
It is clear however that $\ind(R,\Sigma) = \ind(\mathscr P_{R,\Sigma})$, and also that $\operatorname{transl}(\mathscr P_{R,\Sigma}) = Z(\mathscr P_{R,\Sigma}) \cap \operatorname{transl}(\mathscr P_R)$, so that $\overline\ind(R,\Sigma) = \overline\ind(\mathscr P_{R,\Sigma})$.  Thus, when $R$ is $\ZZ$, $\QQ$, or $\RR$, the equivalence of additivity and strong valuativity, as noted in Theorem~\ref{thm:valequiv}(3), yields the following.

\begin{proposition}
When $R$ is $\ZZ$, $\QQ$, or $\RR$, one has
\[
\Pi(R,\Sigma) = \ind(R,\Sigma) = \ind(\mathscr P_{R,\Sigma}) \quad\text{and}\quad \overline\Pi(R,\Sigma) = \overline\ind(R,\Sigma) = \overline\ind(\mathscr P_{R,\Sigma}).
\]
\end{proposition}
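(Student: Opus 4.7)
The plan is to observe that the proposition reduces almost entirely to the equivalences recorded in Theorem~\ref{thm:valequiv}, since the two equalities $\ind(\Sigma) = \ind(\mathscr P_{R,\Sigma})$ and $\overline\ind(\Sigma) = \overline\ind(\mathscr P_{R,\Sigma})$ were already noted as clear in the paragraph preceding the proposition. Thus the only content to verify is the pair of identities $\Pi(\Sigma) = \ind(\Sigma)$ and $\overline\Pi(\Sigma) = \overline\ind(\Sigma)$.

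First I would establish the master identity $\Pi(\mathscr P_R) = \ind(\mathscr P_R)$, equivalently $\operatorname{val}(\mathscr P_R) = \operatorname{stVal}(\mathscr P_R)$, in each of the three cases. For $R = \QQ$ or $R = \RR$, the family $\mathscr P_R$ is intersection-closed (the intersection of two rational, respectively real, polytopes is again such), so part~(a) of Theorem~\ref{thm:valequiv} applies and gives equivalence of additivity and strong valuativity. For $R = \ZZ$, the family $\mathscr P_\ZZ$ is not intersection-closed, but this is exactly the situation $\Lambda = \ZZ^n$ of part~(c) of Theorem~\ref{thm:valequiv}, which again yields $\operatorname{val}(\mathscr P_\ZZ) = \operatorname{stVal}(\mathscr P_\ZZ)$.

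With $\Pi(\mathscr P_R) = \ind(\mathscr P_R)$ in hand, the first chain of equalities is immediate: by definition, $\Pi(\Sigma)$ and $\ind(\Sigma)$ are the images of the same subgroup $Z(\mathscr P_{R,\Sigma}) \subseteq Z(\mathscr P_R)$ in two quotient groups of $Z(\mathscr P_R)$ that are now identified; hence $\Pi(\Sigma) = \ind(\Sigma)$, and combining with the clear equality $\ind(\Sigma) = \ind(\mathscr P_{R,\Sigma})$ gives the first assertion of the proposition.

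For the barred versions, since $\operatorname{val}(\mathscr P_R) = \operatorname{stVal}(\mathscr P_R)$ as subgroups of $Z(\mathscr P_R)$, we have
\[
\overline\Pi(\mathscr P_R) = Z(\mathscr P_R)/(\operatorname{val}(\mathscr P_R) + \operatorname{transl}(\mathscr P_R)) = Z(\mathscr P_R)/(\operatorname{stVal}(\mathscr P_R) + \operatorname{transl}(\mathscr P_R)) = \overline\ind(\mathscr P_R),
\]
so once again $\overline\Pi(\Sigma)$ and $\overline\ind(\Sigma)$ are images of $Z(\mathscr P_{R,\Sigma})$ in identical quotients, yielding $\overline\Pi(\Sigma) = \overline\ind(\Sigma)$. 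Combining with the already clear $\overline\ind(\Sigma) = \overline\ind(\mathscr P_{R,\Sigma})$ completes the proof. The main point requiring care is simply selecting the correct part of Theorem~\ref{thm:valequiv} for $R = \ZZ$, since the intersection-closure argument fails in that case; no other obstacle arises.
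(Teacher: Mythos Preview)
Your proposal is correct and follows exactly the approach the paper takes: the paper does not give a separate proof but simply remarks (in the paragraph preceding the proposition) that the clear equalities $\ind(\Sigma)=\ind(\mathscr P_{R,\Sigma})$ and $\overline\ind(\Sigma)=\overline\ind(\mathscr P_{R,\Sigma})$, together with the equivalence of additivity and strong valuativity for $\mathscr P_R$ established earlier via Theorem~\ref{thm:valequiv} (part~(1) for $R=\QQ,\RR$, part~(3) for $R=\ZZ$), yield the proposition. Your write-up makes this reasoning explicit and correctly flags that the $R=\ZZ$ case requires McMullen's result rather than the intersection-closed argument.
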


We conclude this section with another variant of the polytope algebra given in \cite{Mor93c}.
Given a complete rational fan $\Sigma$, Morelli defines rings $L_\Sigma(\ZZ^n)$ and $\mathscr L_\Sigma(\ZZ^n)$ as follows.  For a point $p\in \RR^n$ and a polytope $P$, if $p\in P$ then define $TC_p(P) = \RR_{\geq 0}\{P-p\}$ to be the tangent cone of $P$ at $p$, and if $p\notin P$ define by convention $TC_p(P) = \emptyset$.  Let $\mathscr C$ be the collection of cones (always centered at the origin) in $\RR^n$, and let $\mathscr C_\Sigma = \{C \subseteq \RR^n \mid C^\vee \in \Sigma\}$ be the collection of cones which are duals of the cones in $\Sigma$.
Linearly extending the map $P \mapsto \one_{TC_p(P)}$, we obtain a map $\theta_p \colon \ind(\mathscr P_\ZZ) \to \ind(\mathscr C)$ for any point $p\in \ZZ^n$.  We then define
\begin{align*}
L_\Sigma(\ZZ^n) &= \text{the subgroup generated by $f\in \ind(\mathscr P_\ZZ)$ such that $\theta_p(f) \in \ind(\mathscr C_\Sigma)$ for all $p\in \ZZ^n$, and}\\
\mathscr L_\Sigma(\ZZ^n) &=  \text{the image of $L_\Sigma(\ZZ^n)$ in $ \overline\ind(\mathscr P_\ZZ)$}.
\end{align*}
In the paragraph preceding \cite[Theorem 10.46]{BG09}, the wording is somewhat ambiguous so as to assume implicitly that $\mathscr L_\Sigma(\ZZ^n)$ is equal to $\overline\ind(\mathscr P_{\ZZ,\Sigma})$.  We ask explicitly:

\begin{question}\label{ques:morelli}
For which complete fans $\Sigma$ is $L_\Sigma(\ZZ^n) = \ind(\mathscr P_{\ZZ,\Sigma})$ and/or $\mathscr L_\Sigma(\ZZ^n) = \overline\ind(\mathscr P_{\ZZ,\Sigma})$?
\end{question}
In \cite{FujinoPayne}, the authors give examples of smooth proper toric varieties which admit no nontrivial nef line bundles, so $\overline\ind(\mathscr P_{\ZZ,\Sigma})= \mathbb{Z}$, which gives examples of smooth fans for which both equalities in the question fail.
We will later prove Theorem~\ref{thm:folklore} which, when combined with a result of Morelli (Theorem~\ref{thm:Morelli} here), implies that for smooth projective fans $\Sigma$ we have that $L_\Sigma(\ZZ^n) = \ind(\mathscr P_{\ZZ,\Sigma})$ and $\mathscr L_\Sigma(\ZZ^n) = \overline\ind(\mathscr P_{\ZZ,\Sigma})$.

\subsection{Relation to (operational) Chow rings}

Let $R = \ZZ$ or $\QQ$ from this section onwards, so that we may consider toric varieties and their ($\QQ$-)divisor classes associated to polytopes.  Let $\Sigma$ be a complete rational fan and $X_\Sigma$ be its toric variety.  We point to \cite{Ful93} for basic facts on toric varieties.  Recall that a lattice polytope $Q\in \mathscr P_{\ZZ,\Sigma}$ defines a nef $T$-equivariant line bundle $\mathcal{O}_{X_{\Sigma}}(D_Q)$ in $X_\Sigma$, with the property that its divisor class $[D_Q]\in \operatorname{Pic}(X_\Sigma)$ does not change when we translate $Q$.
See \cite[Chapter 6]{CLS11} for a discussion of polytopes and line bundles.
We collect some results of Fulton and Sturmfels.

\begin{theorem}\label{thm:FSPoly}
Let $\Sigma$ be a complete rational fan, and let $A^\bullet(X_\Sigma)$ be the operational Chow cohomology ring of the toric variety $X_\Sigma$.  Then, we have:
\begin{enumerate}\itemsep 5pt
\item \cite[Theorem 3.1]{FultonSturmfels} The operational Chow ring is isomorphic (as a graded ring) to the ring of Minkowski weights on the fan $\Sigma$ with product structure coming from the fan displacement rule.
\item \cite[Theorem 5.1]{FultonSturmfels}  If $\Sigma$ is projective, the exponential map, sending $[Q] \mapsto \exp([D_Q])$, defines an injection of rings $\overline\ind_\QQ(\mathscr P_{\QQ,\Sigma}) \to A^\bullet(X_\Sigma)_\QQ$ whose image is the subring generated by $A^1(X_\Sigma)_\QQ = \operatorname{Pic}_\QQ(X_\Sigma)$.  The exponential map is an isomorphism when $\Sigma$ is further simplicial.
\item \cite[Theorem 5.2]{FultonSturmfels} The exponential map defines an isomorphism between $\overline\ind_\QQ(\mathscr P_\QQ)$ and the direct limit $\varinjlim A^\bullet(X_\Sigma)_\QQ$ over all complete fans.
\end{enumerate}
\end{theorem}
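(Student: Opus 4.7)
The plan is to prove each part in turn, treating part (1) as the structural foundation on which (2) and (3) rest. For part (1), I would construct the map $A^{k}(X_\Sigma)\to \operatorname{MW}_{d-k}(\Sigma)$ by sending an operational class $c$ to the weight $\sigma\mapsto \deg\bigl(c\cap [V(\sigma)]\bigr)$, where $V(\sigma)$ is the torus-invariant subvariety of codimension $k$ and $\dim X_\Sigma=d$. The balancing condition comes from the relations $\sum_{\rho} \langle m, u_\rho\rangle [V(\rho)]=0$ in $A_*(V(\tau))$ for codimension-one cones $\tau \prec \sigma$, which realize the piecewise-linearity relations of toric Chow groups. Injectivity uses the fact that operational classes are detected by capping with torus-invariant cycles, and surjectivity uses that Chow groups of complete toric varieties are generated by $T$-invariant classes with the described relations. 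The main technical step is to identify the cup product with the fan displacement rule: for two Minkowski weights $w,w'$ on cones of complementary codimension, one has to compute $(w\cdot w')(\sigma)$ by moving $[V(\sigma')]$ by a generic one-parameter subgroup so that it meets $[V(\sigma'')]$ transversely, and then to show this agrees with the cup product via intersection on the smooth locus and the deformation-to-normal-cone argument.

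For part (2), the map $[Q]\mapsto \exp([D_Q])$ is well-defined on $\overline\ind_\QQ(\mathscr P_{\QQ,\Sigma})$ because translating $Q$ does not change the divisor class of $D_Q$, so the factor by $\operatorname{transl}$ is killed; valuativity is ensured because the assignment $Q\mapsto \one_Q$ already enforces the additive relations, and $D_{Q\cup Q'}+D_{Q\cap Q'}=D_Q+D_{Q'}$ as Cartier divisors when all four polytopes are in $\mathscr P_{\QQ,\Sigma}$. Multiplicativity follows from Minkowski sum corresponding to tensor product of line bundles, so that $\exp([D_{Q+Q'}])=\exp([D_Q])\cdot \exp([D_{Q'}])$. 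The image is precisely the subring generated by $A^1(X_\Sigma)_\QQ$ because over $\QQ$ any divisor class is a difference of nef classes, and nef divisors on $X_\Sigma$ are exactly the $[D_Q]$ for $Q\in \mathscr P_{\QQ,\Sigma}$ (via the standard correspondence \cite[\S6.2]{CLS11}). The simplicial case then follows from the Jurkiewicz–Danilov presentation, which shows $A^\bullet(X_\Sigma)_\QQ$ is generated as a ring by $A^1$.

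The serious obstacle is injectivity of the exponential map in the non-simplicial projective case. Here I would argue as follows: a relation $\sum a_i \exp([D_{Q_i}])=0$ in $A^\bullet(X_\Sigma)_\QQ$ implies, by capping with powers of an ample class and using part (1), that the corresponding collection of mixed volumes of the $Q_i$ against arbitrary polymatroids vanishes. Bernstein–Kushnirenko then gives that the indicator combination $\sum a_i \one_{Q_i}$ is strongly valuative-equivalent to zero modulo translations, i.e., vanishes in $\overline\ind_\QQ(\mathscr P_{\QQ,\Sigma})$. Alternatively one can proceed by induction on $\dim\Sigma$, restricting to boundary divisors and using the star-fan description to reduce to the simplicial case where the statement is already known.

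Part (3) is then immediate: the inclusion $\mathscr P_{\QQ,\Sigma}\subseteq \mathscr P_{\QQ,\Sigma'}$ for a refinement $\Sigma'\succeq \Sigma$ corresponds to the pullback $A^\bullet(X_\Sigma)_\QQ\to A^\bullet(X_{\Sigma'})_\QQ$ induced by the toric birational morphism $X_{\Sigma'}\to X_\Sigma$, and both sides admit the same directed system indexed by complete rational fans. Passing to the colimit and applying part (2) on each level gives the claimed isomorphism $\overline\ind_\QQ(\mathscr P_\QQ)\overset\sim\to \varinjlim A^\bullet(X_\Sigma)_\QQ$, with the compatibility of the exponential maps under pullback being a formal consequence of the functoriality of Chern classes.
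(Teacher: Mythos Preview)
The paper does not prove this theorem; it is quoted verbatim from Fulton--Sturmfels with citations and used as background in the appendix. So there is no ``paper's own proof'' to compare against.

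That said, your sketch for part~(2) has a real gap in the well-definedness step. You write that the map $[\one_Q]\mapsto \exp([D_Q])$ respects the valuative relations because $D_{Q\cup Q'}+D_{Q\cap Q'}=D_Q+D_{Q'}$ as Cartier divisors. This equality is correct in $A^1$, but $\exp$ is not additive: from $a+b=c+d$ you cannot conclude $\exp(a)+\exp(b)=\exp(c)+\exp(d)$. What you actually need is that $Q\mapsto \exp([D_Q])$ is a strongly valuative function of $Q$, and that requires a different argument. The standard route (and what Fulton--Sturmfels do) is to compute $\exp([D_Q])$ explicitly as a Minkowski weight via part~(1): the weight on a cone $\sigma$ is the normalized lattice volume of the face of $Q$ on which functionals in $\sigma$ are minimized. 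Since ``volume of the face selected by a fixed linear functional'' is a valuative invariant of $Q$, well-definedness follows. The paper itself records this description immediately after the theorem statement. Your injectivity argument via mixed volumes is in the right spirit but, as written, is circular: you need to know that vanishing of all face-volume data forces the indicator combination to vanish modulo translations, and that is essentially the content of McMullen's work on the polytope algebra rather than Bernstein--Kushnirenko.

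For part~(3), note that part~(2) gives an isomorphism only for simplicial $\Sigma$; for general projective $\Sigma$ you only get an injection onto the subring generated by degree one. Your colimit argument therefore needs the extra observation that every complete fan admits a simplicial refinement, so that the directed system of simplicial fans is cofinal.
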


The image $\exp([D_Q])$ of the exponential map applied to $Q$ can be described in terms of Minkowski weights as follows:  The cone dual to a face $F$ of $Q$ gets weight equal to the lattice volume of $F$ (in the lattice of the affine span of $F$).
For the case when $R = \RR$, after a suitable modification of the definitions for the ring of Minkowski weights and the exponential map above, one has a similar injective map \cite[Theorem 2]{McM89} that is an isomorphism when $\Sigma$ is further simplicial \cite[Theorem 5.1]{McM93a}.  See also \cite{Bri97}.

\subsection{Relation to K-rings}

Let $K(X)$ be the Grothendieck ring of vector bundles on a smooth complete variety $X$.  For a smooth complete $\CC$-variety $X$, the Hirzebruch--Riemann--Roch theorem gives that the Chern character map $ch \colon K(X)_\QQ \to A(X)_\QQ$, defined on classes of line bundles by $[\mathcal L] \mapsto \exp(c_1(\mathcal L))$, 
is a ring isomorphism.  Comparing this to the second statement in Theorem~\ref{thm:FSPoly}, one concludes that there is an isomorphism $\overline \ind_\QQ(\mathscr P_{\QQ,\Sigma}) \simeq K(X_\Sigma)_\QQ$ determined by $[Q] \mapsto [\mathcal O_{X_\Sigma}(D_Q)]$ when $\Sigma$ is projective and smooth.
Obtaining this isomorphism not only over $\QQ$ but over $\ZZ$ is the topic of this section.  In particular, we prove the following.

\begin{theorem}\label{thm:folklore}
Let $\Sigma$ be a smooth projective fan, and let $K_T(X_\Sigma)$ be the Grothendieck ring of torus-equivariant vector bundles on $X_\Sigma$.  Then, there is a ring isomorphism
\[
\psi_T \colon \ind(\mathscr P_{\ZZ,\Sigma}) \overset\sim\to K_T(X_\Sigma)
\]
determined by the property $[P] \mapsto [\mathcal O_{X_{\Sigma}}(D_P)]$ for any $P\in \mathscr P_{\ZZ,\Sigma}$. This descends to an isomorphism $\psi \colon \overline\ind(\mathscr P_{\ZZ,\Sigma}) \overset\sim\to K(X_\Sigma)$. 
\end{theorem}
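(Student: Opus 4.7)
The plan is to define $\psi_T$ via the localization embedding of Theorem~\ref{thm:localization}.(1). For $P \in \mathscr P_{\ZZ,\Sigma}$ and a maximal cone $\sigma \in \Sigma(n)$, let $v_\sigma(P) \in \ZZ^n$ denote the vertex of $P$ on which linear functionals in the interior of $\sigma$ attain their minimum; this vertex is well-defined because the normal fan of $P$ coarsens $\Sigma$. Set
\[
\psi_T(\mathbf 1_P) := \bigl(T^{-v_\sigma(P)}\bigr)_{\sigma \in \Sigma(n)},
\]
and extend $\ZZ$-linearly. This tuple matches the standard formula for the localization of $[\mathcal O_{X_\Sigma}(D_P)]$ at the torus-fixed points, so once the map is well-defined it will automatically satisfy $\psi_T(\mathbf 1_P) = [\mathcal O_{X_\Sigma}(D_P)]$.

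For well-definedness as a map out of $\ind(\mathscr P_{\ZZ,\Sigma})$, two conditions are needed: (i) the tuples $(T^{-v_\sigma(P)})_\sigma$ satisfy the wall compatibility of Theorem~\ref{thm:localization}.(1), and (ii) for each $\sigma$ the map $P \mapsto T^{-v_\sigma(P)}$ is strongly valuative. Part (i) is immediate: for adjacent maximal cones $\sigma, \sigma'$ sharing a wall with primitive vector $v$, the two vertices $v_\sigma(P)$ and $v_{\sigma'}(P)$ are endpoints of a polytope edge dual to that wall, so their difference is an integer multiple of $v$. For (ii), I would extend the target formally to $\ZZ[\QQ^n]$ and verify weak valuativity on the larger family $\mathscr P_\QQ$ by a direct hyperplane-cut analysis: if $H$ subdivides $P$ into $P^+, P^-, P^0$ with $v_\sigma(P)$ lying strictly in $H^-$, then by following the segment from any $q \in P^+$ down to $v_\sigma(P)$ one sees that the $\ell$-minimum on $P^+$ is attained on $P^0$, so $v_\sigma(P^+) = v_\sigma(P^0)$; this yields the identity $T^{-v_\sigma(P)} + T^{-v_\sigma(P^0)} = T^{-v_\sigma(P^+)} + T^{-v_\sigma(P^-)}$. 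Theorem~\ref{thm:valequiv}.(b) then upgrades weak valuativity to strong valuativity on $\mathscr P_\QQ$, and the restriction to $\mathscr P_{\ZZ,\Sigma}$ lands back in $\ZZ[T^\pm]$.

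The ring structure on $\ind(\mathscr P_{\ZZ,\Sigma})$ is induced by Minkowski sum (its well-definedness a standard McMullen-style argument using polyhedral subdivisions), and $\mathcal O(D_{P+Q}) \simeq \mathcal O(D_P) \otimes \mathcal O(D_Q)$ makes $\psi_T$ multiplicative. The descent from $\psi_T$ to the non-equivariant isomorphism $\psi$ follows by identifying the translation subgroup $\operatorname{transl}(\mathscr P_{\ZZ,\Sigma})$ with the kernel ideal $I_K$ described in Theorem~\ref{thm:localization}.(1).

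The main obstacle is bijectivity. For surjectivity, $K_T(X_\Sigma)$ is generated as a $\ZZ[T^\pm]$-algebra by $T$-equivariant line bundle classes (a consequence of Klyachko's description of equivariant sheaves on smooth toric varieties), and by projectivity of $\Sigma$ every such class equals $[\mathcal O(D_P)] \cdot [\mathcal O(D_Q)]^{-1}$ for some $P, Q \in \mathscr P_{\ZZ,\Sigma}$; the inverse $[\mathcal O(D_Q)]^{-1}$ can be realized in the image of $\psi_T$ by constructing its compatible tuple fixed point by fixed point using Theorem~\ref{thm:localization}.(1), where we exploit that all characters $T^{-u}$ are already in the image via $\psi_T(\mathbf 1_{\{u\}}) = T^{-u}$. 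For injectivity, a vanishing combination $\sum a_i [\mathbf 1_{P_i}] \in \ker \psi_T$ forces $\sum_{i \,:\, v_\sigma(P_i) = u} a_i = 0$ for every pair $(\sigma, u)$, and a combinatorial induction along a linear order of polytopes refining the partial order by Minkowski-summand inclusion then recovers $\sum a_i \mathbf 1_{P_i} = 0$ pointwise. I expect the reconciliation of the integrality of the lattice with the fixed-point reconstruction of inverses to be the most delicate step.
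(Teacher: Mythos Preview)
Your well-definedness argument is essentially the same as the paper's: both use the localization description of $K_T(X_\Sigma)$ and the valuativity of $P \mapsto v_\sigma(P)$ (the paper cites McMullen rather than reproving it). The ring-homomorphism and descent-to-$K(X_\Sigma)$ steps are also fine.

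The gaps are in bijectivity, and they are genuine.

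\textbf{Surjectivity.} You correctly reduce to showing that $[\mathcal O(D_Q)]^{-1}$ lies in the image of $\psi_T$ for $Q$ ample. But your proposed justification does not work: the singletons $\{u\}$ map to the \emph{diagonal} Laurent monomials $(T^{-u},\ldots,T^{-u})$, whereas the localization of $[\mathcal O(D_Q)]^{-1}$ is the \emph{non-diagonal} tuple $(T^{v_\sigma(Q)})_\sigma$. Knowing the target tuple is compatible in the sense of Theorem~\ref{thm:localization}(1) only tells you it lies in $K_T(X_\Sigma)$; it says nothing about whether it lies in the image of $\psi_T$. The paper closes this gap with an explicit identity: for $P$ with normal fan $\Sigma$ and $N=|P\cap\ZZ^n|$,
\[
[\mathcal O_{X_\Sigma}(-D_P)] \;=\; \sum_{k=1}^{N}(-1)^{k-1}\!\!\sum_{\substack{S\subseteq P\cap\ZZ^n\\|S|=k}}\!\!\bigl[\mathcal O_{X_\Sigma}\bigl(D_{(k-1)P-p_S}\bigr)\bigr],
\]
verified fixed-point by fixed-point using that $v$ is a vertex of $P$. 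Each summand on the right is the class of a translate of a dilate of $P$, hence visibly in the image of $\psi_T$.

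\textbf{Injectivity.} Your proposed argument extracts from $\psi_T(\xi)=0$ only the conditions $\sum_{i:\,v_\sigma(P_i)=u}a_i=0$ for each pair $(\sigma,u)$. It is not at all clear that these vertex constraints force $\sum a_i\mathbf 1_{P_i}=0$ pointwise, and the phrase ``combinatorial induction along a linear order refining Minkowski-summand inclusion'' does not indicate a mechanism: membership of a generic point $x$ in $P_i$ is governed by the simultaneous configuration of \emph{all} the $v_\sigma(P_i)$, not by any single one, and no ordering of the $P_i$ untangles this. The paper instead produces an explicit section $K_T(X_\Sigma)\to\ind(\mathscr P_{\ZZ,\Sigma})$: it sends $[\mathcal E]$ to the function on $\QQ^n$ given by $m/k\mapsto\chi\bigl(X_\Sigma;\Psi^k[\mathcal E]\bigr)_m$, where $\Psi^k$ is the $k$-th Adams operation. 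Because $\Psi^k[\mathcal O(D_P)]=[\mathcal O(D_{kP})]$ and this nef line bundle has Euler characteristic concentrated in degree zero with weight-$m$ piece equal to $1$ if $m\in kP$ and $0$ otherwise, the section takes $[\mathcal O(D_P)]$ back to $\mathbf 1_P$. This is the idea you are missing: one must bring in the full Euler characteristic (equivalently, recover all lattice points of dilates), not just the vertex data.
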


Morelli proved a similar result for any smooth complete (not necessarily projective) fan;  the following theorem collects \cite[Theorems 5, 6, and 8]{Mor93c}.
For $k \in \mathbb{Z}_{>0}$, let $\Psi^k$ be the $k$-th Adams operation, which is a ring endomorphism of $K_{(T)}(X_{\Sigma})$ that satisfies $\Psi^k[\mathcal{L}] = [\mathcal{L}^{\otimes k}]$ for $\mathcal{L}$ a ($T$-equivariant) line bundle.
For $m \in \mathbb{Z}^n$ and $[\mathcal{E}] \in K_T(X_{\Sigma})$, let $\chi(X_{\Sigma}, [\mathcal{E}])_m$ be the weight $m$ Euler characteristic.

\begin{theorem}\label{thm:Morelli}
Let $\Sigma$ be a smooth complete fan.
\begin{enumerate}\itemsep 5pt
\item The map $\textbf{I}_T \colon K_T(X_\Sigma) \stackrel{\sim}{\to} L_{\Sigma}(\mathbb{Z}^n) \subseteq \ZZ^{\QQ^n}$ given by $[\mathcal E] \mapsto \big(m/k \mapsto \chi(X_\Sigma;\Psi^k[\mathcal E])_m 
\big)$ is a well-defined ring isomorphism.
\item The map $\textbf I_T$ descends to an isomorphism $\textbf I \colon K(X_\Sigma) \stackrel{\sim}{\to} \mathcal{L}_{\Sigma}(\mathbb{Z}^n)$.
\end{enumerate}
\end{theorem}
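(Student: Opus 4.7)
The plan is to prove Morelli's theorem via $T$-equivariant localization combined with Brion's polyhedral formula for lattice-point enumerators. By Theorem~\ref{thm:localization}\ref{localization:K}, each class $[\mathcal{E}] \in K_T(X_\Sigma)$ is determined by its localizations $[\mathcal{E}]_\sigma \in \mathbb{Z}[T^{\pm 1}]$ at the torus-fixed points $x_\sigma$. The Adams operation $\Psi^k$ commutes with pullback to fixed points and acts by $T^v \mapsto T^{kv}$, so the weight-$m$ Euler characteristic of $\Psi^k[\mathcal{E}]$ is computed by the equivariant Hirzebruch--Riemann--Roch / Atiyah--Bott formula
\[
\chi^T(X_\Sigma; \Psi^k[\mathcal{E}]) = \sum_{\sigma \in \Sigma(n)} \frac{(\Psi^k[\mathcal{E}])_\sigma}{\prod_{\rho \in \sigma(1)}(1 - T^{u_{\sigma,\rho}})},
\]
where $u_{\sigma,\rho}$ are the $T$-weights of the tangent directions at $x_\sigma$. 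Although each summand is a rational function, completeness of $\Sigma$ guarantees that the sum is a Laurent polynomial. The test case is $[\mathcal{E}] = [\mathcal{O}(D_P)]$ for a lattice polytope $P$ with normal fan coarsening $\Sigma$: Brion's theorem recovers $\chi^T(X_\Sigma;[\mathcal{O}(D_P)]) = \sum_{m \in P \cap \mathbb{Z}^n} T^m$, and since $\Psi^k[\mathcal{O}(D_P)] = [\mathcal{O}(D_{kP})]$, one gets $\mathbf{I}_T([\mathcal{O}(D_P)])(m/k) = \mathbf{1}_{kP}(m) = \mathbf{1}_P(m/k)$. So $\mathbf{I}_T$ extends the assignment $[\mathcal{O}(D_P)] \mapsto \mathbf{1}_P$, and in particular agrees with $\psi_T$ of Theorem~\ref{thm:folklore} on this generating subset.

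Next I would verify the image lies in $L_\Sigma(\mathbb{Z}^n)$. Expanding each term $T^{m_{\sigma,i}}/\prod_\rho(1-T^{u_{\sigma,\rho}})$ as the generating series of lattice points in a translate of the cone dual to $\sigma$, the localization formula writes $\mathbf{I}_T([\mathcal{E}])$ as an (infinite, but locally finite) $\mathbb{Z}$-linear combination of indicator functions of integer translates of cones in $\mathscr{C}_\Sigma$. Since the tangent cone at any $p \in \mathbb{Z}^n$ of a translate of a cone in $\mathscr{C}_\Sigma$ is again a face of a cone in $\mathscr{C}_\Sigma$, the condition $\theta_p(\mathbf{I}_T([\mathcal{E}])) \in \mathbb{I}(\mathscr{C}_\Sigma)$ is automatic, placing the image in $L_\Sigma(\mathbb{Z}^n)$. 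Multiplicativity is then checked on generators: the formula $[\mathcal{O}(D_P)] \cdot [\mathcal{O}(D_Q)] = [\mathcal{O}(D_{P+Q})]$ maps to $\mathbf{1}_P \cdot \mathbf{1}_Q = \mathbf{1}_{P+Q}$, which is the Minkowski-convolution product structure on $L_\Sigma(\mathbb{Z}^n)$; since $K_T(X_\Sigma)$ is generated as a ring by classes of $T$-equivariant line bundles and since each such class is a virtual combination of globally generated $[\mathcal{O}(D_P)]$'s, multiplicativity propagates.

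For injectivity, I would run the localization formula in reverse: Brion's polarization identity (or equivalently the M\"{o}bius-style decomposition of the equivariant Hilbert series at each vertex) expresses the localization $[\mathcal{E}]_\sigma$ as a $\mathbb{Z}$-linear combination of finitely many values $\mathbf{I}_T([\mathcal{E}])(m/k)$ taken near the cone $\sigma$; hence $\mathbf{I}_T([\mathcal{E}]) = 0$ forces every $[\mathcal{E}]_\sigma = 0$, and $[\mathcal{E}] = 0$ by Theorem~\ref{thm:localization}\ref{localization:K}. Surjectivity is the technical heart and the main obstacle: given $f \in L_\Sigma(\mathbb{Z}^n)$, one must produce $[\mathcal{E}]$ with $\mathbf{I}_T([\mathcal{E}]) = f$. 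The strategy is to use the Lawrence--Brion decomposition to write $f$ as a finite $\mathbb{Z}$-linear combination of lattice-point generating series of simplicial cones in $\mathscr{C}_\Sigma$ translated to lattice points, and to lift each such generating series to the corresponding rational function $T^m/\prod(1-T^{u})$ at the appropriate fixed point, verifying that the resulting tuple in $\prod_\sigma K_T(\mathrm{pt})$ satisfies the congruence conditions of Theorem~\ref{thm:localization}\ref{localization:K}. The congruence check reduces to the fact that the lattice-point generating functions of cones sharing a facet agree modulo $1-T^{v}$ along that facet's normal direction, which is a standard property of Brion's decomposition.

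Finally, the descent to the non-equivariant statement follows from matching kernels under the two forgetful maps. The kernel of $K_T(X_\Sigma) \to K(X_\Sigma)$ is generated by $\xi - \xi(1,\dots,1)$ for global Laurent polynomials $\xi$ (Theorem~\ref{thm:localization}\ref{localization:K}); multiplying a $T$-equivariant class by the character $T^v$ shifts the weight decomposition by $v$, so under $\mathbf{I}_T$ it corresponds to the translation $f \mapsto f(\,\cdot - v)$. Hence this kernel maps surjectively onto $\operatorname{transl}(\mathscr{P}_\mathbb{Z}) \cap L_\Sigma(\mathbb{Z}^n)$, which is by definition the kernel of $L_\Sigma(\mathbb{Z}^n) \twoheadrightarrow \mathcal{L}_\Sigma(\mathbb{Z}^n)$, and the induced map $\mathbf{I}: K(X_\Sigma) \to \mathcal{L}_\Sigma(\mathbb{Z}^n)$ is an isomorphism.
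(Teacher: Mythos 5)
First, a point of orientation: the paper does not prove this statement. Theorem~\ref{thm:Morelli} is quoted from Morelli (it ``collects'' Theorems 5, 6, and 8 of that paper), so there is no internal proof to compare yours against; you are attempting to reprove a cited result. Your outline --- equivariant localization plus Brion's formula --- is in the right spirit, but it has a gap that is fatal as written. The theorem is stated for smooth \emph{complete} fans, yet every key step of your argument (well-definedness of $\mathbf{I}_T$ as a function on $\QQ^n$, multiplicativity, and the identification of the image) is reduced to the generators $[\mathcal O_{X_\Sigma}(D_P)]$ for $P\in\mathscr P_{\ZZ,\Sigma}$. That reduction requires every $T$-equivariant line bundle to be a virtual combination of nef ones, which holds when $\Sigma$ is projective but fails for general complete fans: the paper itself notes (citing Fujino--Payne) that there are smooth proper toric varieties with no nontrivial nef line bundles, in which case $\mathscr P_{\ZZ,\Sigma}$ consists only of points and your generating set is essentially empty. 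So at best your argument addresses the projective case --- which is indeed the only case the paper uses, via Theorem~\ref{thm:folklore} --- but it is not the statement you set out to prove. Even in the projective case, the step ``each such class is a virtual combination of globally generated $[\mathcal O(D_P)]$'s'' needs justification; the paper's Lemma~\ref{lem:surjInd} supplies exactly this via an explicit identity expressing $[\mathcal O(-D_P)]$ in terms of the classes $[\mathcal O(D_{(k-1)P-p_S})]$.

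Two further points. Membership of $\mathbf{I}_T([\mathcal E])$ in $L_\Sigma(\ZZ^n)$ requires, by definition, that the function lie in $\ind(\mathscr P_\ZZ)$, i.e.\ be a \emph{finite} integer combination of indicator functions of lattice polytopes; the locally finite infinite cone decomposition you extract from the localization formula verifies the tangent-cone condition $\theta_p(f)\in\ind(\mathscr C_\Sigma)$ but not this finiteness, and closing that gap --- together with surjectivity, which you correctly flag as the technical heart but only sketch --- is essentially the content of Morelli's theorem. Finally, your injectivity step asserts that the localization $[\mathcal E]_\sigma$, which is a Laurent polynomial, is ``a $\ZZ$-linear combination of finitely many values'' of $\mathbf{I}_T([\mathcal E])$; as stated this is not meaningful, and the honest version (recovering each coefficient of $[\mathcal E]_\sigma$ from the behavior of $m/k\mapsto\chi(X_\Sigma;\Psi^k[\mathcal E])_m$ in the region dual to $\sigma$) needs an argument separating the contributions of distinct fixed points, whose translated dual cones overlap. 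It is worth comparing with how the appendix actually uses $\mathbf{I}_T$: only as a one-sided inverse to $\psi_T$, where the verification reduces to $\chi(X_\Sigma;\Psi^\ell[\mathcal O(D_P)])_m=\one_{\ell P}(m)$ --- again a reduction to nef generators that is available only in the projective setting.
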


However, in light of Question~\ref{ques:morelli}, it is unclear whether this proves Theorem~\ref{thm:folklore}.
We conclude with our proof of Theorem~\ref{thm:folklore} in the form of two lemmas. The proof of the second lemma uses ideas of Morelli.

\begin{lemma}\label{lem:surjInd}
There is a surjective ring homomorphism $\psi_T\colon \ind(\mathscr P_{\ZZ,\Sigma}) \to K_T(X_\Sigma)$ determined by the property $[P] \mapsto [\mathcal O_{X_{\Sigma}}(D_P)]$ for any $P\in \mathscr P_{\ZZ,\Sigma}$. It descends to a surjective ring homomorphism $\psi\colon \overline \ind(\mathscr P_{\ZZ,\Sigma}) \to K(X_\Sigma)$.
\end{lemma}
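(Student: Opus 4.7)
The plan is to define $\psi_T$ on generators by $\one_P \mapsto [\mathcal{O}_{X_\Sigma}(D_P)]$, extend linearly, and then verify in turn (a) well-definedness, (b) multiplicativity together with the descent to the non-equivariant $\psi$, and (c) surjectivity. The main obstacle will be surjectivity. The key fact used throughout is that $[\mathcal{O}_{X_\Sigma}(D_P)]$ localizes at the torus-fixed point $p_\sigma$ to the character $T^{v_\sigma(P)}$, where $v_\sigma(P)$ denotes the vertex of $P$ whose normal cone contains $\sigma$.

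For (a), under the identification $\ind(\mathscr P_{\ZZ,\Sigma}) = \Pi(\Sigma)$ (valid because, by Theorem~\ref{thm:valequiv}, additive and strong valuativity coincide for polytope families), it suffices to verify the additive relation
\[
[\mathcal{O}(D_P)] + [\mathcal{O}(D_Q)] = [\mathcal{O}(D_{P\cup Q})] + [\mathcal{O}(D_{P\cap Q})] \quad \text{in } K_T(X_\Sigma)
\]
whenever $P, Q, P\cup Q, P\cap Q \in \mathscr P_{\ZZ,\Sigma}$. By the localization injection of Theorem~\ref{thm:localization}, this reduces at each maximal cone $\sigma$ to the identity $T^{v_\sigma(P)} + T^{v_\sigma(Q)} = T^{v_\sigma(P\cup Q)} + T^{v_\sigma(P\cap Q)}$, which follows from the convex-geometric multiset equality $\{v_\sigma(P), v_\sigma(Q)\} = \{v_\sigma(P\cup Q), v_\sigma(P\cap Q)\}$. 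To prove the latter, choose $u$ in the relative interior of $\sigma$ generic enough that each $v_\sigma(\cdot)$ is the unique minimizer of $\langle u, \cdot\rangle$; the minimum over $P\cup Q$ is attained at the $u$-smaller of $v_\sigma(P)$ and $v_\sigma(Q)$, while convexity of $P\cup Q$ forces the $u$-larger of the two to also lie in $P\cap Q$ (via the standard line-segment argument), so it must equal $v_\sigma(P\cap Q)$.

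Multiplicativity (b) is then immediate from $\mathcal{O}(D_P) \otimes \mathcal{O}(D_Q) \cong \mathcal{O}(D_{P+Q})$ combined with the Minkowski-sum product on $\Pi(\Sigma) = \ind(\mathscr P_{\ZZ,\Sigma})$. Descent to $\psi \colon \overline{\ind}(\mathscr P_{\ZZ,\Sigma}) \to K(X_\Sigma)$ is automatic, since the translation $P \mapsto P+v$ for $v\in\ZZ^n$ scales $[\mathcal{O}(D_P)]$ by the character $T^v$, which maps to $1$ under the forgetful surjection $K_T(X_\Sigma) \twoheadrightarrow K(X_\Sigma)$ described in Theorem~\ref{thm:localization}; thus the translation relations land in the kernel of the forgetful map.

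Surjectivity (c) is the main obstacle. I plan to handle it via Morelli's Theorem~\ref{thm:Morelli}, which provides an isomorphism $\mathbf{I}_T \colon K_T(X_\Sigma) \overset\sim\to L_\Sigma(\ZZ^n)$. A direct calculation shows $\mathbf{I}_T([\mathcal{O}(D_P)]) = \one_P|_{\QQ^n}$ for $P \in \mathscr P_{\ZZ,\Sigma}$: indeed $\mathbf{I}_T([\mathcal{O}(D_P)])(m/k) = \chi(X_\Sigma, \mathcal{O}(kD_P))_m$, which by Demazure vanishing for nef toric line bundles on a projective toric variety equals $\dim H^0(X_\Sigma, \mathcal{O}(kD_P))_m = \one_{kP}(m) = \one_P(m/k)$. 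Hence $\mathbf{I}_T \circ \psi_T$ is the natural restriction map $\ind(\mathscr P_{\ZZ,\Sigma}) \to L_\Sigma(\ZZ^n)$, $\one_P \mapsto \one_P|_{\QQ^n}$, and surjectivity of $\psi_T$ reduces to surjectivity of this restriction. I would complete this last step by exploiting the defining condition $\theta_p(f) \in \ind(\mathscr C_\Sigma)$ on elements $f \in L_\Sigma(\ZZ^n)$ together with projectivity of $\Sigma$ (which supplies an ample lattice polytope whose normal fan is exactly $\Sigma$), straightening any a priori expression of $f$ as a $\ZZ$-combination of indicators of arbitrary lattice polytopes into one using only polytopes in $\mathscr P_{\ZZ,\Sigma}$.
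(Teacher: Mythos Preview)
Your treatment of well-definedness (a) and multiplicativity/descent (b) is essentially the same as the paper's, though you spell out the vertex argument where the paper cites \cite[Theorem 4.6]{McM09}; this is fine.

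The genuine gap is in (c). Your reduction via Morelli's isomorphism $\mathbf I_T$ is correct, and your computation $\mathbf I_T([\mathcal O(D_P)]) = \one_P|_{\QQ^n}$ is exactly what the paper uses elsewhere. But the step you leave for last---showing that the restriction map $\ind(\mathscr P_{\ZZ,\Sigma}) \to L_\Sigma(\ZZ^n)$ is surjective---is precisely Question~\ref{ques:morelli}, which the paper explicitly flags as open in general and resolves for smooth projective $\Sigma$ only \emph{as a corollary} of this lemma, not as an input to it. Your ``straightening'' sketch is not a proof: given $f \in L_\Sigma(\ZZ^n)$ expressed as $\sum a_i \one_{Q_i}$ with arbitrary lattice polytopes $Q_i$, the tangent-cone condition $\theta_p(f) \in \ind(\mathscr C_\Sigma)$ constrains the local behavior of the \emph{sum}, not of the individual $Q_i$, and there is no evident mechanism for replacing each $Q_i$ by a combination of deformations of the ample polytope. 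So as written your argument is circular.

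The paper's route to surjectivity avoids this entirely. It uses that $K_T(X_\Sigma)$ is generated as a ring by classes of $T$-equivariant line bundles (Klyachko; see also Anderson--Payne), and that on a projective toric variety every such class is $[\mathcal L^\vee \otimes \mathcal M]$ with $\mathcal L, \mathcal M$ ample. Since ample classes are hit by $\psi_T$, it remains to exhibit $[\mathcal O(-D_P)]$ in the image for any $P$ with normal fan $\Sigma$. The paper does this by an explicit identity: with $N = |P \cap \ZZ^n|$ and $p_S = \sum_{p\in S} p$,
\[
\sum_{k=0}^{N} (-1)^k \sum_{\substack{S\subseteq P\cap\ZZ^n\\ |S|=k}} [\mathcal O_{X_\Sigma}(D_{kP - p_S})] = 0,
\]
verified by localization (at the fixed point for a vertex $v$ of $P$, the terms for $S$ and $S\cup\{v\}$ cancel). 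Rearranging gives $[\mathcal O(-D_P)]$ as a $\ZZ$-combination of classes $[\mathcal O(D_{(k-1)P - p_S})]$, each of which is a translate of a nonnegative dilate of $P$ and hence lies in the image of $\psi_T$.
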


\begin{proof}
First we show that $\psi_T$ is well-defined.  We use the localization theorem for the torus-equivariant $K$-theory of smooth complete toric varieties \cite[Theorem 3.2]{Nie74}, which embeds $K_T(X_\Sigma)$ as a subring of $\prod_{\mathrm{pt} \in X^T_{\Sigma}} K_T(\mathrm{pt})$.  For each fixed maximal cone $\sigma \in \Sigma$, which corresponds to a point in $X^T_{\Sigma}$, the class of $[\mathcal{O}_{X_{\Sigma}}(D_P)]$ is sent to $T^{-v_{\sigma}}$, where $v_{\sigma}$ is the vertex of $P$ on which any functional in the interior of $\sigma$ achieves its minimum. That this is well-defined follows from Proposition~\ref{prop:faceval}.  To see that $\psi_T$ is a ring homomorphism, note that if $P$ and $Q$ are polytopes, then the vertex of $P \msum Q$ on which any functional in the interior of $\sigma$ achieves its minimum is the sum of the corresponding vertices of $P$ and $Q$. \\
\indent For the surjectivity of $\psi_T$, first note that for a complete smooth toric variety $X_\Sigma$, the ring $K_T(X_\Sigma)$ is generated as a ring by the classes of $T$-equivariant line bundles \cite[Corollary 1]{Kly84} (see also \cite[Lemma 2.2]{AndersonPayne}).  If $\Sigma$ is further projective, any $T$-equivariant line bundle is isomorphic to $\mathcal L^\vee \otimes \mathcal  M$ for some ample $T$-equivariant lines bundles $\mathcal L$ and $\mathcal M$. 
Since $\psi_T$ surjects onto the classes of $T$-equivariant ample line bundles, it suffices now to show that for a $T$-equivariant ample line bundle $\mathcal L$, its inverse class $[\mathcal L^\vee]$ is a sum of powers of $[\mathcal{L}]$ (possibly with different equivariant structures).  Concretely, suppose we have a lattice polytope $P \subset\RR^n$ whose normal fan $\Sigma_P$ equals $\Sigma$. Let $N$ be the number of lattice points in $P$.
Denoting $p_S = \sum_{p\in S} p$ for a subset $S\subseteq P \cap \ZZ^n$,
we claim that
\[
[\mathcal O_{X_{\Sigma}}(-D_P)] = \sum_{k=1}^{N} (-1)^{k-1} \sum_{\substack{S\subseteq P\cap \ZZ^n\\ |S|= k}} [ \mathcal O_{X_{\Sigma}}(D_{(k-1)P-p_S})] \quad\text{as elements in $K_T(X_\Sigma)$}.
\]
By multiplying $[\mathcal O_{X_{\Sigma}}(D_P)]$, we equivalently check that
\[
\sum_{k = 0}^{N}(-1)^k \sum_{\substack{S\subseteq P\cap \ZZ^n\\ |S| = k}} [\mathcal O_{X_{\Sigma}}(D_{kP - p_S})] = 0.
\]
Here the $k = 0$ term should be interpreted as $[\mathcal{O}]$ with the trivial equivariant structure. At each $T$-fixed point $x$ of $X_\Sigma$ corresponding to a vertex $v$ of $P$, the localization value of the left-hand-side is zero since $[\mathcal O_{X_{\Sigma}}(D_{(|S|+1)P-p_{S\cup v}})]_x = [\mathcal O_{X_{\Sigma}}(D_{|S|P-p_S})]_x$ for any $S\subseteq (P\cap\ZZ^n)\setminus v$.\\
\indent Finally, we note that for $Q\in \mathscr P_{\ZZ,\Sigma}$, the divisor class $[D_Q]$ is invariant under translation of $Q$, so translation invariance is clear. Therefore $\psi_T$ descends to a map $\psi\colon \overline \ind(\mathscr P_{\ZZ,\Sigma}) \to K(X_\Sigma)$, which is surjective because $K_T(X_{\Sigma}) \to K(X_{\Sigma})$ is surjective.
\end{proof}

\begin{lemma}
The maps $\psi_T$ and $\psi$ given in the previous lemma are injective.
\end{lemma}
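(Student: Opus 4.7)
The plan is to factor $\psi_T$ through Morelli's isomorphism $\mathbf{I}_T\colon K_T(X_\Sigma)\overset\sim\to L_\Sigma(\ZZ^n)$ from Theorem~\ref{thm:Morelli}. I will identify the composition $\mathbf{I}_T\circ \psi_T$ with the natural restriction-to-$\QQ^n$ map $\ind(\mathscr P_{\ZZ,\Sigma})\to \ZZ^{\QQ^n}$ and show that this restriction is injective; injectivity of $\psi_T$ then follows immediately, and injectivity of $\psi$ will follow by a matching of kernels.

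To compute $\mathbf{I}_T\circ \psi_T$ on a generator $[\one_P]$ with $P\in \mathscr P_{\ZZ,\Sigma}$, the key observation is that $\Psi^k[\mathcal O_{X_\Sigma}(D_P)] = [\mathcal O_{X_\Sigma}(kD_P)]$ and $kD_P$ is base-point-free, hence nef. By Demazure vanishing the higher cohomology of $\mathcal O_{X_\Sigma}(kD_P)$ vanishes, and with the standard $T$-linearization coming from $kP$ the weight $m$ part of $H^0$ has dimension $\one_{kP\cap \ZZ^n}(m)$; consequently
\[
\mathbf{I}_T(\psi_T([\one_P]))(m/k) \;=\; \chi\bigl(X_\Sigma,\,[\mathcal O_{X_\Sigma}(kD_P)]\bigr)_m \;=\; \one_{kP}(m) \;=\; \one_P(m/k).
\]
Extending $\ZZ$-linearly, this shows $\mathbf{I}_T\circ \psi_T$ is the restriction map $f\mapsto f|_{\QQ^n}$.

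For injectivity of this restriction map, given $f=\sum a_i\one_{P_i}$ with $f|_{\QQ^n}=0$ I consider the finite arrangement in $\RR^n$ formed by the facet hyperplanes of the $P_i$: this partitions $\RR^n$ into finitely many relatively open cells, each of which is a relatively open subset of a rational affine subspace and on each of which $f$ is constant. Every nonempty such cell contains a rational point, at which $f$ vanishes by hypothesis, so $f\equiv 0$ on every cell and hence on all of $\RR^n$, proving $\psi_T$ injective. For the non-equivariant statement I will note that the kernel of $K_T(X_\Sigma)\twoheadrightarrow K(X_\Sigma)$ is generated by $[\mathcal O_{X_\Sigma}(\chi^u)]-[\mathcal O_{X_\Sigma}]$ for $u\in \ZZ^n$, which under $\psi_T^{-1}$ corresponds precisely to the translation relations generating $\operatorname{transl}(\mathscr P_{\ZZ,\Sigma})$; hence $\psi_T$ descends to an isomorphism $\overline{\ind}(\mathscr P_{\ZZ,\Sigma})\overset\sim\to K(X_\Sigma)$, yielding injectivity of $\psi$. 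The main bookkeeping obstacle is matching conventions so that Morelli's $T$-equivariant Euler characteristic uses the same $T$-linearization of $\mathcal O_{X_\Sigma}(D_P)$ that $\psi_T$ does; once this is aligned, the identification of $\mathbf{I}_T\circ \psi_T$ with the restriction map is automatic.
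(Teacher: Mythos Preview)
Your argument is correct and is essentially the same as the paper's. Both proofs hinge on the computation
\[
\chi\bigl(X_\Sigma,\Psi^k[\mathcal O_{X_\Sigma}(D_P)]\bigr)_m \;=\; \one_{kP}(m) \;=\; \one_P(m/k),
\]
obtained from Demazure vanishing and the lattice-point description of $H^0$. The only stylistic differences are: (i) you invoke Morelli's isomorphism $\mathbf I_T$ from Theorem~\ref{thm:Morelli} as a black box, whereas the paper simply rebuilds the map $[\mathcal E]\mapsto(m/k\mapsto \chi(X_\Sigma;\Psi^k[\mathcal E])_m)$ by hand and checks it is well-defined; and (ii) you spell out the injectivity of the restriction $\ind(\mathscr P_{\ZZ,\Sigma})\to \ZZ^{\QQ^n}$ via the rational hyperplane-arrangement argument, a point the paper leaves implicit when it calls its map a ``section.'' For the non-equivariant statement your kernel-matching is fine (the ideal in $K_T$ generated by $T^u-1$ corresponds under $\psi_T$ exactly to the ideal generated by $[\one_{\{u\}}]-[\one_{\{0\}}]$, which equals $\operatorname{transl}$), and amounts to the same content as the paper's one-line ``clearly descends.''
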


\begin{proof}
For $[\mathcal{E}] \in K_T(X_{\Sigma})$, consider the function $\QQ^n \to \ZZ$ defined by
\[
m/k \mapsto \chi(X_\Sigma;\Psi^k[\mathcal E])_m \quad\text{for $m\in \ZZ^n$ and $k \in \ZZ_{>0}$}.
\]
In order to see that this is a well-defined function, we need to check that
$$\chi(X_{\Sigma}; \Psi^{k}[\mathcal{E}])_m = \chi(X_{\Sigma}; \Psi^{nk}[\mathcal{E}])_{nm} \quad\text{for any $n\in \ZZ_{>0}$}.$$
By Lemma~\ref{lem:surjInd} and because the classes of the polytopes $P\in \mathscr P_{\ZZ,\Sigma}$ generate $\ind(\mathscr P_{\ZZ,\Sigma})$,  it suffices to check that
\[
\chi(X_{\Sigma}; \Psi^k[\mathcal{O}_{X_{\Sigma}}(D_P)])_m = \chi(X_{\Sigma}; \Psi^{nk}[\mathcal{O}_{X_{\Sigma}}(D_P)])_{nm} \quad\text{for any $n\in \ZZ_{>0}$}
\]
for an arbitrary polytope $P \in \mathscr{P}_{\mathbb{Z}, \Sigma}$.
This then follows from the fact that for any positive integer $\ell$ and $m\in \ZZ^n$, one has
\[
\chi(X_\Sigma, \Psi^\ell[\mathcal O_{X_{\Sigma}}(D_P)])_m = \begin{cases}
1 & \text{if $m\in \ell P$}\\
0 &\text{otherwise}.
\end{cases}
\]
Indeed, $\Psi^\ell [\mathcal{O}_{X_{\Sigma}}(D_P)] = [\mathcal{O}_{X_{\Sigma}}(D_{\ell P})]$, we can identify $H^0(X_{\Sigma}; \mathcal{O}_{X_{\Sigma}}(D_{\ell P}))$ with the vector space spanned by lattice points in $\ell P$, and the  higher cohomology of base-point-free line bundles on toric varieties vanishes \cite[\S3.4 \& \S3.5]{Ful93}.

We now construct a map $K_T(X_{\Sigma}) \to \ind(\mathscr P_{\ZZ, \Sigma})$.
By Lemma~\ref{lem:surjInd}, every class $[\mathcal{E}] \in K_T(X_{\Sigma})$ is of the form $[\mathcal{E}] = \sum_i a_i [\mathcal{O}_{X_{\Sigma}}(D_{P_i})]$ for some $P_i \in \mathscr{P}_{\mathbb{Z}, \Sigma}$. We send $[\mathcal{E}]$ to $\sum_i a_i [P_i] \in \ind(\mathscr P_{\ZZ, \Sigma})$. 
The construction above recovers the evaluations of $\sum a_i [P_i]$ at points in $\mathbb{Q}^n$. Because two finite sums of indicator functions of lattice polytopes are equal if they agree on $\mathbb{Q}^n$, this map is well-defined.
It is clearly a left-inverse of $\psi_T$ which descends to a left-inverse of $\psi$. 
\end{proof}

\small
\bibliography{sources}
\bibliographystyle{alpha}

\end{document}